\theoremstyle{plain}
\newtheorem{thm}{Theorem}[section]
\newtheorem*{thm*}{Theorem}
\newtheorem{prop}[thm]{Proposition}
\newtheorem{lem}[thm]{Lemma}
\newtheorem{cor}[thm]{Corollary}
\theoremstyle{definition}
\theoremstyle{remark}
\newtheorem{rem}{{\rm Remark}}[section]
\newtheorem{ex}{\rm{Example}}[section]
\newtheorem{defi}[ex]{{\rm Definition}}
\newcommand{\vol}{\operatorname{vol}}
\newcommand{\ric}{\operatorname{Ric}}
\newcommand{\Div}{\operatorname{div}}
\newcommand{\bm}{\partial M}
\newcommand{\ball}{B^{n}_{\kappa,\lambda}}
\newcommand{\const}{C_{\kappa,\lambda}}
\newcommand{\bconst}{\bar{C}_{\kappa,\lambda}}
\newcommand{\bball}{\partial B^{n}_{\kappa,\lambda}}
\newcommand{\dm}{D(M,\partial M)}
\newcommand{\tr}{\operatorname{trace}}
\newcommand{\cut}{\mathrm{Cut}\,}
\newcommand{\expp}{\exp^{\perp}}
\newcommand{\inte}{\mathrm{Int}\,}
\newcommand{\tbp}{T^{\perp}\bm}
\newcommand{\thetae}{\theta}
\newcommand{\thetaex}{\theta}
\newcommand{\bthetae}{\bar{\theta}}
\def\qed{\hfill $\Box$} 
\title[rigidity of manifolds with boundary]{rigidity of manifolds with boundary under a lower Ricci curvature bound}
\author{Yohei Sakurai}
\date{\today}
\address{Graduate School of Pure and Applied Sciences, University of Tsukuba, Tennodai 1-1-1, Tsukuba, Ibaraki, 305-8577, Japan}
\email{sakurai@math.tsukuba.ac.jp}
\thanks{Research Fellow of Japan Society for the Promotion of Science for 2014-2016}
\subjclass[2010]{53C20}
\keywords{Manifold with boundary; Ricci curvature; Mean curvature; Comparison theorem}
\begin{document}
\maketitle

\begin{abstract}
We study Riemannian manifolds with boundary under a lower Ricci curvature bound,
and a lower mean curvature bound for the boundary.
We prove a volume comparison theorem of Bishop-Gromov type concerning the volumes of the metric neighborhoods of the boundaries.
We conclude several rigidity theorems.
As one of them,
we obtain a volume growth rigidity theorem.
We also show a splitting theorem of Cheeger-Gromoll type under the assumption of the existence of a single ray.
\end{abstract}

\section{Introduction}\label{sec:introduction}
In this paper, 
we study Riemannian manifolds with boundary under a lower Ricci curvature bound,
and a lower mean curvature bound for the boundary.
Heintze and Karcher in \cite{HK},
and Kasue in \cite{K2} (\cite{K1}),
have proved several comparison theorems for such manifolds with boundary.
Furthermore,
Kasue has proved rigidity theorems in \cite{K3}, \cite{K4} for such manifolds with boundary (see also \cite{K5}, \cite{I}).
These rigidity theorems state that if such manifolds satisfy suitable rigid conditions,
then there exist diffeomorphisms preserving the Riemannian metrics between the manifolds and the model spaces.
Other rigidity results have been also studied in \cite{dCX} and \cite{X}, and so on.

In order to develop the geometry of such manifolds with boundary,
we prove a volume comparison theorem of Bishop-Gromov type concerning the metric neighborhoods of the boundaries,
and produce a volume growth rigidity theorem.
We also prove a splitting theorem of Cheeger-Gromoll type under the assumption of the existence of a single ray emanating from the boundary.
We obtain a lower bound for the smallest Dirichlet eigenvalues for the $p$-Laplacians.
We also add a rigidity result to the list of the rigidity results obtained by Kasue in \cite{K4} on the smallest Dirichlet eigenvalues for the Laplacians.

The preceding rigidity results mentioned above have stated the existence of Riemannian isometries between manifolds with boundary and the model spaces.
On the other hand,
our rigidity results discussed below states the existence of isometries as metric spaces from a view point of metric geometry.
These notions are equivalent to each other (see Subsection \ref{sec:metric}).
\subsection{Main results}
For $\kappa \in \mathbb{R}$,
we denote by $M^{n}_{\kappa}$ the $n$-dimensional space form with constant curvature $\kappa$,
and by $g^{n}_{\kappa}$ the standard Riemannian metric on $M^{n}_{\kappa}$.

We say that $\kappa\in \mathbb{R}$ and $\lambda \in \mathbb{R}$ satisfy the \textit{ball-condition}
if there exists a closed geodesic ball $\ball$ in $M^{n}_{\kappa}$ with non-empty boundary $\bball$
such that $\bball$ has a constant mean curvature $\lambda$.
We denote by $\const$ the radius of $\ball$.
We see that $\kappa$ and $\lambda$ satisfy the ball-condition if and only if either
(1) $\kappa>0$; 
(2) $\kappa=0$ and $\lambda>0$;
or (3) $\kappa<0$ and $\lambda>\sqrt{\vert \kappa \vert}$.
Let $s_{\kappa,\lambda}(t)$ be a unique solution of the so-called Jacobi-equation
\begin{equation*}
f''(t)+\kappa f(t)=0
\end{equation*}
with intial conditions $f(0)=1$ and $f'(0)=-\lambda$.
We see that $\kappa$ and $\lambda$ satisfy the ball-condition if and only if
the equation $s_{\kappa,\lambda}(t)=0$ has a positive solution;
in particular,
$\const=\inf \{ t>0  \mid s_{\kappa,\lambda}(t)=0 \}$.

We denote by $\mathbb{S}^{n-1}$ the $(n-1)$-dimensional standard unit sphere.
Let $ds_{n-1}^{2}$ be the canonical metric on $\mathbb{S}^{n-1}$.
For an arbitrary pair of $\kappa\in \mathbb{R}$ and $\lambda\in \mathbb{R}$,
we define an $n$-dimensional \textit{model space} $M^{n}_{\kappa,\lambda}$ \textit{with constant mean curvature boundary with Riemannian metric} $g^{n}_{\kappa,\lambda}$ as follows:
If $\kappa>0$, then we put $(M^{n}_{\kappa,\lambda},g^{n}_{\kappa,\lambda}):=(\ball,g^{n}_{\kappa}|_{\ball})$.
If $\kappa \leq 0$, then
\[
  (M^{n}_{\kappa,\lambda},g^{n}_{\kappa,\lambda}) := \begin{cases}
  (\ball,g^{n}_{\kappa}|_{\ball})                                                                 & \text{if $\lambda>\sqrt{\vert \kappa \vert}$}, \\
  (M^{n}_{\kappa}\setminus \inte B^{n}_{\kappa,-\lambda} ,g^{n}_{\kappa}|_{M^{n}_{\kappa}\setminus \inte B^{n}_{\kappa,-\lambda}})     & \text{if $\lambda<-\sqrt{\vert \kappa \vert}$}, \\
  ([0,\infty) \times \mathbb{S}^{n-1},dt^{2}+s_{\kappa,\lambda}^{2}(t)ds_{n-1}^{2})            & \text{if $\vert \lambda \vert=\sqrt{\vert \kappa \vert}$},\\
  ([t_{\kappa,\lambda},\infty) \times \mathbb{S}^{n-1},dt^{2}+s_{\kappa,0}^{2}(t)ds_{n-1}^{2}) & \text{if $\vert \lambda \vert<\sqrt{\vert \kappa \vert}$},
  \end{cases}
\]
where $t_{\kappa,\lambda}$ is the unique solution of the equation $s_{\kappa,0}'(t)/s_{\kappa,0}(t)=-\lambda$
under the assumptions $\kappa<0$ and $\vert \lambda \vert<\sqrt{\vert \kappa \vert}$.
We denote by $h^{n-1}_{\kappa,\lambda}$ the induced Riemannian metric on $\partial M^{n}_{\kappa,\lambda}$.

For $n\geq 2$,
let $M$ be an $n$-dimensional, 
connected Riemannian manifold with boundary with Riemannian metric $g$.
The boundary $\bm$ is assumed to be smooth.
We denote by $h$ the induced Riemannian metric on $\bm$.
We say that
$M$ is \textit{complete}
if for the Riemannian distance $d_{M}$ on $M$
induced from the length structure determined by $g$,
the metric space $(M,d_{M})$ is complete.
We denote by $\ric_{g}$ the Ricci curvature on $M$ defined by $g$.
For $K \in \mathbb{R}$, 
by $\ric_{M}\geq K$, 
we mean that the infimum of $\ric_{g}$ on the unit tangent bundle on the interior $\inte M$ of $M$ is at least $K$.
For $x\in \bm$, 
we denote by $H_{x}$ the mean curvature on $\bm$ at $x$ in $M$.
For $\lambda \in \mathbb{R}$, 
by $H_{\bm}\geq \lambda$, 
we mean $\inf_{x\in \bm} H_{x}\geq \lambda$.
Let $\rho_{\bm}:M\to \mathbb{R}$ be the distance function from $\bm$ defined as
\begin{equation*}
\rho_{\bm}(p):=d_{M}(p,\bm).
\end{equation*}
The \textit{inscribed radius of} $M$ is defined as
\begin{equation*}
\dm:=\sup _{p\in M}\rho_{\bm}(p).
\end{equation*}
For $r>0$, 
we put $B_{r}(\bm):=\{\,p\in M \mid \rho_{\bm}(p) \leq r\,\}$.
We denote by $\vol_{g}$ the Riemannian volume on $M$ induced from $g$.

One of the main results is the following volume comparison theorem:
\begin{thm}\label{thm:volume comparison}
For $\kappa \in \mathbb{R}$ and $\lambda\in \mathbb{R}$,
and for $n\geq 2$,
let $M$ be an $n$-dimensional, connected complete Riemannian manifold with boundary with Riemannian metric $g$
such that $\ric_{M}\geq (n-1)\kappa$ and $H_{\bm} \geq \lambda$.
Suppose $\bm$ is compact.
Then for all $r,R\in (0,\infty)$ with $r\leq R$, we have
\begin{equation*}\label{eq:volume comparison}
\frac{\vol_{g} B_{R}(\bm)}{\vol_{g} B_{r}(\bm)}\leq \frac{\vol_{g^{n}_{\kappa,\lambda}} B_{R}(\partial M^{n}_{\kappa,\lambda})}{\vol_{g^{n}_{\kappa,\lambda}} B_{r}(\partial M^{n}_{\kappa,\lambda})}.
\end{equation*}
\end{thm}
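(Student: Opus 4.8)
The plan is to run a Heintze--Karcher/Bishop--Gromov argument in Fermi coordinates based at the boundary. Let $N$ be the inward unit normal field along $\bm$, let $\expp\colon[0,\infty)\times\bm\to M$, $\expp(t,x):=\exp_{x}(tN_{x})$, be the normal exponential map, and let $\tau\colon\bm\to(0,\infty]$ be the cut function of $\bm$, so that $\rho_{\bm}$ is smooth on the open \emph{segment domain} $\{\,\expp(t,x)\mid 0<t<\tau(x)\,\}$, whose complement (the cut locus of $\bm$ together with $\bm$) is closed and has $\vol_{g}$-measure zero. In the coordinates $(t,x)$ on this domain the Riemannian volume reads $\theta(t,x)\,dt\,d\vol_{h}(x)$ with $\theta>0$ the normal Jacobian. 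Setting
\begin{equation*}
\bar\theta_{x}(t):=\begin{cases}\theta(t,x)&\text{if }0\le t<\tau(x),\\ 0&\text{if }t\ge\tau(x),\end{cases}
\end{equation*}
we then have $\vol_{g}B_{r}(\bm)=\int_{\bm}\int_{0}^{r}\bar\theta_{x}(t)\,dt\,d\vol_{h}(x)$ for every $r>0$.

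The core is the fibrewise comparison for $\theta$. Fix $x\in\bm$ and put $\gamma_{x}(t):=\expp(t,x)$. On $0<t<\tau(x)$ the function $m_{x}(t):=\partial_{t}\log\theta(t,x)=(\Delta_{g}\rho_{\bm})(\gamma_{x}(t))$ satisfies, by the Bochner formula and the Cauchy--Schwarz inequality, the Riccati inequality $m_{x}'(t)+\tfrac{1}{n-1}m_{x}(t)^{2}+\ric_{g}(\gamma_{x}'(t),\gamma_{x}'(t))\le 0$, with $\lim_{t\to 0^{+}}m_{x}(t)=-(n-1)H_{x}\le-(n-1)\lambda$. Using $\ric_{M}\ge(n-1)\kappa$ and comparing with $\phi(t):=(n-1)s_{\kappa,\lambda}'(t)/s_{\kappa,\lambda}(t)$, which solves the corresponding model Riccati equation with $\phi(0)=-(n-1)\lambda$ since $s_{\kappa,\lambda}(0)=1$ and $s_{\kappa,\lambda}'(0)=-\lambda$, standard ODE comparison gives $m_{x}(t)\le\phi(t)$ on $0<t<\min\{\tau(x),\const\}$, hence
\begin{equation*}
\frac{d}{dt}\log\frac{\theta(t,x)}{s_{\kappa,\lambda}(t)^{n-1}}=m_{x}(t)-\phi(t)\le 0,\qquad 0<t<\min\{\tau(x),\const\}.
\end{equation*}
Since $\theta(t,x)/s_{\kappa,\lambda}(t)^{n-1}\to 1$ as $t\to 0^{+}$, this shows $\theta(t,x)\le s_{\kappa,\lambda}(t)^{n-1}$ and that $t\mapsto\theta(t,x)/s_{\kappa,\lambda}(t)^{n-1}$ is non-increasing on that interval; in particular $\tau(x)\le\const$ whenever $\kappa,\lambda$ satisfy the ball-condition, because $s_{\kappa,\lambda}(\const)=0$ while $\theta(\,\cdot\,,x)$ is positive and continuous before the cut time. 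Put $\bar\theta(t):=s_{\kappa,\lambda}(t)^{n-1}$ for $0\le t<\const$ and $\bar\theta(t):=0$ for $t\ge\const$, with the convention $\const:=\infty$ when the ball-condition fails; extension by zero past $\tau(x)$ preserves monotonicity, so $t\mapsto\bar\theta_{x}(t)/\bar\theta(t)$ is non-increasing for every $x\in\bm$.

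It remains to integrate and to identify the model side. Going through the four cases defining $(M^{n}_{\kappa,\lambda},g^{n}_{\kappa,\lambda})$ --- re-centering the geodesic ball in the first two cases, and reading the warped product off directly in the last two (using the defining relation $s_{\kappa,0}'(t_{\kappa,\lambda})/s_{\kappa,0}(t_{\kappa,\lambda})=-\lambda$) --- one checks that the distance function from $\partial M^{n}_{\kappa,\lambda}$ has normal Jacobian proportional to $s_{\kappa,\lambda}(t)^{n-1}=\bar\theta(t)$, so that $\vol_{g^{n}_{\kappa,\lambda}}B_{R}(\partial M^{n}_{\kappa,\lambda})=\vol_{h^{n-1}_{\kappa,\lambda}}(\partial M^{n}_{\kappa,\lambda})\int_{0}^{R}\bar\theta(t)\,dt$ for every $R>0$. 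Now invoke the elementary fact that if $f,g\ge 0$ with $f/g$ non-increasing then $R\mapsto(\int_{0}^{R}f)/(\int_{0}^{R}g)$ is non-increasing: applied with $f=\bar\theta_{x}$ and $g=\bar\theta$ it yields, for $r\le R$, $(\int_{0}^{R}\bar\theta_{x})/(\int_{0}^{r}\bar\theta_{x})\le(\int_{0}^{R}\bar\theta)/(\int_{0}^{r}\bar\theta)=:c$. Thus $\int_{0}^{R}\bar\theta_{x}\le c\int_{0}^{r}\bar\theta_{x}$ for every $x\in\bm$; integrating over $\bm$ against $\vol_{h}$ and dividing by $\vol_{g}B_{r}(\bm)>0$ gives $\vol_{g}B_{R}(\bm)/\vol_{g}B_{r}(\bm)\le c$, which is exactly the claimed inequality since $c=\vol_{g^{n}_{\kappa,\lambda}}B_{R}(\partial M^{n}_{\kappa,\lambda})/\vol_{g^{n}_{\kappa,\lambda}}B_{r}(\partial M^{n}_{\kappa,\lambda})$.

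The main obstacle is the fibrewise Riccati comparison for normal geodesics issuing from the boundary: one must set up the segment domain and the cut locus of $\bm$ correctly (closedness, measure zero, and positivity of $\theta$ up to the cut time), pin down the initial condition $m_{x}(0^{+})=-(n-1)H_{x}$ so that the model comparison function is precisely $s_{\kappa,\lambda}$, and check that the monotone-ratio property survives the extension by zero at $\tau(x)$ --- this is the content of the comparison estimates of Heintze--Karcher and Kasue. The remaining point, essentially a computation, is the uniform identification of the model volume growth with $\int_{0}^{R}s_{\kappa,\lambda}(t)^{n-1}\,dt$ across all four cases in the definition of $M^{n}_{\kappa,\lambda}$; once this is in place, the two elementary integral inequalities above conclude the proof.
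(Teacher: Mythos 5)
Your argument is correct and follows essentially the same route as the paper's primary proof: set up the cut locus of $\bm$ and the truncated Jacobian $\bar\theta_{x}$, obtain the fibrewise comparison $\theta(t,x)\le s_{\kappa,\lambda}^{n-1}(t)$ with monotone ratio (the Heintze--Karcher estimate, which you rederive via Bochner/Riccati instead of citing it), note $\tau(x)\le\const$ under the ball-condition, and integrate the resulting cross-inequality over $[0,r]\times[r,R]$ and then over $\bm$. The only cosmetic differences are that you phrase the last step as a monotone-ratio lemma rather than the paper's double integration of $\bar\theta(t,x)\,\bar{s}_{\kappa,\lambda}^{n-1}(s)\leq\bar\theta(s,x)\,\bar{s}_{\kappa,\lambda}^{n-1}(t)$, and that you explicitly flag the case-by-case identification of the model volume, which the paper leaves implicit.
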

Theorem \ref{thm:volume comparison} is an analogue to the Bishop-Gromov volume comparison theorem (\cite{Gr1}, \cite{Gr2}).
What happens in the equality case can be described
by using the Jacobi fields along the geodesics
perpendicular to the boundary (see Remark \ref{rem:volume eq} and Proposition \ref{prop:half vol2}).
\begin{rem}
Theorem \ref{thm:volume comparison} is a relative volume comparison theorem.
Under the same setting as in Theorem \ref{thm:volume comparison},
Heintze and Karcher have proved in Theorem 2.1 in \cite{HK} that
the absolute volume comparison inequality
\begin{equation*}\label{eq:absolute volume comparison}
\frac{\vol_{g} B_{r}(\bm)}{\vol_{h} \bm}\leq \frac{\vol_{g^{n}_{\kappa,\lambda}} B_{r}(\partial M^{n}_{\kappa,\lambda})}{\vol_{h^{n-1}_{\kappa,\lambda}} \partial M^{n}_{\kappa,\lambda}}
\end{equation*}
holds for every $r>0$.
This inequality can be derived from Theorem \ref{thm:volume comparison}.
Similar volume comparison inequalities for submanifolds have been studied in \cite{HK}.
\end{rem}
\begin{rem}
Kasue has shown in Theorem A in \cite{K3} that
if $\kappa$ and $\lambda$ satisfy the ball-condition,
then $\dm\leq \const$ (see Lemma \ref{lem:shinitai2});
moreover,
if there exists a point $p_{0}\in M$ such that $\rho_{\bm}(p_{0})=\const$,
then $M$ is isometric to $\ball$ (see Theorem \ref{thm:ball rigid}).
\end{rem}
\begin{rem}
It has been recently shown in \cite{L} that
if $M$ is an $n$-dimensional, 
connected complete Riemannian manifold with boundary such that $\ric_{M}\geq 0$ and $H_{\bm}\geq \lambda>0$,
then $D(M,\bm)\leq C_{0,\lambda}$;
moreover,
if $\bm$ is compact,
then $M$ is compact,
and $D(M,\bm)=C_{0,\lambda}$ if and only if
$M$ is isometric to $B^{n}_{0,\lambda}$.
It has been recently proved in \cite{LW} that
for $\kappa<0$ and $\lambda>\sqrt{\vert \kappa \vert}$,
if $M$ is an $n$-dimensional,
connected complete Riemannian manifold with boundary such that $\ric_{M}\geq (n-1)\kappa$ and $H_{\bm}\geq \lambda$,
then $D(M,\bm)\leq \const$;
moreover,
if $\bm$ is compact,
then $D(M,\bm)= \const$ if and only if
$M$ is isometric to $\ball$.
A similar result has been proved in \cite{LW}
for manifolds with boundary under a lower Bakry-\'Emery Ricci curvature bound.
It has been also recently stated in \cite{G} that
if $\kappa\in \mathbb{R}$ and $\lambda\in \mathbb{R}$ satisfy the ball-condition,
and if $M$ is an $n$-dimensional,
connected complete Riemannian manifold with boundary such that $\ric_{M}\geq (n-1)\kappa$ and $H_{\bm}\geq \lambda$,
then $D(M,\bm)\leq \const$;
moreover,
if $\bm$ is compact,
then $M$ is compact,
and $D(M,\bm)= \const$ if and only if
$M$ is isometric to $\ball$.
\end{rem}
\begin{rem}
We prove Theorem \ref{thm:volume comparison}
by using a geometric study of the cut locus for the boundary,
and a comparison result for the Jacobi fields along geodesics perpendicular to the boundary.

For metric measure spaces,
Strum \cite{St}, and Ohta \cite{O1}, \cite{O2} have independently introduced the so-called measure contraction property
that is equivalent to a lower Ricci curvature bound for manifolds without boundary.
We prove a measure contraction inequality for manifolds with boundary (see Proposition \ref{prop:mcp1}).
Using our measure contraction inequality,
we give another proof of Theorem \ref{thm:volume comparison}.
\end{rem}
For $\kappa\in \mathbb{R}$ and $\lambda\in \mathbb{R}$,
if $\kappa$ and $\lambda$ satisfy the ball-condition,
then we put $\bar{C}_{\kappa,\lambda}:=C_{\kappa,\lambda}$;
otherwise,
$\bar{C}_{\kappa,\lambda}:=\infty$.
We define a function $\bar{s}_{\kappa,\lambda}:[0,\infty)\to \mathbb{R}$ by
\[
\bar{s}_{\kappa,\lambda}(t) := \begin{cases}
                                            s_{\kappa,\lambda}(t) & \text{if $t< \bar{C}_{\kappa,\lambda}$},\\
                                            0                     & \text{if $t\geq    \bar{C}_{\kappa,\lambda}$},
                                 \end{cases}
\]
and define a function $f_{n,\kappa,\lambda}:[0,\infty)\to \mathbb{R}$ by
\begin{equation*}
f_{n,\kappa,\lambda}(t):=\int^{t}_{0}\, \bar{s}^{n-1}_{\kappa,\lambda}(u)\,du.
\end{equation*}
For $\kappa\in \mathbb{R}$ and $\lambda\in \mathbb{R}$,
we define $[0,\bar{C}_{\kappa,\lambda})\times_{\kappa,\lambda} \partial M$ as the warped product $([0,\bar{C}_{\kappa,\lambda})\times \partial M, dt^{2}+s^{2}_{\kappa,\lambda}(t)h)$ with Riemannian metric $dt^{2}+s^{2}_{\kappa,\lambda}(t)h$,
and we put $d_{\kappa,\lambda}:=d_{[0,\bar{C}_{\kappa,\lambda})\times_{\kappa,\lambda} \partial M}$.

Theorem \ref{thm:volume comparison} yields the following volume growth rigidity theorem:
\begin{thm}\label{thm:volume growth distance rigidity}
For $\kappa \in \mathbb{R}$ and $\lambda\in \mathbb{R}$,
and for $n\geq 2$,
let $M$ be an $n$-dimensional Riemannian manifold with boundary with Riemannian metric $g$
such that $\ric_{M}\geq (n-1)\kappa$ and $H_{\bm} \geq \lambda$.
Suppose $\partial M$ is compact.
Let $h$ denote the induced Riemannian metric on $\bm$.
If
\begin{equation*}\label{eq:volume growth rigidity}
\liminf_{r\to \infty}\frac{\vol_{g}B_{r}(\bm)}{f_{n,\kappa,\lambda}(r)}\geq \vol_{h}\bm,
\end{equation*}
then the metric space $(M,d_{M})$ is isometric to $([0,\bconst)\times_{\kappa,\lambda}\bm,d_{\kappa,\lambda})$.
Moreover,
if $\kappa$ and $\lambda$ satisfy the ball-condition,
then $(M,d_{M})$ is isometric to $(\ball,d_{\ball})$.
\end{thm}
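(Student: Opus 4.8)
\textit{Sketch of a proof.}
The plan is to force equality in the comparison estimates and then to recognize $M$ as the model space via the normal exponential map $\expp$. I would work in Fermi coordinates along $\bm$: for $x\in\bm$, write $\gamma_x$ for the unit-speed geodesic entering $M$ orthogonally at $x$, $\tau(x)\in(0,\infty]$ for its cut time relative to $\bm$, $\theta(t,x)$ for the volume density of $\expp$ along $\gamma_x$, and $\bthetae(t,x)$ for $\theta(t,x)$ if $t<\tau(x)$ and $0$ otherwise. Since $\cut\bm$ is null, $\vol_g B_r(\bm)=\int_{\bm}\int_0^{r}\bthetae(t,x)\,dt\,dv_h(x)$, and the Jacobian comparison behind Theorem~\ref{thm:volume comparison} gives $\bthetae(t,x)\le\bar{s}^{n-1}_{\kappa,\lambda}(t)$; integrating and using $\vol_{g^{n}_{\kappa,\lambda}}B_r(\partial M^{n}_{\kappa,\lambda})=\vol_{h^{n-1}_{\kappa,\lambda}}(\partial M^{n}_{\kappa,\lambda})\,f_{n,\kappa,\lambda}(r)$ yields the absolute comparison $\vol_g B_r(\bm)\le\vol_h(\bm)\,f_{n,\kappa,\lambda}(r)$ for every $r>0$. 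Thus $\vol_g B_r(\bm)/f_{n,\kappa,\lambda}(r)\le\vol_h\bm$, while by Theorem~\ref{thm:volume comparison} this quotient is non-increasing in $r$, so its limit as $r\to\infty$ is its infimum, which the hypothesis forces to be $\ge\vol_h\bm$. Hence the quotient is identically $\vol_h\bm$: one gets $\vol_g B_r(\bm)=\vol_h(\bm)\,f_{n,\kappa,\lambda}(r)$ for all $r>0$, and therefore $\bthetae(t,x)=\bar{s}^{n-1}_{\kappa,\lambda}(t)$ for a.e.\ $(t,x)\in[0,\infty)\times\bm$.

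Next I would unpack this identity. Because $\bar{s}_{\kappa,\lambda}>0$ on $[0,\bconst)$, the equality $\bthetae(t,x)=\bar{s}^{n-1}_{\kappa,\lambda}(t)$ forces $\tau(x)\ge\bconst$ for a.e.\ $x$, hence for every $x\in\bm$ by continuity of $\tau$ on the compact manifold $\bm$ (when $\bconst=\infty$ this says each $\gamma_x$ is a ray). On $[0,\bconst)$, the equality $\theta(t,x)=s^{n-1}_{\kappa,\lambda}(t)$ triggers the rigidity in the Riccati/Jacobi-field comparison recorded for the equality case of Theorem~\ref{thm:volume comparison} (Remark~\ref{rem:volume eq}, Proposition~\ref{prop:half vol2}): every $\bm$-Jacobi field along $\gamma_x$ is $s_{\kappa,\lambda}(t)$ times a parallel field, equivalently the shape operator of the level set $\rho_{\bm}^{-1}(t)$ at $\gamma_x(t)$ equals $-(s'_{\kappa,\lambda}/s_{\kappa,\lambda})(t)\,\id$. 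Putting $\Phi:=\expp|_{[0,\bconst)\times\bm}\colon[0,\bconst)\times\bm\to M$, this says exactly that $\psg=dt^{2}+s^{2}_{\kappa,\lambda}(t)h$.

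It then remains to check that $\Phi$ is a global isometry. It is defined on all of $[0,\bconst)\times\bm$ since $\tau\ge\bconst$; it is an immersion, hence a local diffeomorphism, since its Jacobian $s^{n-1}_{\kappa,\lambda}(t)$ is positive there; and it is injective because for $t<\tau(x)$ the segment $\gamma_x|_{[0,t]}$ is the unique minimizer from $\gamma_x(t)$ to $\bm$, so $\Phi(t,x)$ determines $t=\rho_{\bm}(\Phi(t,x))$ and then its foot $x$. Its image is $\{p\in M:\rho_{\bm}(p)<\bconst\}$. If $\bconst=\infty$, this is all of $M$ (the connected $M$ has nonempty boundary, so each point lies at finite distance from $\bm$), so $\Phi$ is a diffeomorphism preserving the metric tensor, hence an isometry $([0,\bconst)\times_{\kappa,\lambda}\bm,d_{\kappa,\lambda})\to(M,d_M)$, which proves the first assertion. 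If instead $\kappa,\lambda$ satisfy the ball-condition, then $\bconst=\const<\infty$ and $f_{n,\kappa,\lambda}$ is strictly increasing on $[0,\const)$, whereas $\vol_g B_r(\bm)=\vol_g M$ is constant for $r\ge\dm$; if $\dm<\const$ this would contradict $\vol_g B_r(\bm)=\vol_h(\bm)f_{n,\kappa,\lambda}(r)$ on $(\dm,\const)$, so $\dm\ge\const$, and with Lemma~\ref{lem:shinitai2} ($\dm\le\const$) we get $\dm=\const$, attained at some $p_0\in M$. Then Kasue's inscribed-radius rigidity (Theorem~\ref{thm:ball rigid}) identifies $(M,d_M)$ with $(\ball,d_{\ball})$.

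The main obstacle is the middle step: converting the single scalar identity $\bthetae=\bar{s}^{n-1}_{\kappa,\lambda}$ into full control of the Fermi-coordinate metric, while simultaneously controlling where the geodesics $\gamma_x$ are defined and how far they minimize. This rests on the fine structure of $\cut\bm$ (in particular continuity of $\tau$) and on the equality case of the Jacobi-field comparison — exactly the machinery already assembled for Theorem~\ref{thm:volume comparison} — so the work lies in invoking it correctly rather than in any fresh estimate.
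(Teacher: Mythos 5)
Your proposal is correct and follows essentially the same route as the paper: force $\vol_g B_r(\bm)=\vol_h(\bm)f_{n,\kappa,\lambda}(r)$ for all $r$ from Theorem \ref{thm:volume comparison}, deduce $\tau\geq\bconst$ everywhere via continuity of $\tau$, use the equality case of the Jacobi-field comparison to see that $\expp$ pulls back $g$ to $dt^{2}+s^{2}_{\kappa,\lambda}(t)h$, and in the ball case reduce to Kasue's rigidity (Theorem \ref{thm:ball rigid}). Your minor variants (a.e.-plus-continuity in place of Lemma \ref{lem:half rigid}, building $\Phi$ directly on $[0,\bconst)\times\bm$ instead of exhausting by $B_{r_i}(\bm)$, and getting $\dm=\const$ from strict monotonicity of $f_{n,\kappa,\lambda}$) are cosmetic differences from the paper's Lemma \ref{lem:half rigid}, Proposition \ref{prop:half vol2} and the gluing argument.
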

\begin{rem}
Under the same setting as in Theorem \ref{thm:volume growth distance rigidity},
by Theorem \ref{thm:volume comparison},
we always have the following (see Proposition \ref{prop:volume growth sup}):
\begin{equation*}
\limsup_{r\to \infty}\frac{\vol_{g}B_{r}(\bm)}{f_{n,\kappa,\lambda}(r)}\leq \vol_{h}\bm.
\end{equation*}
Theorem \ref{thm:volume growth distance rigidity} is certainly concerned with a rigidity phenomenon.
\end{rem}

\subsection{Splitting theorems}\label{sec:Splitting theorems}
Kasue in Theorem C in \cite{K3} has proved the following splitting theorem.
For $\kappa \leq 0$,
let $M$ be an $n$-dimensional,
connected complete Riemannian manifold with boundary
such that $\ric_{M}\geq (n-1)\kappa$ and $H_{\bm} \geq \sqrt{\vert \kappa \vert}$.
If $M$ is noncompact and $\bm$ is compact,
then $(M,d_{M})$ is isometric to $([0,\infty)\times_{\kappa,\sqrt{\vert \kappa \vert}}\bm,d_{\kappa,\sqrt{\vert \kappa \vert}})$.
The same result has been proved by Croke and Kleiner in Theorem 2 in \cite{CK}.

In \cite{K3},
the proof of the splitting theorem is based on the original proof of the Cheeger-Gromoll splitting theorem in \cite{CG}.
For a ray $\gamma$ on $M$,
let $b_{\gamma}$ be the busemann function on $M$ for $\gamma$.
The key points in \cite{K3} are to show the existence of a ray $\gamma$ on $M$ such that
for all $t\geq 0$ we have $\rho_{\bm}(\gamma(t))=t$,
and the subharmonicity of the function $b_{\gamma}-\rho_{\bm}$
in a distribution sense,
and to apply an analytic maximal principle (see \cite{GT}).
In \cite{CK},
the splitting theorem has been proved by using the Calabi maximal principle (\cite{C})
similarly to the elementary proof of the Cheeger-Gromoll splitting theorem developed by Eschenburg and Heintze in \cite{EH}.
It seems that the proof in \cite{CK} relies on the compactness of $\bm$.

Let $M$ be a connected complete Riemannian manifold with boundary.
For $x\in \bm$,
we denote by $u_{x}$ the unit inner normal vector at $x$.
Let $\gamma_{x}:[0,T)\to M$ be the geodesic with initial conditions $\gamma_{x}(0)=x$ and $\gamma_{x}'(0)=u_{x}$.
We define a function $\tau:\bm\to \mathbb{R}\cup \{\infty\}$ by
\begin{equation*}
\tau(x):=\sup \{\,t>0 \mid \rho_{\bm}(\gamma_{x}(t))=t\,\}.
\end{equation*}

We point out that
the following splitting theorem holds for the case where
the boundary is not necessarily compact.
\begin{thm}\label{thm:splitting}
For $n\geq 2$ and $\kappa \leq 0$,
let $M$ be an $n$-dimensional,
connected complete Riemannian manifold with boundary
such that $\ric_{M}\geq (n-1)\kappa$ and $H_{\bm} \geq \sqrt{\vert \kappa \vert}$.
Assume that for some $x\in \bm$,
we have $\tau(x)=\infty$.
Then $(M,d_{M})$ is isometric to $([0,\infty)\times_{\kappa,\sqrt{\vert \kappa \vert}}\bm,d_{\kappa,\sqrt{\vert \kappa \vert}})$.
\end{thm}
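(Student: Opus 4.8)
To prove Theorem \ref{thm:splitting}, the plan is to follow the strategy of Kasue's proof of Theorem C in \cite{K3} (itself modelled on \cite{CG}, \cite{EH}), isolating the single step in which the compactness of $\bm$ was used---the production of a ray $\gamma$ with $\rho_{\bm}(\gamma(t))=t$ for all $t$---and observing that the hypothesis $\tau(x)=\infty$ hands us such a ray directly. Set $\lambda:=\sqrt{\vert\kappa\vert}$ and $\gamma:=\gamma_{x}$. Since $\tau(x)=\infty$, we have $\rho_{\bm}(\gamma(t))=t=d_{M}(\gamma(t),\bm)$ for every $t\geq 0$; as $\rho_{\bm}$ is $1$-Lipschitz this forces $d_{M}(\gamma(s),\gamma(t))=\vert t-s\vert$, so $\gamma$ is a unit-speed ray, $\dm=\infty$, and the Busemann function $b:=b_{\gamma}$, $b(p):=\lim_{t\to\infty}(t-d_{M}(p,\gamma(t)))$, is well defined on all of $M$. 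From $d_{M}(p,\gamma(t))\geq d_{M}(\gamma(t),\bm)-\rho_{\bm}(p)=t-\rho_{\bm}(p)$ we get $b\leq\rho_{\bm}$ on $M$, with equality along $\gamma$.

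The second step is a pair of one-sided Laplacian estimates, read in the barrier (support) sense of Calabi \cite{C}. For $\kappa\leq 0$ and $\vert\lambda\vert=\sqrt{\vert\kappa\vert}$ one has $s_{\kappa,\lambda}(t)=e^{-\lambda t}$ (or $\equiv 1$ when $\kappa=0$), so $s_{\kappa,\lambda}'/s_{\kappa,\lambda}\equiv-\lambda$; hence the mean curvature comparison for $\rho_{\bm}$ (using $\ric_{M}\geq(n-1)\kappa$ and $H_{\bm}\geq\lambda$) gives $\Delta\rho_{\bm}\leq-(n-1)\lambda$, while the Laplacian comparison applied to the distance functions $d_{M}(\cdot,\sigma(s))$, with $\sigma$ an asymptotic ray of $\gamma$ issuing from the given point and $s\to\infty$, gives $\Delta b\geq-(n-1)\lambda$. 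Thus $w:=b-\rho_{\bm}$ satisfies $\Delta w\geq 0$ in the barrier sense, $w\leq 0$, and $w\equiv 0$ on the interior arc $\gamma((0,\infty))$; by the strong maximum principle (\cite{C}, \cite{GT}) the set $\{w=0\}$ is open in $\inte M$, and being also closed and nonempty it exhausts the connected set $\inte M$, so $b=\rho_{\bm}$ on $M$. Feeding this back into the two estimates, $\Delta\rho_{\bm}=-(n-1)\lambda$ in the distribution sense; since the right-hand side is smooth, elliptic regularity forces $\rho_{\bm}\in C^{\infty}(\inte M)$, and then $\vert\nabla\rho_{\bm}\vert\equiv 1$ (also $\rho_{\bm}$ is smooth up to $\bm$, where $\nabla\rho_{\bm}$ is the inner unit normal). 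The Bochner formula for $\rho_{\bm}$, together with $\vert\nabla\rho_{\bm}\vert\equiv 1$, $\Delta\rho_{\bm}\equiv-(n-1)\lambda$ and $\ric_{M}\geq-(n-1)\lambda^{2}$, gives $\vert\mathrm{Hess}\,\rho_{\bm}\vert^{2}=-\ric_{M}(\nabla\rho_{\bm},\nabla\rho_{\bm})\leq(n-1)\lambda^{2}$; since $\nabla\rho_{\bm}\in\Ker\mathrm{Hess}\,\rho_{\bm}$ and $\tr\mathrm{Hess}\,\rho_{\bm}=\Delta\rho_{\bm}$, this is equality in the Cauchy--Schwarz bound $\vert\mathrm{Hess}\,\rho_{\bm}\vert^{2}\geq(\Delta\rho_{\bm})^{2}/(n-1)$, whence $\mathrm{Hess}\,\rho_{\bm}=-\lambda\,(g-d\rho_{\bm}\otimes d\rho_{\bm})$ on $\inte M$.

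The third step integrates this Hessian identity. The flow $\Phi_{t}$ of $\nabla\rho_{\bm}$ consists of unit-speed normal geodesics, is defined for all $t\geq 0$ by completeness, and never meets $\cut\bm$ (as $\rho_{\bm}$ is everywhere smooth); so $\Phi(t,y):=\Phi_{t}(y)$ is a well-defined map $[0,\infty)\times\bm\to M$. It is injective (flow lines do not recross and $\rho_{\bm}\circ\Phi_{t}=t$), a local diffeomorphism (it is the identity on $\{0\}\times\bm$ and $\partial_{t}\Phi=\nabla\rho_{\bm}\neq 0$), and surjective, because every $p\in M$ has a foot point on $\bm$---which exists by completeness---joined to $p$ by a minimizing normal geodesic, i.e.\ a flow line. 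Since $\partial_{t}\Phi=\nabla\rho_{\bm}$ is a unit field orthogonal to the level sets and $\mathrm{Hess}\,\rho_{\bm}=-\lambda g$ along them, the slice metrics obey $\tfrac{d}{dt}\Phi_{t}^{\ast}(g|_{\{\rho_{\bm}=t\}})=-2\lambda\,\Phi_{t}^{\ast}(g|_{\{\rho_{\bm}=t\}})$, so $\Phi^{\ast}g=dt^{2}+e^{-2\lambda t}h=dt^{2}+s_{\kappa,\lambda}^{2}(t)h$. Since $\lambda=\sqrt{\vert\kappa\vert}$ does not satisfy the ball-condition, $\bconst=\infty$, so $(M,g)$ is isometric as a Riemannian manifold to $[0,\bconst)\times_{\kappa,\lambda}\bm=[0,\infty)\times_{\kappa,\sqrt{\vert\kappa\vert}}\bm$; by the equivalence of Riemannian and metric isometries recalled in Subsection \ref{sec:metric}, $(M,d_{M})$ is isometric to $([0,\infty)\times_{\kappa,\sqrt{\vert\kappa\vert}}\bm,d_{\kappa,\sqrt{\vert\kappa\vert}})$.

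The step I expect to be the main obstacle is the rigorous justification of the two barrier-sense Laplacian inequalities in the presence of a boundary: one must control the foot points and the cut locus $\cut\bm$ of $\rho_{\bm}$, and verify that the minimizing geodesics used to manufacture support functions for $b$ and for $\rho_{\bm}$ remain in $\inte M$ away from their endpoints. This is exactly the comparison and cut-locus machinery for the boundary distance function developed in the earlier sections, so it can be quoted. The conceptual point, and the reason the statement holds without compactness of $\bm$, is that once the ray is supplied by the hypothesis, every subsequent step---the maximum principle, the regularity bootstrap, and the warped-product reconstruction---is either local or a connectedness argument on $\inte M$.
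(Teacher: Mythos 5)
Your outline---compare the Busemann function of the ray $\gamma_{x}$ with $\rho_{\bm}$, prove subharmonicity of the difference in the barrier sense, apply the Calabi maximum principle, then reconstruct the warped product---is the same as the paper's, and your finish (Bochner formula, Hessian rigidity, flow of $\nabla\rho_{\bm}$) is an acceptable variant of the paper's finish (equality in the Jacobi-field comparison, Remarks \ref{rem:comp eq} and \ref{rem:lap eq}, giving the isometry $\Phi(t,x)=\gamma_{x}(t)$ directly). The genuine gap is in the sentence ``by the strong maximum principle the set $\{w=0\}$ is open in $\inte M$.'' To apply the maximum principle at a point $p$ with $b_{\gamma}(p)=\rho_{\bm}(p)$ you need $w=b_{\gamma}-\rho_{\bm}$ subharmonic in the barrier sense on a whole neighborhood of $p$, and two ingredients are missing. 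First, the upper Laplacian bound for $\rho_{\bm}$ is only available off the cut locus: Theorem \ref{thm:boundary Laplacian} is stated for $t\in(0,\tau(x))$ and $\rho_{\bm}$ is smooth only on $\inte M\setminus\cut\bm$ (Proposition \ref{prop:distance function}); at points of $\cut\bm$ a barrier-sense version of this comparison is a nontrivial statement the paper never proves (cf.\ the Addendum citing \cite{P} for $\kappa=0$), and the naive upper supports $\epsilon+d_{M}(\cdot,\gamma_{x}(\epsilon))$ satisfy the wrong Laplacian bound. Second, the lower barrier for $b_{\gamma}$ at a nearby point $q$ is built from an asymptote issuing from $q$, and Lemma \ref{lem:point Laplacian} applies to it only if that asymptote stays in $\inte M$, which is not automatic. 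The paper closes exactly these two holes with Lemma \ref{lem:geometric splitting} (equality $b_{\gamma}=\rho_{\bm}$ at $p$ forces $p\notin\cut\bm$ and $\tau=\infty$ at the foot point, so one can work inside $D_{\bm}$ where $\rho_{\bm}$ is smooth) and Lemma \ref{lem:asymptote} (asymptotes from points near $\gamma_{x_{0}}(l)$ lie in $\inte M$). You flag this as the main obstacle but assert it ``can be quoted'' from the earlier sections; it cannot: these two lemmas are proved in Section \ref{sec:splitting} precisely as part of the proof of this theorem, and the material of Sections \ref{sec:Cut locus for the boundary} and \ref{sec:Comparison theorems} does not contain them. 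The same omission undermines your later claim that $\Delta\rho_{\bm}=-(n-1)\lambda$ holds distributionally (hence the elliptic-regularity step), since before you know $\cut\bm=\emptyset$ that identity again needs the upper bound across possible cut points.

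Once those two lemmas are supplied, your argument does go through, and it is mildly different in organization from the paper's: you propagate the equality set $\{b_{\gamma}=\rho_{\bm}\}$ through $\inte M$ and finish with Bochner and the gradient flow, whereas the paper propagates $\{\tau=\infty\}$ along the boundary component (open and closed via Lemmas \ref{lem:geometric splitting} and \ref{lem:asymptote}), deduces $\cut\bm=\emptyset$ and connectedness of $\bm$ from Lemma \ref{lem:regular and injective} and Lemma \ref{lem:shinitai4}, and then reads off the splitting from the equality case of the Jacobi-field comparison, avoiding elliptic regularity and the Bochner formula altogether. The paper's route is the more economical one given the toolkit it has already built; your route would additionally require justifying smoothness of $\rho_{\bm}$ up to $\bm$ and the behaviour of the flow map at the boundary, which the paper's explicit map $\Phi(t,x)=\gamma_{x}(t)$ handles for free.
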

Theorem \ref{thm:splitting} can be proved by a similar way to that of the proof of the splitting theorem in \cite{K3}.
We give a proof of Theorem \ref{thm:splitting} in which we use the Calabi maximal principle.
Our proof can be regarded as an elementary proof of the splitting theorem in \cite{K3}.
\begin{rem}
In Theorem \ref{thm:splitting},
if $\bm$ is noncompact,
then we can not replace the assumption of $\tau$ with that of the existence of a single ray orthogonally emanating from the boundary.
For instance,
we put
\begin{equation*}
M:=\{(p,q)\in \mathbb{R}^{2}\mid p<0, p^{2}+q^{2}\leq 1\}\cup  \{(p,q)\in \mathbb{R}^{2}\mid p\geq 0, \vert q \vert\leq 1\}.
\end{equation*}
Observe that $M$ is a $2$-dimensional,
connected complete Riemannian manifold with boundary
such that $\ric_{M}=0$ and $H_{\bm} \geq 0$.
For all $x\in \bm$,
we have $\tau(x)=1$.
The geodesic $\gamma_{(-1,0)}$ is a ray in $M$.
On the other hand,
$M$ is not isometric to the standard product $[0,\infty)\times \bm$.
\end{rem}

\subsection{Eigenvalues}
Let $M$ be a Riemannian manifold with boundary with Riemannian metric $g$.
For $p\in [1,\infty)$,
the \textit{$(1,p)$-Sobolev space} $W^{1,p}_{0}(M)$ \textit{on} $M$ \textit{with compact support}
is defined as the completion of the set of all smooth functions on $M$ whose support is compact and contained in $\inte M$
with respect to the standard $(1,p)$-Sobolev norm.
Let $\Vert \cdot \Vert$ denote the standard norm induced from $g$,
and $\Div$ the divergence with respect to $g$.
For $p\in [1,\infty)$,
the \textit{$p$-Laplacian} $\Delta_{p}\, f$ for $f \in W^{1,p}_{0}(M)$ is defined as
\begin{equation*}
\Delta_{p}\,f:=-\Div \,\left(\Vert \nabla f \Vert^{p-2}\, \nabla f \right),
\end{equation*}
where the equality holds in a weak sense on $W^{1,p}_{0}(M)$.
A real number $\lambda$ is said to be a \textit{$p$-Dirichlet eigenvalue} for $\Delta_{p}$ on $M$
if we have a non-zero function $f$ in $W^{1,p}_{0}(M)$ such that $\Delta_{p} f=\lambda \vert f \vert^{p-2}\,f$ holds  on $\inte M$ in a weak sense on $W^{1,p}_{0}(M)$.
For $p\in [1,\infty)$,
the \textit{Rayleigh quotient} $R_{p}(f)$ \textit{for} $f \in W^{1,p}_{0}(M)$ is defined as
\begin{equation*}
R_{p}(f):=\frac{\int_{M}\, \Vert \nabla f \Vert^{p}\,d \vol_{g}}{\int_{M}\,  \vert f \vert^{p}\,d \vol_{g}}.
\end{equation*}
We put $\mu_{1,p}(M):=\inf_{f} R_{p}(f)$,
where the infimum is taken over all non-zero functions in $W^{1,p}_{0}(M)$.
The value $\mu_{1,2}(M)$ is equal to the infimum of the spectrum of $\Delta_{2}$ on $M$.
If $M$ is compact,
and if $p\in (1,\infty)$,
then $\mu_{1,p}(M)$ is equal to the infimum of the set of all $p$-Dirichlet eigenvalues for $\Delta_{p}$ on $M$.

Due to the volume estimate obtained by Kasue in \cite{K5},
we obtain the following:
\begin{thm}\label{thm:p-Laplacian1}
For $\kappa \in \mathbb{R},\,\lambda\in \mathbb{R}$ and $D\in(0,\bar{C}_{\kappa,\lambda}]$,
and for $n\geq 2$,
let $M$ be an $n$-dimensional,
connected complete Riemannian manifold with boundary
such that $\ric_{M}\geq (n-1)\kappa,\, H_{\bm} \geq \lambda$ and $\dm \leq D$.
Suppose $\partial M$ is compact.
Then for all $p\in (1,\infty)$,
we have
\begin{equation*}\label{eq:p-Laplacian1}
\mu_{1,p}(M)\geq (\,p\,C(n,\kappa,\lambda,D)\,)^{-p},
\end{equation*}
where $C(n,\kappa,\lambda,D)$ is a positive constant defined by
\begin{equation*}\label{eq:constant}
C(n,\kappa,\lambda,D):=\sup_{t\in [0,D)}\, \frac{\int^{D}_{t}\,  s^{n-1}_{\kappa,\lambda}(s)\, ds}{s^{n-1}_{\kappa,\lambda}(t)}.
\end{equation*}
\end{thm}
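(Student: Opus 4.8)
The plan is to establish, for every nonzero $f\in W^{1,p}_{0}(M)$, the Hardy-type inequality
\[
\int_{M}|f|^{p}\,d\vol_{g}\leq\bigl(p\,C(n,\kappa,\lambda,D)\bigr)^{p}\int_{M}\Vert\nabla f\Vert^{p}\,d\vol_{g},
\]
which is exactly the bound $R_{p}(f)\geq(p\,C(n,\kappa,\lambda,D))^{-p}$; taking the infimum over nonzero $f$ then yields the theorem. By the definition of $W^{1,p}_{0}(M)$ and since $R_{p}(|f|)=R_{p}(f)$, it suffices to treat a smooth $f\geq0$ whose support is compact and contained in $\inte M$, so that in particular $f$ vanishes near $\bm$. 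Note first that $M=B_{D}(\bm)$ is compact, since $\bm$ is compact and $\dm\leq D<\infty$, so every integral below is finite; moreover $\tau(x)\leq\dm\leq D$ for each $x\in\bm$.

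Next I would reduce the left-hand side to an integral along the normal geodesics $\gamma_{x}$. Let $\cut\bm$ be the cut locus of $\bm$. On the full-measure open set $M\setminus(\cut\bm\cup\bm)$ every point $q$ has a unique foot point $x=x(q)\in\bm$, the function $\rho_{\bm}$ is smooth with $\Vert\nabla\rho_{\bm}\Vert\equiv1$, and the unit-speed minimizing segment from $\bm$ to $q$ is $\gamma_{x}|_{[0,\rho_{\bm}(q)]}$, along which $\gamma_{x}'=\nabla\rho_{\bm}$. Since $p>1$, the function $f^{p}$ is of class $C^{1}$, and $f(\gamma_{x}(0))=0$; hence the fundamental theorem of calculus and the chain rule give
\[
f(q)^{p}=p\int_{0}^{\rho_{\bm}(q)}f(\gamma_{x}(t))^{p-1}(f\circ\gamma_{x})'(t)\,dt\leq p\int_{0}^{\rho_{\bm}(q)}f(\gamma_{x}(t))^{p-1}\Vert\nabla f\Vert(\gamma_{x}(t))\,dt.
\]
Passing to the $\expp$-coordinates $q\leftrightarrow(x,s)$ on $\{(x,s)\mid x\in\bm,\ 0\leq s<\tau(x)\}$, writing $d\vol_{g}=\theta(s,x)\,ds\,dh$ for the associated Jacobian $\theta$ and the Riemannian measure $dh$ of $(\bm,h)$, and applying Fubini on $\{0\leq t\leq s<\tau(x)\}$, I obtain
\[
\int_{M}f^{p}\,d\vol_{g}\leq p\int_{\bm}\int_{0}^{\tau(x)}f(\gamma_{x}(t))^{p-1}\Vert\nabla f\Vert(\gamma_{x}(t))\Bigl(\int_{t}^{\tau(x)}\theta(s,x)\,ds\Bigr)dt\,dh.
\]

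Now the volume estimate of Kasue in \cite{K5} enters --- a comparison for the Jacobi fields along the normal geodesics, which also underlies Theorem \ref{thm:volume comparison} (see also \cite{HK}): under $\ric_{M}\geq(n-1)\kappa$ and $H_{\bm}\geq\lambda$, the ratio $t\mapsto\theta(t,x)/s^{n-1}_{\kappa,\lambda}(t)$ is non-increasing on $[0,\tau(x))$. Since $D\leq\bconst$, the function $s_{\kappa,\lambda}$ is positive on $[0,D)$, so for $t<\tau(x)\leq D$,
\[
\int_{t}^{\tau(x)}\theta(s,x)\,ds\leq\frac{\theta(t,x)}{s^{n-1}_{\kappa,\lambda}(t)}\int_{t}^{\tau(x)}s^{n-1}_{\kappa,\lambda}(s)\,ds\leq\frac{\theta(t,x)}{s^{n-1}_{\kappa,\lambda}(t)}\int_{t}^{D}s^{n-1}_{\kappa,\lambda}(s)\,ds\leq C(n,\kappa,\lambda,D)\,\theta(t,x).
\]
Substituting this into the displayed inequality and reassembling the $\expp$-coordinates into an integral over $M$ gives $\int_{M}f^{p}\,d\vol_{g}\leq p\,C(n,\kappa,\lambda,D)\int_{M}f^{p-1}\Vert\nabla f\Vert\,d\vol_{g}$. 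Finally, H\"older's inequality with exponents $p/(p-1)$ and $p$ yields $\int_{M}f^{p-1}\Vert\nabla f\Vert\,d\vol_{g}\leq\bigl(\int_{M}f^{p}\,d\vol_{g}\bigr)^{(p-1)/p}\bigl(\int_{M}\Vert\nabla f\Vert^{p}\,d\vol_{g}\bigr)^{1/p}$; dividing through by $\bigl(\int_{M}f^{p}\,d\vol_{g}\bigr)^{(p-1)/p}$, which we may assume is positive, produces the desired Hardy-type inequality.

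The analytic steps --- the fundamental theorem of calculus along $\gamma_{x}$, Fubini, the monotonicity of the Jacobian, and H\"older --- are routine, and the main obstacle I anticipate is the justification of the passage to $\expp$-coordinates: one needs that $\cut\bm$ is null, that $\rho_{\bm}$ is smooth with unit gradient off $\cut\bm\cup\bm$, that a generic point has a unique foot point, and, crucially, that the minimizing segment from such a point back to $\bm$ remains in $M\setminus\cut\bm$, so that the change of variables and the Fubini step are legitimate; these are exactly the structural properties of the cut locus for the boundary set up earlier in the paper. It is also worth recording that $C(n,\kappa,\lambda,D)<\infty$: the function under the supremum is continuous on $(0,D)$, equals $\int_{0}^{D}s^{n-1}_{\kappa,\lambda}$ at $t=0$, and tends to $0$ as $t\to D^{-}$ in the case $D=\bconst$, since $s_{\kappa,\lambda}$ then has a simple zero at $D$.
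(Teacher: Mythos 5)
Your argument is correct, but it takes a genuinely different route from the paper's proof of Theorem \ref{thm:p-Laplacian1}. The paper does not integrate along normal geodesics at this point: it invokes Kasue's volume estimate (Proposition \ref{prop:Kasue volume estimate}, from \cite{K5}) to bound the Dirichlet isoperimetric constant, $ID_{1}(M)\geq C(n,\kappa,\lambda,D)^{-1}$, passes to the Sobolev constant via the Federer--Fleming identity $ID_{1}(M)=SD_{1}(M)$ \cite{FF}, and thereby gets the $L^{1}$ Poincar\'e inequality $\int_{M}\vert \phi\vert \,d\vol_{g}\leq C(n,\kappa,\lambda,D)\int_{M}\Vert\nabla\phi\Vert\,d\vol_{g}$, which it then applies to $\vert\psi\vert^{p}$ together with H\"older, exactly as in your last step. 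You instead prove the same weighted inequality $\int_{M}\vert f\vert^{p}\leq p\,C(n,\kappa,\lambda,D)\int_{M}\vert f\vert^{p-1}\Vert\nabla f\Vert$ directly, by the fundamental theorem of calculus along $\gamma_{x}$, Fubini in normal coordinates (legitimate by Proposition \ref{prop:vol of cut} and Lemma \ref{lem:regular and injective}), and the monotonicity of $\theta(t,x)/s^{n-1}_{\kappa,\lambda}(t)$ from Lemma \ref{lem:comp2} (with $\tau(x)\leq\min\{\tau_{1}(x),\bconst\}$ via Lemma \ref{lem:shinitai2}). This is in spirit the segment-inequality route of Section \ref{sec:segment inequality} (Proposition \ref{prop:segment}, Lemma \ref{lem:Poincare}), but executed sharply enough that you keep the integral $\int_{t}^{D}s^{n-1}_{\kappa,\lambda}$ instead of the crude bound $C_{1}(n,\kappa,\lambda,D)\,D$, so you recover the full constant of Theorem \ref{thm:p-Laplacian1} rather than the weaker Proposition \ref{prop:p-Laplacian2}; in effect you reprove the needed case of Kasue's estimate along each geodesic, making the proof self-contained within the paper's Sections \ref{sec:Cut locus for the boundary}--\ref{sec:Comparison theorems}. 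Two minor points: when $\kappa,\lambda$ do not satisfy the ball-condition the theorem allows $D=\infty$, so your remark that $M=B_{D}(\bm)$ is compact ``since $\dm\leq D<\infty$'' is unjustified in that case --- but it is also unnecessary, since the compact support of $f$ already makes all integrals finite (and if $C(n,\kappa,\lambda,\infty)=\infty$ the inequality is vacuous); and $\vert f\vert$ of a smooth $f$ need not be smooth, so rather than ``reducing to $f\geq 0$ smooth'' you should apply the fundamental theorem of calculus to $t\mapsto\vert f(\gamma_{x}(t))\vert^{p}$, which is $C^{1}$ for $p>1$ with derivative bounded by $p\vert f\vert^{p-1}\Vert\nabla f\Vert$; with these cosmetic repairs the proof stands.
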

\begin{rem}\label{rem:boundary case}
In Theorem \ref{thm:p-Laplacian1},
since $\bm$ is compact,
$D(M,\bm)$ is finite
if and only if $M$ is compact (see Lemma \ref{lem:compact}).
We see that $C(n,\kappa,\lambda,\infty)$ is finite if and only if $\kappa<0$ and $\lambda=\sqrt{\vert \kappa \vert}$;
in this case,
we have
$C(n,\kappa,\lambda,D)=\left((n-1)\lambda \right)^{-1}\,\left(1-e^{-(n-1)\lambda\, D} \right)$;
in particular,
$(2\,C(n,\kappa,\lambda,\infty))^{-2}=\left((n-1)\lambda/2 \right)^{2}$.
\end{rem}
\begin{rem}
For compact manifolds with boundary of non-negative Ricci curvature,
similar lower bounds for $\mu_{1,p}$ to that in Theorem \ref{thm:p-Laplacian1} have been obtained in \cite{KN}, in \cite{Z1} and in \cite{Z2}.
\end{rem}
We recall the works of Kasue in \cite{K4} for compact manifolds with boundary.
Let $n\geq 2$, $\kappa,\lambda\in \mathbb{R}$
and $D\in (0,\bar{C}_{\kappa,\lambda}]\setminus \{\infty\}$.
Kasue has proved in Theorem 2.1 in \cite{K4} that
there exists a positive constant $\mu_{n,\kappa,\lambda,D}$ such that
for every $n$-dimensional,
connected compact Riemannian manifold $M$ with boundary such that $\ric_{M}\geq (n-1)\kappa, \,H_{\bm} \geq \lambda$ and $\dm\leq D$,
we have $\mu_{1,2}(M)\geq \mu_{n,\kappa,\lambda,D}$;
moreover,
in some extremal case,
the equality holds if and only if $M$ is isometric to some model space. 
The extremal case happens only if
$\kappa$ and $\lambda$ satisfy the ball-condition
or the condition that the equation $s'_{\kappa,\lambda}(t)=0$ has a positive solution.
Note that
the equation $s'_{\kappa,\lambda}(t)=0$ has a positive solution
if and only if either
(1) $\kappa=0$ and $\lambda=0$;
(2) $\kappa<0$ and $\lambda\in (0,\sqrt{\vert \kappa \vert})$;
or (3) $\kappa>0$ and $\lambda \in (-\infty,0)$.
Let
\begin{equation*}\label{eq:Kasue estimate}
\bar{\mu}_{n,\kappa,\lambda,D}:=\left(\, 4 \sup_{t\in (0,D)} \int^{D}_{t}\, s^{n-1}_{\kappa,\lambda}(s)\,ds\,\int^{t}_{0}\, s^{1-n}_{\kappa,\lambda}(s)\,ds  \, \right)^{-1}.
\end{equation*}
It has been shown in Lemma 1.3 in \cite{K4} that $\mu_{n,\kappa,\lambda,D} >\bar{\mu}_{n,\kappa,\lambda,D}$.
Therefore,
for every $n$-dimensional,
connected compact Riemannian manifold $M$ with boundary such that $\ric_{M}\geq (n-1)\kappa, \,H_{\bm} \geq \lambda$ and $\dm \leq D$,
we have $\mu_{1,2}(M)> \bar{\mu}_{n,\kappa,\lambda,D}$.
This estimate for $\mu_{1,2}$ is better than that in Theorem \ref{thm:p-Laplacian1}.

Let $n\geq 2$, $\kappa<0$ and $\lambda=\sqrt{\vert \kappa \vert}$.
The model space $M^{n}_{\kappa,\lambda}$ is non-compact.
For $t\in [0,\infty)$,
we put $\phi_{n,\kappa,\lambda}(t):=t\,e^{\frac{(n-1)\lambda t}{2}}$.
The smooth function $\phi_{n,\kappa,\lambda} \circ \rho_{\partial M^{n}_{\kappa,\lambda}}$ on $M^{n}_{\kappa,\lambda}$ satisfies
$R_{2}(\phi_{n,\kappa,\lambda} \circ \rho_{\partial M^{n}_{\kappa,\lambda}})=\left((n-1)\lambda/2 \right)^{2}$;
hence,
$\mu_{1,2}(M^{n}_{\kappa,\lambda}) \leq \left((n-1)\lambda/2 \right)^{2}$.
Notice that
the value $(2\,C(n,\kappa,\lambda,\infty))^{-2}$ in Theorem \ref{thm:p-Laplacian1} is equal to $\left((n-1)\lambda/2 \right)^{2}$ (see Remark \ref{rem:boundary case}).
Theorem \ref{thm:p-Laplacian1} implies $\mu_{1,2}(M^{n}_{\kappa,\lambda}) = \left((n-1)\lambda/2 \right)^{2}$.
Let $D\in (0,\infty)$.
As mentioned above,
we have already known in \cite{K4} that
for every $n$-dimensional,
connected compact Riemannian manifold $M$ with boundary such that $\ric_{M}\geq (n-1)\kappa, \,H_{\bm} \geq \lambda$ and $\dm \leq D$,
we have $\mu_{1,2}(M)> \bar{\mu}_{n,\kappa,\lambda,D}$.
The value $\bar{\mu}_{n,\kappa,\lambda,D}$ is equal to $\left((n-1)\lambda/2 \right)^{2}\,\left(1-e^{-(n-1)\lambda\, D/2} \right )^{-2}$,
and tends to $\mu_{1,2}(M^{n}_{\kappa,\lambda})$ as $D \to \infty$.

By using Theorem \ref{thm:p-Laplacian1} and the splitting theorem in \cite{K3},
we add the following result for not necessarily compact manifolds with boundary to the list of the rigidity results obtained in \cite{K4}.
\begin{thm}\label{thm:eigenvalue rigidity}
Let $\kappa<0$ and $\lambda:=\sqrt{\vert \kappa \vert}$.
For $n\geq 2$,
let $M$ be an $n$-dimensional,
connected complete Riemannian manifold with boundary
such that $\ric_{M}\geq (n-1)\kappa$ and $H_{\bm} \geq \lambda$.
Suppose $\partial M$ is compact.
Then for all $p\in (1,\infty)$,
we have
\begin{equation*}\label{eq:noncompact estimate}
\mu_{1,p}(M)\geq \left(\frac{(n-1)\lambda}{p}\right)^{p};
\end{equation*}
if the equality holds for some $p\in (1,\infty)$,
then $(M,d_{M})$ is isometric to $([0,\infty)\times_{\kappa,\lambda}\bm, d_{\kappa,\lambda})$;
moreover,
if $p=2$,
then the equality holds if and only if
$(M,d_{M})$ is isometric to $([0,\infty)\times_{\kappa,\lambda}\bm, d_{\kappa,\lambda})$.
\end{thm}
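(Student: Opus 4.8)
The strategy is to read the inequality off Theorem \ref{thm:p-Laplacian1} and to reduce the equality analysis to Kasue's splitting theorem. Since $\kappa<0$ and $\lambda=\sqrt{\vert\kappa\vert}$, the pair $(\kappa,\lambda)$ fails the ball-condition, so $\bconst=\infty$ and $s_{\kappa,\lambda}(t)=e^{-\lambda t}$. I would apply Theorem \ref{thm:p-Laplacian1} with $D=\infty$ (admissible because $\bconst=\infty$, and $\dm\le\infty$ trivially). A one-line computation gives
\[
C(n,\kappa,\lambda,\infty)=\sup_{t\ge 0}\,\frac{\int_{t}^{\infty}e^{-(n-1)\lambda s}\,ds}{e^{-(n-1)\lambda t}}=\frac{1}{(n-1)\lambda},
\]
as recorded in Remark \ref{rem:boundary case}, so Theorem \ref{thm:p-Laplacian1} yields $\mu_{1,p}(M)\ge(p\,C(n,\kappa,\lambda,\infty))^{-p}=((n-1)\lambda/p)^{p}$ for every $p\in(1,\infty)$.

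Now suppose $\mu_{1,p}(M)=((n-1)\lambda/p)^{p}$ for some $p\in(1,\infty)$. The first step is to show $M$ is noncompact. If it were compact, then since $\bm$ is compact, $D:=\dm$ is finite by Lemma \ref{lem:compact}; applying Theorem \ref{thm:p-Laplacian1} with this finite $D$, and using that the same elementary integral over $[t,D]$ gives $C(n,\kappa,\lambda,D)=((n-1)\lambda)^{-1}(1-e^{-(n-1)\lambda D})<((n-1)\lambda)^{-1}$, one would get $\mu_{1,p}(M)\ge(p\,C(n,\kappa,\lambda,D))^{-p}>((n-1)\lambda/p)^{p}$, contradicting the equality. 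Hence $M$ is noncompact. Since $\ric_{M}\ge(n-1)\kappa$, $H_{\bm}\ge\sqrt{\vert\kappa\vert}=\lambda$, $M$ is noncompact and $\bm$ is compact, Kasue's splitting theorem (Theorem C in \cite{K3}) shows that $(M,d_{M})$ is isometric to $([0,\infty)\times_{\kappa,\sqrt{\vert\kappa\vert}}\bm,d_{\kappa,\sqrt{\vert\kappa\vert}})=([0,\infty)\times_{\kappa,\lambda}\bm,d_{\kappa,\lambda})$. This proves the implication for every $p\in(1,\infty)$.

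For the converse when $p=2$, assume $(M,d_{M})$ is isometric to $W:=([0,\infty)\times_{\kappa,\lambda}\bm,d_{\kappa,\lambda})$. A metric isometry here is a Riemannian isometry (Subsection \ref{sec:metric}), so $\mu_{1,2}(M)=\mu_{1,2}(W)$, and $\mu_{1,2}(M)\ge((n-1)\lambda/2)^{2}$ by the first step; it remains to show $\mu_{1,2}(W)\le((n-1)\lambda/2)^{2}$. I would use the function $\phi_{n,\kappa,\lambda}(t)=t\,e^{(n-1)\lambda t/2}$ of the $[0,\infty)$-coordinate $t$ on $W$: the Laplace--Beltrami operator of a function $u(t)$ on $W$ is $u''(t)+(n-1)(s_{\kappa,\lambda}'/s_{\kappa,\lambda})u'(t)=u''(t)-(n-1)\lambda\,u'(t)$, from which $\Delta_{2}\phi_{n,\kappa,\lambda}=((n-1)\lambda/2)^{2}\phi_{n,\kappa,\lambda}$ on $W$. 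This function is not in $L^{2}(W)$ — the volume element of $W$ being $s_{\kappa,\lambda}^{n-1}(t)\,dt\,d\vol_{h}$, one has $\vert\phi_{n,\kappa,\lambda}(t)\vert^{2}s_{\kappa,\lambda}^{n-1}(t)=t^{2}$ — but its cutoff truncations $\eta_{R}\,\phi_{n,\kappa,\lambda}$, with $\eta_{R}$ equal to $1$ on $\{t\le R\}$ and $0$ on $\{t\ge R+1\}$, lie in $W^{1,2}_{0}(W)$ and satisfy $R_{2}(\eta_{R}\,\phi_{n,\kappa,\lambda})\to((n-1)\lambda/2)^{2}$ as $R\to\infty$. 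This is exactly the computation made for the model space $M^{n}_{\kappa,\lambda}$ in the paragraph preceding the statement, and it goes through verbatim for an arbitrary compact $\bm$ because all the integrals reduce to one-dimensional integrals in $t$ times $\vol_{h}\bm$. Hence $\mu_{1,2}(W)\le((n-1)\lambda/2)^{2}$, and combining the two bounds, $\mu_{1,2}(M)=((n-1)\lambda/2)^{2}$.

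Once Theorem \ref{thm:p-Laplacian1} and Kasue's splitting theorem are in hand, the argument is structurally short and I do not anticipate a serious obstacle. The two points that must be handled with care are: (i) the exclusion of the compact case genuinely uses the \emph{strict} monotonicity of $D\mapsto C(n,\kappa,\lambda,D)$, so that $\dm<\infty$ forces $\mu_{1,p}(M)>((n-1)\lambda/p)^{p}$ and hence forces noncompactness; and (ii) the truncation estimate in the converse, where one checks that the Rayleigh quotients $R_{2}(\eta_{R}\,\phi_{n,\kappa,\lambda})$ converge to $((n-1)\lambda/2)^{2}$, the contribution of the cutoff region $\{R\le t\le R+1\}$ to both the numerator and the denominator being of lower order in $R$ than the leading $\vol_{h}(\bm)\int_{0}^{R}t^{2}\,dt$ term.
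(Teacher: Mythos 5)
Your proposal is correct and follows essentially the same route as the paper: the inequality comes from Theorem \ref{thm:p-Laplacian1} via the computation $C(n,\kappa,\lambda,D)=((n-1)\lambda)^{-1}(1-e^{-(n-1)\lambda D})$, the equality case forces $\dm=\infty$ and hence (by Lemma \ref{lem:compact}) noncompactness, so Kasue's splitting theorem (Theorem C in \cite{K3}) applies, and the $p=2$ converse uses the test function $\phi_{n,\kappa,\lambda}\circ\rho_{\bm}$. The only deviation is that you establish $\mu_{1,2}(M)\leq((n-1)\lambda/2)^{2}$ through cutoff truncations $\eta_{R}\,\phi_{n,\kappa,\lambda}$, which is a more careful rendering of the paper's direct assertion that $R_{2}(\phi_{n,\kappa,\lambda}\circ\rho_{\bm})=((n-1)\lambda/2)^{2}$ (that function is not itself in $L^{2}$, nor in $W^{1,2}_{0}(M)$), rather than a genuinely different argument.
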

\begin{rem}
In Theorem \ref{thm:eigenvalue rigidity},
the author does not know whether
in the case of $p\neq 2$
the value $\mu_{1,p}([0,\infty)\times_{\kappa,\lambda}\bm)$ is equal to $((n-1)\lambda/p)^{p}$.
\end{rem}

Cheeger and Colding in Theorem 2.11 in \cite{CC} have proved the segment inequality for complete Riemannian manifolds under a lower Ricci curvature bound.
They have mentioned that
their segment inequality gives a lower bound for the smallest Dirichlet eigenvalue for the Laplacian on a closed ball.

Based on the proof of Theorem \ref{thm:volume comparison},
we prove a segment inequality of Cheeger-Colding type for manifolds with boundary (see Proposition \ref{prop:segment}).
Using our segment inequality,
we obtain a lower bound for $\mu_{1,p}$
smaller than the lower bound in Theorem \ref{thm:p-Laplacian1} (see Proposition \ref{prop:p-Laplacian2}).

\subsection{Organization}
In Section \ref{sec:Preliminaries},
we prepare some notations and recall the basic facts on Riemannian manifolds with boundary.

In Section \ref{sec:Cut locus for the boundary},
for a connected complete Riemannian manifold with boundary,
we study the basic properties of the cut locus for the boundary.
The basic properties seem to be well-known,
however,
they has not been summarized in any literature.
For the sake of the readers,
we discuss them in order to prove our results.

In Section \ref{sec:Comparison theorems},
by using the study of the cut locus for the boundary in Section \ref{sec:Cut locus for the boundary},
we prove Theorem \ref{thm:volume comparison}.

In Section \ref{sec:Volume growth rigidity},
we prove Theorem \ref{thm:volume growth distance rigidity}.
The rigidity follows from the study in the equality case in Theorem \ref{thm:volume comparison}.

In Section \ref{sec:splitting},
we prove Theorem \ref{thm:splitting}.

In Section \ref{sec:segment inequality},
we prove Theorems \ref{thm:p-Laplacian1} and \ref{thm:eigenvalue rigidity}.
We also prove a segment inequality (see Proposition \ref{prop:segment}).
After that,
we show the Poincar\'e inequality (see Lemma \ref{lem:Poincare}),
and we conclude Proposition \ref{prop:p-Laplacian2}.

In Section \ref{sec:Measure contraction property},
we prove a measure contraction inequality (see Proposition \ref{prop:mcp1}).
We also give another proof of Theorem \ref{thm:volume comparison}.

\subsection*{{\rm Addendum}}
After completing the first draft of this paper,
the author has been informed by Sormani of the paper \cite{P} written by Perales.
Let $M$ be a connected complete Riemannian manifold with boundary such that $\ric_{M}\geq 0$ and $H_{\bm}\geq \lambda$.
The paper \cite{P} contains a Laplacian comparison theorem for $\rho_{\bm}$ everywhere in a barrier sense,
a theorem of volume estimates of the metric neighborhoods of $\bm$,
and applications to studies of convergences of such manifolds with boundary.

\subsection*{{\rm Acknowledgements}}
The author would like to express his gratitude to Professor Koichi Nagano for his constant advice and suggestions.
The author would also like to thank Professor Takao Yamaguchi for his valuable advice.
The author would like to thank Professor Yong Wei for informing him of the paper \cite{LW},
Professor Christina Sormani for informing him of the paper \cite{P},
and Professor Takumi Yokota for informing him of the paper \cite{CK}.
The author would also like to thank Professor Atsushi Kasue for his valuable comments that lead some improvements of Theorems \ref{thm:p-Laplacian1} and \ref{thm:eigenvalue rigidity}.
The author is grateful to an anonymous referee of some journal for valuable comments.
One of the comments leads the author to the study of the measure contraction inequality.

\section{Preliminaries}\label{sec:Preliminaries}
We refer to \cite{BBI} for the basics of metric geometry,
and to \cite{S} for the basics of Riemannian manifolds with boundary.

\subsection{Metric spaces}
Let $(X,d_{X})$ be a metric space.
For $r>0$ and $A\subset X$,
we denote by $U_{r}(A)$ the open $r$-neighborhood of $A$ in $X$,
and by $B_{r}(A)$ the closed one.

For a metric space $(X,d_{X})$,
the length metric $\bar{d}_{X}$ is defined as follows:
For two points $x_{1},x_{2}\in X$,
we put $\bar{d}_{X}(x_{1},x_{2})$
to the infimum of the length of curves connecting $x_{1}$ and $x_{2}$ with respect to $d_{X}$.
A metric space $(X,d_{X})$ is said to be a \textit{length space} if $d_{X}=\bar{d}_{X}$.

Let $(X,d_{X})$ be a metric space.
For an interval $I\subset \mathbb{R}$,
let $\gamma:I\to X$ be a curve.
We say that $\gamma$ is a \textit{normal minimal geodesic}
if for all $s,t\in I$,
we have $d_{X}(\gamma(s),\gamma(t))=\vert s-t\vert$,
and $\gamma$ is a \textit{normal geodesic}
if for each $t\in I$,
there exists an interval $J\subset I$ with $t\in J$ such that $\gamma|_{J}$ is a normal minimal geodesic.
A metric space $(X,d_{X})$ is said to be a \textit{geodesic space} if
for every pair of two points in $X$,
there exists a normal minimal geodesic connecting them.
A metric space is \textit{proper} if all closed bounded subsets of the space are compact.
The Hopf-Rinow theorem for length spaces (see e.g., Theorem 2.5.23 in \cite{BBI}) states that 
if a length space $(X,d_{X})$ is complete and locally compact,
and if $d_{X}<\infty$,
then $(X,d_{X})$ is a proper geodesic space.

\subsection{Riemannian manifolds with boundary}
For $n\geq 2$, 
let $M$ be an $n$-dimensional,
connected Riemannian manifold with (smooth) boundary
with Riemannian metric $g$.
For $p\in \inte M$, 
let $T_{p}M$ be the tangent space at $p$ on $M$,
and let $U_{p}M$ be the unit tangent sphere at $p$ on $M$.
We denote by $\Vert \cdot \Vert$ the standard norm induced from $g$.
If $v_{1},\dots,v_{k}\in T_{p}M$ are linearly independent,
then we see $\Vert v_{1}\wedge \cdots \wedge v_{k} \Vert=\sqrt{\det (g(v_{i},v_{j}))}$.
Let $d_{M}$ be the length metric induced from $g$.
If $M$ is complete with respect to $d_{M}$,
then the Hopf-Rinow theorem for length spaces tells us that
the metric space $(M,d_{M})$ is a proper geodesic space.

For $x\in \bm$,
and the tangent space $T_{x}\bm$ at $x$ on $\bm$,
let $T_{x}^{\perp} \bm$ be the orthogonal complement of $T_{x}\bm$ in the tangent space at $x$ on $M$.
Take $u\in T_{x}^{\perp}\bm$.
For the second fundamental form $S$ of $\bm$,
let $A_{u}:T_{x}\bm \to T_{x}\bm$ be the \textit{shape operator} for $u$ defined as
\begin{equation*}
g(A_{u}v,w):=g(S(v,w),u).
\end{equation*}
Let $u_{x}\in T_{x}^{\perp}\bm$ denote the unit inner normal vector at $x$.
The \textit{mean curvature} $H_{x}$ at $x$ is defined by
\begin{equation*}
H_{x}:=\frac{1}{n-1} \tr A_{u_{x}}.
\end{equation*}
For the normal tangent bundle $\tbp:=\bigcup_{x\in \bm} T_{x}^{\perp}\bm$ of $\bm$,
let $0(\tbp)$ be the zero-section $\bigcup_{x\in \bm} \{\,0_{x}\in T_{x}^{\perp}\partial M\, \}$ of $T^{\perp}\bm$.
For $r>0$,
we put 
\begin{equation*}
U_{r}(0(\tbp)):=\bigcup_{x\in \bm} \{\,t\,u_{x}\in T_{x}^{\perp}\bm\mid t\in [0,r) \,\}.
\end{equation*}
For $x\in \bm$,
we denote by $\gamma_{x}:[0,T)\to M$ the normal geodesic with initial conditions $\gamma_{x}(0)=x$ and $\gamma_{x}'(0)=u_{x}$.
Note that
$\gamma_{x}$ is a normal geodesic in the usual sense in Riemannian geometry.
On an open neighborhood of $0(\tbp)$ in $\tbp$, 
the normal exponential map $\expp$ of $\bm$ is defined as follows:
For $x\in \bm$ and $u\in T_{x}^{\perp}\bm$,
put $\expp(x,u):=\gamma_{x}(\Vert u\Vert)$.
Since the boundary $\bm$ is smooth,
there exists an open neighborhood $U$ of $\bm$ satisfying the following:
(1) the map $\expp|_{(\expp)^{-1}(U\setminus \bm)}$ is a diffeomorphism onto $U\setminus \bm$;
(2) for every $p\in U$,
there exists a unique point $x\in \bm$ such that $d_{M}(p,x)=d_{M}(p,\bm)$;
in this case,
$\gamma_{x}|_{[0,d_{M}(p,\bm)]}$ is a unique normal minimal geodesic in $M$ from $x$ to $p$.
We call such an open set $U$ a \textit{normal neighborhood of} $\bm$.
If $\bm$ is compact,
then for some $r>0$,
the set $U_{r}(\bm)$ is a normal neighborhood of $\bm$.

We say that a Jacobi field $Y$ along $\gamma_{x}$ is a $\bm$-\textit{Jacobi field} if $Y$ satisfies the following initial conditions:
\begin{equation*}
Y(0)\in T_{x}\bm, \quad Y'(0)+A_{u_{x}}Y(0)\in T_{x}^{\perp}\bm.
\end{equation*}
We say that $\gamma_{x}(t_{0})$ is a \textit{conjugate point} of $\bm$ along $\gamma_{x}$
if there exists a non-zero $\bm$-Jacobi field $Y$ along $\gamma_{x}$ with $Y(t_{0})=0$.
Let $\tau_{1}(x)$ denote the first conjugate value for $\bm$ along $\gamma_{x}$.
It is well-known that for all $x\in \bm$ and $t>\tau_{1}(x)$,
we have $t>d_{M}(\gamma_{x}(t),\bm)$.

For all $x\in \bm$ and $t\in [0,\tau_{1}(x))$,
we denote by $\theta(t,x)$ the absolute value of the Jacobian of $\expp$ at $(x,tu_{x})\in \tbp$.
For each $x\in \bm$,
we choose an orthonormal basis $\{e_{x,i}\}_{i=1}^{n-1}$ of $T_{x}\bm$.
For each $i=1,\dots,n-1$,
let $Y_{x,i}$ be the $\bm$-Jacobi field along $\gamma_{x}$ with initial conditions $Y_{x,i}(0)=e_{x,i}$ and $Y'_{x,i}(0)=-A_{u_{x}}e_{x,i}$.
Note that for all $x\in \bm$ and $t\in [0,\tau_{1}(x))$,
we have $\theta(t,x)=\Vert Y_{x,1}(t)\wedge \cdots \wedge Y_{x,n-1}(t)\Vert$.
This does not depend on the choice of the orthonormal basis.

\subsection{Distance rigidity and metric rigidity}\label{sec:metric}
For $i=1,2$,
let $M_{i}$ be connected Riemannian manifolds with boundary with Riemannian metric $g_{i}$.
For each $i$,
the boundary $\bm_{i}$ carries the induced Riemannian metric $h_{i}$.
\begin{defi}
We say that a homeomorphism $\Phi:M_{1}\to M_{2}$ is a \textit{Riemannian isometry with boundary} from $M_{1}$ to $M_{2}$ if $\Phi$ satisfies the following conditions:
\begin{enumerate}
 \item $\Phi|_{\inte M_{1}}:\inte M_{1} \to \inte M_{2}$ is smooth, and $(\Phi|_{\inte M_{1}})^{\ast} (g_{2})=g_{1}$;\label{enum:inner isom}
 \item $\Phi|_{\bm_{1}}:\bm_{1} \to \bm_{2}$ is smooth, and $(\Phi|_{\bm_{1}})^{\ast} (h_{2})=h_{1}$.\label{enum:bdry isom}
\end{enumerate}
\end{defi}
If there exists a Riemannian isometry $\Phi:M_{1}\to M_{2}$ with boundary,
then the inverse $\Phi^{-1}$ is also a Riemannian isometry with boundary.

The following is well-known for manifolds without boundary (see e.g., Theorem 11.1 in \cite{H}).
\begin{lem}\label{lem:isometry without boundary}
Let $M$ and $N$ be connected Riemannian manifolds (without boundary) with Riemannian metric $g_{M}$ and with $g_{N}$,
respectively.
Let $d_{M}$ and $d_{N}$ be the Riemannian distances on $M$ and on $N$,
respectively.
Suppose that
a map $\Psi:M\to N$ is an isometry between the metric spaces $(M,d_{M})$ and $(N,d_{N})$.
Then $\Psi$ is smooth,
and $\Psi^{\ast} g_{N}=g_{M}$.
Namely,
$\Psi$ is a Riemannian isometry from $(M,g_{M})$ to $(N,g_{N})$.
\end{lem}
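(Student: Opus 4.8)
The plan is to establish the classical Myers--Steenrod theorem by the standard argument through normal coordinates and the exponential maps.

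\emph{Step 1 (geodesics go to geodesics, and definition of $L_{p}$).} A unit-speed curve $\gamma\colon[a,b]\to M$ is a minimizing geodesic exactly when $d_{M}(\gamma(s),\gamma(t))=|s-t|$ for all $s,t\in[a,b]$, a condition visibly preserved by the distance-preserving map $\Psi$; hence $\Psi\circ\gamma$ is a unit-speed minimizing geodesic in $N$, in particular smooth. Now fix $p\in M$ and set $q:=\Psi(p)$. For every $u\in U_{p}M$ the image of $t\mapsto\exp_{p}(tu)$ is a unit-speed minimizing geodesic issuing from $q$, so there is a unique $\bar{u}\in U_{q}N$ with $\Psi(\exp_{p}(tu))=\exp_{q}(t\bar{u})$ for all small $t\geq 0$. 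Define $L_{p}\colon T_{p}M\to T_{q}N$ by $L_{p}(0_{p}):=0_{q}$ and $L_{p}(tu):=t\bar{u}$ for $t\geq 0$ and $u\in U_{p}M$.

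\emph{Step 2 ($L_{p}$ is a linear isometry).} In normal coordinates centered at $p$ one has $g_{ij}=\delta_{ij}+O(|x|^{2})$, which yields the infinitesimal law of cosines
\begin{equation*}
d_{M}(\exp_{p}(tu_{1}),\exp_{p}(tu_{2}))^{2}=t^{2}\,\Vert u_{1}-u_{2}\Vert^{2}+o(t^{2})\qquad(t\to 0)
\end{equation*}
for $u_{1},u_{2}\in U_{p}M$, and the analogous expansion holds at $q$. Since $\Psi$ preserves distances and $\Psi(\exp_{p}(tu_{i}))=\exp_{q}(t\bar{u}_{i})$, comparing the two expansions gives $\Vert\bar{u}_{1}-\bar{u}_{2}\Vert=\Vert u_{1}-u_{2}\Vert$. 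As all vectors involved are unit, polarization gives $\langle\bar{u}_{1},\bar{u}_{2}\rangle=\langle u_{1},u_{2}\rangle$, and hence, by the positive homogeneity in its definition, $L_{p}$ preserves the inner product on all of $T_{p}M$. A map preserving inner products is automatically linear (compute $\Vert L_{p}(x+y)-L_{p}(x)-L_{p}(y)\Vert^{2}$ and expand), so $L_{p}$ is a linear isometry.

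\emph{Step 3 (local formula and conclusion).} Choose $r>0$ with $\exp_{p}\colon B_{r}(0_{p})\to B_{r}(p)$ a diffeomorphism; shrinking $r$, $\exp_{q}$ is defined on $B_{r}(0_{q})$, which contains $L_{p}(B_{r}(0_{p}))$. For $v=tu\in B_{r}(0_{p})$ the construction gives $\Psi(\exp_{p}(v))=\exp_{q}(L_{p}v)$, i.e.\ $\Psi|_{B_{r}(p)}=\exp_{q}\circ L_{p}\circ(\exp_{p})^{-1}$, a composition of smooth maps. Thus $\Psi$ is smooth near $p$, and $p$ being arbitrary, $\Psi$ is smooth. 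Differentiating the local formula at $p$ and using $d(\exp_{p})_{0_{p}}=\operatorname{id}$ and $d(\exp_{q})_{0_{q}}=\operatorname{id}$ yields $d\Psi_{p}=L_{p}$, a linear isometry, so $(\Psi^{\ast}g_{N})_{p}=g_{M,p}$; since $p$ is arbitrary, $\Psi^{\ast}g_{N}=g_{M}$. The one genuinely analytic ingredient is the infinitesimal law of cosines in Step 2, i.e.\ the second-order Taylor control of $d_{M}$ near $p$ via normal coordinates; the passage from geodesics to geodesics and the extension of a sphere isometry to a linear isometry are then routine.
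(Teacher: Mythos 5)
Your proof is correct. Note first that the paper does not prove Lemma \ref{lem:isometry without boundary} at all: it records it as well known and refers to Theorem 11.1 in \cite{H} (the Myers--Steenrod theorem), so there is no in-paper argument to compare with; what you have written supplies the omitted proof along the classical route (minimizing geodesics are characterized metrically, the induced map on unit spheres preserves angles via the second-order expansion of the distance in normal coordinates, its positively homogeneous extension $L_{p}$ is a linear isometry, and $\Psi=\exp_{q}\circ L_{p}\circ\exp_{p}^{-1}$ near $p$, whence smoothness and $d\Psi_{p}=L_{p}$). Two small points are worth making explicit. First, take $r$ below the injectivity radius at $p$, so that the radial geodesics of $B_{r}(p)$ are minimizing up to length $r$; this is what lets you promote the identity $\Psi(\exp_{p}(tu))=\exp_{q}(t\bar{u})$ from ``small $t$'' (where $\bar{u}$ was defined) to all $t<r$, using uniqueness of geodesics with prescribed initial velocity. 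Second, you do not need to shrink $r$ so that $\exp_{q}$ is defined on all of $B_{r}(0_{q})$: the formula only requires $\exp_{q}$ on $L_{p}(B_{r}(0_{p}))$, and it is automatically defined there because the curves $\Psi(\exp_{p}(tu))$, $t<r$, are themselves geodesics of $N$; in particular no completeness assumption is used. With these clarifications the argument is complete and matches the standard proof cited by the paper.
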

For manifolds with boundary,
we show the following:
\begin{lem}\label{lem:isometry with with boundary}
For $i=1,2$,
let $M_{i}$ be connected Riemannian manifolds with boundary with Riemannian metric $g_{i}$.
Then there exists a Riemannian isometry with boundary from $M_{1}$ to $M_{2}$
if and only if the metric space $(M_{1},d_{M_{1}})$ is isometric to $(M_{2},d_{M_{2}})$.
\end{lem}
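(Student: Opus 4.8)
The plan is to prove Lemma~\ref{lem:isometry with with boundary} in two directions, with the nontrivial direction being that a metric-space isometry $\Psi\colon(M_1,d_{M_1})\to(M_2,d_{M_2})$ can be upgraded to a Riemannian isometry with boundary. For the easy direction, suppose $\Phi\colon M_1\to M_2$ is a Riemannian isometry with boundary. Then $\Phi$ is a diffeomorphism on the interiors pulling back $g_2$ to $g_1$, so it is an isometry for the length metrics on the interiors; since $\inte M_i$ is dense in $M_i$ and $d_{M_i}$ is the length metric (continuous extension of the interior length metric), $\Phi$ extends to a map preserving $d_{M_1}$, i.e.\ $(M_1,d_{M_1})$ and $(M_2,d_{M_2})$ are isometric as metric spaces. (One should check that the length metric on $M_i$ restricted to $\inte M_i$ agrees with the interior length metric, which follows because any curve through the boundary can be pushed slightly into the interior with arbitrarily small change of length, using a collar neighborhood of $\bm_i$.)

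For the hard direction, let $\Psi\colon(M_1,d_{M_1})\to(M_2,d_{M_2})$ be a surjective isometry of metric spaces. The first step is to show $\Psi$ maps $\bm_1$ onto $\bm_2$ and $\inte M_1$ onto $\inte M_2$. This is a topological/metric characterization: a point $p\in M_i$ lies in $\inte M_i$ if and only if it has a neighborhood isometric (as a metric space) to an open set that is a ``manifold point'' — more concretely, one can characterize boundary points metrically, e.g.\ via the local structure of small metric spheres, or via the fact that near an interior point small metric balls are (bi-Lipschitz to) Euclidean balls while near a boundary point they are (bi-Lipschitz to) Euclidean half-balls, and these are not locally isometric after rescaling in a way that a distance-preserving map respects. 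Since $\Psi$ preserves $d_{M_i}$ exactly, it preserves any such intrinsically-defined subset, so $\Psi(\inte M_1)=\inte M_2$ and $\Psi(\bm_1)=\bm_2$.

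The second step is to observe that $d_{M_i}$ restricted to $\inte M_i$ is the Riemannian distance of the open Riemannian manifold $(\inte M_i, g_i)$ — again this is the collar argument above. Hence $\Psi|_{\inte M_1}\colon(\inte M_1,g_1)\to(\inte M_2,g_2)$ is an isometry of Riemannian manifolds (without boundary) in the metric sense, so by Lemma~\ref{lem:isometry without boundary} it is smooth with $(\Psi|_{\inte M_1})^{\ast}g_2=g_1$. The third step handles the boundary: the induced Riemannian distance $d_{\bm_i}$ on $(\bm_i,h_i)$ (intrinsic length metric of $\bm_i$) can be recovered from $d_{M_i}$ restricted to $\bm_i$ by a length-metric construction, so $\Psi|_{\bm_1}$ is a metric isometry between $(\bm_1,h_1)$ and $(\bm_2,h_2)$, and Lemma~\ref{lem:isometry without boundary} again gives smoothness and $(\Psi|_{\bm_1})^{\ast}h_2=h_1$. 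Finally $\Psi$ is a homeomorphism (being a bijective isometry of metric spaces), so $\Psi$ satisfies both defining conditions of a Riemannian isometry with boundary.

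\textbf{The main obstacle} I anticipate is the second step combined with the boundary recovery in the third step: one must verify carefully that the length metric of $M_i$, which a priori ``sees'' paths running along $\bm_i$, restricts on $\inte M_i$ (resp.\ on $\bm_i$) to the intrinsic Riemannian distance of the interior (resp.\ of the boundary with metric $h_i$). The interior case is the cleaner one, handled by a collar of $\bm_i$ and an approximation argument pushing curves off the boundary. The boundary case is slightly more delicate because a shortest path in $M_i$ between two boundary points need not lie in $\bm_i$, so one recovers $d_{\bm_i}$ not from $d_{M_i}|_{\bm_i\times\bm_i}$ directly but as the length metric it induces on $\bm_i$; verifying that this length metric coincides with the Riemannian distance of $(\bm_i,h_i)$ is the crux, and it follows from the smoothness of $\bm_i$ as a submanifold. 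Once these identifications are in place, both applications of Lemma~\ref{lem:isometry without boundary} are immediate.
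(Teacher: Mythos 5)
Your proposal is correct in substance and follows the same skeleton as the paper: both directions ultimately reduce to two applications of Lemma \ref{lem:isometry without boundary} (Myers--Steenrod), once on the interiors and once on the boundaries, with the crux being that the intrinsic distances of $(\inte M_i,g_i)$ and of $(\bm_i,h_i)$ can be recovered from $d_{M_i}$. The differences are in the scaffolding. For the easy direction the paper computes lengths of curves directly (splitting into interior and boundary points of the curve), while you restrict to the interior and use density plus continuity; both work, and yours leans on the fact that $d_{M_i}$ restricted to $\inte M_i$ is the intrinsic interior distance (your collar argument), a global fact the paper never needs because in the hard direction it instead works inside small strongly convex balls $U_r(p)\subset \inte M_1$, where the subspace metric automatically agrees with the intrinsic one. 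Your explicit first step --- that $\Psi$ carries $\inte M_1$ onto $\inte M_2$ and $\bm_1$ onto $\bm_2$ --- is a point the paper silently assumes (it places $U_r(\Psi(p))$ inside $\inte M_2$ without comment), so including it is a genuine improvement; however, the particular justification you sketch is shaky as stated: small balls at interior points and half-balls at boundary points are bi-Lipschitz homeomorphic to each other, so ``ball vs.\ half-ball up to bi-Lipschitz'' does not separate the two cases. The clean fixes are either purely topological (a bijective isometry is a homeomorphism, and the manifold boundary is a topological invariant by invariance of domain) or genuinely metric (blow-up tangent cones $\mathbb{R}^n$ versus a half-space are not isometric, or interior points are characterized by two-sided local geodesic extendability). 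For the boundary step your ``induced length metric on $\bm_i$ equals $d_{\bm_i}$'' is exactly the paper's argument in disguise: the paper proves it via the ratio $d_{\bm_2}(z_0,z)/d_{M_2}(z_0,z)\to 1$, which is the precise form of your appeal to smoothness of $\bm_i$, so there is no divergence there.
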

\begin{proof}
For $i=1,2$,
we denote by $\Vert \cdot \Vert_{g_{i}}$ and by $\Vert \cdot \Vert_{h_{i}}$ the standard norms induced from $g_{i}$ and from $h_{i}$,
respectively.
For a piecewise smooth curve $\gamma$ in $M_{i}$,
we denote by $L_{g_{i}}(\gamma)$ the length of $\gamma$ induced from $g_{i}$.

First,
we show that
if $\Phi:M_{1}\to M_{2}$ is a Riemannian isometry with boundary,
then it is an isometry between the metric spaces $(M_{1},d_{M_{1}})$ and $(M_{2},d_{M_{2}})$.
It suffices to show that
$\Phi$ is a $1$-Lipschitz map from $(M_{1},d_{M_{1}})$ to $(M_{2},d_{M_{2}})$.
Pick $p,q\in M_{1}$.
Take $\epsilon>0$.
There exists a piecewise smooth curve $\gamma:[0,l]\to M_{1}$ such that $L_{g_{1}}(\gamma)<d_{M_{1}}(p,q)+\epsilon$.
Assume that
$\gamma$ is smooth at $t\in [0,l]$.
If $\gamma(t)$ belongs to $\inte M_{1}$,
then $\Vert (\Phi \circ \gamma)'(t)\Vert_{g_{2}}$ is equal to $\Vert \gamma'(t)\Vert_{g_{1}}$.
If $\gamma(t)$ belongs to $\bm_{1}$,
then $\Vert (\Phi \circ \gamma)'(t)\Vert_{h_{2}}$ is equal to $\Vert \gamma'(t)\Vert_{h_{1}}$,
and hence $L_{g_{2}}(\Phi \circ \gamma)$ is equal to $L_{g_{1}}(\gamma)$.
We have $d_{M_{2}}(\Phi(p),\Phi(q))<d_{M_{1}}(p,q)+\epsilon$.
This implies that
$\Phi$ is $1$-Lipschitz.

Next,
we show that
if $\Psi:M_{1}\to M_{2}$ is an isometry between the metric spaces $(M_{1},d_{M_{1}})$ and $(M_{2},d_{M_{2}})$,
then it is a Riemannian isometry with boundary.
To do this,
we first show that
$\Psi|_{\inte M_{1}}:\inte M_{1} \to \inte M_{2}$ is smooth,
and $(\Psi|_{\inte M_{1}})^{\ast} (g_{2})=g_{1}$.
Take $p\in \inte M_{1}$.
There exists a sufficiently small $r\in (0,\infty)$ such that
$U_{r}(p)$ and $U_{r}(\Psi(p))$ are strongly convex in $(\inte M_{1},g_{1})$ and in $(\inte M_{2},g_{2})$,
respectively.
Then $\Psi|_{U_{r}(p)}$ becomes an isometry between the metric subspaces $U_{r}(p)$ and $U_{r}(\Psi(p))$.
Applying Lemma \ref{lem:isometry without boundary} to the open Riemannian submanifolds $U_{r}(p)$ and $U_{r}(\Psi(p))$,
we see that
$\Psi|_{U_{r}(p)}$ is a smooth Riemannian isometry.
This implies that
$\Psi|_{\inte M_{1}}:\inte M_{1} \to \inte M_{2}$ is smooth,
and $(\Psi|_{\inte M_{1}})^{\ast} (g_{2})=g_{1}$.

We second show that
the map $\Psi|_{\bm_{1}}:\bm_{1} \to \bm_{2}$ is smooth,
and $(\Psi|_{\bm_{1}})^{\ast} (h_{2})=h_{1}$.
To do this,
we prove that
$\Psi|_{\bm_{1}}$ is an isometry between the metric spaces $(\bm_{1},d_{\bm_{1}})$ and $(\bm_{2},d_{\bm_{2}})$,
where $d_{\bm_{1}}$ and $d_{\bm_{2}}$ are the Riemannian distances on $\bm_{1}$ and on $\bm_{2}$,
respectively.
It suffices to show that
$\Psi|_{\bm_{1}}$ is a $1$-Lipschitz map from $(\bm_{1},d_{\bm_{1}})$ to $(\bm_{2},d_{\bm_{2}})$.
Take $x,y\in \bm_{1}$.
For every $\epsilon>0$,
there exists a piecewise smooth curve $\gamma:[0,l]\to \bm_{1}$ such that $L_{h_{1}}(\gamma)<d_{\bm_{1}}(x,y)+\epsilon$.
Fix $t \in [0,l]$ at which
$\gamma$ is smooth.
Since $\Psi$ is an isometry between $(M_{1},d_{M_{1}})$ and $(M_{2},d_{M_{2}})$,
we have
\begin{align*}
\Vert \gamma'(t) \Vert_{h_{1}} &=\Vert \gamma'(t) \Vert_{g_{1}}=\lim_{\delta \to 0}\frac{d_{M_{1}}(\gamma(t),\gamma(t+\delta))}{\delta}\\
                                                &=\lim_{\delta \to 0}\frac{d_{M_{2}}((\Psi \circ \gamma)(t),(\Psi \circ \gamma)(t+\delta))}{\delta}.
\end{align*}
Since $\bm_{2}$ is smooth,
and since $h_{2}$ is induced from $g_{2}$,
for every $z_{0}\in \bm_{2}$
we have
\begin{equation*}
\lim_{z\to z_{0}}\, \frac{d_{\bm_{2}}(z_{0},z)}{d_{M_{2}}(z_{0},z)}=1,
\end{equation*}
where the limit is taken with respect to $d_{\bm_{2}}$.
Hence,
we have
\begin{equation*}
\lim_{\delta \to 0}\frac{d_{\bm_{2}}((\Psi \circ \gamma)(t),(\Psi \circ \gamma)(t+\delta))}{d_{M_{2}}((\Psi \circ \gamma)(t),(\Psi \circ \gamma)(t+\delta))}=1;
\end{equation*}
in particular,
\begin{equation*}
\Vert \gamma'(t) \Vert_{h_{1}}=\lim_{\delta \to 0}\frac{d_{\bm_{2}}((\Psi \circ \gamma)(t),(\Psi \circ \gamma)(t+\delta))}{\delta}.
\end{equation*}
It follows that
\begin{equation*}
L_{h_{1}}(\gamma)=\int^{l}_{0}\,\lim_{\delta \to 0}\frac{d_{\bm_{2}}((\Psi \circ \gamma)(t),(\Psi \circ \gamma)(t+\delta))}{\delta}\,dt.
\end{equation*}
The right hand side coincides with the length of $\Psi \circ \gamma$ with respect to $d_{\bm_{2}}$ (see e.g., Section 2.7 in \cite{BBI}),
and is greater than or equal to $d_{\bm_{2}}(\Psi(x),\Psi(y))$.
Therefore,
$d_{\bm_{2}}(\Psi(x),\Psi(y))<d_{\bm_{1}}(x,y)+\epsilon$.
This implies that
$\Psi|_{\bm_{1}}$ is $1$-Lipschitz.
Thus,
we conclude that
$\Psi|_{\bm_{1}}$ is an isometry between $(\bm_{1},d_{\bm_{1}})$ and $(\bm_{2},d_{\bm_{2}})$.
Applying Lemma \ref{lem:isometry without boundary} to $\bm_{1}$ and $\bm_{2}$,
we see that
$\Psi|_{\bm_{1}}$ is smooth,
and $(\Psi|_{\bm_{1}})^{\ast} (h_{2})=h_{1}$.

This completes the proof of Lemma \ref{lem:isometry with with boundary}.
\end{proof}
\subsection{Comparison theorem}
For $\kappa\in \mathbb{R}$, 
let $s_{\kappa}(t)$ be a unique solution of the so-called Jacobi-equation $f''(t)+\kappa f(t)=0$ with initial conditions $f(0)=0$ and $f'(0)=1$.

The \textit{Laplacian} $\Delta$ of a smooth function on a Riemannian manifold is defined by
the minus of the trace of its Hessian.

It is well-known that
we have the following Laplacian comparison theorem for the distance function from a single point (see e.g., Proposition 3.6 in \cite{S}).
\begin{lem}\label{lem:point Laplacian}
Let $M$ be an $n$-dimensional,
connected complete Riemmanian manifold with boundary such that $\ric_{M}\geq (n-1)\kappa$.
Take $p\in \inte M$ and $u\in U_{p}M$.
Let $\rho_{p}:M\to \mathbb{R}$ be the function defined as $\rho_{p}(q):=d_{M}(p,q)$,
and let $\gamma_{u}:[0,t_{0})\to M$ be the normal minimal geodesic with initial conditions $\gamma_{u}(0)=p$ and $\gamma'_{u}(0)=u$
such that $\gamma_{u}$ lies in $\inte M$.
Then for all $t\in (0,t_{0})$, we have
\begin{equation*}
\Delta \rho_{p}(\gamma_{u}(t))\geq -(n-1)\frac{s_{\kappa}'(t)}{s_{\kappa}(t)}.
\end{equation*}
\end{lem}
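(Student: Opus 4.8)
The plan is to reduce the statement to the classical index-form (equivalently, Riccati) comparison along the single geodesic $\gamma_{u}$. The only feature special to manifolds with boundary is that one must work along a geodesic contained in $\inte M$, which is exactly what is assumed; since $\gamma_{u}|_{[0,t]}$ is a minimal geodesic in $\inte M$ for each $t\in(0,t_{0})$, the point $\gamma_{u}(t)$ is neither a cut point nor a conjugate point of $p$, so $\rho_{p}$ is smooth on a neighborhood of $\gamma_{u}(t)$ in $\inte M$, and all the usual first/second variation machinery applies verbatim.

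First I would fix $t\in(0,t_{0})$, choose parallel orthonormal vector fields $E_{1},\dots,E_{n-1}$ along $\gamma_{u}$ orthogonal to $\gamma_{u}'$, and let $J_{i}$ be the unique Jacobi field along $\gamma_{u}$ with $J_{i}(0)=0$ and $J_{i}(t)=E_{i}(t)$ (unique by non-conjugacy). The standard identities for the distance function give, with $\Delta=-\tr\operatorname{Hess}$,
\[
-\Delta\rho_{p}(\gamma_{u}(t))=\sum_{i=1}^{n-1}g(J_{i}'(t),J_{i}(t))=\sum_{i=1}^{n-1}I_{0}^{t}(J_{i},J_{i}),
\]
where $I_{0}^{t}$ denotes the index form on $[0,t]$. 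By the index lemma, $J_{i}$ minimizes $I_{0}^{t}$ among piecewise smooth fields $W$ along $\gamma_{u}$ with $W(0)=0$ and $W(t)=E_{i}(t)$. Testing with the explicit comparison field $\bar{J}_{i}(s):=\bigl(s_{\kappa}(s)/s_{\kappa}(t)\bigr)E_{i}(s)$, using $s_{\kappa}''+\kappa s_{\kappa}=0$ and integration by parts, one computes
\[
-\Delta\rho_{p}(\gamma_{u}(t))\leq\sum_{i=1}^{n-1}I_{0}^{t}(\bar{J}_{i},\bar{J}_{i})=(n-1)\frac{s_{\kappa}'(t)}{s_{\kappa}(t)}-\frac{1}{s_{\kappa}^{2}(t)}\int_{0}^{t}\bigl(\ric_{g}(\gamma_{u}'(s),\gamma_{u}'(s))-(n-1)\kappa\bigr)s_{\kappa}^{2}(s)\,ds.
\]
Since $\ric_{M}\geq(n-1)\kappa$ on $\inte M$ and $\gamma_{u}$ lies in $\inte M$, the integral term is non-negative, so $-\Delta\rho_{p}(\gamma_{u}(t))\leq(n-1)s_{\kappa}'(t)/s_{\kappa}(t)$, which is the claim. (When $\kappa>0$ one checks $s_{\kappa}(t)>0$: minimality of $\gamma_{u}|_{[0,t]}$ in $\inte M$ forces $t\leq\pi/\sqrt{\kappa}$ by the usual Bonnet--Myers argument, so $t_{0}\leq\pi/\sqrt{\kappa}$ and the division by $s_{\kappa}^{2}(t)$ is legitimate.) Alternatively, one may phrase the same computation as a Riccati comparison: the function $h(t):=-\Delta\rho_{p}(\gamma_{u}(t))$ is the trace of the shape operator of the distance sphere and satisfies $h'+h^{2}/(n-1)+\ric_{g}(\gamma_{u}',\gamma_{u}')\leq0$ (matrix Riccati equation plus $\tr(S^{2})\geq(\tr S)^{2}/(n-1)$), while $h_{\kappa}:=(n-1)s_{\kappa}'/s_{\kappa}$ solves the corresponding equation with equality and has the same asymptotics $h,h_{\kappa}\sim(n-1)/t$ as $t\to0^{+}$; a Gronwall-type argument on $h-h_{\kappa}$ then gives $h\leq h_{\kappa}$.

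The main obstacle is not conceptual but organizational: verifying the regularity claims (smoothness and non-conjugacy at $\gamma_{u}(t)$), controlling the singularity of both $h$ and $h_{\kappa}$ at $t=0$, and—this being the one point where the boundary enters—invoking the hypothesis that $\gamma_{u}$ remains in $\inte M$ so that the interior Ricci bound and the variation formulas may be used as in the boundaryless case. Since no term involving $\bm$ appears, the argument coincides with the classical proof (cf.\ Proposition 3.6 in \cite{S}).
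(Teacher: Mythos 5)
Your argument is correct: the paper gives no proof of this lemma, citing it as the classical Laplacian comparison theorem (Proposition 3.6 in \cite{S}), and your index-form computation with the comparison fields $(s_{\kappa}(s)/s_{\kappa}(t))E_{i}(s)$ is exactly that standard proof, with the sign convention $\Delta=-\tr\operatorname{Hess}$ handled correctly. The only point specific to the boundary setting---that minimality and the interior Ricci bound may be used because $\gamma_{u}$ stays in $\inte M$, so nearby minimizers and variations also stay in $\inte M$ and $\rho_{p}$ is smooth near $\gamma_{u}(t)$---is the verification you flag, and it goes through as you indicate.
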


\section{Cut locus for the boundary}\label{sec:Cut locus for the boundary}
Let $M$ be a connected complete Riemannian manifold with boundary with Riemannian metric $g$.
\subsection{Foot points}
For a point $p\in M$, 
we call $x\in \bm$ a \textit{foot point} on $\bm$ of $p$ if $d_{M}(p,x)=d_{M}(p,\bm)$.
Since $(M,d_{M})$ is proper, 
every point in $M$ has at least one foot point on $\bm$.
\begin{lem}\label{lem:foot vector}
For $p\in \inte M$, 
let $x\in \bm$ be a foot point on $\bm$ of $p$.
Then there exists a unique normal minimal geodesic $\gamma:[0,l]\to M$ from $x$ to $p$
such that $\gamma=\gamma_{x}|_{[0,l]}$,
where $l=\rho_{\bm}(p)$.
In particular,
$\gamma'(0)=u_{x}$ and $\gamma|_{(0,l]}$ lies in $\inte M$.
\end{lem}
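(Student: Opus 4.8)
The plan is to exploit the fact that $x$ is a foot point together with the first-variation principle and the definition of a normal neighborhood of $\bm$. First I would record that, since $(M,d_M)$ is a proper geodesic space, there exists at least one normal minimal geodesic $\gamma\colon[0,l]\to M$ from $x$ to $p$ with $l=d_M(x,p)=d_M(p,\bm)=\rho_{\bm}(p)$. The first task is to show that $\gamma$ meets $\bm$ only at $t=0$, i.e.\ $\gamma|_{(0,l]}$ lies in $\inte M$: if $\gamma(t_1)\in\bm$ for some $t_1\in(0,l]$, then $d_M(p,\bm)\le d_M(p,\gamma(t_1))=l-t_1<l=d_M(p,\bm)$, a contradiction (using $p\in\inte M$, so $l>0$). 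Next I would argue that $\gamma'(0)=u_x$. Since $\gamma$ is minimizing to $\bm$ and $\bm$ is a smooth hypersurface, the first-variation formula for the distance to $\bm$ forces $\gamma$ to leave $x$ orthogonally to $\bm$: if $\gamma'(0)$ had a nonzero component tangent to $\bm$, one could produce a nearby curve from $\bm$ to $p$ strictly shorter than $l$ by moving the initial endpoint along $\bm$ in the direction of that tangential component. Hence $\gamma'(0)\in T_x^{\perp}\bm$ with $\|\gamma'(0)\|=1$; since $p\in\inte M$ the geodesic must head into $M$, so $\gamma'(0)=u_x$ and therefore $\gamma=\gamma_x|_{[0,l]}$ by uniqueness of geodesics with given initial data.

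It remains to prove uniqueness of the minimal geodesic from $x$ to $p$. The cleanest route is to reduce to the existence of a normal neighborhood: by the properties of $\expp$ recalled in Subsection 2.2, for the point $p$ — or rather, working near the segment $\gamma$ — one uses that $\gamma|_{[0,l]}$ is the unique normal minimal geodesic realizing $d_M(\gamma(s),\bm)$ for small $s$, and then propagates uniqueness along $\gamma$. Concretely, any other normal minimal geodesic $\sigma\colon[0,l]\to M$ from $x$ to $p$ also satisfies $\sigma'(0)=u_x$ by the orthogonality argument just given, hence $\sigma=\gamma_x|_{[0,l]}=\gamma$ by uniqueness of the solution of the geodesic equation with the same initial position and velocity. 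So in fact uniqueness is a free consequence of the orthogonality step, not something requiring a separate argument.

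I expect the main obstacle to be making the orthogonality argument $\gamma'(0)=u_x$ fully rigorous, since it is precisely the first-variation-of-arclength computation for a variation whose endpoint slides along the boundary, and one must be careful that the competitor curves stay inside $M$ (which they do, since they are built from the interior segment $\gamma|_{[\epsilon,l]}$ together with a short boundary-to-$\gamma(\epsilon)$ piece, and $\gamma(\epsilon)\in\inte M$ for $\epsilon>0$ small). Everything else — properness giving existence, the short contradiction that $\gamma$ avoids $\bm$ after $t=0$, and deducing both the identification $\gamma=\gamma_x|_{[0,l]}$ and uniqueness from ODE uniqueness — is routine once that step is in place.
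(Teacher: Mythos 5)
Your plan is correct, and its overall skeleton (existence from the Hopf--Rinow theorem, the triangle-inequality contradiction showing $\gamma|_{(0,l]}\subset\inte M$, and deducing both $\gamma=\gamma_{x}|_{[0,l]}$ and uniqueness from $\gamma'(0)=u_{x}$ via uniqueness of geodesics with given initial data) matches the paper. The one genuine difference is how orthogonality is obtained. You propose to prove $\gamma'(0)\in T_{x}^{\perp}\bm$ directly by a first-variation argument, sliding the initial endpoint along $\bm$ and building competitor curves through $\gamma(\epsilon)$ that stay inside $M$; as you note, making this rigorous near the boundary is the delicate point (one has to control the lengths of the boundary-to-$\gamma(\epsilon)$ pieces, e.g.\ in Fermi coordinates of the collar, with error constants depending on $\epsilon$). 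The paper sidesteps this computation entirely by invoking the normal neighborhood $U$ of $\bm$ recorded in Subsection 2.2: for small $t>0$ one checks that $\rho_{\bm}(\gamma(t))=t$ and $x$ is a foot point of $\gamma(t)\in U$, and property (2) of the normal neighborhood then says the unique minimal geodesic from $x$ to $\gamma(t)$ is $\gamma_{x}|_{[0,t]}$, forcing $\gamma'(0)=u_{x}$. In effect you are re-proving the tubular-neighborhood/orthogonality fact that the paper has already packaged into the definition of a normal neighborhood; your route is more self-contained but longer and technically heavier, while the paper's is shorter because the analytic content has been delegated to the preliminaries. Either way the conclusion and the uniqueness step go through as you describe.
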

\begin{proof}
Since $(M,d_{M})$ is a geodesic space,
there exists a normal minimal geodesic $\gamma:[0,l]\to M$ from $x$ to $p$.
Since $x$ is a foot point on $\bm$ of $p$,
we see that $\gamma|_{(0,l]}$ lies in $\inte M$.
We take a normal neighborhood $U$ of $\bm$.
If $p\in U\setminus \bm$, 
then $x$ is a unique foot point on $\bm$ of $p$,
and $\gamma=\gamma_{x}|_{[0,l]}$;
in particular, we have $\gamma'(0)=u_{x}$.
Even if $p\notin U\setminus \bm$, 
then for every sufficiently small $t>0$,
we see that $x$ is the foot point on $\bm$ of $\gamma(t)$.
Hence, $\gamma'(0)=u_{x}$.
This implies $\gamma=\gamma_{x}|_{[0,l]}$.
\end{proof}

\subsection{Cut locus}
Let $\tau:\bm\to \mathbb{R}\cup \{\infty\}$ be the function defined as
\begin{equation*}
\tau(x):=\sup \{\,t>0 \mid \rho_{\bm}(\gamma_{x}(t))=t\,\}.
\end{equation*}
Recall that
for all $x\in \bm$ and $t>\tau_{1}(x)$,
we have $t>\rho_{\bm}(\gamma_{x}(t))$.
Therefore,
for all $x\in \bm$,
we have $0<\tau(x)\leq \tau_{1}(x)$.

To study the cut locus,
we show the following:
\begin{lem}\label{lem:conti}
The function $\tau$ is continuous on $\bm$.
\end{lem}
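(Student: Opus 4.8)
The plan is to prove upper semicontinuity and lower semicontinuity separately, since the two directions behave quite differently. Throughout I work with a fixed point $x_{0}\in\bm$ and a sequence $x_{i}\to x_{0}$ in $\bm$, and I use that the normal geodesics $\gamma_{x}$ depend smoothly on $x$ (the normal exponential map $\expp$ is smooth on an open neighborhood of $0(\tbp)$), together with the fact that $(M,d_{M})$ is a proper geodesic space and $\rho_{\bm}$ is $1$-Lipschitz. The quantity to control is $\tau(x)=\sup\{t>0\mid \rho_{\bm}(\gamma_{x}(t))=t\}$; note $\rho_{\bm}(\gamma_{x}(t))=t$ precisely means $\gamma_{x}|_{[0,t]}$ is a normal minimal geodesic from $\bm$ realizing the distance, and for $t>\tau(x)$ one has $\rho_{\bm}(\gamma_{x}(t))<t$ strictly.

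\textbf{Lower semicontinuity:} $\liminf_{i}\tau(x_{i})\ge\tau(x_{0})$. Fix $t<\tau(x_{0})$; I must show $\tau(x_{i})\ge t$ for all large $i$, i.e.\ $\rho_{\bm}(\gamma_{x_{i}}(t))=t$. Since $t<\tau(x_{0})$ we may pick $t'$ with $t<t'<\tau(x_{0})$, so $\rho_{\bm}(\gamma_{x_{0}}(t'))=t'$; in particular $\gamma_{x_{0}}|_{[0,t']}$ lies (except at $0$) in $\inte M$ and $t'\le\tau_{1}(x_{0})$, so there are no conjugate points of $\bm$ before $t'$ and $\expp$ is a diffeomorphism near each $(x_{0},su_{x_{0}})$, $s\le t$. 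I claim $\gamma_{x_{0}}|_{[0,t]}$ is the \emph{unique} normal minimal geodesic from $\bm$ to $\gamma_{x_{0}}(t)$: indeed its foot point is unique (otherwise two distinct minimizers would meet at an interior point with $t<\tau(x_0)$, forcing a conjugate point or a non-minimizing segment beyond, contradicting $t<t'\le\tau_1(x_0)$, cf.\ Lemma 3.4). Now suppose, for contradiction, that along a subsequence $\rho_{\bm}(\gamma_{x_{i}}(t))<t$; let $z_{i}:=\gamma_{x_{i}}(t)\to z_{0}:=\gamma_{x_{0}}(t)$ by smoothness of $\expp$, and let $y_{i}\in\bm$ be a foot point of $z_{i}$, so $d_{M}(z_{i},y_{i})=\rho_{\bm}(z_{i})<t$. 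Passing to a further subsequence, $y_{i}\to y_{0}\in\bm$ (using properness and that the $y_i$ stay bounded), and by continuity of $d_{M}$ and of $\rho_{\bm}$ we get $d_{M}(z_{0},y_{0})=\rho_{\bm}(z_{0})\le\liminf\rho_{\bm}(z_{i})\le t=\rho_{\bm}(z_{0})$; hence $y_{0}$ is a foot point of $z_{0}$, so by uniqueness $y_{0}=x_{0}$ and the minimal geodesic from $y_i$ to $z_i$ converges to $\gamma_{x_{0}}|_{[0,t]}$. But then $\rho_{\bm}(z_{i})=d_M(z_i,y_i)\to t$, and one checks this forces the minimizers $\gamma_{y_i}|_{[0,\rho_{\bm}(z_i)]}$ to have initial data converging to $(x_0,u_{x_0})$, so that $\gamma_{x_i}|_{[0,t]}$ is minimizing for all large $i$ — contradicting the assumption $\rho_{\bm}(z_i)<t$. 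Therefore $\tau(x_{i})\ge t$ eventually, proving lower semicontinuity.

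\textbf{Upper semicontinuity:} $\limsup_{i}\tau(x_{i})\le\tau(x_{0})$. Suppose not; then along a subsequence $\tau(x_{i})\to L>\tau(x_{0})$ (allowing $L=\infty$), so pick $t$ with $\tau(x_{0})<t<L$ (and $t$ finite). For all large $i$ we have $t<\tau(x_{i})$, hence $\rho_{\bm}(\gamma_{x_{i}}(t))=t$. Since $t$ is finite and the $x_i$ converge, $\gamma_{x_{i}}(t)\to\gamma_{x_{0}}(t)$ by smoothness (continuity) of $\expp$ — here I use that $\gamma_{x_0}$ is defined at least up to its first conjugate value $\tau_1(x_0)\ge\tau(x_0)$, and that $\gamma_{x_0}(t)$ exists because $M$ is complete, so $\gamma_{x_0}:[0,\infty)\to M$ extends as a geodesic (it may leave $\inte M$, but it is still defined). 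By the $1$-Lipschitz continuity of $\rho_{\bm}$ and continuity of evaluation, $\rho_{\bm}(\gamma_{x_{0}}(t))=\lim_{i}\rho_{\bm}(\gamma_{x_{i}}(t))=t$. But $t>\tau(x_{0})$ forces $\rho_{\bm}(\gamma_{x_{0}}(t))<t$, a contradiction. Hence $\limsup_{i}\tau(x_{i})\le\tau(x_{0})$.

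Combining the two inequalities gives $\lim_{i}\tau(x_{i})=\tau(x_{0})$, so $\tau$ is continuous on $\bm$. I expect the main obstacle to be the lower-semicontinuity argument: one must rule out the scenario where minimality of $\gamma_{x_i}|_{[0,t]}$ fails for a sequence $x_i\to x_0$ even though it holds in the limit, and the cleanest way is the compactness/foot-point convergence argument above, which crucially uses properness of $(M,d_M)$, the uniqueness of the foot point below the cut value (Lemma 3.4 and the absence of conjugate points before $\tau_1$), and the smooth dependence of $\expp$ on the footpoint. The upper-semicontinuity direction is comparatively soft, following from continuity of $\rho_{\bm}$ alone.
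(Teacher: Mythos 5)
Your proposal is correct in substance, but only the upper semicontinuity half coincides with the paper's argument: there the paper, like you, passes to the limit in the identity $\rho_{\bm}(\gamma_{x_i}(\cdot))=\cdot$ along a subsequence realizing the $\limsup$. For lower semicontinuity the paper takes a much shorter route: it fixes $\delta$ with $\liminf_i\tau(x_i)+\delta<\tau(x_0)$, passes to the limit in the strict inequality $\rho_{\bm}(\gamma_{x_j}(\tau(x_j)+\delta))<\tau(x_j)+\delta$, and contradicts the definition of $\tau$. Your argument is instead the classical cut-time continuity argument: foot points $y_i$ of $z_i=\gamma_{x_i}(t)$ subconverge by properness, the limit foot point must be $x_{0}$ by uniqueness of the foot point below the cut value, and then the normal exponential map identifies the two competing preimages of $z_i$. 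What your longer route buys is precisely the step at which the paper is terse (a strict inequality does not automatically survive a limit): your mechanism is the same one the paper itself deploys later, in the proof of Lemma \ref{lem:cut}, so the two approaches are complementary rather than in conflict, and yours is the more robust of the two.

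Two points in your write-up need tightening. First, the claim that completeness lets $\gamma_{x_0}$ extend to $[0,\infty)$ ``possibly leaving $\inte M$'' is false in a manifold with boundary: a geodesic may terminate on $\bm$. You do not need it: in the upper-semicontinuity step the segments $\gamma_{x_i}|_{[0,t]}$ realize $\rho_{\bm}$, so a subsequential limit is again a minimal geodesic from $\bm$ of length $t$ with foot point $x_{0}$, hence equals $\gamma_{x_0}|_{[0,t]}$ by Lemma \ref{lem:foot vector}; in particular $\gamma_{x_0}(t)$ is defined and $\rho_{\bm}(\gamma_{x_0}(t))=t$, the desired contradiction. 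Second, the sentence ``one checks this forces \dots so that $\gamma_{x_i}|_{[0,t]}$ is minimizing'' does not follow from convergence of initial data alone; it should be completed as follows: since $t<\tau(x_0)\le\tau_{1}(x_0)$ there is no conjugate point at parameter $t$, so $\expp$ is injective on a neighborhood of $(x_{0},t\,u_{x_{0}})$ in $\tbp$; the two preimages $(x_{i},t\,u_{x_{i}})$ and $(y_{i},\rho_{\bm}(z_{i})\,u_{y_{i}})$ of $z_{i}$ both converge to $(x_{0},t\,u_{x_{0}})$, hence coincide for large $i$, giving $\rho_{\bm}(z_{i})=t$, a contradiction. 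Finally, the foot-point uniqueness you invoke below $\tau(x_{0})$ is exactly (part of) Lemma \ref{lem:cut}; its proof uses only Lemma \ref{lem:foot vector} and the definition of $\tau$, not continuity, so there is no circularity, but it is cleaner to cite or reproduce the corner/shortcut argument there than the ``conjugate point or non-minimizing segment'' dichotomy you sketch.
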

\begin{proof}
Assume $x_{i}\to x$ in $\bm$.
First,
we show the upper semi-continuity of $\tau$.
We assume $\limsup_{i\to \infty} \tau(x_{i})<\infty$.
Take a subsequence $\{\tau(x_{j})\}$ of $\{\tau(x_{i})\}$ with $\tau(x_{j})\to \limsup_{i\to \infty} \tau(x_{i})$ as $j\to \infty$.
Put $p_{j}:=\gamma_{ x_{j} }( \tau(x_{j}) )$ and $p:=\gamma_{x}( \limsup_{i\to \infty} \tau(x_{i}) )$.
Since geodesics in $(\inte M,g)$ depend continuously on the initial direction and the parameter, 
we see $p_{j}\to p$ in $M$ as $j\to \infty$.
By the definition of $\tau$,
for all $j$ we have $\rho_{\bm}(p_{j})=\tau(x_{j})$.
By letting $j\to \infty$, 
we obtain $\rho_{\bm}(p)=\limsup_{i\to \infty} \tau(x_{i})$.
Hence,
$\limsup_{i\to \infty} \tau(x_{i})\leq \tau(x)$.
In a similar way,
we see that
if $\limsup_{i\to \infty} \tau(x_{i})=\infty$,
then $\tau(x)=\infty$.
Therefore,
we have shown the upper semi-continuity.

Next,
we show the lower semi-continuity of $\tau$.
We may assume $\liminf_{i\to \infty} \tau(x_{i})<\infty$.
The proof is done by contradiction.
We suppose $\liminf_{i\to \infty} \tau(x_{i})<\tau(x)$.
Choose $\delta>0$ such that $\liminf_{i\to \infty} \tau(x_{i})+\delta<\tau(x)$.
Take a subsequence $\{\tau(x_{j})\}$ of $\{\tau(x_{i})\}$ with $\tau(x_{j})\to \liminf_{i\to \infty} \tau(x_{i})$ as $j\to \infty$.
By the definition of $\tau$, 
we have $\tau(x_{j})+\delta>d_{M}(\gamma_{x_{j}}(\tau(x_{j})+\delta),\bm)$.
Since $\gamma_{x_{j}}(\tau(x_{j})+\delta) \to \gamma_{x}(\liminf_{i\to \infty} \tau(x_{i})+\delta)$ in $M$,
we have
\begin{equation*}
\liminf_{i\to \infty} \tau(x_{i})+\delta>\rho_{\bm}(\gamma_{x}(\liminf_{i\to \infty} \tau(x_{i})+\delta)).
\end{equation*}
On the other hand, 
$\liminf_{i\to \infty} \tau(x_{i})+\delta<\tau(x)$.
This contradicts the definition of $\tau$.
Hence,
we have shown the lower semi-continuity.
\end{proof}
By Lemma \ref{lem:foot vector},
we have the following:
\begin{lem}\label{lem:shinitai}
For all $r>0$,
we have
\begin{equation*}
B_{r}(\bm)=\expp \left(\bigcup_{x\in \bm} \{tu_{x}\mid t\in [0,\min\{r,\tau(x)\}] \}\right).
\end{equation*}
\end{lem}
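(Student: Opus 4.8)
The plan is to establish the set equality by proving the two inclusions separately: the inclusion ``$\supseteq$'' rests on the fact that $\gamma_{x}$ realizes the distance to $\bm$ up to time $\tau(x)$, and the inclusion ``$\subseteq$'' is a direct consequence of Lemma~\ref{lem:foot vector}.

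The preliminary fact I would record is that for every $x\in\bm$ one has $\rho_{\bm}(\gamma_{x}(t))=t$ for all $t\in[0,\tau(x)]$ (with the understanding that $[0,\tau(x)]$ means $[0,\infty)$ when $\tau(x)=\infty$). For $t<\tau(x)$, pick $t'$ with $t<t'$ and $\rho_{\bm}(\gamma_{x}(t'))=t'$, which exists by the definition of $\tau(x)$; since $\gamma_{x}$ has unit speed, $d_{M}(\gamma_{x}(t),\gamma_{x}(t'))\leq t'-t$, so the triangle inequality gives $t'=\rho_{\bm}(\gamma_{x}(t'))\leq\rho_{\bm}(\gamma_{x}(t))+(t'-t)$, hence $\rho_{\bm}(\gamma_{x}(t))\geq t$, while $\rho_{\bm}(\gamma_{x}(t))\leq d_{M}(\gamma_{x}(t),x)\leq t$ is immediate. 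When $\tau(x)<\infty$, the curve $\gamma_{x}|_{[0,\tau(x))}$ has finite length, so by completeness of $(M,d_{M})$ it extends continuously to $t=\tau(x)$; letting $t\to\tau(x)^{-}$ and using continuity of $\rho_{\bm}$ yields $\rho_{\bm}(\gamma_{x}(\tau(x)))=\tau(x)$. In particular $\gamma_{x}(t)\in\inte M$ for $0<t\leq\tau(x)$, so $tu_{x}$ lies in the domain of $\expp$ and $\expp(tu_{x})=\gamma_{x}(t)$ for every such $t$.

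Granting this, ``$\supseteq$'' is immediate: if $t\in[0,\min\{r,\tau(x)\}]$ then $\rho_{\bm}(\expp(tu_{x}))=t\leq r$, so $\expp(tu_{x})\in B_{r}(\bm)$. For ``$\subseteq$'', let $p\in B_{r}(\bm)$ and set $l:=\rho_{\bm}(p)\leq r$. If $l=0$ then $p\in\bm$ (as $\bm$ is closed) and $p=\expp(0_{p})$ with $0\in[0,\min\{r,\tau(p)\}]$. If $l>0$ then $p\in\inte M$, and by Lemma~\ref{lem:foot vector} there is a foot point $x\in\bm$ of $p$ with $p=\gamma_{x}(l)=\expp(lu_{x})$; since $\gamma_{x}|_{[0,l]}$ is a minimal geodesic realizing $d_{M}(p,\bm)=l$, we have $\rho_{\bm}(\gamma_{x}(l))=l$, so $l$ lies in the set $\{t>0\mid\rho_{\bm}(\gamma_{x}(t))=t\}$ and therefore $l\leq\tau(x)$; thus $l\leq\min\{r,\tau(x)\}$ and $p$ belongs to the right-hand side. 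Combining the two inclusions proves the lemma.

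I expect the only delicate point to be the endpoint behaviour at $t=\tau(x)$ in the case $\tau(x)<\infty$: one must verify that $\gamma_{x}$ is still defined there, remains in $\inte M$, and still satisfies $\rho_{\bm}(\gamma_{x}(\tau(x)))=\tau(x)$, which is precisely where completeness of $(M,d_{M})$ and continuity of $\rho_{\bm}$ enter. Everything else is a formal consequence of the definitions of $\tau$ and $\expp$ together with Lemma~\ref{lem:foot vector}.
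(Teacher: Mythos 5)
Your proof is correct and follows essentially the same route as the paper: the inclusion ``$\subseteq$'' via Lemma~\ref{lem:foot vector} applied to a foot point of $p$, and ``$\supseteq$'' from the fact that $\rho_{\bm}(\gamma_{x}(t))=t$ for $t\leq\tau(x)$. The only difference is that you spell out (via the triangle inequality and continuity at $t=\tau(x)$) the monotonicity fact which the paper simply attributes to the definition of $\tau$; that extra care is harmless and sound.
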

\begin{proof}
Take $p\in B_{r}(\bm)$,
and let $x$ be a foot point on $\bm$ of $p$.
By Lemma \ref{lem:foot vector},
there exists a unique normal minimal geodesic $\gamma:[0,l]\to M$ from $x$ to $p$ such that $\gamma=\gamma_{x}|_{[0,l]}$,
where $l=\rho_{\bm}(p)$.
Since $x$ is a foot point on $\bm$ of $p$,
we have $l\leq r$,
and $l\leq \tau(x)$.
Hence,
\begin{equation*}
B_{r}(\bm)\subset \expp \left(\bigcup_{x\in \bm} \{tu_{x}\mid t\in [0,\min\{r,\tau(x)\}] \}\right).
\end{equation*}

On the other hand,
take $x\in \bm$ and $t\in [0,\min\{r,\tau(x)\}]$.
By the definition of $\tau$,
the point $x$ is a foot point on $\bm$ of $\gamma_{x}(t)$.
Therefore,
$\rho_{\bm}(\gamma_{x}(t))=t\leq r$.
This implies the opposite inclusion.
\end{proof}
For the inscribed radius $D(M,\bm)$ of $M$,
from the definition of $\tau$,
it follows that $\sup_{x\in \bm} \tau(x)\leq D(M,\bm)$.
Lemma \ref{lem:foot vector} implies the opposite.
Hence, 
we have $D(M,\bm)=\sup_{x\in \bm} \tau(x)$.

We put
\begin{align*}
TD_{\bm}  &:= \bigcup_{x\in \bm} \{\,t\,u_{x} \in T^{\perp}_{x}\bm \mid t\in[0,\tau(x)) \,\},\\
T\cut \bm &:= \bigcup_{x\in \bm} \{\,\tau(x)\,u_{x} \in T^{\perp}_{x}\bm \mid \tau(x)<\infty \,\},
\end{align*}
and define $D_{\bm}:=\expp (TD_{\bm})$ and $\cut \bm:=\expp (T\cut \bm)$.
We call $\cut \bm$ the \textit{cut locus for the boundary} $\bm$.
By Lemma \ref{lem:foot vector},
we have $\inte M=(D_{\bm}\setminus \bm)\cup \cut \bm$ and $M=D_{\bm}\cup \cut \bm$.

The continuity of $\tau$ tells us the following:
\begin{lem}\label{lem:compact}
Suppose $\bm$ is compact.
Then $D(M,\bm)<\infty$ if and only if $M$ is compact.
\end{lem}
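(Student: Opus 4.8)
The plan is to prove the two implications separately, and to extract the non-trivial direction from Lemma~\ref{lem:shinitai} together with the continuity of $\tau$. The direction ``$M$ compact $\Rightarrow D(M,\bm)<\infty$'' is immediate and does not even use compactness of $\bm$: the function $\rho_{\bm}$ is $1$-Lipschitz, hence continuous, so on a compact $M$ it attains its supremum and $D(M,\bm)=\max_{p\in M}\rho_{\bm}(p)<\infty$.

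For the converse, suppose $D:=D(M,\bm)<\infty$. Since $D=\sup_{x\in\bm}\tau(x)$, the function $\tau$ takes values in $(0,D]$; in particular it is finite-valued, and by Lemma~\ref{lem:conti} it is continuous on the compact set $\bm$. Every $p\in M$ satisfies $\rho_{\bm}(p)\le D$, so $M=B_{D}(\bm)$, and Lemma~\ref{lem:shinitai} applied with $r=D$ (so that $\min\{D,\tau(x)\}=\tau(x)$) gives
\[
M=\expp\!\left(\bigcup_{x\in\bm}\{\,t\,u_{x}\mid t\in[0,\tau(x)]\,\}\right).
\]
I would then realize $M$ as the continuous image of the compact space $\bm\times[0,1]$ under the map $\Phi(x,s):=\gamma_{x}(s\,\tau(x))$, whose image is exactly the right-hand side above; compactness of $M$ then follows at once.

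The step that needs care is the well-definedness and continuity of $\Phi$. First one must check that $\gamma_{x}$ is defined on the \emph{closed} interval $[0,\tau(x)]$: for $t\in(0,\tau(x))$ the point $\gamma_{x}(t)$ lies in $\inte M$ at distance $t>0$ from $\bm$; the points $\gamma_{x}(t_{i})$ for any sequence $t_{i}\uparrow\tau(x)$ form a Cauchy sequence (being on a unit-speed geodesic), and completeness of $(M,d_{M})$ yields a limit $p$ with $\rho_{\bm}(p)=\tau(x)>0$, hence $p\in\inte M$ and $\gamma_{x}$ extends past $p$, so $\gamma_{x}(\tau(x))$ makes sense. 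Continuity of $\Phi$ then follows from continuity of $\tau$ and the continuous dependence of interior geodesics on their initial data and parameter, exactly as was used in the proof of Lemma~\ref{lem:conti}. This is the only genuinely technical point; everything else is bookkeeping. Alternatively, one can sidestep $\Phi$ altogether: since $\bm$ is compact it has finite $d_{M}$-diameter, and $D<\infty$ forces $\diam(M,d_{M})\le 2D+\diam(\bm,d_{M})<\infty$ via the triangle inequality through foot points, so $M$ is a closed bounded subset of the proper metric space $(M,d_{M})$ and is therefore compact.
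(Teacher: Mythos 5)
Your proof is correct and is essentially the paper's argument: the paper also disposes of the easy direction by the finiteness of $\rho_{\bm}$ on a compact $M$, and proves the converse by writing $M$ as the continuous image under $\expp$ of the set $TD_{\bm}\cup T\cut \bm$, which is compact by the continuity of $\tau$ and the compactness of $\bm$ --- this is exactly your map $\Phi$ up to reparametrization of $\bm\times[0,1]$. Your closing alternative (boundedness of $(M,d_{M})$ plus properness from Hopf--Rinow) is also valid, but it is not the route the paper takes.
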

\begin{proof}
If $D(M,\bm)<\infty$,
then $\sup_{x\in \bm} \tau(x)<\infty$.
By the continuity of $\tau$,
the set $TD_{\bm}\cup T\cut \bm$ is closed in $T^{\perp}\bm$.
Since $\bm$ is compact,
the set is compact in $T^{\perp}\bm$.
The set $D_{\bm}\cup \cut \bm$ coincides with $M$.
The continuity of $\expp|_{TD_{\bm}\cup T\cut \bm}$ implies that $M$ is compact.
On the other hand,
if $M$ is compact,
then the function $\rho_{\bm}$ is finite on $M$;
in particular,
$D(M,\bm)<\infty$.
\end{proof}
Furthermore,
we have:
\begin{prop}\label{prop:vol of cut}
$\vol_{g} \cut \bm=0$.
\end{prop}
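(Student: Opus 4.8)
The plan is to express $\cut\bm$ as the image under the (locally Lipschitz) map $\expp$ of the set $T\cut\bm$, and to show that $T\cut\bm$ has measure zero inside an $(n-1)$-dimensional family of normal directions so that its image has $n$-dimensional volume zero. Concretely, parametrize a neighbourhood of $\bm$ in $\tbp$ by $\Phi(x,t):=(x,tu_x)$, so that $\expp\circ\Phi$ is the normal-exponential map written in the coordinates $(x,t)\in\bm\times[0,\infty)$. Then
\begin{equation*}
T\cut\bm=\{(x,\tau(x))\mid x\in\bm,\ \tau(x)<\infty\},
\end{equation*}
which is the graph of the function $\tau$ over the open subset $\{\tau<\infty\}\subset\bm$. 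Since $\tau$ is continuous by Lemma \ref{lem:conti}, this graph is a closed subset of $\bm\times\mathbb{R}$, and, being the graph of a continuous function on an $(n-1)$-manifold, it has $(n-1+1)$-dimensional Lebesgue measure zero in $\bm\times\mathbb{R}$. It therefore suffices to verify that $\expp$ maps measure-zero sets (in the $(x,t)$-coordinates, near the cut locus) to $\vol_g$-measure-zero sets.

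To make this rigorous I would argue locally. Fix $p\in\cut\bm$ with foot point $x$ and $\tau(x)<\infty$; choose a relatively compact coordinate chart $V\subset\bm$ around $x$ and consider the map $F:=\expp\circ\Phi$ on $V\times[0,R]$ for $R$ slightly larger than $\sup_{V}\tau$ (finite by continuity of $\tau$ and compactness of $\overline V$). On this compact domain $F$ is smooth, hence Lipschitz; a Lipschitz map from a subset of $\mathbb{R}^n$ into a Riemannian $n$-manifold sends Lebesgue-null sets to $\vol_g$-null sets. Since $(T\cut\bm)\cap(V\times[0,R])$ is null in $\mathbb{R}^n$ (graph of the continuous function $\tau|_V$), its image, which is $\cut\bm\cap F(V\times[0,R])$, is $\vol_g$-null. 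Because $\bm$ is a manifold it is covered by countably many such charts $V$ (and we may also pass to a cover by such neighbourhoods since $\cut\bm\cap\{\tau=\infty\}=\varnothing$), so $\cut\bm$ is a countable union of $\vol_g$-null sets, hence $\vol_g\cut\bm=0$.

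The one point that needs a little care — and the main technical obstacle — is the passage from "null in the coordinate domain" to "$\vol_g$-null in $M$" near points where $\expp$ is not an immersion (that is, near conjugate points of $\bm$, where the Jacobian $\theta$ of Section \ref{sec:Cut locus for the boundary} degenerates). This is exactly why one wants the Lipschitz-maps-preserve-null-sets statement rather than a change-of-variables formula: Lipschitzness of $F$ on the compact set $\overline V\times[0,R]$ holds regardless of where $dF$ drops rank, and that alone forces $\mathcal{L}^n$-null domain sets to have null image. Once this is granted, covering $\bm$ by countably many charts and summing finishes the proof.

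\begin{rem}
Alternatively one can avoid the measure-theoretic lemma entirely: since $\tau$ is continuous, $T\cut\bm\subset T\operatorname{Cut}\bm\cup\{\text{conjugate points}\}$, and on any chart the set $\{\tau<\tau_1\}$ is a region over which $\expp$ is a diffeomorphism with bounded Jacobian, so the graph of $\tau$ there maps to a null set by the ordinary area formula; the remaining part of $T\cut\bm$ lies in the first-conjugate-locus $\{(x,\tau_1(x))\}$, on which $\theta$ vanishes, and one checks directly that the image of a graph inside the zero set of the Jacobian is null. I would present the Lipschitz argument as the cleaner option.
\end{rem}
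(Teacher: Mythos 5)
Your strategy is essentially the paper's: by Lemma \ref{lem:conti} the graph $\{\,(x,\tau(x))\mid x\in\bm,\ \tau(x)<\infty\,\}$ is a null set of $\bm\times[0,\infty)$, and one pushes it forward under smooth (hence locally Lipschitz) maps between $n$-dimensional spaces, which preserve null sets; the paper does this by sending the graph into $\tbp$ via the smooth map $(x,t)\mapsto t\,u_{x}$ and then applying $\expp$, which is smooth on an open neighborhood of $T\cut\bm$ because $\cut\bm\subset\inte M$.

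One step as you wrote it is not justified: the assertion that $F=\expp\circ\Phi$ is smooth, hence Lipschitz, on the whole compact box $\overline{V}\times[0,R]$ with $R$ slightly larger than $\sup_{V}\tau$. On a manifold with boundary the geodesic $\gamma_{y}$ need not exist up to time $R$ for every $y\in V$: for $t>\tau(y)$ it may reach $\bm$ and terminate (or meet $\bm$ tangentially), and since $R$ is chosen relative to $\sup_{V}\tau$ rather than $\tau(y)$, this can happen inside your box, so $F$ may be undefined or fail to be smooth there. The repair is exactly the observation in the paper's proof: since $\rho_{\bm}(\gamma_{x}(\tau(x)))=\tau(x)>0$, every cut point lies in $\inte M$ and $\gamma_{x}((0,\tau(x)])\subset\inte M$, so $\expp$ is defined and smooth on an open neighborhood of each point $\tau(x)\,u_{x}$ of $T\cut\bm$ in $\tbp$. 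Your Lipschitz-image argument is only needed on relatively compact neighborhoods of points of the graph of $\tau$, not on a full box over a chart, and countably many such neighborhoods cover $T\cut\bm$; with that localization your proof is complete. (The alternative sketched in your closing remark is unnecessary; its first inclusion is vacuous as stated, and the case analysis near the conjugate locus is precisely what the Lipschitz argument lets you avoid.)
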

\begin{proof}
By Lemma \ref{lem:conti},
and by the Fubini theorem,
the graph
\begin{equation*}
\{\,(x,\tau(x)) \mid x\in \bm,\,\tau(x)<\infty \,\}
\end{equation*}
of $\tau$ is a null set of $\bm\times [0,\infty)$.
A map $\Psi:\bm \times [0,\infty)\to T^{\perp}\bm$ defined by $\Psi(x,t):=(x,t u_{x})$ is smooth.
In particular,
the set $T\cut \bm$ is also a null set of $T^{\perp}\bm$.
By the definition of $\tau$,
the set $\cut \bm$ is contained in $\inte M$.
Hence,
$\expp$ is smooth on an open neighborhood of $T\cut \bm$ in $T^{\perp}\bm$.
Therefore,
we see $\vol_{g} \cut \bm=0$.
\end{proof}
We next show the following characterization of $\tau$:
\begin{lem}\label{lem:cut}
Let $T>0$.
Take $x\in \bm$ with $\tau(x)<\infty$.
Then $T=\tau(x)$ if and only if $T=\rho_{\bm}(\gamma_{x}(T))$, 
and at least one of the following holds:
\begin{enumerate}
 \item $\gamma_{x}(T)$ is the first conjugate point of $\bm$ along $\gamma_{x}$;\label{enum:first conjugate}
 \item there exists a foot point $y\in \bm\setminus \{x\}$ on $\bm$ of $\gamma_{x}(T)$.\label{enum:distinct foot}
\end{enumerate}
\end{lem}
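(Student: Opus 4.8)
The plan is to mimic the classical characterization of the cut value for a point (as in Riemannian geometry without boundary), adapting the argument to the boundary setting via the normal exponential map $\expp$. I will prove the two implications separately.

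\emph{($\Leftarrow$) Sufficiency.} Assume $T = \rho_{\bm}(\gamma_x(T))$ and that at least one of \eqref{enum:first conjugate}, \eqref{enum:distinct foot} holds; I want $T = \tau(x)$. Since $T = \rho_{\bm}(\gamma_x(T))$, the definition of $\tau$ already gives $\tau(x) \geq T$, so it remains to rule out $\tau(x) > T$, i.e.\ to show $\rho_{\bm}(\gamma_x(t)) < t$ for all $t > T$. Suppose for contradiction $\rho_{\bm}(\gamma_x(t_1)) = t_1$ for some $t_1 > T$; then $\gamma_x|_{[0,t_1]}$ is a normal minimal geodesic from $\bm$ realizing the distance, and in particular $t_1 \leq \tau_1(x)$, so no conjugate point of $\bm$ occurs along $\gamma_x$ before $t_1$; this already contradicts \eqref{enum:first conjugate}. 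In case \eqref{enum:distinct foot}, let $y \neq x$ be a foot point of $\gamma_x(T)$; by Lemma~\ref{lem:foot vector} the geodesic $\gamma_y|_{[0,T]}$ is a second normal minimal geodesic from $\bm$ to $\gamma_x(T)$, distinct from $\gamma_x|_{[0,T]}$. Then the broken curve going $\bm \to \gamma_x(T)$ along $\gamma_y$ and then $\gamma_x(T) \to \gamma_x(t_1)$ along $\gamma_x$ has length $t_1 = \rho_{\bm}(\gamma_x(t_1))$; being a minimizing curve from $\bm$, it must be an unbroken smooth geodesic meeting $\bm$ orthogonally, forcing $\gamma_y'(T) = \gamma_x'(T)$ and hence $\gamma_y = \gamma_x$, a contradiction. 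So $\tau(x) = T$.

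\emph{($\Rightarrow$) Necessity.} Assume $T = \tau(x) < \infty$. That $T = \rho_{\bm}(\gamma_x(T))$ follows from the continuity of $\rho_{\bm}$ and the definition of $\tau$ as a supremum: for $t \nearrow T$ we have $\rho_{\bm}(\gamma_x(t)) = t$, so $\rho_{\bm}(\gamma_x(T)) = T$ by passing to the limit. Now suppose neither \eqref{enum:first conjugate} nor \eqref{enum:distinct foot} holds; I derive a contradiction by showing $\rho_{\bm}(\gamma_x(t)) = t$ for some $t$ slightly larger than $T$. Since $\gamma_x(T)$ is not conjugate to $\bm$, the map $\expp$ is a local diffeomorphism near $Tu_x \in \tbp$. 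Since $x$ is the \emph{only} foot point of $p := \gamma_x(T)$, pick a sequence $p_i \to p$ with $\rho_{\bm}(p_i) < \rho_{\bm}(p)$ chosen along $\gamma_x$ beyond $T$, say $p_i = \gamma_x(T + 1/i)$; if $\rho_{\bm}(p_i) = T + 1/i$ for some $i$ we are already done, so assume $\rho_{\bm}(p_i) < T + 1/i$ and let $y_i$ be a foot point of $p_i$. By properness of $(M,d_M)$ and compactness, after passing to a subsequence $y_i \to y \in \bm$, and $\rho_{\bm}(p_i) = d_M(p_i, y_i) \to d_M(p,y)$, which must equal $\rho_{\bm}(p) = T$; so $y$ is a foot point of $p$, whence $y = x$ by hypothesis. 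For large $i$, the vectors $\rho_{\bm}(p_i)\,u_{y_i} \in \tbp$ converge to $Tu_x$ and satisfy $\expp(\rho_{\bm}(p_i)u_{y_i}) = p_i = \expp((T+1/i)u_x)$; since $\expp$ is injective on a neighborhood of $Tu_x$, we get $\rho_{\bm}(p_i)u_{y_i} = (T+1/i)u_x$ for large $i$, which contradicts $\rho_{\bm}(p_i) < T + 1/i$. Hence one of \eqref{enum:first conjugate}, \eqref{enum:distinct foot} holds.

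\emph{Main obstacle.} The delicate point is the necessity direction: getting a genuine contradiction requires knowing that $\expp$ is a local diffeomorphism at $Tu_x$ (which is exactly the failure of \eqref{enum:first conjugate}) \emph{and} handling the subsequence extraction of foot points $y_i$ carefully — in particular, one must be sure the $y_i$ stay in a fixed compact set and that $u_{y_i} \to u_x$, using smoothness of $x \mapsto u_x$ on $\bm$ and the convergence of the realizing geodesics $\gamma_{y_i}|_{[0,\rho_{\bm}(p_i)]}$ to $\gamma_x|_{[0,T]}$ (continuous dependence of geodesics in $(\inte M, g)$ on initial data). This is where all the preparatory lemmas — Lemma~\ref{lem:foot vector}, the continuity results, and properness of $(M,d_M)$ — get used together.
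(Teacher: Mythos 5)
Your proof is correct and follows essentially the same route as the paper: the necessity direction (take $p_i=\gamma_x(T+1/i)$, foot points $y_i$ converging to a foot point $y$ of $\gamma_x(T)$, and use local injectivity of $\expp$ near $Tu_x$ when $y=x$ to contradict $\rho_{\bm}(p_i)<T+1/i$) is the paper's argument almost verbatim, and the sufficiency direction rests on the same facts ($\tau\le\tau_1$ and a shortcut argument at the junction). The only cosmetic difference is that in case (2) you use a single ``a minimizing concatenation must be smooth at the interior junction'' argument, whereas the paper splits into the sub-cases $\gamma_x'(T)=-\gamma_y'(T)$ and $\gamma_x'(T)\neq-\gamma_y'(T)$; both amount to the same reasoning.
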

\begin{proof}
First,
we assume $T=\rho_{\bm}(\gamma_{x}(T))$.
By the definition of $\tau$,
we have $T\leq \tau(x)$.
If (\ref{enum:first conjugate}) holds, 
then $T$ is equal to $\tau_{1}(x)$;
in particular,
$T=\tau(x)$.
Suppose that (\ref{enum:distinct foot}) holds.
We assume $T<\tau(x)$, 
and take $\delta>0$ such that $T+\delta<\tau(x)$.
If $\gamma'_{x}(T)=-\gamma'_{y}(T)$ at $\gamma_{x}(T)$, 
then $\gamma_{x}(T+\delta)=\gamma_{y}(T-\delta)$.
Since $T\leq \tau(y)$, we have
\begin{equation*}
\rho_{\bm}(\gamma_{x}(T+\delta))=\rho_{\bm}(\gamma_{y}(T-\delta))=T-\delta.
\end{equation*}
This is in contradiction with $T+\delta<\tau(x)$.
If $\gamma'_{x}(T)\neq -\gamma'_{y}(T)$ at $\gamma_{x}(T)$, then for all $t\in (T,T+\delta]$, we have
\begin{equation*}
\rho_{\bm}(\gamma_{x}(t))<d_{M}(\gamma_{x}(t),\gamma_{x}(T))+d_{M}(\gamma_{x}(T),y)\leq t.
\end{equation*}
This contradicts $t\leq T+\delta<\tau(x)$.
Hence,
we see $T=\tau(x)$.

Next,
we assume $T=\tau(x)$.
Then we have $T=\rho_{\bm}(\gamma_{x}(T))$.
Put $p:=\gamma_{x}(T)$.
Assuming that $p$ is not the first conjugate point of $\bm$ along $\gamma_{x}$,
we will prove (\ref{enum:distinct foot}).
Take an open neighborhood $\bar{U}$ of $(x,Tu_{x})$ in $T^{\perp}\bm$ such that $\expp|_{\bar{U}}:\bar{U}\to \expp(\bar{U})$ is a diffeomorphism.
Put $U:=\expp(\bar{U})$.
For every sufficiently large $i\in \mathbb{N}$, 
we put $p_{i}:=\gamma_{x}(T+1/i)$,
and take a foot point $x_{i}$ on $\bm$ of $p_{i}$.
By Lemma \ref{lem:foot vector},
there exists a unique normal minimal geodesic $\gamma_{i}:[0,l_{i}]\to M$ from $x_{i}$ to $p_{i}$ such that $\gamma_{i}=\gamma_{x_{i}}|_{[0,l_{i}]}$,
where $l_{i}=\rho_{\bm}(p_{i})$.
Since $(M,d_{M})$ is proper,
by taking a subsequence if necessary,
we may assume that
for some $y\in \bm$,
we have $x_{i}\to y$ in $\bm$.
Since $x_{i}$ is a foot point on $\bm$ of $p_{i}$ and $p_{i}\to p$ in $M$,
we see that $y$ is a foot point on $\bm$ of $p$.
If $x=y$,
then for every sufficiently large $i\in \mathbb{N}$, 
we have $(x_{i},l_{i}\,u_{x_{i}})\in \bar{U}$ and $\expp(x,(T+ 1/i)\,u_{x})=\expp (x_{i},l_{i}\,u_{x_{i}})$.
By the injectivity of $\expp|_{\bar{U}}$,
we have $T+ 1/i=l_{i}$.
This is in contradiction with $T+1/i>l_{i}$.
Hence,
we see $x\neq y$.
This completes the proof.
\end{proof}
From Lemma \ref{lem:cut},
we derive the following:
\begin{lem}\label{lem:cut and regular}
We have $\cut \bm \cap D_{\bm}=\emptyset$.
In particular, 
\begin{equation*}
\inte M=(D_{\bm}\setminus \bm) \sqcup \cut \bm,\quad M=D_{\bm}\sqcup \cut \bm.
\end{equation*}
\end{lem}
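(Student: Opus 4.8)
The only substantive claim is $\cut\bm\cap D_{\bm}=\emptyset$; once this is established, the two displayed identities are immediate. Indeed, it is already known that $\inte M=(D_{\bm}\setminus\bm)\cup\cut\bm$ and $M=D_{\bm}\cup\cut\bm$, while $\bm\subset D_{\bm}$ (via the zero vectors $0_{x}$) and $\cut\bm\subset\inte M$ (recall $\rho_{\bm}>0$ on $\cut\bm$ by Lemma~\ref{lem:cut}), so that $\cut\bm\cap\bm=\emptyset$ and the two unions become disjoint. So I would argue by contradiction, assuming some $p$ lies in $\cut\bm\cap D_{\bm}$, and extract from each of the two memberships the geodesic data needed to force a coincidence of geodesics.

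First I would unpack $p\in\cut\bm$: write $p=\gamma_{x}(\tau(x))$ with $\tau(x)<\infty$, and invoke Lemma~\ref{lem:cut} to get $\tau(x)=\rho_{\bm}(\gamma_{x}(\tau(x)))=\rho_{\bm}(p)$; then $d_{M}(p,x)\le\tau(x)=\rho_{\bm}(p)$ shows $x$ is a foot point of $p$, so by Lemma~\ref{lem:foot vector} the curve $\gamma_{x}|_{[0,\tau(x)]}$ is a normal minimal geodesic from $x$ to $p$ lying in $\inte M$ on $(0,\tau(x)]$. Next I would unpack $p\in D_{\bm}$: write $p=\gamma_{y}(t)$ with $t\in[0,\tau(y))$, and pick $t'>t$ with $\rho_{\bm}(\gamma_{y}(t'))=t'$ — taking $t'=\tau(y)$ (Lemma~\ref{lem:cut}) when $\tau(y)<\infty$, and using the defining supremum of $\tau(y)$ when $\tau(y)=\infty$. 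Since $\gamma_{y}|_{[0,t']}$ has length $t'=\rho_{\bm}(\gamma_{y}(t'))\le d_{M}(y,\gamma_{y}(t'))$, it is a normal minimal geodesic; as its subarcs are minimal and $\rho_{\bm}$ is $1$-Lipschitz, this gives $\rho_{\bm}(\gamma_{y}(s))=s$ for every $s\in[0,t']$. In particular $\gamma_{y}((0,t'])\subset\inte M$ and $t=\rho_{\bm}(p)=\tau(x)$.

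The heart of the argument is then a concatenation. Define $c\colon[0,t']\to M$ by $c(s)=\gamma_{x}(s)$ for $s\in[0,t]=[0,\tau(x)]$ and $c(s)=\gamma_{y}(s)$ for $s\in[t,t']$; this is well defined since $\gamma_{x}(\tau(x))=p=\gamma_{y}(t)$, and it is a unit-speed curve from $x$ to $\gamma_{y}(t')$ of length $t'$. Since $d_{M}(x,\gamma_{y}(t'))\ge\rho_{\bm}(\gamma_{y}(t'))=t'$, the curve $c$ is minimizing. Its image meets $\bm$ only at $c(0)=x$, and $c(t)=p\in\inte M$ is an interior parameter value, so by the regularity of minimizing curves (which near the interior point $p$ reduces to the boundaryless case) $c$ is smooth at $t$, whence $\gamma_{x}'(\tau(x))=c'(t)=\gamma_{y}'(t)$. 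By uniqueness of geodesics with prescribed initial point and velocity, $\gamma_{x}$ and $\gamma_{y}$ coincide near $p$, hence on their whole common parameter interval; tracing back to the boundary — using $\tau(x)=t$ — yields $x=\gamma_{x}(0)=\gamma_{y}(0)=y$. But then $t=\tau(x)=\tau(y)$, contradicting $t<\tau(y)$. This proves $\cut\bm\cap D_{\bm}=\emptyset$, and hence Lemma~\ref{lem:cut and regular}.

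The step I expect to be the main obstacle is exactly this concatenation argument: one must check that the glued curve genuinely realizes $d_{M}(\cdot,\gamma_{y}(t'))$ — which rests on the estimate $d_{M}(\cdot,\gamma_{y}(t'))\ge\rho_{\bm}(\gamma_{y}(t'))$ — that it stays inside $\inte M$ away from its boundary endpoint so that the standard first-variation/regularity fact applies at the interior break point $p$, and that ODE uniqueness then transports the coincidence $\gamma_{x}=\gamma_{y}$ all the way back to $\bm$. The auxiliary identity $\rho_{\bm}(\gamma_{y}(s))=s$ for $s\le t'$, though routine, is also genuinely needed, since membership of $p$ in $D_{\bm}$ only provides $p=\gamma_{y}(t)$ a priori, not $\rho_{\bm}(p)=t$.
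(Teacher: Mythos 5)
Your proof is correct, but it takes a genuinely different route from the paper's. The paper's own argument is much shorter because it leans on the characterization of $\tau$ in Lemma~\ref{lem:cut}: writing $p=\gamma_{x}(\tau(x))=\gamma_{y}(l)$ with $l<\tau(y)$, one gets $l=\rho_{\bm}(p)=\tau(x)$, hence $x\neq y$ (otherwise $\tau(y)=\tau(x)=l<\tau(y)$), and both $x$ and $y$ are foot points of $p$; condition (2) of Lemma~\ref{lem:cut} applied to $y$ then forces $l=\tau(y)$, a contradiction. You instead bypass the foot-point criterion (using only the easy direction of Lemma~\ref{lem:cut} together with Lemma~\ref{lem:foot vector}) and run the classical cut-locus mechanism directly: extend along $\gamma_{y}$ to a parameter $t'>t$ still realizing the distance to $\bm$, concatenate with $\gamma_{x}|_{[0,\tau(x)]}$, note the concatenation is minimizing because $d_{M}(x,\gamma_{y}(t'))\geq\rho_{\bm}(\gamma_{y}(t'))=t'$, remove the corner at $p$ by interior regularity of minimizers, and conclude $x=y$ by uniqueness of geodesics, contradicting $t<\tau(y)$. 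Both arguments are sound; the paper's buys brevity from the already-established Lemma~\ref{lem:cut}, while yours is more self-contained and exhibits explicitly the ``no minimizing extension past a cut point'' phenomenon (and in spirit is closer to the proof of Lemma~\ref{lem:cut} itself, though even there the paper argues via the dichotomy $\gamma_{x}'=-\gamma_{y}'$ or a strict triangle inequality rather than via smoothing the corner). One step worth a sentence in a final write-up: at the break point, the subarc of $c$ near $p$ lies in $\inte M$ and its length equals $d_{M}$ of its endpoints, which is at most the intrinsic distance of $\inte M$; hence it is minimizing in the boundaryless manifold $\inte M$, and only then does the standard regularity (totally normal neighborhoods) give smoothness at $p$ and the matching of velocities needed for the ODE-uniqueness step.
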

\begin{proof}
Suppose that
there exists $p\in \cut \bm \cap D_{\bm}$.
Then there exist $x,\,y\in \bm$ and $l\in (0,\tau(y))$ such that $p=\gamma_{x}(\tau(x))=\gamma_{y}(l)$.
By the definition of $\tau$,
we have $l=\tau(x)$;
in particular,
$x\neq y$.
Furthermore,
by the definition of $\tau$,
we see that $x$ and $y$ are foot points on $\bm$ of $p$.
By Lemma \ref{lem:cut},
we have $l=\tau(y)$.
This is a contradiction.
Therefore,
we have $\cut \bm \cap D_{\bm}=\emptyset$.
Since $\inte M=(D_{\bm}\setminus \bm)\cup  \cut \bm$ and $M=D_{\bm}\cup \cut \bm$,
we prove the lemma.
\end{proof}
For the connectedness of the boundary,
we show:
\begin{lem}\label{lem:shinitai4}
If $\cut \bm=\emptyset$,
then $\bm$ is connected.
\end{lem}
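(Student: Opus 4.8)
The plan is to exhibit a continuous retraction $\pi\colon M\to\bm$; then $\bm$, being the continuous image of the connected space $M$, is connected.

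First I would record that $\cut\bm=\emptyset$ is equivalent to $\tau(x)=\infty$ for every $x\in\bm$, and prove that in this situation every $p\in M$ has a \emph{unique} foot point on $\bm$. Existence is immediate since $(M,d_{M})$ is proper. For uniqueness, suppose $x\neq y$ were both foot points of some $p$; then $p\in\inte M$, and, setting $l:=\rho_{\bm}(p)>0$, Lemma \ref{lem:foot vector} gives $p=\gamma_{x}(l)=\gamma_{y}(l)$ with $l=\rho_{\bm}(\gamma_{x}(l))$. The argument used in the proof of Lemma \ref{lem:cut} to show that alternative (2) forces $T=\tau(x)$ only uses $l\le\tau(x)$, so it applies verbatim here and would give $l=\tau(x)$, contradicting $\tau(x)=\infty$. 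Hence the foot point is unique, and we obtain a well-defined map $\pi\colon M\to\bm$ sending each $p$ to its foot point, with $\pi|_{\bm}=\id_{\bm}$.

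Next I would check that $\pi$ is continuous. Let $p_{i}\to p$ in $M$ and put $x_{i}:=\pi(p_{i})$. Since $\rho_{\bm}$ is $1$-Lipschitz, $d_{M}(p,x_{i})\le d_{M}(p,p_{i})+d_{M}(p_{i},x_{i})=d_{M}(p,p_{i})+\rho_{\bm}(p_{i})$ remains bounded, so by properness $\{x_{i}\}$ is precompact in $M$, and its subsequential limits lie in $\bm$ because $\bm$ is closed. If $x_{i_{k}}\to x\in\bm$, then passing to the limit in $d_{M}(p_{i_{k}},x_{i_{k}})=\rho_{\bm}(p_{i_{k}})$ and using continuity of $d_{M}$ and of $\rho_{\bm}$ yields $d_{M}(p,x)=\rho_{\bm}(p)$; thus $x$ is a foot point of $p$, and by uniqueness $x=\pi(p)$. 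Since every subsequential limit of $\{x_{i}\}$ equals $\pi(p)$ and $\{x_{i}\}$ is precompact, $x_{i}\to\pi(p)$, proving continuity.

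Finally, $M$ is connected and $\pi$ is continuous, so $\pi(M)$ is connected; since $\pi|_{\bm}=\id_{\bm}$, we have $\pi(M)=\bm$, and therefore $\bm$ is connected. The only delicate point I anticipate is the uniqueness of foot points in the first step: Lemma \ref{lem:cut} is stated only for $\tau(x)<\infty$, so one must reuse the \emph{argument} of its proof (a point realizing its distance to $\bm$ by two distinct geodesics would have to be a cut point) rather than cite the statement directly; alternatively one can argue directly that a broken minimizing path through $p$ must be smooth, forcing the two geodesics to coincide.
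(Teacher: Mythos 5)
Your proof is correct, but it runs along a different track from the paper's. The paper argues by contradiction: assuming $\bm$ has components $\{\bm_{i}\}$, it partitions $D_{\bm}\setminus\bm$ into the sets $D_{\bm_{i}}$ of points whose (unique) foot point lies in $\bm_{i}$, observes these are mutually disjoint open subsets of $\inte M$, and then uses the decomposition $\inte M=(D_{\bm}\setminus\bm)\sqcup\cut\bm$ of Lemma \ref{lem:cut and regular} together with $\cut\bm=\emptyset$ to disconnect $\inte M$, a contradiction. You instead push connectedness forward: unique foot points give a well-defined foot-point map $\pi\colon M\to\bm$ with $\pi|_{\bm}=\id$, continuity follows from properness of $(M,d_{M})$ plus uniqueness via the standard subsequence argument, and then $\bm=\pi(M)$ is connected. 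Both proofs hinge on the same core fact --- uniqueness of foot points when there is no cut locus --- and your handling of it is actually slightly more careful than the paper's: the paper cites Lemma \ref{lem:cut} for this uniqueness even though that lemma is stated under $\tau(x)<\infty$, whereas you correctly note that one must rerun the argument of its proof (the two-foot-point alternative forces $\rho_{\bm}(p)=\tau(x)$, impossible when $\tau(x)=\infty$). What the paper's route buys is brevity, since it reuses the already established decomposition lemmas and never needs continuity of the foot-point assignment (only disjointness and openness of the $D_{\bm_{i}}$); what your route buys is a more self-contained, purely point-set argument that yields the stronger conclusion that $\bm$ is a continuous retract of $M$, from which connectedness is immediate.
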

\begin{proof}
Suppose that $\bm$ is not connected.
Let $\{\partial M_{i}\}_{i\geq 2}$ be the connected components of $\bm$.
By Lemma \ref{lem:cut},
for every $p\in D_{\bm}\setminus \bm$,
there exists a unique foot point on $\bm$ of $p$.
For each $i$,
we denote by $D_{\bm_{i}}$ the set of all points in $D_{\bm}\setminus \bm$ whose foot points are contained in $\bm_{i}$.
By the continuity of $\tau$,
the sets $D_{\bm_{i}}\setminus \bm,\, i\geq 2$, are mutually disjoint domains in $\inte M$.
Lemma \ref{lem:cut and regular} implies that
$\inte M$ coincides with $(\bigsqcup_{i\geq 2}D_{\bm_{i}})\sqcup \cut \bm$.
Since $\cut\bm=\emptyset$,
the set $\inte M$ is not connected.
This is a contradiction.
\end{proof}
By the continuity of $\tau$,
the set $TD_{\bm}\setminus 0(T^{\perp}\bm)$ is a domain in $T^{\perp}\bm$.
Using Lemma \ref{lem:cut},
we see the following:
\begin{lem}\label{lem:regular and injective}
$TD_{\bm}\setminus 0(T^{\perp}\bm)$ is a maximal domain in $T^{\perp}\bm$ on which $\expp$ is regular and injective.
\end{lem}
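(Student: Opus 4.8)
The plan is to verify two things: first, that $\expp$ is regular and injective on the open set $U_{0} := TD_{\bm}\setminus 0(T^{\perp}\bm)$; second, that no strictly larger domain in $T^{\perp}\bm$ has this property. For regularity on $U_{0}$: a point of $U_{0}$ has the form $t\,u_{x}$ with $0<t<\tau(x)\le \tau_{1}(x)$, so $\gamma_{x}(t)$ is not a conjugate point of $\bm$ along $\gamma_{x}$, which is exactly the statement that $d(\expp)$ is nonsingular at $t\,u_{x}$; hence $\theta(t,x)>0$ there and $\expp$ is a local diffeomorphism on $U_{0}$. For injectivity on $U_{0}$: suppose $\expp(t\,u_{x})=\expp(s\,u_{y})=:p$ with both arguments in $U_{0}$. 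By the definition of $\tau$, $\rho_{\bm}(\gamma_{x}(t))=t$ and $\rho_{\bm}(\gamma_{y}(s))=s$, so both $x$ and $y$ are foot points on $\bm$ of $p$, and $t=s=\rho_{\bm}(p)$. If $x\ne y$, then $y\in\bm\setminus\{x\}$ is a foot point of $\gamma_{x}(t)$, so by Lemma \ref{lem:cut} (alternative (\ref{enum:distinct foot})) we would get $t=\tau(x)$, contradicting $t<\tau(x)$. Hence $x=y$ and $t\,u_{x}=s\,u_{y}$, proving injectivity.

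For maximality: let $V\subset T^{\perp}\bm$ be any domain on which $\expp$ is regular and injective; I must show $V\subset U_{0}$, i.e. $V$ does not meet $0(T^{\perp}\bm)$ and does not meet $T^{\perp}\bm\setminus(TD_{\bm}\cup 0(T^{\perp}\bm))$. The first point: $\expp$ maps $0_{x}$ to $x\in\bm$, and along the zero-section $d(\expp)$ is degenerate (its image lands in the hypersurface $\bm$), so $V\cap 0(T^{\perp}\bm)=\emptyset$ by regularity; moreover near a zero-section point $\expp$ is two-to-one onto a neighborhood of $\bm$ by the normal-neighborhood structure, contradicting injectivity in any case. For the second point, suppose $w=t_{0}u_{x_{0}}\in V$ with $t_{0}\ge\tau(x_{0})$. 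Since $V$ is open, it contains $t\,u_{x}$ for all $(t,x)$ near $(t_{0},x_{0})$; pick such a point with $t>\tau(x)$ (possible by the continuity of $\tau$ from Lemma \ref{lem:conti}: if $t_{0}>\tau(x_{0})$ this is immediate, and if $t_{0}=\tau(x_{0})$ one moves slightly past). Then either $\gamma_{x}(t)$ lies beyond the first conjugate point, so some point $t'u_{x}\in V$ with $t'$ near $t$ is conjugate, contradicting regularity of $\expp$ on $V$; or $t>\tau_{1}(x)$ fails but still $t>\tau(x)$, in which case by Lemma \ref{lem:cut} there is a foot point $y\ne x$ of $\gamma_{x}(\tau(x))$, giving $\expp(\tau(x)u_{x})=\expp(\tau(x)u_{y})$. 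One then argues, using that $\gamma_{x}$ ceases to be minimizing past $\tau(x)$ together with a first-variation/shortcut estimate as in the proof of Lemma \ref{lem:cut}, that arbitrarily close to $w$ in $V$ there are two distinct normal vectors with the same $\expp$-image, contradicting injectivity of $\expp|_{V}$. Hence $V\subset U_{0}$.

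The main obstacle is this last maximality step: ruling out that an injective domain $V$ sneaks past the cut value $\tau(x)$ at an interior point of a geodesic segment. The conjugate-point case is clean (regularity fails), but the case where $\gamma_{x}(\tau(x))$ is a cut point only because of a competing foot point $y\ne x$ requires producing a genuine failure of injectivity \emph{inside the open set $V$}, not merely on its boundary. The clean way to do this is to note that since $\gamma_{x}$ is not minimizing on $[0,t]$ for $t>\tau(x)$, for such $t$ one has $\rho_{\bm}(\gamma_{x}(t))<t$, so by Lemma \ref{lem:foot vector} the foot point $z$ of $\gamma_{x}(t)$ satisfies $z\ne x$ and $\gamma_{z}|_{[0,\rho_{\bm}(\gamma_{x}(t))]}$ is a second normal geodesic reaching $\gamma_{x}(t)$; shrinking, the vectors $t\,u_{x}$ and $\rho_{\bm}(\gamma_{x}(t))\,u_{z}$ both lie in $V$ (the latter by openness of $V$ once $z_{i}\to$ some limit, as in Lemma \ref{lem:cut}) and map to the same point, contradicting injectivity. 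Assembling these observations gives $V\subseteq U_{0}$, and since $\expp$ is regular and injective on $U_{0}$ itself, $U_{0}$ is the maximal such domain.
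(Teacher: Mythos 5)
Your positive half is fine: on $TD_{\bm}\setminus 0(T^{\perp}\bm)$ regularity follows from $t<\tau(x)\leq\tau_{1}(x)$, and injectivity follows because two distinct foot points at distance $t<\tau(x)$ would force $t=\tau(x)$ by (the distinct-foot-point case of) Lemma \ref{lem:cut}. This is exactly the argument the paper has in mind when it states the lemma as a consequence of Lemma \ref{lem:cut} without a written proof (the only cosmetic point being that Lemma \ref{lem:cut} is stated for $\tau(x)<\infty$, so you should invoke the relevant step of its proof rather than the lemma itself when $\tau(x)=\infty$).

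The maximality half, however, has a genuine gap, and in fact you are trying to prove a statement that is false. You interpret maximality as ``every domain $V$ on which $\expp$ is regular and injective is contained in $TD_{\bm}\setminus 0(T^{\perp}\bm)$.'' Take $M$ the flat unit disk: $\tau\equiv\tau_{1}\equiv 1$, but $\theta(t,x)=\vert 1-t\vert\neq 0$ for $t>1$, and over a small arc of $\bm$ the set $\{tu_{x}\mid t\in(1.1,1.5)\}$ is a domain on which $\expp$ is regular and injective (the image points have radius $t-1$ in the direction antipodal to $x$), yet it lies entirely outside $TD_{\bm}$. So the lemma can only mean maximality with respect to inclusion: no domain \emph{strictly containing} $TD_{\bm}\setminus 0(T^{\perp}\bm)$ has the property. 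Your argument does not establish even this, because the step you yourself flag as the main obstacle is not closed: there is no reason the competing vector $\rho_{\bm}(\gamma_{x}(t))u_{z}$ belongs to $V$ --- for an arbitrary $V$ it simply need not (as the disk shows), and even when $V\supset TD_{\bm}\setminus 0(T^{\perp}\bm)$ one can have $\rho_{\bm}(\gamma_{x}(t))=\tau(z)$, so that vector lies in $T\cut\bm$, outside $TD_{\bm}$ and possibly outside $V$; likewise your conjugate-point branch fails, since $\tau_{1}(x)$ may be well below $t$ and $\tau_{1}(x)u_{x}\notin V$. The correct route is: if $V\supsetneq TD_{\bm}\setminus 0(T^{\perp}\bm)$ is a domain, it must contain a boundary point of that set, i.e.\ either a zero vector (where injectivity fails, since $\expp(\pm u)$ coincide under the paper's definition) or $w=\tau(x)u_{x}$ with $\tau(x)<\infty$; by Lemma \ref{lem:cut}, either $\gamma_{x}(\tau(x))$ is the first conjugate point, so $\expp$ is singular at $w\in V$ itself, or there is a second foot point $y\neq x$ of $\expp(w)$; if $\tau(x)<\tau(y)$ then $\tau(x)u_{y}\in TD_{\bm}\setminus 0(T^{\perp}\bm)\subset V$ has the same image as $w$, killing injectivity, and if $\tau(x)=\tau(y)$ one uses that $\expp$ is a local diffeomorphism at $w$ (by regularity on $V$) while $\gamma_{y}(s)\to\expp(w)$ as $s\uparrow\tau(y)$ with $su_{y}\in V$ staying away from $w$, again contradicting injectivity on $V$. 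Without an argument of this kind the maximality claim is unproved.
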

We show the smoothness of $\rho_{\bm}$ on the set $\inte M\setminus \cut \bm$.
\begin{prop}\label{prop:distance function}
The function $\rho_{\bm}$ is smooth on $\inte M\setminus \cut \bm$.
Moreover,
for each $p\in \inte M\setminus \cut \bm$,
the gradient vector $\nabla \rho_{\bm}(p)$ of $\rho_{\bm}$ at $p$
is given by $\nabla \rho_{\bm}(p)=\gamma'(l)$,
where $\gamma:[0,l]\to M$ is the normal minimal geodesic from the foot point on $\bm$ of $p$ to $p$.
\end{prop}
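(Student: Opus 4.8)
The plan is to work locally near a fixed point $p\in \inte M\setminus \cut \bm$ and to transport the standard ``distance function from a point is smooth off the cut locus'' argument to the boundary setting. First I would let $x\in \bm$ be the foot point on $\bm$ of $p$; by Lemma \ref{lem:cut and regular} this foot point is unique, and by Lemma \ref{lem:foot vector} the unique normal minimal geodesic from $x$ to $p$ is $\gamma=\gamma_{x}|_{[0,l]}$ with $l=\rho_{\bm}(p)=d_{M}(p,\bm)$ and $\gamma'(0)=u_{x}$. Since $p\in \inte M\setminus \cut \bm$ we have $(x,lu_{x})\in TD_{\bm}\setminus 0(T^{\perp}\bm)$, so by Lemma \ref{lem:regular and injective} the normal exponential map $\expp$ is a local diffeomorphism near $(x,lu_{x})$; in particular $l<\tau_{1}(x)$, so no $\bm$-Jacobi field along $\gamma$ vanishes at $l$.

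Next I would choose a neighborhood $\bar U$ of $(x,lu_{x})$ in $T^{\perp}\bm$ on which $\expp|_{\bar U}$ is a diffeomorphism onto an open set $U\subset \inte M$, and which is of the form $\{(y,tu_{y})\mid y\in V,\ t\in (l-\epsilon,l+\epsilon)\}$ for a neighborhood $V$ of $x$ in $\bm$ and small $\epsilon>0$; shrinking, we may assume $t<\tau(y)$ throughout, so that every $\expp(y,tu_{y})\in U$ lies in $D_{\bm}\setminus \cut\bm$ and has $y$ as its unique foot point with $\rho_{\bm}(\expp(y,tu_{y}))=t$. Hence on $U$ the function $\rho_{\bm}$ equals $t\circ(\expp|_{\bar U})^{-1}$, i.e.\ the composition of a smooth map with the smooth coordinate projection $(y,tu_{y})\mapsto t$; this proves $\rho_{\bm}\in C^{\infty}(U)$, and since $p$ was arbitrary, $\rho_{\bm}$ is smooth on all of $\inte M\setminus \cut\bm$.

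For the gradient formula I would use that $\rho_{\bm}$ is $1$-Lipschitz (it is a distance function), so $\Vert \nabla \rho_{\bm}(p)\Vert \le 1$, while along the radial geodesic $\gamma$ one has $\rho_{\bm}(\gamma(s))=s$ for $s$ near $l$, giving $g(\nabla\rho_{\bm}(p),\gamma'(l))=\tfrac{d}{ds}\big|_{s=l}\rho_{\bm}(\gamma(s))=1$. By Cauchy--Schwarz these two facts force $\nabla\rho_{\bm}(p)=\gamma'(l)$, a unit vector. The main obstacle is the bookkeeping in the second paragraph: one must make sure the chosen neighborhood $\bar U$ can simultaneously be taken small enough that (i) $\expp|_{\bar U}$ is a diffeomorphism, (ii) $t<\tau(y)$ on $\bar U$ (so that the image avoids $\cut\bm$ and foot points are unique with $\rho_{\bm}=t$), and (iii) it has product form in $(y,t)$; the first is Lemma \ref{lem:regular and injective}, the second uses the continuity of $\tau$ (Lemma \ref{lem:conti}) together with $l<\tau(x)$ (which holds because $p\notin\cut\bm$), and the third is automatic after further shrinking. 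Once these are arranged the identification $\rho_{\bm}=t\circ(\expp|_{\bar U})^{-1}$ is immediate and the rest is routine.
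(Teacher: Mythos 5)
Your proof is correct. For the smoothness part you follow essentially the same route as the paper: the paper invokes Lemma \ref{lem:cut and regular} to identify $\inte M\setminus \cut \bm$ with $D_{\bm}\setminus \bm$ and Lemma \ref{lem:regular and injective} to see that $\expp$ is a diffeomorphism of $TD_{\bm}\setminus 0(T^{\perp}\bm)$ onto this set, and then writes $\rho_{\bm}(q)=\Vert (\expp)^{-1}(q)\Vert$ globally, whereas you carry out the same identification on a small product-shaped neighborhood $\bar U$ and write $\rho_{\bm}=t\circ(\expp|_{\bar U})^{-1}$; the local bookkeeping you worry about (that $t<\tau(y)$ on $\bar U$, via continuity of $\tau$) is exactly what the global statement of Lemma \ref{lem:regular and injective} packages for you, so this is the same argument in local form. (Minor attribution point: uniqueness of the foot point for $p\in D_{\bm}\setminus\bm$ comes from Lemma \ref{lem:cut}, of which Lemma \ref{lem:cut and regular} is a consequence.) The gradient identity is where you genuinely diverge: the paper takes an arbitrary $v\in T_{p}M$, uses the smooth dependence of the foot point to build a smooth variation by normal minimal geodesics, and applies the first variation formula to get $(\rho_{\bm}\circ c)'(0)=g(v,\gamma'(l))$; you instead combine the $1$-Lipschitz bound $\Vert\nabla\rho_{\bm}(p)\Vert\le 1$ with $\rho_{\bm}(\gamma_{x}(s))=s$ for $s$ near $l$ (valid since $l<\tau(x)$) and equality in Cauchy--Schwarz. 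Your version is more elementary, since it avoids both the first variation formula and any appeal to smoothness of the foot-point map, while the paper's computation identifies the directional derivative in every direction directly; both are complete proofs.
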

\begin{proof}
By Lemma \ref{lem:regular and injective}, 
the map $\expp|_{TD_{\bm}\setminus 0(T^{\perp}\bm)}$ is a diffeomorphism onto $D_{\bm}\setminus \bm$.
Lemma \ref{lem:cut and regular} implies
$\inte M\setminus \cut \bm=D_{\bm}\setminus \bm$.
For all $q\in \inte M\setminus \cut \bm$, 
we have $\rho_{\bm}(q)=\Vert (\expp)^{-1}(q) \Vert$.
Hence, $\rho_{\bm}$ is smooth on $\inte M\setminus \cut \bm$.

For any vector $v\in T_{p}M$, 
we take a smooth curve $c:(-\epsilon,\epsilon)\to \inte M$ tangent to $v$ at $p=c(0)$.
We may assume $c(s)\in \inte M\setminus \cut \bm$ when $\vert s \vert$ is sufficiently small.
By Lemma \ref{lem:cut}, 
there exists a unique foot point $\bar{c}(s)$ on $\bm$ of $c(s)$.
By Lemma \ref{lem:foot vector},
we obtain a smooth variation of $\gamma$ by taking normal minimal geodesics in $M$ from $\bar{c}(s)$ to $c(s)$.
The first variation formula for the variation implies $(\rho_{\bm}\circ c)'(0)= g(v,\gamma'(l))$.
Therefore,
we have $\nabla \rho_{\bm}(p)=\gamma'(l)$.
\end{proof}

\section{Comparison theorems}\label{sec:Comparison theorems}
In this section,
we prove Theorem \ref{thm:volume comparison}.
\subsection{Basic comparison}
We refer to the following absolute comparison inequality that has been shown by Heintze and Karcher in Subsection 3.4 in \cite{HK}.
\begin{lem}[\cite{HK}]\label{lem:comp1}
Let $M$ be an $n$-dimensional, 
connected complete Riemannian manifold with boundary with Riemannian metric $g$.
Take a point $x\in \bm$.
Suppose that for all $t\in (0,\min \{\tau_{1}(x),\bar{C}_{\kappa,\lambda}\} )$,
we have $\ric_{g}(\gamma'_{x}(t)) \geq (n-1)\kappa$,
and suppose $H_{x}\geq \lambda$.
Then for all $t\in (0,\min \{\tau_{1}(x),\bar{C}_{\kappa,\lambda}\})$, we have
\begin{equation*}
\frac{\thetaex'(t,x)}{\thetaex(t,x)}\leq (n-1)\frac{s_{\kappa,\lambda}'(t)}{s_{\kappa,\lambda}(t)}.
\end{equation*}
\end{lem}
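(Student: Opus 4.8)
The plan is to reduce the inequality to a scalar Riccati differential inequality and to apply the standard comparison of solutions of such inequalities, as in Heintze--Karcher. Fix $x\in\bm$ and set $I:=(0,\min\{\tau_{1}(x),\bar{C}_{\kappa,\lambda}\})$. First I would observe that every $\bm$-Jacobi field stays orthogonal to $\gamma_{x}'$: for a $\bm$-Jacobi field $Y$, the function $t\mapsto g(Y,\gamma_{x}')$ is affine, and it together with its first derivative vanishes at $0$ by the initial conditions. Parallel translating an orthonormal basis $\{e_{x,i}\}$ of $T_{x}\bm$ along $\gamma_{x}$ yields an orthonormal frame $\{E_{i}(t)\}$ of $\gamma_{x}'(t)^{\perp}$, and writing $Y_{x,i}(t)=\sum_{j}D_{ji}(t)E_{j}(t)$ gives an $(n-1)\times(n-1)$ matrix $D(t)$ with $D(0)=\mathrm{id}$, $D'(0)=-A_{u_{x}}$, invertible for $t\in(0,\tau_{1}(x))$, and $\theta(t,x)=|\det D(t)|$; in particular $\theta'(t,x)/\theta(t,x)=\tr\bigl(D'(t)D(t)^{-1}\bigr)$ on $I$.

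Next I would analyze $U(t):=D'(t)D(t)^{-1}$. From the Jacobi equation, $D''=-R_{t}D$ where $R_{t}$ is the symmetric Jacobi operator $v\mapsto R(v,\gamma_{x}'(t))\gamma_{x}'(t)$ on $\gamma_{x}'(t)^{\perp}$, with $\tr R_{t}=\ric_{g}(\gamma_{x}'(t))$. The structural point is that $U$ is self-adjoint: the matrix $W(t):=D(t)^{\ast}D'(t)-D'(t)^{\ast}D(t)$ has $W'=D^{\ast}D''-(D'')^{\ast}D=D^{\ast}(R_{t}^{\ast}-R_{t})D=0$, so $W$ is constant, and $W(0)=-A_{u_{x}}+A_{u_{x}}^{\ast}=0$ because the shape operator is symmetric; hence $D^{\ast}D'=(D')^{\ast}D$, which is equivalent to $U=U^{\ast}$. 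Differentiating $U$ yields the matrix Riccati equation $U'+U^{2}+R_{t}=0$. Taking the trace, using the hypotheses $\tr R_{t}\geq(n-1)\kappa$ and the elementary bound $\tr(U^{2})\geq(\tr U)^{2}/(n-1)$ (which holds precisely because $U$ is self-adjoint), I get for $\varphi:=\tr U=\theta'/\theta$ the inequality $\varphi'\leq-\varphi^{2}/(n-1)-(n-1)\kappa$ on $I$.

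On the model side, put $\psi(t):=(n-1)s_{\kappa,\lambda}'(t)/s_{\kappa,\lambda}(t)$; since $s_{\kappa,\lambda}>0$ on $(0,\bar{C}_{\kappa,\lambda})$ this is smooth there, and the relation $s_{\kappa,\lambda}''+\kappa s_{\kappa,\lambda}=0$ gives $\psi'=-\psi^{2}/(n-1)-(n-1)\kappa$, so $\psi$ solves the borderline equation. As $t\to0^{+}$ one has $U(t)\to-A_{u_{x}}$, hence $\varphi(t)\to-(n-1)H_{x}$, while $\psi(t)\to(n-1)s_{\kappa,\lambda}'(0)=-(n-1)\lambda$, so $\limsup_{t\to0^{+}}(\varphi-\psi)(t)=-(n-1)(H_{x}-\lambda)\leq0$. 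To finish, I would set $w:=\varphi-\psi$, note $w'\leq-\frac{\varphi+\psi}{n-1}\,w$ on $I$, fix $t_{1}\in I$, and introduce $E(t):=\exp\!\bigl(\int_{t_{1}}^{t}\tfrac{\varphi(s)+\psi(s)}{n-1}\,ds\bigr)$, which is positive on $I$ and tends to a finite positive limit as $t\to0^{+}$ because $\varphi$ and $\psi$ are bounded near $0$. Then $(Ew)'\leq0$, so $Ew$ is nonincreasing on $I$, and since its limit at $0^{+}$ is $\leq0$ we conclude $Ew\leq0$, hence $w\leq0$ on $I$, which is exactly $\theta'/\theta\leq(n-1)s_{\kappa,\lambda}'/s_{\kappa,\lambda}$.

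The main obstacle is not a hard estimate but careful bookkeeping near $t=0$: unlike the comparison for the distance to an interior point, the $\bm$-Jacobi fields do not vanish at $t=0$, so one must carry the finite initial data $D(0)=\mathrm{id}$, $D'(0)=-A_{u_{x}}$ and the nonzero initial value $-(n-1)H_{x}$ of $\varphi$ through the comparison. The only genuinely substantive step is verifying, via the constancy of the Wronskian-type matrix $W$, that $U$ is self-adjoint, since this is what legitimizes the trace inequality $\tr(U^{2})\geq(\tr U)^{2}/(n-1)$; granting that, the rest is the routine Riccati comparison.
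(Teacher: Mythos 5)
Your argument is correct and is essentially the standard Heintze--Karcher proof (matrix Riccati equation for $U=D'D^{-1}$, self-adjointness of $U$ via the constant Wronskian $D^{\ast}D'-(D')^{\ast}D$, then trace and Riccati comparison with the model solution $(n-1)s_{\kappa,\lambda}'/s_{\kappa,\lambda}$), which is precisely the source the paper cites for this lemma without reproducing a proof. One small overstatement: not every $\bm$-Jacobi field is orthogonal to $\gamma_{x}'$ (e.g.\ $t\mapsto t\,\gamma_{x}'(t)$ satisfies the stated initial conditions), but orthogonality does hold for the fields $Y_{x,i}$ you actually use, since both $Y_{x,i}(0)=e_{x,i}$ and $Y_{x,i}'(0)=-A_{u_{x}}e_{x,i}$ are tangential, so your proof is unaffected.
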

\begin{rem}\label{rem:comp eq}
In the case in Lemma \ref{lem:comp1},
we choose an orthonormal basis $\{e_{x,i}\}_{i=1}^{n-1}$ of $T_{x}\bm$,
and let $\{Y_{x,i}\}^{n-1}_{i=1}$ be the $\bm$-Jacobi fields along $\gamma_{x}$
with initial conditions $Y_{x,i}(0)=e_{x,i}$ and $Y_{x,i}'(0)=-A_{u_{x}}e_{x,i}$.
Then there exists $t_{0}\in (0,\min \{\tau_{1}(x),\bar{C}_{\kappa,\lambda}\})$ such that
\begin{equation*}
\frac{\thetaex'(t_{0},x)}{\thetaex(t_{0},x)}=(n-1)\frac{s_{\kappa,\lambda}'(t_{0})}{s_{\kappa,\lambda}(t_{0})}.
\end{equation*}
if and only if
for all $i=1,\dots,n-1$ and $t\in [0,t_{0}]$,
we have $Y_{x,i}(t)=s_{\kappa,\lambda}(t)\, E_{x,i}(t)$,
where $E_{x,i}$ are the parallel vector fields along $\gamma_{x}$ with initial condition $E_{x,i}(0)=e_{x,i}$ (see \cite{HK}).
\end{rem}
The following Laplacian comparison theorem has been stated by Kasue in Corollary 2.42 in \cite{K2}.
\begin{thm}[\cite{K2}]\label{thm:boundary Laplacian}
Let $M$ be an $n$-dimensional,
connected complete Riemannian manifold with boundary with Riemannian metric $g$.
Take $x\in \bm$.
Suppose that for all $t\in (0,\tau(x))$,
we have $\ric_{g}(\gamma'_{x}(t)) \geq (n-1)\kappa$,
and suppose $H_{x}\geq \lambda$.
Then for all $t\in (0,\tau(x))$, we have
\begin{equation*}
\Delta \rho_{\bm} (\gamma_{x}(t)) \geq -(n-1)\frac{s_{\kappa,\lambda}'(t)}{s_{\kappa,\lambda}(t)}.
\end{equation*}
\end{thm}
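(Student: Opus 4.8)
The plan is to reduce the estimate, via the smoothness of $\rho_{\bm}$ off the cut locus, to the Heintze--Karcher Jacobian comparison of Lemma \ref{lem:comp1}. Fix $t_{0}\in(0,\tau(x))$. By the definition of $\tau$ and by Lemma \ref{lem:foot vector}, $\rho_{\bm}(\gamma_{x}(t))=t$ for $t\in[0,t_{0}]$ and $\gamma_{x}(t_{0})\in D_{\bm}\setminus\bm=\inte M\setminus\cut \bm$; hence, by Proposition \ref{prop:distance function}, $\rho_{\bm}$ is smooth near $\gamma_{x}(t_{0})$ with $\nabla\rho_{\bm}(\gamma_{x}(t))=\gamma_{x}'(t)$. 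In the normal (Fermi) coordinates of $\bm$ around $\gamma_{x}(t_{0})$ the Riemannian volume element of $g$ pulls back to $\theta(t,x)\,d\vol_{h}\,dt$, so the unit vector field $\nabla\rho_{\bm}=\partial_{t}$ has divergence $\theta'(t,x)/\theta(t,x)$ along $\gamma_{x}$; since $\Delta$ is the negative of the trace of the Hessian, this gives
\begin{equation*}
\Delta\rho_{\bm}(\gamma_{x}(t))=-\frac{\theta'(t,x)}{\theta(t,x)}\qquad\text{for }t\in(0,t_{0}].
\end{equation*}
It therefore suffices to bound $\theta'(t_{0},x)/\theta(t_{0},x)$ from above by $(n-1)s_{\kappa,\lambda}'(t_{0})/s_{\kappa,\lambda}(t_{0})$.

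I would first record that $\tau(x)\le\bar{C}_{\kappa,\lambda}$. This is vacuous when $\kappa,\lambda$ do not satisfy the ball-condition, since then $\bar{C}_{\kappa,\lambda}=\infty$. If they do satisfy it and, for contradiction, $\tau(x)>C_{\kappa,\lambda}$, then $\tau_{1}(x)\ge\tau(x)>C_{\kappa,\lambda}$, so $\min\{\tau_{1}(x),\bar{C}_{\kappa,\lambda}\}=C_{\kappa,\lambda}$ and the hypotheses $\ric_{g}(\gamma_{x}'(t))\ge(n-1)\kappa$ on $(0,C_{\kappa,\lambda})$ and $H_{x}\ge\lambda$ of Lemma \ref{lem:comp1} hold on this interval; hence $\theta'(t,x)/\theta(t,x)\le(n-1)s_{\kappa,\lambda}'(t)/s_{\kappa,\lambda}(t)$ there, and integrating this from a fixed $t_{1}$ gives $\theta(t,x)\le\theta(t_{1},x)\,(s_{\kappa,\lambda}(t)/s_{\kappa,\lambda}(t_{1}))^{n-1}\to0$ as $t\to C_{\kappa,\lambda}^{-}$, contradicting the positivity of the continuous Jacobian $\theta(\cdot,x)$ on $[0,\tau_{1}(x))\supset[0,C_{\kappa,\lambda}]$.

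Now, for the fixed $t_{0}\in(0,\tau(x))$ we have $t_{0}<\tau_{1}(x)$, $t_{0}<\bar{C}_{\kappa,\lambda}$ (so $s_{\kappa,\lambda}>0$ on $(0,t_{0}]$), and $\ric_{g}(\gamma_{x}'(t))\ge(n-1)\kappa$ for $t\in(0,t_{0}]$ together with $H_{x}\ge\lambda$. Since the Riccati (ODE) comparison underlying Lemma \ref{lem:comp1} is local in the time parameter --- the estimate at a value $t$ uses only the curvature hypotheses on $(0,t]$ --- this yields
\begin{equation*}
\frac{\theta'(t_{0},x)}{\theta(t_{0},x)}\le(n-1)\frac{s_{\kappa,\lambda}'(t_{0})}{s_{\kappa,\lambda}(t_{0})}.
\end{equation*}
Combined with the displayed formula for $\Delta\rho_{\bm}$, this gives $\Delta\rho_{\bm}(\gamma_{x}(t_{0}))\ge-(n-1)s_{\kappa,\lambda}'(t_{0})/s_{\kappa,\lambda}(t_{0})$; as $t_{0}\in(0,\tau(x))$ was arbitrary, the theorem follows.

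The main obstacle is ensuring the argument uses only the hypothesis that $\ric_{g}(\gamma_{x}'(t))\ge(n-1)\kappa$ on $(0,\tau(x))$, which may be a proper subinterval of $(0,\tau_{1}(x))$; this forces the two auxiliary points above --- the local-in-$t$ reading of Lemma \ref{lem:comp1}, and the prior inscribed-radius bound $\tau(x)\le\bar{C}_{\kappa,\lambda}$ which guarantees $s_{\kappa,\lambda}>0$ on the relevant range. By comparison, the identification of $\Delta\rho_{\bm}$ along $\gamma_{x}$ with $-\theta'/\theta$ (equivalently, the Bochner/Riccati evolution of the mean curvature of the level sets of $\rho_{\bm}$) is routine.
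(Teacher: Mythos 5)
Your argument is correct: the paper itself does not prove Theorem \ref{thm:boundary Laplacian} (it quotes it from Kasue \cite{K2}), but the route you take --- smoothness of $\rho_{\bm}$ on $D_{\bm}\setminus \bm$ via Proposition \ref{prop:distance function}, the identity $\Delta \rho_{\bm}(\gamma_{x}(t))=-\theta'(t,x)/\theta(t,x)$, and then the Heintze--Karcher comparison of Lemma \ref{lem:comp1} --- is exactly the reduction the paper itself records in Remark \ref{rem:lap eq}. Your two auxiliary points (the bound $\tau(x)\le \bar{C}_{\kappa,\lambda}$, and the observation that the Riccati comparison at $t_{0}$ uses the curvature hypothesis only on $(0,t_{0}]$, so that assuming $\ric_{g}(\gamma_{x}'(t))\ge (n-1)\kappa$ merely on $(0,\tau(x))$ suffices) correctly bridge the mismatch between the theorem's hypotheses and the interval $(0,\min\{\tau_{1}(x),\bar{C}_{\kappa,\lambda}\})$ appearing in Lemma \ref{lem:comp1}.
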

\begin{rem}\label{rem:lap eq}
In the case in Theorem \ref{thm:boundary Laplacian},
for all $t\in (0,\tau(x))$,
we have $\Delta \rho_{\bm} (\gamma_{x}(t))=-\thetaex'(t,x)/\thetaex(t,x)$.
Therefore,
the equality case in Theorem \ref{thm:boundary Laplacian}
results into that in Lemma \ref{lem:comp1} (see Remark \ref{rem:comp eq}).
\end{rem}
By Lemma \ref{lem:comp1},
we have the following relative comparison inequality.
\begin{lem}\label{lem:comp2}
Let $M$ be an $n$-dimensional, 
connected complete Riemannian manifold with boundary with Riemannian metric $g$.
Take a point $x\in \bm$.
Suppose that for all $t\in (0,\min \{\tau_{1}(x),\bar{C}_{\kappa,\lambda}\} )$,
we have $\ric_{g}(\gamma'_{x}(t)) \geq (n-1)\kappa$,
and suppose $H_{x}\geq \lambda$.
Then for all $s,t\in [0,\min \{\tau_{1}(x),\bar{C}_{\kappa,\lambda}\})$ with $s\leq t$,
\begin{equation*}
\frac{\thetaex(t,x)}{\thetaex(s,x)}\leq \frac{s_{\kappa,\lambda}^{n-1}(t)}{s_{\kappa,\lambda}^{n-1}(s)};
\end{equation*}
in particular,
$\theta(t,x)\leq s^{n-1}_{\kappa,\lambda}(t)$.
Moreover,
if $\kappa$ and $\lambda$ satisfy the ball-condition, 
then $\tau_{1}(x)\leq \const$.
\end{lem}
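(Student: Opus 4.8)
The plan is to derive Lemma~\ref{lem:comp2} directly from the pointwise logarithmic-derivative estimate in Lemma~\ref{lem:comp1} by integration, and then to read off the ball-condition consequence from the behaviour of $s_{\kappa,\lambda}$. Concretely, fix $x\in\bm$ and set $I:=(0,\min\{\tau_{1}(x),\bar{C}_{\kappa,\lambda}\})$. On $I$ the function $t\mapsto \thetaex(t,x)$ is smooth and positive (it is a product of norms of linearly independent $\bm$-Jacobi fields before the first conjugate value), and likewise $s_{\kappa,\lambda}$ is smooth and positive on $(0,\bar C_{\kappa,\lambda})$ by the definition of $\bar C_{\kappa,\lambda}=\inf\{t>0\mid s_{\kappa,\lambda}(t)=0\}$ in the ball-condition case and on all of $(0,\infty)$ otherwise. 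Hence
\[
\frac{d}{dt}\log\frac{\thetaex(t,x)}{s_{\kappa,\lambda}^{\,n-1}(t)}
=\frac{\thetaex'(t,x)}{\thetaex(t,x)}-(n-1)\frac{s_{\kappa,\lambda}'(t)}{s_{\kappa,\lambda}(t)}\leq 0
\]
on $I$ by Lemma~\ref{lem:comp1}, so $t\mapsto \thetaex(t,x)/s_{\kappa,\lambda}^{\,n-1}(t)$ is non-increasing on $I$. For $s,t\in[0,\min\{\tau_{1}(x),\bar C_{\kappa,\lambda}\})$ with $s\leq t$, if $0<s$ this gives the claimed inequality immediately; the endpoint case $s=0$ follows by letting $s\downarrow 0$, using $\thetaex(0,x)=1$ and $\lim_{s\downarrow0}s_{\kappa,\lambda}^{\,n-1}(s)=s_{\kappa,\lambda}^{\,n-1}(0)=1$ (recall $s_{\kappa,\lambda}(0)=1$), which also yields the ``in particular'' statement $\theta(t,x)\le s_{\kappa,\lambda}^{n-1}(t)$.

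For the final assertion, suppose $\kappa$ and $\lambda$ satisfy the ball-condition, so $\bar C_{\kappa,\lambda}=C_{\kappa,\lambda}<\infty$ and $s_{\kappa,\lambda}(t)>0$ on $(0,C_{\kappa,\lambda})$ with $s_{\kappa,\lambda}(C_{\kappa,\lambda})=0$. Assume for contradiction that $\tau_{1}(x)>C_{\kappa,\lambda}$. Then $C_{\kappa,\lambda}\in I$ is not attained but every $t<C_{\kappa,\lambda}$ lies in the interior, and $\thetaex$ stays smooth and positive on a neighbourhood of $[C_{\kappa,\lambda}-\varepsilon,C_{\kappa,\lambda}]$ since $C_{\kappa,\lambda}<\tau_{1}(x)$. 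Fixing any $s\in(0,C_{\kappa,\lambda})$ and letting $t\uparrow C_{\kappa,\lambda}$ in the monotonicity inequality forces
\[
\thetaex(C_{\kappa,\lambda},x)\le \thetaex(s,x)\,\frac{s_{\kappa,\lambda}^{\,n-1}(C_{\kappa,\lambda})}{s_{\kappa,\lambda}^{\,n-1}(s)}=0,
\]
contradicting $\thetaex(C_{\kappa,\lambda},x)>0$ (no conjugate point of $\bm$ before $\tau_{1}(x)$). Hence $\tau_{1}(x)\le C_{\kappa,\lambda}=\const$.

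I expect the only delicate point to be the justification of the endpoint limits $s\downarrow 0$ and $t\uparrow \bar C_{\kappa,\lambda}$: one must be careful that the quotient $\thetaex/s_{\kappa,\lambda}^{\,n-1}$ extends continuously to $t=0$ with value $1$ (using the Taylor expansions $\thetaex(t,x)=1-(n-1)H_{x}t+o(t)$ and $s_{\kappa,\lambda}(t)=1-\lambda t+o(t)$, both of which are finite at $0$ so the ratio is continuous there), and that $s_{\kappa,\lambda}$ does not vanish prematurely on $(0,\bar C_{\kappa,\lambda})$, which is exactly the content of the definition of $\bar C_{\kappa,\lambda}$. Everything else is a one-line integration of the inequality supplied by Lemma~\ref{lem:comp1}.
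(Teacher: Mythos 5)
Your proof is correct and follows essentially the same route as the paper: integrating the logarithmic-derivative inequality of Lemma \ref{lem:comp1} to get monotonicity of $\thetaex(t,x)/s_{\kappa,\lambda}^{n-1}(t)$ (the paper phrases this as a comparison with $\theta(t,\tilde{x})=s_{\kappa,\lambda}^{n-1}(t)$ in the model), letting $s\to 0$ with $\thetaex(0,x)=s_{\kappa,\lambda}(0)=1$ for the absolute bound, and deducing $\tau_{1}(x)\leq \const$ by contradiction from $\thetaex(\const,x)=0$. Your positivity claim $\thetaex(\const,x)>0$ for $\const<\tau_{1}(x)$ is exactly the paper's step that a vanishing Jacobian would produce a nonzero $\bm$-Jacobi field vanishing at $\const$ and hence a conjugate point before $\tau_{1}(x)$, so the two arguments coincide.
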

\begin{proof}
Take $\tilde{x}\in \partial M^{n}_{\kappa,\lambda}$.
By Lemma \ref{lem:comp1}, 
for all $t\in (0,\min \{\tau_{1}(x),\bar{C}_{\kappa,\lambda}\})$,
\begin{equation*}
\frac{d}{dt} \log \frac{\theta(t,\tilde{x}) }{\thetaex(t,x)}=\frac{\theta'(t,\tilde{x})}{\theta(t,\tilde{x})}-\frac{\thetaex'(t,x)}{\thetaex(t,x)}\geq 0.
\end{equation*}
Hence,
for all $s,t\in (0,\min \{\tau_{1}(x),\bar{C}_{\kappa,\lambda}\})$ with $s\leq t$,
we have
\begin{equation*}
\frac{\thetaex(t,x)}{\thetaex(s,x)}\leq \frac{\theta(t,\tilde{x})}{\theta(s,\tilde{x})}.
\end{equation*}
In the inequality,
by letting $s\to 0$,
we have $\thetaex(t,x)\leq \theta(t,\tilde{x})$.
Hence, 
for all $s,t\in [0,\min \{\tau_{1}(x),\bar{C}_{\kappa,\lambda}\})$ with $s\leq t$,
we have the desired inequality.

Let $\kappa$ and $\lambda$ satisfy the ball-condition.
We suppose $\const<\tau_{1}(x)$.
For all $t\in [0,\const)$, 
we have $\thetaex(t,x)\leq s^{n-1}_{\kappa,\lambda}(t)$.
By letting $t\to \const$, 
we have $\thetaex(\const,x)=0$.
Since $\const<\tau_{1}(x)$,
the point $\gamma_{x}(\const)$ is not a conjugate point of $\bm$ along $\gamma_{x}$.
Hence,
there exists a nonzero $\bm$-Jacobi field $Y$ along $\gamma_{x}$ such that $Y(\const)=0$;
in particular, 
$\gamma_{x}(\const)$ is a conjugate point of $\bm$ along $\gamma_{x}$.
This is a contradiction.
Therefore, 
we have $\tau_{1}(x)\leq \const$.
\end{proof}
\subsection{Inscribed radius comparison}
Using Lemma \ref{lem:comp2},
we will give a proof of the following lemma that has been already proved by Kasue in Theorem A in \cite{K3}.
\begin{lem}[\cite{K3}]\label{lem:shinitai2}
Let $\kappa\in \mathbb{R}$ and $\lambda \in \mathbb{R}$ satisfy the ball-condition.
Let $M$ be an $n$-dimensional, 
connected complete Riemannian manifold with boundary such that
$\ric_{M}\geq (n-1)\kappa$ and $H_{\bm}\geq \lambda$.
Then for all $x\in \bm$,
we have $\tau(x)\leq \const$;
in particular,
$\dm \leq \const$.
\end{lem}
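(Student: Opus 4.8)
The plan is to reduce the statement directly to the relative comparison inequality of Lemma \ref{lem:comp2}, which already contains the essential Jacobi field estimate. First I would fix an arbitrary $x\in\bm$ and verify that the hypotheses of Lemma \ref{lem:comp2} hold along the normal geodesic $\gamma_{x}$. The bound $H_{x}\geq\lambda$ is immediate from $H_{\bm}\geq\lambda$. For the curvature hypothesis, observe that $\ric_{g}$ is a continuous function on the unit tangent bundle of all of $M$ (the metric is smooth up to $\bm$), so the inequality $\ric_{M}\geq(n-1)\kappa$, defined as an infimum over the unit tangent bundle of $\inte M$, persists on the closure; in particular $\ric_{g}(\gamma_{x}'(t))\geq(n-1)\kappa$ for every admissible $t$, hence for all $t\in(0,\min\{\tau_{1}(x),\bconst\})$, regardless of whether $\gamma_{x}(t)$ lies in $\inte M$ or momentarily on $\bm$.

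Since $\kappa$ and $\lambda$ satisfy the ball-condition, $\bconst=\const$, and Lemma \ref{lem:comp2} then gives $\tau_{1}(x)\leq\const$. Combining this with the elementary inequality $\tau(x)\leq\tau_{1}(x)$ recorded in Section \ref{sec:Cut locus for the boundary} (which follows from the fact that $t>\rho_{\bm}(\gamma_{x}(t))$ whenever $t>\tau_{1}(x)$), we obtain $\tau(x)\leq\const$. As $x\in\bm$ was arbitrary and $\dm=\sup_{x\in\bm}\tau(x)$, the inscribed radius bound $\dm\leq\const$ follows at once.

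The only point needing a little care is the transfer of the Ricci bound from $\inte M$ to those parameters (lying between $\tau(x)$ and $\tau_{1}(x)$) at which $\gamma_{x}$ might return to $\bm$; this is precisely what the continuity observation above handles, and it is the sole minor obstacle. Everything else is a direct invocation of Lemma \ref{lem:comp2} together with the definitions of $\tau$ and $\dm$.
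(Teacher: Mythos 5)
Your overall route is the same as the paper's: everything is reduced to the ``moreover'' part of Lemma \ref{lem:comp2} together with the elementary relation $\tau(x)\leq\tau_{1}(x)$ and $\dm=\sup_{x\in\bm}\tau(x)$. The difference is purely in the logical arrangement: you invoke Lemma \ref{lem:comp2} directly for every $x\in\bm$, whereas the paper argues by contradiction, assuming $\const<\tau(x)$ first. That rearrangement is where a genuine gap appears. To apply Lemma \ref{lem:comp2} you need not only the curvature bound $\ric_{g}(\gamma_{x}'(t))\geq(n-1)\kappa$ on $(0,\min\{\tau_{1}(x),\const\})$, but also that $\gamma_{x}$ exists as a geodesic of $M$ on that whole interval, so that $\tau_{1}(x)$, the Jacobi fields and $\theta(t,x)$ are defined there. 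A geodesic emanating perpendicularly from $\bm$ may meet $\bm$ again at some time $T_{0}<\const$ before any conjugate point occurs (think of a thin lens-shaped domain), and at a transversal hit it cannot be continued inside $M$; beyond $T_{0}$ nothing in Lemma \ref{lem:comp2} makes sense. Your continuity observation only transfers the Ricci lower bound from $\inte M$ to boundary points the geodesic might touch; it cannot supply the existence of the geodesic, of $\theta(\cdot,x)$, or of $\tau_{1}(x)$ past the first return to the boundary, so the ``sole minor obstacle'' you identify is not the real one.

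The gap is easily closed, and there are two standard ways. One is the paper's: suppose for contradiction that $\const<\tau(x)$; then by the definition of $\tau$ the segment $\gamma_{x}|_{(0,\tau(x)]}$ lies in $\inte M$, so on $(0,\const]$ the geodesic exists, stays in the interior (making your continuity step unnecessary), and Lemma \ref{lem:comp2} yields $\tau_{1}(x)\leq\const<\tau(x)$, contradicting $\tau(x)\leq\tau_{1}(x)$. The other is to keep your direct argument but add the missing case: if $\gamma_{x}$ meets $\bm$ at some time $T_{0}\leq\const$, then $\rho_{\bm}(\gamma_{x}(T_{0}))=0<T_{0}$ forces $\tau(x)<T_{0}\leq\const$ outright; if it does not, the geodesic is defined and remains in $M$ up to $\const$ and your application of Lemma \ref{lem:comp2} (with the continuity remark for possible tangential boundary touches) goes through. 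Either patch makes your argument complete; as written, the unconditional invocation of Lemma \ref{lem:comp2} is not justified.
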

\begin{proof}
Take $x\in \bm$.
By the definition of $\tau$,
the geodesic $\gamma_{x}|_{(0,\tau(x)]}$ lies in $\inte M$.
If $\const<\tau(x)$,
then by Lemma \ref{lem:comp2},
we see that $\gamma_{x}(\const)$ is a conjugate point of $\bm$ along $\gamma_{x}$.
We obtain $\tau_{1}(x)<\tau(x)$.
This contradicts the relation between $\tau$ and $\tau_{1}$.
Hence,
$\tau(x)\leq \const$.
\end{proof}
The following rigidity theorem has been proved in Theorem A in \cite{K3}.
\begin{thm}[\cite{K3}]\label{thm:ball rigid}
Let $\kappa\in \mathbb{R}$ and $\lambda \in \mathbb{R}$ satisfy the ball-condition.
Let $M$ be an $n$-dimensional, 
connected complete Riemannian manifold with boundary such that
$\ric_{M}\geq (n-1)\kappa$ and $H_{\bm}\geq \lambda$.
If there exists a point $p\in M$ such that $\rho_{\bm}(p)=\const$,
then the metric space $(M,d_{M})$ is isometric to $(\ball,d_{\ball})$.
\end{thm}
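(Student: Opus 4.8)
The strategy is to first prove $\rho_{\bm}+\rho_{p}\equiv\const$ by a maximum‑principle argument, then deduce rigidity of the $\bm$‑Jacobi fields from the resulting equality in the Laplacian comparison, and finally read off a warped‑product description of $M$ and match it with $\ball$. Two elementary facts about the model functions are needed: (i) $s_{\kappa,\lambda}(t)=c\,\sk(\const-t)$ on $[0,\const]$ with $c:=-s_{\kappa,\lambda}'(\const)>0$, since $t\mapsto\sk(\const-t)$ solves $f''+\kappa f=0$ and vanishes at $\const$; differentiating its logarithm gives $s_{\kappa,\lambda}'(t)/s_{\kappa,\lambda}(t)=-\sk'(\const-t)/\sk(\const-t)$ on $(0,\const)$; and (ii) on the interval where $\sk>0$ the function $v\mapsto\sk'(v)/\sk(v)$ is strictly decreasing, because $(\sk'/\sk)'=-\kappa-(\sk'/\sk)^{2}$. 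Also $\const<\pi/\sqrt{\kappa}$ when $\kappa>0$, so $(0,\const)$ lies inside the positivity interval of $\sk$.

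Put $H:=\rho_{\bm}+\rho_{p}-\const$. The triangle inequality and $\rho_{\bm}(p)=\const$ give $H\geq 0$ on $M$, while Lemma~\ref{lem:shinitai2} gives $\dm=\const$. Choosing a foot point $x_{0}\in\bm$ of $p$, Lemma~\ref{lem:foot vector} identifies $\gamma_{x_{0}}|_{[0,\const]}$ with the unique minimal geodesic from $x_{0}$ to $p$, whence $\tau(x_{0})=\tau_{1}(x_{0})=\const$ (using Lemma~\ref{lem:comp2}) and $H\equiv 0$ along $\gamma_{x_{0}}|_{(0,\const)}$. On a small connected neighbourhood $\Omega\subset\inte M$ of $\gamma_{x_{0}}((0,\const))$ on which $\rho_{\bm}<\const$ and $\rho_{p}$ stays in a compact subinterval of the positivity interval of $\sk$, the comparison theorems give, in the barrier sense, $\Delta\rho_{\bm}\geq-(n-1)(s_{\kappa,\lambda}'/s_{\kappa,\lambda})(\rho_{\bm})$ (Theorem~\ref{thm:boundary Laplacian}) and $\Delta\rho_{p}\geq-(n-1)(\sk'/\sk)(\rho_{p})$ (Lemma~\ref{lem:point Laplacian}); since $\rho_{p}\geq\const-\rho_{\bm}$, the monotonicity in (ii) and the identity $(\sk'/\sk)(\const-\rho_{\bm})=-(s_{\kappa,\lambda}'/s_{\kappa,\lambda})(\rho_{\bm})$ yield $\Delta H\geq 0$ on $\Omega$. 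As $H\geq 0$ attains the value $0$ at interior points of $\Omega$, the Calabi maximum principle forces $H\equiv 0$ on $\Omega$. To propagate: if $H(q)=0$ with $q\in\inte M\setminus\{p\}$, then concatenating a minimal geodesic from a foot point of $q$ to $q$ with a minimal geodesic from $q$ to $p$ produces a minimal geodesic $\gamma_{x_{q}}|_{[0,\const]}$ from $\bm$ to $p$ through $q$ along which $H\equiv 0$, so the same local argument gives $H\equiv 0$ near $q$; since $\inte M\setminus\{p\}$ is connected ($n\geq 2$) and $H$ is continuous, $H\equiv 0$ on $M$.

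From $\rho_{\bm}+\rho_{p}\equiv\const$ I would conclude that every $x\in\bm$ satisfies $\rho_{p}(x)=\const$, that the minimal geodesic from $x$ to $p$ is $\gamma_{x}|_{[0,\const]}$, and that $\tau(x)=\const$ and $\gamma_{x}(\const)=p$; hence $\cut\bm=\{p\}$ and $\rho_{p}=\const-\rho_{\bm}$ is smooth on $\inte M\setminus\{p\}$. There $\Delta\rho_{\bm}=-\Delta\rho_{p}\leq-(n-1)(s_{\kappa,\lambda}'/s_{\kappa,\lambda})(\rho_{\bm})$ by Lemma~\ref{lem:point Laplacian} (again via the identity above), so together with Theorem~\ref{thm:boundary Laplacian} equality holds, and by Remarks~\ref{rem:lap eq} and \ref{rem:comp eq} we get $Y_{x,i}(t)=s_{\kappa,\lambda}(t)E_{x,i}(t)$ for all $x\in\bm$, all $i$, and $t\in[0,\const)$. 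By Lemmas~\ref{lem:cut and regular} and \ref{lem:regular and injective}, $\expp$ restricts to a diffeomorphism of $\{t\,u_{x}:x\in\bm,\ t\in(0,\const)\}$ onto $\inte M\setminus\{p\}$, and the rigid Jacobi fields show that in the corresponding Fermi coordinates the metric equals $dt^{2}+s_{\kappa,\lambda}^{2}(t)h$. This extends smoothly across $t=0$ (recovering $\bm$, as $s_{\kappa,\lambda}(0)=1$); across $t=\const$, setting $r=\const-t$ turns the metric into $dr^{2}+\sk^{2}(r)(c^{2}h)$, which is a smooth metric at $r=0$ only if $(\bm,c^{2}h)$ is the unit round sphere, i.e. $h=\sk^{2}(\const)\,ds_{n-1}^{2}=h^{n-1}_{\kappa,\lambda}$. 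This produces a Riemannian isometry with boundary $M\to\ball$, and Lemma~\ref{lem:isometry with with boundary} upgrades it to an isometry $(M,d_{M})\to(\ball,d_{\ball})$ of metric spaces.

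The main obstacle is the second step: making the barrier (support‑function) versions of the two Laplacian comparison inequalities rigorous away from the smooth loci of $\rho_{\bm}$ and $\rho_{p}$, invoking the maximum principle correctly, and doing the book‑keeping that keeps $\rho_{p}$ inside the interval on which $\sk'/\sk$ is defined and decreasing — this is precisely where the hypothesis $\dm=\const$, hence $\rho_{p}\leq\const$ after propagation, is used. The identification at the collapsing point $p$ in the last step is routine but must be stated with care.
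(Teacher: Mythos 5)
The paper itself contains no proof of this statement: Theorem \ref{thm:ball rigid} is quoted from Theorem A of \cite{K3}, so there is no in-paper argument to match yours against. Your proposal is a self-contained proof, and its architecture is sound; in spirit it runs parallel to the paper's own proof of Theorem \ref{thm:splitting} in Section \ref{sec:splitting}, with the excess function $\rho_{\bm}+\rho_{p}-\const$ playing the role that $b_{\gamma_{x_{0}}}-\rho_{\bm}$ plays there, Lemma \ref{lem:point Laplacian} and Theorem \ref{thm:boundary Laplacian} supplying the two opposing Laplacian bounds (note the paper's sign convention, $\Delta=-\operatorname{trace}\mathrm{Hess}$, so your $\Delta H\geq 0$ means $-H$ is subharmonic in the paper's sense), the maximum principle plus an open--closed propagation giving $\rho_{\bm}+\rho_{p}\equiv\const$, and the equality discussion (Remarks \ref{rem:comp eq} and \ref{rem:lap eq}) yielding the warped-product form exactly as in Proposition \ref{prop:half vol2}. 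Two points are worth recording. First, the obstacle you single out (barrier versions of the two comparisons at non-smooth points) can be avoided altogether: the open segment $\gamma_{x_{0}}((0,\const))$ lies in $D_{\bm}$, which is disjoint from $\cut\bm$ and whose complement $\cut\bm$ is closed in $\inte M$ (Lemmas \ref{lem:cut and regular}, \ref{lem:regular and injective}, Proposition \ref{prop:distance function}), and each interior point of the segment admits a unique minimizer from $p$, lying in $\inte M$ and free of conjugate points, so $\rho_{p}$ is smooth nearby; hence $\Omega$ (and likewise each $\Omega_{q}$ in the propagation step) may be taken inside the common smooth locus of $\rho_{\bm}$ and $\rho_{p}$, and the classical strong maximum principle suffices. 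Second, your endgame is consistent: the Wronskian of $s_{\kappa,\lambda}$ and $\sk$ gives $-s_{\kappa,\lambda}'(\const)\,\sk(\const)=1$, i.e.\ $c=1/\sk(\const)$, so smoothness of the metric at $p$ forces $h=\sk^{2}(\const)\,ds_{n-1}^{2}=h^{n-1}_{\kappa,\lambda}$ and the warped product is precisely $\ball$; Lemma \ref{lem:isometry with with boundary} then upgrades the Riemannian isometry with boundary to the metric-space isometry claimed. What your route buys over the bare citation is an elementary proof in the style of \cite{CK} and \cite{EH}; what it costs is the bookkeeping above, all of which is standard.
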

\subsection{Volume comparison}
By the coarea formula (see e.g., Theorem 3.2.3 in \cite{F}),
we have the following:
\begin{lem}\label{lem:vol}
Let $M$ be a connected complete Riemannian manifold with boundary with Riemannian metric $g$.
Suppose $\partial M$ is compact.
Let $r$ be a positive number such that
$U_{r}(\bm)$ is a normal neighborhood of $\bm$.
Then we have 
\begin{equation*}
\vol_{g} B_{r}(\bm)=\int_{\bm} \int^{r}_{0}\thetae(t,x)\,dt\,d\vol_{h}.
\end{equation*}
\end{lem}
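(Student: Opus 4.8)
The plan is to realise $B_{r}(\bm)$, up to a $\vol_{g}$-null set, as a diffeomorphic image of an explicit region in the normal bundle on which the relevant Jacobian is exactly $\theta$, and then to apply the area formula.

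First I would observe that the hypothesis that $U_{r}(\bm)$ is a normal neighbourhood of $\bm$ forces $\tau(x)\geq r$ for every $x\in \bm$. Indeed, if $\tau(x_{0})<r$ for some $x_{0}$, then $p:=\gamma_{x_{0}}(\tau(x_{0}))$ satisfies $\rho_{\bm}(p)=\tau(x_{0})<r$, so $p\in U_{r}(\bm)\setminus \bm$; by Lemma \ref{lem:cut}, either $p$ is a conjugate point of $\bm$ along $\gamma_{x_{0}}$, so that $\expp$ is not regular at $\tau(x_{0})u_{x_{0}}\in (\expp)^{-1}(U_{r}(\bm)\setminus \bm)$, contradicting property (1) of a normal neighbourhood, or $p$ has a foot point $y\neq x_{0}$, contradicting the uniqueness of foot points in $U_{r}(\bm)$. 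Hence $\tau\geq r$ on $\bm$, and then $\tau_{1}(x)\geq\tau(x)\geq r$ as well, so $\theta(t,x)$ is defined for $t\in[0,r)$; moreover Lemma \ref{lem:shinitai} gives $B_{r}(\bm)=\expp\big(\{\,t\,u_{x}\mid x\in \bm,\ t\in [0,r]\,\}\big)$.

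Next I would perform the change of variables through the smooth map $F\colon \bm\times [0,r]\to M$, $F(x,t):=\gamma_{x}(t)=\expp(x,tu_{x})$, with $\bm\times [0,r]$ carrying the product measure $d\vol_{h}\,dt$. For fixed $x$ and an orthonormal basis $\{e_{x,i}\}_{i=1}^{n-1}$ of $T_{x}\bm$, $dF_{(x,t)}$ sends $\partial_{t}$ to $\gamma_{x}'(t)$ and $e_{x,i}$ to $Y_{x,i}(t)$. Since $t\mapsto g(Y_{x,i}(t),\gamma_{x}'(t))$ is affine and vanishes together with its derivative at $t=0$ (as $Y_{x,i}(0)\in T_{x}\bm$ and $Y_{x,i}'(0)=-A_{u_{x}}e_{x,i}\in T_{x}\bm$), it vanishes identically, so the Jacobian of $F$ at $(x,t)$ equals $\Vert \gamma_{x}'(t)\wedge Y_{x,1}(t)\wedge \cdots \wedge Y_{x,n-1}(t)\Vert=\Vert Y_{x,1}(t)\wedge \cdots \wedge Y_{x,n-1}(t)\Vert=\theta(t,x)$ for $t<\tau_{1}(x)$. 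If $F(x,t)=F(x',t')$ with $(x,t)\neq (x',t')$, then (using $\tau\geq r$, the definition of $\tau$, and the uniqueness of foot points off $\cut\bm$ from Lemma \ref{lem:cut}) one checks $F(x,t)\in \cut\bm$; since $\vol_{g}\cut\bm=0$ by Proposition \ref{prop:vol of cut} and $\bm$ is a hypersurface, $\#F^{-1}(p)=1$ for $\vol_{g}$-a.e. $p\in B_{r}(\bm)$. As $\bm$ is compact, $F$ is Lipschitz, and the area formula (Theorem 3.2.3 in \cite{F}) yields
\begin{equation*}
\vol_{g}B_{r}(\bm)=\int_{B_{r}(\bm)}\#F^{-1}(p)\,d\vol_{g}=\int_{\bm\times [0,r]}\theta(t,x)\,d\vol_{h}\,dt=\int_{\bm}\int^{r}_{0}\theta(t,x)\,dt\,d\vol_{h},
\end{equation*}
where the last step is Fubini's theorem, legitimate since $\theta\geq 0$ and $\bm$ is compact.

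I expect the only genuinely delicate point to be the bookkeeping that $\bm\times [0,r]$ covers $B_{r}(\bm)$ with multiplicity one off a $\vol_{g}$-null set; this rests entirely on the structure results of Section \ref{sec:Cut locus for the boundary}, in particular on $\tau\geq r$ and on $\vol_{g}\cut\bm=0$. The identification of the Jacobian of $F$ with $\theta$ is essentially the definition of $\theta$, once the perpendicularity $g(Y_{x,i},\gamma_{x}')\equiv 0$ has been recorded.
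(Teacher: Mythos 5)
Your argument is correct and is essentially the paper's own: the paper proves Lemma \ref{lem:vol} simply by invoking the coarea/area formula of Theorem 3.2.3 in \cite{F} for the normal exponential map, whose Jacobian at $(x,t\,u_{x})$ is by definition $\thetae(t,x)$, and your proposal just supplies the details (that the normal-neighborhood hypothesis forces $\tau\geq r$ on $\bm$, the perpendicularity of the $\bm$-Jacobi fields to $\gamma_{x}'$, and multiplicity one off the null set $\cut\bm\cup\bm$). These details are consistent with the structure results of Section \ref{sec:Cut locus for the boundary}, so no gap.
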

From Lemma \ref{lem:vol},
we derive the following:
\begin{lem}\label{lem:vol2}
Let $M$ be a connected complete Riemannian manifold with boundary with Riemannian metric $g$.
Suppose $\partial M$ is compact.
Then for all $r>0$,
we have 
\begin{equation*}
\vol_{g} B_{r}(\bm)=\int_{\bm} \int^{\min\{r,\tau(x)\}}_{0}\thetae(t,x)\,dt\,d\vol_{h}.
\end{equation*}
\end{lem}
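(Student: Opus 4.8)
The plan is to upgrade the local formula of Lemma \ref{lem:vol} to arbitrary $r$ by performing a single global change of variables over the region on which $\rho_{\bm}$ is smooth and discarding the rest as a null set. For $(x,t)\in\bm\times(0,\infty)$ write $F(x,t):=\expp(t\,u_{x})=\gamma_{x}(t)$; via $(x,t)\mapsto t\,u_{x}$ we identify $\bm\times(0,\infty)$ with an open subset of $T^{\perp}\bm$, and under this identification $TD_{\bm}\setminus 0(T^{\perp}\bm)$ becomes $\{(x,t)\mid 0<t<\tau(x)\}$.

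First I would fix $r>0$ and introduce the set $\Omega_{r}:=\{(x,t)\in\bm\times(0,\infty)\mid t<\min\{r,\tau(x)\}\}$, which is open because $\tau$ is continuous (Lemma \ref{lem:conti}). Since $t<\tau(x)$ on $\Omega_{r}$, Lemma \ref{lem:regular and injective} makes $F|_{\Omega_{r}}$ a diffeomorphism onto an open subset of $D_{\bm}\setminus\bm$; moreover, the definition of $\tau$ forces $\rho_{\bm}(F(x,t))=t$ there (from $\rho_{\bm}(\gamma_{x}(t'))=t'$ for some $t'\ge t$ one deduces $\rho_{\bm}(\gamma_{x}(t))=t$ by the triangle inequality), so $F(\Omega_{r})\subseteq B_{r}(\bm)$. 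Next I would show $B_{r}(\bm)\setminus F(\Omega_{r})$ is $\vol_{g}$-null. By Lemma \ref{lem:shinitai} every point of $B_{r}(\bm)$ equals $\expp(t\,u_{x})$ for some $0\le t\le\min\{r,\tau(x)\}$; a leftover point must therefore satisfy $t=0$ (so it lies in $\bm$), $t=\tau(x)\le r$ (so it lies in $\cut\bm$), or $t=r<\tau(x)$ (so it lies in $F(\bm\times\{r\})$). These three sets are respectively a smooth hypersurface, a null set by Proposition \ref{prop:vol of cut}, and the image of a smooth map from the $(n-1)$-dimensional manifold $\bm\times\{r\}$; all have $\vol_{g}$-measure zero. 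Hence $\vol_{g}B_{r}(\bm)=\vol_{g}F(\Omega_{r})$.

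Finally I would apply the change of variables formula to $F|_{\Omega_{r}}$. With respect to the product metric $h+dt^{2}$, the absolute Jacobian of $F$ at $(x,t)$ equals $\theta(t,x)$: the differential sends $\partial_{t}$ to the unit vector $\gamma_{x}'(t)$ and an orthonormal basis $\{e_{x,i}\}$ of $T_{x}\bm$ to the $\bm$-Jacobi fields $Y_{x,i}(t)$, which are orthogonal to $\gamma_{x}'(t)$, so the Jacobian is $\Vert Y_{x,1}(t)\wedge\cdots\wedge Y_{x,n-1}(t)\Vert=\theta(t,x)$ --- exactly the identity underlying Lemma \ref{lem:vol}. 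Thus, by the change of variables formula and Fubini's theorem,
\[
\vol_{g}B_{r}(\bm)=\vol_{g}F(\Omega_{r})=\int_{\bm}\int^{\min\{r,\tau(x)\}}_{0}\theta(t,x)\,dt\,d\vol_{h},
\]
which is the claim. The step requiring the most care is the null-set bookkeeping in the second paragraph --- in particular, confirming that none of the leftover points actually belongs to $F(\Omega_{r})$, which follows from the injectivity of $\expp$ on $TD_{\bm}\setminus 0(T^{\perp}\bm)$ (Lemma \ref{lem:regular and injective}) combined with $M=D_{\bm}\sqcup\cut\bm$ (Lemma \ref{lem:cut and regular}); the change of variables itself is then routine.
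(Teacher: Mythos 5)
Your proposal is correct and takes essentially the same route as the paper: decompose $B_{r}(\bm)$ via Lemma~\ref{lem:shinitai}, use Lemma~\ref{lem:regular and injective} to get a diffeomorphism on the region $\{t\,u_{x}\mid 0<t<\min\{r,\tau(x)\}\}$, discard the null remainder (handled in the paper by Proposition~\ref{prop:vol of cut}), and integrate the Jacobian $\thetae(t,x)$ by change of variables and Fubini, which the paper packages as the citation of Lemma~\ref{lem:vol}. Your explicit null-set bookkeeping and direct computation of the Jacobian are simply the details the paper leaves implicit.
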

\begin{proof}
Take $r>0$.
By Lemma \ref{lem:shinitai},
we have
\begin{equation*}
B_{r}(\bm)=\expp \left(\bigcup_{x\in \bm} \{tu_{x}\mid t\in [0,\min\{r,\tau(x)\}] \}\right).
\end{equation*}
From Lemma \ref{lem:regular and injective},
it follows that
the map $\expp$ is diffeomorphic on $\bigcup_{x\in \bm} \{tu_{x}\mid t\in (0,\min\{r,\tau(x)\}) \}$.
Therefore,
by Proposition \ref{prop:vol of cut} and Lemma \ref{lem:vol}, 
we have the desired equality.
\end{proof}
We prove Theorem \ref{thm:volume comparison}.
\begin{proof}[Proof of Theorem \ref{thm:volume comparison}]
We define a function $\bthetae:[0,\infty) \times \bm \to \mathbb{R}$ by
\[
  \bthetae(t,x) := \begin{cases}
                            \thetae(t,x) & \text{if $t\leq \tau(x)$}, \\
                            0           & \text{if $t> \tau(x)$}.
                       \end{cases}
\]
By Lemma \ref{lem:vol2},
we have 
\begin{equation*}
\vol_{g} B_{r}(\bm)=\int_{\bm} \int^{r}_{0}\bthetae(t,x)\,dt\,d\vol_{h}.
\end{equation*}
Lemma \ref{lem:shinitai2} implies that
for each $x\in \bm$,
we have $\tau(x)\leq \bar{C}_{\kappa,\lambda}$.
Therefore,
from Lemma \ref{lem:comp2}, 
it follows that
for all $s,t\in [0,\infty)$ with $s\leq t$,
\begin{equation*}
\bthetae(t,x)\; \bar{s}_{\kappa,\lambda}^{n-1}(s) \leq \bthetae(s,x)\; \bar{s}_{\kappa,\lambda}^{n-1}(t).
\end{equation*}
Integrating the both sides of the above inequality over $[0,r]$ with respect to $s$,
and then doing that over $[r,R]$ with respect to $t$, we see
\begin{equation*}
\frac{\int^{R}_{r}\bthetae(t,x)\,dt}{\int^{r}_{0}\bthetae(s,x)\,ds}\leq \frac{\int^{R}_{r} \bar{s}_{\kappa,\lambda}^{n-1}(t)\,dt}{\int^{r}_{0} \bar{s}_{\kappa,\lambda}^{n-1}(s)\,ds}.
\end{equation*}
Hence,
we have
\begin{align*}
\frac{\vol_{g} B_{R}(\bm)}{\vol_{g} B_{r}(\bm)}&=    1+\frac{\int_{\bm} \int^{R}_{r}\bthetae(t,x)\,dt\,d\vol_{h}}{\int_{\bm} \int^{r}_{0}\bthetae(s,x)\,ds\,d\vol_{h}}\\
                                               &\leq 1+\frac{\int^{R}_{r}\bar{s}_{\kappa,\lambda}^{n-1}(t)\,dt}{\int^{r}_{0}\bar{s}_{\kappa,\lambda}^{n-1}(s)\,ds}
                                               =\frac{\vol B_{R}(\partial M^{n}_{\kappa,\lambda})}{\vol B_{r}(\partial M^{n}_{\kappa,\lambda})}.
\end{align*}
This completes the proof of Theorem \ref{thm:volume comparison}.
\end{proof}
\begin{rem}\label{rem:volume eq}
In the case in Theorem \ref{thm:volume comparison},
we suppose that there exists $R>0$ such that
for all $r\in (0,R]$, we have
\begin{equation*}
\frac{\vol_{g} B_{R}(\bm)}{\vol_{g} B_{r}(\bm)}=\frac{\vol B_{R}(\partial M^{n}_{\kappa,\lambda})}{\vol B_{r}(\partial M^{n}_{\kappa,\lambda})}.
\end{equation*}
In this case, 
for all $t\in (0,R]$ and $x\in \bm$, 
we have $\bthetae(t,x)=\bar{s}^{n-1}_{\kappa,\lambda}(t)$.
We choose an orthonormal basis $\{ e_{x,i} \}_{i=1}^{n-1}$ of $T_{x}\bm$.
Let $Y_{x,i}$ be the $\bm$-Jacobi field along $\gamma_{x}$
with initial conditions $Y_{x,i}(0)=e_{x,i}$ and $Y_{x,i}'(0)=-A_{u_{x}}e_{x,i}$.
For all $i=1,\dots,n-1$,
and for all $t\in [0,\min \{R,\bar{C}_{\kappa,\lambda}\}]$ and $x\in \bm$, 
we have $Y_{x,i}(t)=s_{\kappa,\lambda}(t)\, E_{x,i}(t)$,
where $E_{x,i}$ are the parallel vector fields along $\gamma_{x}$ with initial condition $E_{x,i}(0)=e_{x,i}$.
\end{rem}

\section{Volume growth rigidity}\label{sec:Volume growth rigidity}
\subsection{Volume growth}
By Theorem \ref{thm:volume comparison},
we have the following:
\begin{prop}\label{prop:volume growth sup}
Let $M$ be an $n$-dimensional,
connected complete Riemannian manifold with boundary with Riemannian metric $g$ such that $\ric_{M}\geq (n-1)\kappa$ and $H_{\bm} \geq \lambda$.
Suppose $\partial M$ is compact.
Let $h$ denote the induced Riemannian metric on $\bm$.
Then
\begin{equation*}
\limsup_{r\to \infty} \frac{\vol_{g}B_{r}(\bm)}{f_{n,\kappa,\lambda}(r)} \leq \vol_{h}\bm.
\end{equation*}
\end{prop}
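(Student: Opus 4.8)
The plan is to deduce the estimate directly from the ingredients already assembled in the proof of Theorem~\ref{thm:volume comparison}, obtaining in fact the sharper pointwise bound $\vol_{g}B_{r}(\bm)\le(\vol_{h}\bm)\,f_{n,\kappa,\lambda}(r)$ for \emph{every} $r>0$; the assertion about the $\limsup$ then follows at once by dividing by $f_{n,\kappa,\lambda}(r)>0$ and letting $r\to\infty$.

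First I would invoke Lemma~\ref{lem:vol2} together with the definition of $\bthetae$ from the proof of Theorem~\ref{thm:volume comparison} to write, for every $r>0$,
\[
\vol_{g}B_{r}(\bm)=\int_{\bm}\int_{0}^{r}\bthetae(t,x)\,dt\,d\vol_{h}.
\]
Next, since $\bm$ is compact, Lemma~\ref{lem:shinitai2} gives $\tau(x)\le\bconst$ for every $x\in\bm$, so the monotonicity inequality
\[
\bthetae(t,x)\,\bar{s}^{n-1}_{\kappa,\lambda}(s)\le\bthetae(s,x)\,\bar{s}^{n-1}_{\kappa,\lambda}(t)\qquad(0\le s\le t),
\]
established in that proof from Lemma~\ref{lem:comp2}, is available. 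Taking $s=0$ and using the normalizations $\bthetae(0,x)=\thetae(0,x)=1$ and $\bar{s}_{\kappa,\lambda}(0)=s_{\kappa,\lambda}(0)=1$, I obtain the pointwise bound $\bthetae(t,x)\le\bar{s}^{n-1}_{\kappa,\lambda}(t)$ for all $t\ge0$ and all $x\in\bm$.

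Integrating this over $t\in[0,r]$ and then over $\bm$ yields
\[
\vol_{g}B_{r}(\bm)\le\int_{\bm}\int_{0}^{r}\bar{s}^{n-1}_{\kappa,\lambda}(t)\,dt\,d\vol_{h}=f_{n,\kappa,\lambda}(r)\,\vol_{h}\bm,
\]
and since $f_{n,\kappa,\lambda}(r)=\int_{0}^{r}\bar{s}^{n-1}_{\kappa,\lambda}(u)\,du>0$ for every $r>0$, dividing through and passing to the $\limsup$ as $r\to\infty$ completes the proof.

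There is no genuine obstacle here: the statement is a soft consequence of the estimates underlying Theorem~\ref{thm:volume comparison}. The only points worth a moment's attention are the identification of the normalizing constants at $t=0$, which is what makes the specialization $s=0$ of the monotonicity inequality produce the clean bound $\bthetae\le\bar{s}^{n-1}_{\kappa,\lambda}$, and the fact that this bound holds on all of $[0,\infty)$ — automatic from the definitions, since $\bthetae(t,x)=0$ for $t>\tau(x)$ and $\bar{s}_{\kappa,\lambda}(t)=0$ for $t\ge\bconst$. Alternatively, the same pointwise inequality can be read off from Theorem~\ref{thm:volume comparison} itself by fixing $R$, letting $r\to0$, and using $\vol_{g}B_{r}(\bm)/r\to\vol_{h}\bm$ together with the analogous limit for the model space; but going through the proof of Theorem~\ref{thm:volume comparison} is the most economical route.
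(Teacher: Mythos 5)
Your argument is correct and is essentially the paper's own proof: both rest on the integral formula of Lemma \ref{lem:vol2} and the pointwise bound $\thetae(t,x)\leq s^{n-1}_{\kappa,\lambda}(t)$ from Lemma \ref{lem:comp2} (which you re-derive by setting $s=0$ in the monotonicity inequality, a cosmetic difference), followed by integration over $t$ and $\bm$ and letting $r\to\infty$. The only remark is that the paper cites the ``in particular'' clause of Lemma \ref{lem:comp2} directly rather than passing through the truncated density $\bthetae$, but the content is identical.
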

\begin{proof}
Take $r>0$.
By Lemma \ref{lem:vol2},
we have
\begin{equation*}
\vol_{g} B_{r}(\bm)=\int_{\bm} \int^{\min\{r,\tau(x)\}}_{0}\thetae(t,x)\,dt\,d\vol_{h}.
\end{equation*}
By Lemma \ref{lem:comp2},
for all $x\in \bm$ and $t\in (0,\min\{r,\tau(x)\})$,
we have $\thetae(t,x)\leq s^{n-1}_{\kappa,\lambda}(t)$.
Integrating the both sides of the inequality over $(0,\min\{r,\tau(x)\})$ with respect to $t$,
and then doing that over $\bm$ with respect to $x$, 
we see $\vol_{g} B_{r}(\bm)/f_{n,\kappa,\lambda}(r)\leq \vol_{h}\bm$.
Letting $r\to \infty$,
we obtain the desired inequality.
\end{proof}
\subsection{Volume growth rigidity}
In the equality case in Theorem \ref{thm:volume comparison},
$\tau$ satisfies the following property:
\begin{lem}\label{lem:half rigid}
Let $M$ be an $n$-dimensional, 
connected complete Riemannian manifold with boundary with Riemannian metric $g$
such that $\ric_{M}\geq (n-1)\kappa$ and $H_{\bm} \geq \lambda$.
Suppose $\partial M$ is compact.
Assume that
there exists $R\in (0,\bar{C}_{\kappa,\lambda}]\setminus \{\infty\}$ such that
for all $r\in (0,R]$,
we have
\begin{equation*}
\frac{\vol_{g} B_{R}(\bm)}{\vol_{g} B_{r}(\bm)}= \frac{\vol_{g^{n}_{\kappa,\lambda}} B_{R}(\partial M^{n}_{\kappa,\lambda})}{\vol_{g^{n}_{\kappa,\lambda}} B_{r}(\partial M^{n}_{\kappa,\lambda})}.
\end{equation*}
Then for all $x\in \bm$,
we have $\tau(x)\geq R$.
\end{lem}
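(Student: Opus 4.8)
The plan is to read off the claim from the equality analysis already carried out in Remark~\ref{rem:volume eq}. First I would check that the hypotheses of Lemma~\ref{lem:half rigid} place us exactly in the equality situation of Theorem~\ref{thm:volume comparison}: $M$ is complete with $\ric_{M}\geq(n-1)\kappa$ and $H_{\bm}\geq\lambda$, $\bm$ is compact, and for every $r\in(0,R]$ the ratio $\vol_{g}B_{R}(\bm)/\vol_{g}B_{r}(\bm)$ equals the corresponding model ratio. Hence Remark~\ref{rem:volume eq} applies and yields
\[
\bthetae(t,x)=\bar{s}^{\,n-1}_{\kappa,\lambda}(t)\qquad\text{for all }t\in(0,R]\text{ and }x\in\bm.
\]

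Next I would record a positivity statement for the right-hand side. Since $s_{\kappa,\lambda}(0)=1$ and $s_{\kappa,\lambda}$ is continuous, and since either (i)~$\kappa$ and $\lambda$ satisfy the ball-condition, in which case $\bar{C}_{\kappa,\lambda}=\const=\inf\{t>0\mid s_{\kappa,\lambda}(t)=0\}$, or (ii)~they do not, in which case $\bar{C}_{\kappa,\lambda}=\infty$ and $s_{\kappa,\lambda}$ has no positive zero, in both cases $\bar{s}_{\kappa,\lambda}(t)=s_{\kappa,\lambda}(t)>0$ for every $t\in(0,\bar{C}_{\kappa,\lambda})$. As $R\in(0,\bar{C}_{\kappa,\lambda}]\setminus\{\infty\}$, the open interval $(0,R)$ is contained in $(0,\bar{C}_{\kappa,\lambda})$, so $\bar{s}^{\,n-1}_{\kappa,\lambda}(t)>0$ for all $t\in(0,R)$.

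Finally I would argue by contradiction. Fix $x\in\bm$ and suppose $\tau(x)<R$; recall $\tau(x)>0$, so the interval $(\tau(x),R)$ is nonempty and contained in $(0,R)$. Choose $t$ with $\tau(x)<t<R$. By the piecewise definition of $\bthetae$ one has $\bthetae(t,x)=0$ because $t>\tau(x)$, whereas the displayed identity together with the positivity just established gives $\bthetae(t,x)=\bar{s}^{\,n-1}_{\kappa,\lambda}(t)>0$; this contradiction forces $\tau(x)\geq R$. The only points that require any care — and hence the only steps I would spell out — are verifying that Remark~\ref{rem:volume eq} is applicable under the present hypotheses and confirming that $\bar{s}_{\kappa,\lambda}$ is strictly positive on $(0,R)$; the rest is immediate from the definition of $\bthetae$. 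One could equally well bypass Remark~\ref{rem:volume eq} by inspecting the chain of inequalities in the proof of Theorem~\ref{thm:volume comparison}, but invoking the remark is cleaner.
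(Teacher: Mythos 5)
Your argument is circular as written: its only substantive input is Remark~\ref{rem:volume eq}, and that remark is exactly the assertion whose justification contains Lemma~\ref{lem:half rigid}. The remark carries no proof in the paper, and its claim that $\bthetae(t,x)=\bar{s}^{n-1}_{\kappa,\lambda}(t)$ for \emph{all} $t\in(0,R]$ and \emph{all} $x\in\bm$ already implies $\tau(x)\geq R$ by the very positivity observation you make; conversely, in the paper's own development the pointwise identity $\thetae(t,x)=s^{n-1}_{\kappa,\lambda}(t)$ on $(0,R]$ is established only in Proposition~\ref{prop:half vol2}, \emph{after} and \emph{using} Lemma~\ref{lem:half rigid}. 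So invoking the remark here inverts the logical order and assumes the hard part. Your closing remark that one ``could bypass Remark~\ref{rem:volume eq} by inspecting the chain of inequalities'' points to where the real work lies, but you do not carry it out, and it is not routine: equality in the integrated inequality of Theorem~\ref{thm:volume comparison} for a fixed $r$ only gives the relation $\int_{r}^{R}\bthetae(t,x)\,dt\cdot\int_{0}^{r}\bar{s}^{n-1}_{\kappa,\lambda}(s)\,ds=\int_{0}^{r}\bthetae(s,x)\,ds\cdot\int_{r}^{R}\bar{s}^{n-1}_{\kappa,\lambda}(t)\,dt$ for \emph{almost every} $x$; to get it for every $x$ (which the lemma demands) one needs a continuity argument in $x$, and one must then normalize (let $r\to 0$) and differentiate in $r$ before your contradiction applies. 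None of these steps appear in your proposal.

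For comparison, the paper's proof avoids any pointwise equality tracing. It supposes $\tau(x_{0})<R$ for some $x_{0}$, uses the continuity of $\tau$ (Lemma~\ref{lem:conti}) to obtain a geodesic ball $B\subset\bm$ of positive measure on which $\tau\leq t_{0}+\epsilon<R$, and then, by Lemmas~\ref{lem:comp2} and~\ref{lem:vol2}, bounds $\vol_{g}B_{R}(\bm)$ by $\int_{\bm\setminus B}\int_{0}^{\min\{R,\tau(x)\}}s^{n-1}_{\kappa,\lambda}\,dt\,d\vol_{h}+\int_{B}\int_{0}^{t_{0}+\epsilon}s^{n-1}_{\kappa,\lambda}\,dt\,d\vol_{h}$, which is strictly smaller than $(\vol_{h}\bm)\,f_{n,\kappa,\lambda}(R)$; on the other hand the equality hypothesis (applied as $r\to0$) forces $\vol_{g}B_{R}(\bm)=(\vol_{h}\bm)\,f_{n,\kappa,\lambda}(R)$, a contradiction. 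If you wish to keep your route, you must first prove the identity of Remark~\ref{rem:volume eq} without presupposing $\tau\geq R$ on $\bm$ (continuity in $x$, normalization as $r\to0$, differentiation in $r$); once that is done, your positivity of $\bar{s}_{\kappa,\lambda}$ on $(0,R)$ and the contradiction with the definition of $\bthetae$ are correct and do finish the proof.
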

\begin{proof}
Suppose that for some $x_{0}\in \bm$,
we have $\tau(x_{0})<R$.
Put $t_{0}:=\tau(x_{0})$.
Take $\epsilon>0$ with $t_{0}+\epsilon<R$.
By the continuity of $\tau$,
there exists a closed geodesic ball $B$ in $\bm$ centered at $x_{0}$ such that
for all $x\in B$,
we have $\tau(x)\leq t_{0}+\epsilon$.
By Lemmas \ref{lem:comp2} and \ref{lem:vol2},
we see that $\vol_{g} B_{R}(\bm)$ is not larger than
\begin{equation*}
\int_{\bm\setminus B}\,\int^{\min\{R,\tau(x)\}}_{0}s^{n-1}_{\kappa,\lambda}(t)\,dt\,d\vol_{h}
+\int_{B}\,\int^{t_{0}+\epsilon}_{0}s^{n-1}_{\kappa,\lambda}(t)\,dt\,d\vol_{h}.
\end{equation*}
This is smaller than $(\vol_{h}\bm)\; f_{n,\kappa,\lambda}(R)$.
On the other hand,
by the assumption,
we see that $f_{n,\kappa,\lambda}(R)$ is equal to $\vol_{g} B_{R}(\bm)/\vol_{h}\bm$.
This is a contradiction.
\end{proof}
In the case in Lemma \ref{lem:half rigid},
for every $r\in (0,R)$,
the level set $\rho_{\bm}^{-1}(r)$ is an $(n-1)$-dimensional submanifold of $M$.
In particular,
$(B_{r}(\bm),g)$ is an $n$-dimensional (not necessarily, connected) complete Riemannian manifold with boundary.
We denote by $d_{B_{r}(\bm)}$ and by $d_{\kappa,\lambda,r}$
the Riemannian distances on $(B_{r}(\bm),g)$ and on $[0,r]\times_{\kappa,\lambda}\bm$,
respectively.
\begin{prop}\label{prop:half vol2}
Let $M$ be an $n$-dimensional, 
connected complete Riemannian manifold with boundary with Riemannian metric $g$
such that $\ric_{M}\geq (n-1)\kappa$ and $H_{\bm} \geq \lambda$.
Suppose $\partial M$ is compact.
Assume that
there exists $R\in (0,\bar{C}_{\kappa,\lambda}]\setminus \{\infty\}$ such that
for all $r\in (0,R]$,
we have
\begin{equation*}
\frac{\vol_{g} B_{R}(\bm)}{\vol_{g} B_{r}(\bm)}= \frac{\vol_{g^{n}_{\kappa,\lambda}} B_{R}(\partial M^{n}_{\kappa,\lambda})}{\vol_{g^{n}_{\kappa,\lambda}} B_{r}(\partial M^{n}_{\kappa,\lambda})}.
\end{equation*}
Then for every $r\in (0,R)$,
the metric space $(B_{r}(\bm),d_{B_{r}(\bm)})$ is isometric to $([0,r]\times_{\kappa,\lambda}\bm,d_{\kappa,\lambda,r})$.
\end{prop}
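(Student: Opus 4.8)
The plan is to build an explicit isometry $\Phi : [0,R)\times_{\kappa,\lambda}\bm \to \inte M\setminus\cut\bm$ out of the normal exponential map, and then argue that on $B_r(\bm)$ with $r<R$ this extends to a Riemannian isometry with boundary, so that Lemma~\ref{lem:isometry with with boundary} converts it into the claimed metric isometry. First I would invoke Lemma~\ref{lem:half rigid}, which guarantees $\tau(x)\geq R$ for every $x\in\bm$; in particular $\cut\bm\cap B_r(\bm)=\emptyset$ for $r<R$, so by Lemma~\ref{lem:regular and injective} the map $\expp$ is a diffeomorphism from $\{tu_x\mid x\in\bm,\ t\in[0,r)\}$ onto $B_r(\bm)\setminus\cut\bm = B_r(\bm)$ (minus a set of measure zero on the boundary of the range; I would be careful to note $B_r(\bm)$ here is the closed ball, and handle the level $t=r$ separately using that $\rho_{\bm}^{-1}(r)$ is a smooth hypersurface, as remarked after Lemma~\ref{lem:half rigid}). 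Define $\Phi(t,x):=\gamma_x(t)=\expp(tu_x)$; this is a diffeomorphism $[0,r)\times\bm\to B_r(\bm)\setminus\rho_{\bm}^{-1}(r)$, smooth up to the boundary, and $\Phi|_{\{0\}\times\bm}=\id_{\bm}$.

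The heart of the argument is to show $\Phi^\ast g = dt^2 + s_{\kappa,\lambda}^2(t)\,h$. For this I would use the equality-case information already extracted in Remark~\ref{rem:volume eq}: since the volume ratio hypothesis holds up to radius $R$, for every $x\in\bm$ and every $t\in[0,R)$ the $\bm$-Jacobi fields satisfy $Y_{x,i}(t)=s_{\kappa,\lambda}(t)E_{x,i}(t)$ with $E_{x,i}$ parallel. Now fix $x$ and a tangent vector $v\in T_x\bm$, write $v=\sum_i a_i e_{x,i}$, and let $V$ be the $\bm$-Jacobi field with $V(0)=v$, $V'(0)=-A_{u_x}v$; by linearity $V(t)=s_{\kappa,\lambda}(t)E_V(t)$ where $E_V$ is the parallel field with $E_V(0)=v$. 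The differential of $\Phi$ at $(t,x)$ sends $\partial_t$ to $\gamma_x'(t)$ (a unit vector, orthogonal to everything else by the Gauss lemma for $\bm$) and sends the ``horizontal lift'' of $v$ to exactly this Jacobi field $V(t)$. Hence $g(d\Phi(v),d\Phi(w)) = s_{\kappa,\lambda}^2(t)\, g(E_V(t),E_W(t)) = s_{\kappa,\lambda}^2(t)\, g(v,w) = s_{\kappa,\lambda}^2(t)\, h(v,w)$, using that parallel transport is an isometry; and the mixed and $\partial_t\partial_t$ components are $0$ and $1$ respectively. This gives $\Phi^\ast g = dt^2+s_{\kappa,\lambda}^2(t)h$ on $[0,r)\times\bm$, i.e. $\Phi$ is a Riemannian isometry onto its image.

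Finally I would take closures: $\Phi$ extends continuously to a homeomorphism $[0,r]\times\bm\to B_r(\bm)$ (both sides being the closures of the corresponding open sets, $\Phi$ being a proper map since $\bm$, hence $[0,r]\times\bm$, is compact), it is smooth on the interior and on each boundary component with the stated pullback identities, so it is a Riemannian isometry with boundary from $[0,r]\times_{\kappa,\lambda}\bm$ to $B_r(\bm)$ in the sense of the Definition preceding Lemma~\ref{lem:isometry with with boundary}. Applying that lemma yields that $(B_r(\bm),d_{B_r(\bm)})$ is isometric to $([0,r]\times_{\kappa,\lambda}\bm, d_{\kappa,\lambda,r})$, which is the assertion. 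The step I expect to be the main obstacle is the bookkeeping at $t=r$: one must know $\rho_{\bm}^{-1}(r)$ is a genuine smooth hypersurface (so that $B_r(\bm)$ is a manifold with boundary and $\Phi$ is a diffeomorphism up to that boundary as well), which is exactly the content of the remark following Lemma~\ref{lem:half rigid}, and one must check that no cut point sneaks in at radius exactly $r$ — but $r<R\le\tau(x)$ rules this out. A secondary point requiring care is that the warping function $s_{\kappa,\lambda}$ may vanish before $R$ only if $R>\const$, which is excluded since $R\le\bconst$ and $r<R$, so $s_{\kappa,\lambda}>0$ throughout $[0,r]$ and the warped product is a bona fide Riemannian metric.
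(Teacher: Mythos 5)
Your proposal is correct and follows essentially the same route as the paper: Lemma \ref{lem:half rigid} removes the cut locus from $B_{r}(\bm)$, the equality case recorded in Remark \ref{rem:volume eq} forces $Y_{x,i}(t)=s_{\kappa,\lambda}(t)E_{x,i}(t)$ with $E_{x,i}$ parallel, and the map $\Phi(t,x):=\gamma_{x}(t)$ is verified to be a Riemannian isometry with boundary, which Lemma \ref{lem:isometry with with boundary} converts into the claimed metric isometry. The only point the paper makes that you omit is the reduction to the case where $B_{r}(\bm)$ is connected (its components correspond bijectively to those of $\bm$), a minor step needed to apply Lemma \ref{lem:isometry with with boundary}, which is stated for connected manifolds with boundary.
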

\begin{proof}
Take $r\in (0,R)$.
By Lemma \ref{lem:half rigid},
for all $x\in \bm$,
we have $\tau(x)>r$;
in particular,
$B_{r}(\bm)\cap \cut\bm=\emptyset$.
Each connected component of $\bm$ one-to-one corresponds to the connected component of $B_{r}(\bm)$.
Therefore,
we may assume that $B_{r}(\bm)$ is connected.

By Lemma \ref{lem:comp2},
for all $t\in (0,R]$ and $x\in \bm$, 
we have $\thetae(t,x)=s^{n-1}_{\kappa,\lambda}(t)$.
Choose an orthonormal basis $\{ e_{x,i} \}_{i=1}^{n-1}$ of $T_{x}\bm$.
For each $i=1,\dots,n-1$,
let $Y_{x,i}$ be the $\bm$-Jacobi field along $\gamma_{x}$
with initial conditions $Y_{x,i}(0)=e_{x,i}$ and $Y_{x,i}'(0)=-A_{u_{x}}e_{x,i}$.
For all $t\in [0,\min \{R,\bar{C}_{\kappa,\lambda}\}]$ and $x\in \bm$, 
we have $Y_{x,i}(t)=s_{\kappa,\lambda}(t)\, E_{x,i}(t)$,
where $E_{x,i}$ are the parallel vector fields along $\gamma_{x}$ with initial condition $E_{x,i}(0)=e_{x,i}$ (see Remark \ref{rem:volume eq}).
Define a map $\Phi:[0,r]\times \bm\to B_{r}(\bm)$ by $\Phi(t,x):=\gamma_{x}(t)$.
For every $p\in (0,r)\times \bm$,
the map $D(\Phi|_{(0,r)\times \bm})_{p}$ sends an orthonormal basis of $T_{p}([0,r]\times \bm)$ to that of $T_{\Phi(p)}B_{r}(\bm)$,
and for every $x\in \{0,r\}\times \bm$,
the map $D(\Phi|_{\{0,r\}\times \bm})_{x}$ sends an orthonormal basis of $T_{x}(\{0,r\}\times \bm)$ to that of $T_{\Phi(x)}\partial (B_{r}(\bm))$.
Hence,
$\Phi$ is a Riemannian isometry with boundary from $[0,r]\times_{\kappa,\lambda} \bm$ to $B_{r}(\bm)$.
\end{proof}
\subsection{Proof of Theorem \ref{thm:volume growth distance rigidity}}
Let $M$ be an $n$-dimensional,
connected complete Riemannian manifold with boundary with Riemannian metric $g$
such that $\ric_{M}\geq (n-1)\kappa$ and $H_{\bm} \geq \lambda$.
Suppose $\partial M$ is compact.
We assume
\begin{equation*}
\liminf_{r\to \infty}\frac{\vol_{g}B_{r}(\bm)}{f_{n,\kappa,\lambda}(r)}\geq \vol_{h}\bm.
\end{equation*}
By Theorem \ref{thm:volume comparison} and Proposition \ref{prop:volume growth sup},
for all $r,R\in (0,\infty)$ with $r\leq R$,
\begin{equation*}
\frac{\vol_{g}B_{R}(\bm)}{f_{n,\kappa,\lambda}(R)}=\frac{\vol_{g}B_{r}(\bm)}{f_{n,\kappa,\lambda}(r)}=\vol_{h}\bm.
\end{equation*}
If $\kappa$ and $\lambda$ satisfy the ball-condition,
then for all $r\in (0,\const]$
we have
\begin{equation*}
\frac{\vol_{g} B_{\const}(\bm)}{\vol_{g} B_{r}(\bm)}= \frac{\vol_{g^{n}_{\kappa,\lambda}} B_{\const}(\partial M^{n}_{\kappa,\lambda})}{\vol_{g^{n}_{\kappa,\lambda}} B_{r}(\partial M^{n}_{\kappa,\lambda})};
\end{equation*}
in particular,
Lemmas \ref{lem:shinitai2} and \ref{lem:half rigid} imply that
$\tau$ is equal to $\const$ on $\bm$.
If $\kappa$ and $\lambda$ do not satisfy the ball-condition,
then for all $R\in (0,\infty)$ and $r\in (0,R]$
we have
\begin{equation*}
\frac{\vol_{g} B_{R}(\bm)}{\vol_{g} B_{r}(\bm)}= \frac{\vol_{g^{n}_{\kappa,\lambda}} B_{R}(\partial M^{n}_{\kappa,\lambda})}{\vol_{g^{n}_{\kappa,\lambda}} B_{r}(\partial M^{n}_{\kappa,\lambda})};
\end{equation*}
in particular,
Lemma \ref{lem:half rigid} implies that
for all $x\in \bm$,
we have $\tau(x)=\infty$.
It follows that
$\tau$ coincides with $\bconst$ on $\bm$.

If $\kappa$ and $\lambda$ satisfy the ball-condition,
then Lemmas \ref{lem:compact} and \ref{lem:shinitai2} imply that $M$ is compact;
in particular,
there exists a point $p\in M$ such that $\rho_{\bm}(p)=\dm=\const$.
Hence,
from Theorem \ref{thm:ball rigid},
it follows that $(M,d_{M})$ is isometric to $(\ball,d_{\ball})$.

If $\kappa$ and $\lambda$ do not satisfy the ball-condition,
then $\cut \bm=\emptyset$.
From Lemma \ref{lem:shinitai4},
it follows that $\bm$ is connected.
Take a sequence $\{r_{i}\}$ with $r_{i}\to \infty$.
By Proposition \ref{prop:half vol2},
for each $r_{i}$,
we obtain a Riemannian isometry $\Phi_{i}:[0,r_{i}]\times_{\kappa,\lambda}\bm\to B_{r_{i}}(\bm)$ with boundary
from $[0,r_{i}]\times_{\kappa,\lambda}\bm$ to $B_{r_{i}}(\bm)$ defined by $\Phi_{i}(t,x):=\gamma_{x}(t)$.
Since for all $x\in\bm$
it holds that $\tau(x)=\infty$,
there exists a Riemannian isometry $\Phi:[0,\infty)\times_{\kappa,\lambda}\bm\to M$ with boundary
from $[0,\infty)\times_{\kappa,\lambda}\bm$ to $M$
defined by $\Phi(t,x):=\gamma_{x}(t)$ satisfying $\Phi|_{[0,r_{i}]\times_{\kappa,\lambda}\bm}=\Phi_{i}$.
Hence,
$(M,d_{M})$ is isometric to $([0,\infty)\times_{\kappa,\lambda}\bm,d_{\kappa,\lambda})$.
We complete the proof.
\qed
\subsection{Curvature of the boundary}
It seems that
the following is well-known,
especially in a submanifold setting (see e.g., Proposition 9.36 in \cite{B}).
For the sake of the readers,
we give a proof in our setting.
\begin{lem}\label{lem:boundary Ricci formula}
Let $M$ be an $n$-dimensional Riemannian manifold with boundary with Riemannian metric $g$.
Let $h$ denote the induced Riemannian metric on $\bm$.
Take a point $x\in \bm$, 
and choose an orthonormal basis $\{ e_{x,i} \}_{i=1}^{n-1}$ of $T_{x}\bm$.
Put $u:=e_{x,1}$.
Then
\begin{equation*}
\ric_{h}(u)=\ric_{g}(u)-K_{g}(u_{x},u)+\tr A_{S(u,u)}-\sum_{i=1}^{n-1} \Vert S(u,e_{x,i})\Vert^{2},
\end{equation*}
where $K_{g}(u_{x},u)$ is the sectional curvature at $x$ in $(M,g)$ determined by $u_{x}$ and $u$.
\end{lem}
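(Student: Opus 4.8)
The plan is to deduce the identity from the Gauss equation together with a trace computation over an adapted orthonormal frame; the argument is entirely pointwise and algebraic, so completeness and the global structure of $M$ play no role. One may either invoke the Gauss equation directly or, to keep the exposition self-contained, recall its one-line derivation from the Gauss formula $\bar\nabla_{X}Y=\nabla_{X}Y+S(X,Y)$ for the ambient and induced Levi-Civita connections.

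First I would fix curvature conventions. For a Riemannian metric write $R(X,Y,Z,W):=g(R(X,Y)Z,W)$ with $R(X,Y)Z=\nabla_{X}\nabla_{Y}Z-\nabla_{Y}\nabla_{X}Z-\nabla_{[X,Y]}Z$, so that $\ric(v)=\sum_{j}R(v,E_{j},E_{j},v)$ for an orthonormal basis $\{E_{j}\}$, and $K(v,w)=R(v,w,w,v)$ for orthonormal $v,w$. With these conventions the Gauss equation for $\bm\subset M$ reads, for $X,Y,Z,W$ tangent to $\bm$ at $x$,
\begin{equation*}
R^{h}(X,Y,Z,W)=R^{g}(X,Y,Z,W)+g(S(X,W),S(Y,Z))-g(S(X,Z),S(Y,W)),
\end{equation*}
where $S$ is the (normal-bundle valued) second fundamental form; note that $\bm$ is not assumed totally geodesic, so the $S$-terms genuinely contribute (compare Proposition 9.36 in \cite{B}).

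Next I would specialize and trace. Put $u=e_{x,1}$, apply the displayed identity with $X=W=u$ and $Y=Z=e_{x,i}$, and sum over $i=1,\dots,n-1$ (the $i=1$ term contributes $0$ on both sides). The left side becomes $\ric_{h}(u)$. For the first term on the right, adjoin $u_{x}$ to $\{e_{x,i}\}_{i=1}^{n-1}$ to get an orthonormal basis of $T_{x}M$; since $\ric_{g}(u)=\sum_{i=1}^{n-1}R^{g}(u,e_{x,i},e_{x,i},u)+R^{g}(u,u_{x},u_{x},u)$ and $R^{g}(u,u_{x},u_{x},u)=K_{g}(u_{x},u)$, this contributes $\ric_{g}(u)-K_{g}(u_{x},u)$. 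For the second term, the defining relation $g(A_{w}v,v')=g(S(v,v'),w)$ of the shape operator gives $g(S(u,u),S(e_{x,i},e_{x,i}))=g(A_{S(u,u)}e_{x,i},e_{x,i})$, so the sum over $i$ is $\tr A_{S(u,u)}$. For the third term, symmetry of $S$ gives $g(S(u,e_{x,i}),S(e_{x,i},u))=\Vert S(u,e_{x,i})\Vert^{2}$, so the sum is $\sum_{i=1}^{n-1}\Vert S(u,e_{x,i})\Vert^{2}$. Assembling the four pieces yields exactly the asserted formula.

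There is no real obstacle beyond bookkeeping: the one thing to watch is consistency of the sign conventions in the Gauss equation and in the definitions of $\ric$, $K_{g}$, $S$, and $A_{u_{x}}$ as they are used elsewhere in the paper (in particular the orientation built into the inner normal $u_{x}$). I would also remark that the identity holds verbatim at every $x\in\bm$ and for every unit $u\in T_{x}\bm$, since the completing vectors $\{e_{x,i}\}_{i=2}^{n-1}$ of $u^{\perp}\cap T_{x}\bm$ may be chosen arbitrarily.
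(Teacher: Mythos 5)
Your argument is correct and follows essentially the same route as the paper: both apply the Gauss equation to the pairs $(u,e_{x,i})$, complete the frame with $u_{x}$ to recover $\ric_{g}(u)-K_{g}(u_{x},u)$, and identify $\sum_{i}g(S(u,u),S(e_{x,i},e_{x,i}))$ with $\tr A_{S(u,u)}$ via the shape operator; your observation that the $i=1$ term cancels on both sides is exactly the bookkeeping the paper performs by starting the sectional-curvature sum at $i=2$. No changes needed.
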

\begin{proof}
Note that $\ric_{h}(u)=\sum_{i=2}^{n-1} K_{h}(u,e_{x,i})$.
By the Gauss formula, 
\begin{equation*}
\ric_{h}(u)=\sum_{i=2}^{n-1} \left(K_{g}(u,e_{x,i})+g(S(u,u),S(e_{x,i},e_{x,i}))-\Vert S(u,e_{x,i})\Vert^{2}\right).
\end{equation*}
Since $u,e_{x,2},\dots,e_{x,n-1},u_{x}$ are orthogonal to each other,
we have
\begin{equation*}
\ric_{g}(u)=\sum_{i=2}^{n-1} K_{g}(u,e_{x,i})+K_{g}(u,u_{x}).
\end{equation*}
On the other hand,
we see
\begin{equation*}
\sum_{i=1}^{n-1}g(S(u,u),S(e_{x,i},e_{x,i}))=\sum_{i=1}^{n-1}g(A_{S(u,u)}e_{x,i},e_{x,i})=\tr A_{S(u,u)}.
\end{equation*}
Combining these equalities, 
we have the formula.
\end{proof}
To study our rigidity cases,
we need the following:
\begin{lem}\label{lem:rigid boundary Ricci}
Let $M$ be an $n$-dimensional,
connected complete Riemannian manifold with boundary with Riemannian metric $g$
such that $\ric_{M}\geq (n-1)\kappa$.
If $(M,d_{M})$ is isometric to $([0,\infty)\times_{\kappa,\lambda} \bm,d_{\kappa,\lambda})$,
then we have $\ric_{\bm}\geq (n-2)(\kappa+\lambda^{2})$.
\end{lem}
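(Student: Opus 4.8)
The plan is to identify $(M,g)$ with the explicit warped-product model and then to feed the resulting data into Lemma~\ref{lem:boundary Ricci formula}, in which every term is then computable. By Lemma~\ref{lem:isometry with with boundary}, the assumed isometry of metric spaces is realized by a Riemannian isometry with boundary, so we may regard $(M,g)$ as $[0,\infty)\times\bm$ with metric $dt^{2}+s_{\kappa,\lambda}^{2}(t)h$, with $\bm$ identified with $\{0\}\times\bm$; since $s_{\kappa,\lambda}(0)=1$, the metric induced on $\bm$ is exactly $h$. Under this identification the hypothesis $\ric_{M}\geq(n-1)\kappa$ says $\ric_{g}\geq(n-1)\kappa$ on $\inte M$, and because the Ricci tensor of this warped-product metric is smooth up to $\bm$ (recall $s_{\kappa,\lambda}(0)=1>0$), the bound persists at every point of $\bm$ by continuity. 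It therefore suffices to prove $\ric_{h}(u)\geq(n-2)(\kappa+\lambda^{2})$ for every $x\in\bm$ and every $h$-unit $u\in T_{x}\bm$.

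Next I would record the extrinsic geometry of $\bm=\{0\}\times\bm$ inside the warped product, using that $s_{\kappa,\lambda}$ solves $f''+\kappa f=0$ with $f(0)=1$, $f'(0)=-\lambda$, hence $s_{\kappa,\lambda}(0)=1$, $s_{\kappa,\lambda}'(0)=-\lambda$, $s_{\kappa,\lambda}''(0)=-\kappa$. By the standard warped-product identities $\nabla_{X}\partial_{t}=(s_{\kappa,\lambda}'/s_{\kappa,\lambda})\,X$ and $R(X,\partial_{t})\partial_{t}=-(s_{\kappa,\lambda}''/s_{\kappa,\lambda})\,X$ for $X$ tangent to the fibre $\bm$ (along $\bm$, the unit inner normal is $u_{x}=\partial_{t}$), evaluation at $t=0$ gives the shape operator $A_{u_{x}}=-(s_{\kappa,\lambda}'(0)/s_{\kappa,\lambda}(0))\,\id=\lambda\,\id$, hence the second fundamental form $S(u,v)=\lambda\,h(u,v)\,u_{x}$ (in particular $H_{\bm}\equiv\lambda$), and the radial sectional curvature $K_{g}(u_{x},u)=-s_{\kappa,\lambda}''(0)/s_{\kappa,\lambda}(0)=\kappa$ for all $u\in T_{x}\bm$. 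One may instead obtain these from within the paper: since $(M,d_{M})$ is isometric to the model, $\vol_{g}B_{r}(\bm)=(\vol_{h}\bm)\,f_{n,\kappa,\lambda}(r)$ for all $r>0$, so Remark~\ref{rem:volume eq} yields $Y_{x,i}(t)=s_{\kappa,\lambda}(t)E_{x,i}(t)$, and then $A_{u_{x}}=\lambda\,\id$ follows from $Y_{x,i}'(0)=-A_{u_{x}}e_{x,i}$, while $K_{g}(u_{x},u)=\kappa$ follows from the Jacobi equation $Y_{x,i}''+R(Y_{x,i},\gamma_{x}')\gamma_{x}'=0$ together with $s_{\kappa,\lambda}''=-\kappa s_{\kappa,\lambda}$.

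Finally I would combine these in Lemma~\ref{lem:boundary Ricci formula}. Fix $x\in\bm$, choose an orthonormal basis $\{e_{x,i}\}_{i=1}^{n-1}$ of $T_{x}\bm$, and put $u:=e_{x,1}$. From $S(u,u)=\lambda u_{x}$ we get $A_{S(u,u)}=\lambda A_{u_{x}}=\lambda^{2}\id$, so $\tr A_{S(u,u)}=(n-1)\lambda^{2}$; from $S(u,e_{x,i})=\lambda\,\delta_{1i}\,u_{x}$ we get $\sum_{i=1}^{n-1}\Vert S(u,e_{x,i})\Vert^{2}=\lambda^{2}$. Inserting these, $K_{g}(u_{x},u)=\kappa$, and $\ric_{g}(u)\geq(n-1)\kappa$ into the formula of Lemma~\ref{lem:boundary Ricci formula} gives
\begin{equation*}
\ric_{h}(u)=\ric_{g}(u)-\kappa+(n-1)\lambda^{2}-\lambda^{2}\;\geq\;(n-1)\kappa-\kappa+(n-2)\lambda^{2}=(n-2)(\kappa+\lambda^{2}),
\end{equation*}
and since $x\in\bm$ and the unit vector $u\in T_{x}\bm$ are arbitrary, $\ric_{\bm}\geq(n-2)(\kappa+\lambda^{2})$. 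The only points requiring care are the sign bookkeeping in the two warped-product identities and the passage of the Ricci lower bound from $\inte M$ to $\bm$; neither poses a genuine obstacle once the identification with the model is in place.
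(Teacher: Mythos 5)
Your proposal is correct and follows essentially the paper's own route: realize the metric isometry as a Riemannian isometry with boundary, extract $A_{u_{x}}=\lambda\,\id$, $S(e_{x,i},e_{x,j})=\lambda\delta_{ij}u_{x}$ and $K_{g}(u_{x},u)=\kappa$ from the warped-product structure, and substitute into Lemma~\ref{lem:boundary Ricci formula}. The only cosmetic difference is that the paper reads these data off the $\bm$-Jacobi fields $Y_{x,i}(t)=s_{\kappa,\lambda}(t)E_{x,i}(t)$, whereas you use the standard warped-product identities (with the Jacobi-field route mentioned as an alternative), which amounts to the same computation.
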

\begin{proof}
There exists a Riemannian isometry with boundary from $M$ to $[0,\infty)\times_{\kappa,\lambda}\bm$.
For each $x\in \bm$,
choose an orthonormal basis $\{e_{x,i}\}_{i=1}^{n-1}$ of $T_{x}\bm$.
For each $i=1,\dots,n-1$,
let $Y_{x,i}$ be the $\bm$-Jacobi field along $\gamma_{x}$
with initial conditions $Y_{x,i}(0)=e_{x,i}$ and $Y'_{x,i}(0)=-A_{u_{x}}e_{x,i}$.
We have $Y_{x,i}(t)=s_{\kappa,\lambda}(t)E_{x,i}(t)$,
where $E_{x,i}$ are the parallel vector fields along $\gamma_{x}$ with initial condition $E_{x,i}(0)=e_{x,i}$.
Then $A_{u_{x}}e_{x,i}=-Y'_{x,i}(0)=\lambda e_{x,i}$ and $Y''_{x,1}(0)=\kappa e_{x,1}$.
Hence, $\tr A_{u_{x}}=(n-1)\lambda$ and $K_{g}(u_{x},e_{x,1})=\kappa$.
For all $i$
we have $S(e_{x,i},e_{x,i})=\lambda u_{x}$,
and for all $i\neq j$
we have $S(e_{x,i},e_{x,j})=0_{x}$.
By Lemma \ref{lem:boundary Ricci formula} and $\ric_{M}\geq (n-1)\kappa$,
we have $\ric_{\bm}\geq (n-2)(\kappa+\lambda^{2})$.
\end{proof}
\subsection{Complement rigidity}
For $\kappa>0$,
let $M$ be an $n$-dimensional,
connected complete Riemmanian manifold (without boundary) with Riemannian metric $g$ such that $\ric_{M}\geq (n-1)\kappa$.
By the Bishop volume comparison theorem (\cite{BC}),
$\vol_{g} M\leq \vol M^{n}_{\kappa}$;
the equality holds if and only if $M$ is isometric to $M^{n}_{\kappa}$.

The following is concerned with the complements of metric balls.
\begin{cor}\label{cor:complement}
Let $\kappa\in \mathbb{R}$ and $-\lambda\in \mathbb{R}$ satisfy the ball-condition.
Let $M$ be an $n$-dimensional,
connected complete Riemannian manifold with boundary with Riemannian metric $g$
such that $\ric_{M}\geq (n-1)\kappa$ and $H_{\bm} \geq \lambda$.
Suppose $\partial M$ is compact.
Let $h$ denote the induced Riemannian metric on $\bm$.
If
\begin{equation*}
\liminf_{r\to \infty} \frac{\vol_{g}B_{r}(\bm)}{f_{n,\kappa,\lambda}(r)}\geq \vol_{h}\bm,\quad \vol_{h}\bm\geq \vol_{h^{n-1}_{\kappa,-\lambda}} \partial B^{n}_{\kappa,-\lambda},
\end{equation*}
then $(M,d_{M})$ is isometric to $(M^{n}_{\kappa}\setminus \inte B^{n}_{\kappa,-\lambda},d_{M^{n}_{\kappa}\setminus \inte B^{n}_{\kappa,-\lambda}})$.
\end{cor}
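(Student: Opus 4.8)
The plan is to obtain the warped-product structure of $M$ from the first hypothesis via Theorem \ref{thm:volume growth distance rigidity}, and then to read the boundary-volume bound in the second hypothesis as exactly the reverse of the Bishop volume comparison inequality for the closed manifold $\bm$.

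First I would settle the case $\kappa>0$. Then $(\kappa,\lambda)$ satisfies the ball-condition, so the first hypothesis lets me apply Theorem \ref{thm:volume growth distance rigidity}, whose ``moreover'' clause gives that $(M,d_{M})$ is isometric to $(\ball,d_{\ball})$. Since $\ball$ and $B^{n}_{\kappa,-\lambda}$ are complementary geodesic balls in the round sphere $M^{n}_{\kappa}$ (their radii satisfy $\const+C_{\kappa,-\lambda}=\pi/\sqrt{\kappa}$), and their common boundary carries the mean curvatures $\lambda$ and $-\lambda$ with respect to the two opposite inner normals, the manifold with boundary $\ball$ is isometric to $M^{n}_{\kappa}\setminus \inte B^{n}_{\kappa,-\lambda}$, and Lemma \ref{lem:isometry with with boundary} upgrades this to an isometry of metric spaces. (In this case the second hypothesis is automatic, since $\sk(\const)=\sk(C_{\kappa,-\lambda})$ gives $\vol_{h^{n-1}_{\kappa,\lambda}}\bball=\vol_{h^{n-1}_{\kappa,-\lambda}}\partial B^{n}_{\kappa,-\lambda}$.)

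Now assume $\kappa\le 0$; then the ball-condition for $(\kappa,-\lambda)$ forces $\lambda<-\sqrt{\vert \kappa\vert}$, so $(\kappa,\lambda)$ fails the ball-condition, $\bconst=\infty$, and Theorem \ref{thm:volume growth distance rigidity} provides a Riemannian isometry with boundary $\Phi\colon [0,\infty)\times_{\kappa,\lambda}\bm \to M$, $\Phi(t,x)=\gamma_{x}(t)$; in particular $\cut \bm=\emptyset$, so $\bm$ is connected by Lemma \ref{lem:shinitai4}, and closed. By Lemma \ref{lem:rigid boundary Ricci}, $\ric_{\bm}\ge (n-2)(\kappa+\lambda^{2})$, and $K:=\kappa+\lambda^{2}>0$ because $\lambda^{2}>\vert\kappa\vert$. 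Applying the Bishop volume comparison theorem to the $(n-1)$-dimensional closed Riemannian manifold $(\bm,h)$ gives $\vol_{h}\bm\le \vol M^{n-1}_{K}$, with equality if and only if $(\bm,h)$ is isometric to the round sphere $M^{n-1}_{K}$. The key computation is that $\vol_{h^{n-1}_{\kappa,-\lambda}}\partial B^{n}_{\kappa,-\lambda}=\vol M^{n-1}_{K}$: the induced metric on the geodesic sphere $\partial B^{n}_{\kappa,-\lambda}$ of radius $C_{\kappa,-\lambda}$ in $M^{n}_{\kappa}$ is $\sk^{2}(C_{\kappa,-\lambda})\,ds^{2}_{n-1}$, of constant curvature $\sk^{-2}(C_{\kappa,-\lambda})$, and from $s_{\kappa,-\lambda}(C_{\kappa,-\lambda})=0$, i.e. $\ck(C_{\kappa,-\lambda})=-\lambda\,\sk(C_{\kappa,-\lambda})$, together with the identity $\ck^{2}+\kappa\,\sk^{2}\equiv 1$, one gets $(\kappa+\lambda^{2})\,\sk^{2}(C_{\kappa,-\lambda})=1$, so this curvature equals $K$. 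Hence the second hypothesis $\vol_{h}\bm\ge \vol_{h^{n-1}_{\kappa,-\lambda}}\partial B^{n}_{\kappa,-\lambda}=\vol M^{n-1}_{K}$ forces equality in Bishop, and $(\bm,h)$ is isometric to $M^{n-1}_{K}$, hence to $\partial B^{n}_{\kappa,-\lambda}$ with its induced metric.

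Finally, transporting this boundary isometry through $\Phi$ exhibits $(M,g)$ as isometric, as a Riemannian manifold with boundary, to $[0,\infty)\times_{\kappa,\lambda}\partial B^{n}_{\kappa,-\lambda}$, that is, to $[0,\infty)\times\mathbb{S}^{n-1}$ with metric $dt^{2}+s_{\kappa,\lambda}^{2}(t)\,\sk^{2}(C_{\kappa,-\lambda})\,ds^{2}_{n-1}$; the relation $\sk(C_{\kappa,-\lambda}+t)=\sk(C_{\kappa,-\lambda})\,s_{\kappa,\lambda}(t)$ (both sides solve $f''+\kappa f=0$ with value $\sk(C_{\kappa,-\lambda})$ and derivative $-\lambda\,\sk(C_{\kappa,-\lambda})$ at $t=0$) rewrites this as $dt^{2}+\sk^{2}(C_{\kappa,-\lambda}+t)\,ds^{2}_{n-1}$, which is precisely $g^{n}_{\kappa}$ on $M^{n}_{\kappa}\setminus \inte B^{n}_{\kappa,-\lambda}$ expressed through the normal exponential map of its boundary. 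A last application of Lemma \ref{lem:isometry with with boundary} turns this Riemannian isometry with boundary into the desired isometry of metric spaces. I expect the decisive step to be the third paragraph: noticing that Lemma \ref{lem:rigid boundary Ricci} furnishes a strictly positive Ricci bound on $\bm$, that the second hypothesis is precisely the reverse Bishop inequality, and that the threshold curvature is $\kappa+\lambda^{2}$; note also that this Bishop step needs $\dim\bm\ge 2$, whereas the $\kappa>0$ case above works for all $n\ge 2$.
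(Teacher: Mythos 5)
Your proof follows the same route as the paper's: Theorem \ref{thm:volume growth distance rigidity} gives the warped-product (resp.\ ball) structure of $M$, Lemma \ref{lem:rigid boundary Ricci} gives $\ric_{\bm}\geq (n-2)(\kappa+\lambda^{2})>0$, and the Bishop volume comparison combined with the hypothesis $\vol_{h}\bm\geq \vol_{h^{n-1}_{\kappa,-\lambda}}\partial B^{n}_{\kappa,-\lambda}$ identifies $(\bm,h)$ with $(\partial B^{n}_{\kappa,-\lambda},h^{n-1}_{\kappa,-\lambda})$, whence $M$ is the complement of the ball. Your explicit $\kappa>0$ case split and the computations $(\kappa+\lambda^{2})\,\sk^{2}(C_{\kappa,-\lambda})=1$ and $\sk(C_{\kappa,-\lambda}+t)=\sk(C_{\kappa,-\lambda})\,s_{\kappa,\lambda}(t)$ only fill in details the paper compresses into its final sentence (and your remark that the Bishop step needs $\dim\bm\geq 2$ is a fair observation about the statement itself), so the argument is correct and essentially the paper's.
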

\begin{proof}
By Theorem \ref{thm:volume growth distance rigidity},
$(M,d_{M})$ is isometric to $([0,\infty)\times_{\kappa,\lambda}\bm,d_{\kappa,\lambda})$.
Lemma \ref{lem:rigid boundary Ricci} implies $\ric_{\bm}\geq (n-2)(\kappa+\lambda^{2})$.
Since $\kappa$ and $-\lambda$ satisfy the ball-condition,
$(\bm,h)$ is a connected complete Riemannian manifold of positive Ricci curvature.
By the assumption $\vol_{h}\bm\geq \vol_{h^{n-1}_{\kappa,-\lambda}} \partial B^{n}_{\kappa,-\lambda}$,
and by the Bishop volume comparison theorem,
$(\bm,h)$ is isometric to $(\partial B^{n}_{\kappa,-\lambda},h^{n-1}_{\kappa,-\lambda})$.
It turns out that $M$ and $M^{n}_{\kappa}\setminus \inte B^{n}_{\kappa,-\lambda}$ are isometric to each other as metric spaces.
\end{proof}

\section{Splitting theorems}\label{sec:splitting}
Let $M$ be a connected complete Riemannian manifold with boundary.
A normal geodesic $\gamma:[0,\infty)\to M$ is said to be a \textit{ray}
if for all $s,t\in [0,\infty)$, 
we have $d_{M}(\gamma(s),\gamma(t))=|s-t|$.
For a ray $\gamma:[0,\infty)\to M$, 
the function $b_{\gamma}:M\to \mathbb{R}$ defined as
\begin{equation*}
b_{\gamma}(p):=\lim_{t\to \infty}(t-d_{M}(p,\gamma(t)))
\end{equation*}
is called the \textit{busemann function} of $\gamma$.

\begin{lem}\label{lem:geometric splitting}
Let $M$ be a connected complete Riemannian manifold with boundary.
Suppose that for some $x_{0}\in \bm$,
we have $\tau(x_{0})=\infty$.
Take a point $p\in \inte M$.
If $b_{\gamma_{x_{0}}}(p)=\rho_{\bm}(p)$,
then $p\notin \cut \bm$.
Moreover,
for the unique foot point $x$ on $\bm$ of $p$,
we have $\tau(x)=\infty$.
\end{lem}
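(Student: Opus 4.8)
The plan is to work with the Busemann function $b:=b_{\gamma_{x_{0}}}$ and to exploit that $\tau(x_{0})=\infty$ means precisely that $\rho_{\bm}(\gamma_{x_{0}}(t))=t$ for all $t\geq 0$. First I would record the elementary inequality $b\leq \rho_{\bm}$ on all of $M$: from $\rho_{\bm}(\gamma_{x_{0}}(t))=t$ and the triangle inequality, $d_{M}(q,\gamma_{x_{0}}(t))\geq t-\rho_{\bm}(q)$, hence $t-d_{M}(q,\gamma_{x_{0}}(t))\leq \rho_{\bm}(q)$; letting $t\to\infty$ gives $b(q)\leq \rho_{\bm}(q)$. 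Thus the hypothesis $b(p)=\rho_{\bm}(p)$ is exactly the equality case at $p$. Next, fix a foot point $x$ of $p$ and set $\ell:=\rho_{\bm}(p)=d_{M}(x,p)>0$; by Lemma~\ref{lem:foot vector} the minimal geodesic from $x$ to $p$ is $\gamma_{x}|_{[0,\ell]}$. For $s\in[0,\ell]$, combining $b(\gamma_{x}(s))\leq \rho_{\bm}(\gamma_{x}(s))=s$ with the $1$-Lipschitz bound $b(\gamma_{x}(s))\geq b(p)-d_{M}(p,\gamma_{x}(s))=\ell-(\ell-s)=s$ forces $b(\gamma_{x}(s))=s$, so $b$ grows at unit speed along $\gamma_{x}|_{[0,\ell]}$.

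Then I would run the standard asymptotic-ray construction at $p$: choosing $t_{i}\to\infty$ and minimal geodesics $\sigma_{i}$ from $p$ to $\gamma_{x_{0}}(t_{i})$ (whose lengths $d_{M}(p,\gamma_{x_{0}}(t_{i}))\geq t_{i}-\ell$ tend to $\infty$), properness of $(M,d_{M})$ together with a diagonal argument yields a ray $\sigma\colon[0,\infty)\to M$ with $\sigma(0)=p$ that is a uniform limit of the $\sigma_{i}$ on compact sets, and a routine computation gives the co-ray identity $b(\sigma(s))=b(p)+s=\ell+s$. Combined with $b\leq\rho_{\bm}$ and the triangle inequality $\rho_{\bm}(\sigma(s))\leq \rho_{\bm}(p)+s=\ell+s$, this pins down $\rho_{\bm}(\sigma(s))=\ell+s$ for all $s\geq 0$; in particular $\sigma$ stays in $\inte M$, hence is an honest geodesic ray. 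For $s'<\ell$ and small $\epsilon>0$, the lower bound $d_{M}(\gamma_{x}(s'),\sigma(\epsilon))\geq \rho_{\bm}(\sigma(\epsilon))-\rho_{\bm}(\gamma_{x}(s'))=(\ell-s')+\epsilon$ matches the triangle-inequality upper bound $d_{M}(\gamma_{x}(s'),p)+d_{M}(p,\sigma(\epsilon))=(\ell-s')+\epsilon$, so the concatenation of $\gamma_{x}|_{[s',\ell]}$ with $\sigma|_{[0,\epsilon]}$ is minimizing, hence a smooth geodesic through $p$, giving $\sigma'(0)=\gamma_{x}'(\ell)$. Therefore $\sigma$ is the forward continuation of $\gamma_{x}$, so $\gamma_{x}$ is defined on all of $[0,\infty)$, lies in $\inte M$ for $t>0$, and satisfies $\rho_{\bm}(\gamma_{x}(t))=t$ for all $t\geq 0$; that is, $\tau(x)=\infty$.

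Finally, $\tau(x)=\infty$ gives $\ell\,u_{x}\in TD_{\bm}$, so $p=\expp(\ell\,u_{x})\in D_{\bm}$, and Lemma~\ref{lem:cut and regular} yields $p\notin\cut\bm$. For uniqueness of the foot point: if $y\neq x$ were another foot point of $p$, then by Lemma~\ref{lem:foot vector} $\gamma_{y}|_{[0,\ell]}$ is minimal with $\rho_{\bm}(\gamma_{y}(\ell))=\ell$, so $\ell\leq \tau(y)$; the case $\ell=\tau(y)$ would force $p\in\cut\bm$, a contradiction, while the case $\ell<\tau(y)$ would give $\ell\,u_{y}\in TD_{\bm}\setminus 0(T^{\perp}\bm)$ with $\expp(\ell\,u_{x})=\expp(\ell\,u_{y})$, contradicting the injectivity in Lemma~\ref{lem:regular and injective}. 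Hence $x$ is the unique foot point of $p$ and $\tau(x)=\infty$, as required. The main obstacle is the third step — setting up the asymptotic ray correctly in the manifold-with-boundary setting and the gluing argument identifying it with the continuation of $\gamma_{x}$; the point that makes this go through is that the computed value $\rho_{\bm}(\sigma(s))=\ell+s$ is strictly positive, which keeps $\sigma$ away from $\bm$ so that no boundary pathologies intervene.
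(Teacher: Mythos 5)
Your argument is correct, but it reorganizes the proof rather than reproducing the paper's. The paper proves $p\notin\cut\bm$ first, by contradiction: assuming $p\in\cut\bm$ (so $\tau(x)<\infty$ and $p=\gamma_{x}(\tau(x))$), it extracts only a limit direction $u$ at $p$ from minimal geodesics to $\gamma_{x_{0}}(t_{i})$, shows $b_{\gamma_{x_{0}}}(p)=-\epsilon+b_{\gamma_{x_{0}}}(\gamma_{u}(\epsilon))$, and uses $b_{\gamma_{x_{0}}}\leq\rho_{\bm}$ with equality at $p$ to force $u=\gamma_{x}'(\tau(x))$ and $\rho_{\bm}(\gamma_{x}(\tau(x)+\epsilon))=\tau(x)+\epsilon$, contradicting the maximality in the definition of $\tau(x)$; it then gets $\tau(x)=\infty$ by propagating the equality $b_{\gamma_{x_{0}}}=\rho_{\bm}$ along $\gamma_{x}$. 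You instead construct the full asymptotic ray $\sigma$ from $p$ (the construction the paper only sets up just before Lemma \ref{lem:asymptote}), use the co-ray identity together with $b_{\gamma_{x_{0}}}\leq\rho_{\bm}$ to pin down $\rho_{\bm}(\sigma(s))=\ell+s$, splice $\sigma$ onto $\gamma_{x}$ at the interior point $p$ by a no-corner argument, and thereby obtain $\tau(x)=\infty$ directly; the conclusions $p\notin\cut\bm$ and uniqueness of the foot point then follow from the structural Lemmas \ref{lem:cut and regular} and \ref{lem:regular and injective} rather than from the definition of $\tau$. Both routes share the same analytic heart (the equality $b_{\gamma_{x_{0}}}(p)=\rho_{\bm}(p)$ forces the asymptote at $p$ to be the forward continuation of $\gamma_{x}$, along which $\rho_{\bm}$ grows at unit speed), but your direct version dispenses with the contradiction and with the paper's rather terse final bootstrap step, at the cost of invoking the full asymptote machinery and the cut-locus lemmas. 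The two points you label routine --- the monotonicity argument behind $b_{\gamma_{x_{0}}}(\sigma(s))=b_{\gamma_{x_{0}}}(p)+s$, and smoothness of a minimizer through an interior junction point --- are indeed standard and are carried out explicitly elsewhere in the paper, so I see no gap.
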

\begin{proof}
Since $\tau(x_{0})=\infty$,
the normal geodesic $\gamma_{x_{0}}:[0,\infty)\to M$ is a ray.
Since $\rho_{\bm}$ is $1$-Lipschitz,
for all $q\in M$,
we have $b_{\gamma_{x_{0}}}(q)\leq \rho_{\bm}(q)$.

Take a foot point $x$ on $\bm$ of $p$.
Suppose $p\in \cut \bm$.
We have $\tau(x)<\infty$
and $p=\gamma_{x}(\tau(x))$.
Take $\epsilon>0$ with $B_{\epsilon}(p)\subset \inte M$,
and a sequence $\{t_{i}\}$ with $t_{i}\to \infty$.
For each $i$,
we take a normal minimal geodesic $\gamma_{i}:[0,l_{i}]\to M$ from $p$ to $\gamma_{x_{0}}(t_{i})$.
Then $\gamma_{i}|_{[0,\epsilon)}$ lies in $\inte M$.
Put $u_{i}:=\gamma'_{i}(0)\in U_{p}M$.
By taking a subsequence,
for some $u\in U_{p}M$,
we have $u_{i}\to u$ in $U_{p}M$.
We denote by $\gamma_{u}:[0,T)\to M$ the normal geodesic
with initial conditions $\gamma_{u}(0)=p$ and $\gamma'_{u}(0)=u$.
We have
\begin{equation*}
t_{i}-d_{M}(p,\gamma_{x_{0}}(t_{i}))=-\epsilon+(t_{i}-d_{M}(\gamma_{i}(\epsilon),\gamma_{x_{0}}(t_{i}))).
\end{equation*}
By letting $i\to \infty$,
we have $b_{\gamma_{x_{0}}}(p)=-\epsilon+b_{\gamma_{x_{0}}}(\gamma_{u}(\epsilon))$.
From the assumption $b_{\gamma_{x_{0}}}(p)=\rho_{\bm}(p)$,
it follows that $\rho_{\bm}(p)\leq -\epsilon+\rho_{\bm}(\gamma_{u}(\epsilon))$.
On the other hand,
since $\rho_{\bm}$ is $1$-Lipschitz,
we have the opposite.
Therefore,
$d_{M}(x,\gamma_{u}(\epsilon))$ is equal to $d_{M}(x,p)+d_{M}(p,\gamma_{u}(\epsilon))$;
in particular, 
we see $u=\gamma'_{x}(\tau(x))$.
Furthermore,
$\rho_{\bm}(\gamma_{x}(\tau(x)+\epsilon))=\tau(x)+\epsilon$.
This contradicts the definition of $\tau$.
Hence,
$p\notin \cut \bm$,
and $x$ is the unique foot point on $\bm$ of $p$.

Put $l:=\rho_{\bm}(p)$.
We see that for every sufficiently small $\epsilon>0$,
we have $b_{\gamma_{x_{0}}}(\gamma_{x}(l+\epsilon))=\rho_{\bm}(\gamma_{x}(l+\epsilon))$.
In particular,
for all $t\in [l,\infty)$,
we have $b_{\gamma_{x_{0}}}(\gamma_{x}(t))=\rho_{\bm}(\gamma_{x}(t))$.
It follows that $\tau(x)=\infty$.
\end{proof}
Let $M$ be a connected complete Riemannian manifold with boundary,
and let $\gamma:[0,\infty)\to M$ be a ray.
Take $p\in \inte M$,
and a sequence $\{t_{i}\}$ with $t_{i}\to \infty$.
For each $i$,
let $\gamma_{i}:[0,l_{i}]\to M$ be a normal minimal geodesic from $p$ to $\gamma(t_{i})$.
Since $\gamma$ is a ray,
we have $l_{i}\to \infty$.
Take a sequence $\{T_{j}\}$ with $T_{j}\to \infty$.
Since $M$ is proper,
there exists a subsequence $\{\gamma_{1,i}\}$ of $\{\gamma_{i}\}$,
and a normal minimal geodesic $\gamma_{p,1}:[0,T_{1}]\to M$ from $p$ to $\gamma_{p,1}(T_{1})$
such that $\gamma_{1,i}|_{[0,T_{1}]}$ uniformly converges to $\gamma_{p,1}$.
Furthermore,
there exists a subsequence $\{\gamma_{2,i}\}$ of $\{\gamma_{1,i}\}$,
and a normal minimal geodesic $\gamma_{p,2}:[0,T_{2}]\to M$ from $p$ to $\gamma_{p,2}(T_{2})$
such that $\gamma_{2,i}|_{[0,T_{2}]}$ uniformly converges to $\gamma_{p,2}$,
where $\gamma_{p,2}|_{[0,T_{1}]}=\gamma_{p,1}$.
By a diagonal argument,
we obtain a subsequence $\{\gamma_{k}\}$ of $\{\gamma_{i}\}$,
and a ray $\gamma_{p}:[0,\infty)\to M$ such that
for every $t\in (0,\infty)$,
we have $\gamma_{k}(t)\to \gamma_{p}(t)$ as $k\to \infty$.
We call such a ray $\gamma_{p}$ an \textit{asymptote for} $\gamma$ \textit{from} $p$.
\begin{lem}\label{lem:asymptote}
Let $M$ be a connected complete Riemannian manifold with boundary.
Suppose that for some $x_{0}\in \bm$,
we have $\tau(x_{0})=\infty$.
Take $l>0$,
and put $p:=\gamma_{x_{0}}(l)$.
Then there exists $\epsilon>0$ such that
for all $q\in B_{\epsilon}(p)$,
all asymptotes for the ray $\gamma_{x_{0}}$ from $q$ lie in $\inte M$.
\end{lem}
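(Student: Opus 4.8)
The plan is to control the asymptotes from points near $p$ by means of the Busemann function $b:=b_{\gamma_{x_{0}}}$, using two facts already available. Since $\tau(x_{0})=\infty$, the normal geodesic $\gamma_{x_{0}}$ is a ray, so $\rho_{\bm}(\gamma_{x_{0}}(t))=t$ and $b(\gamma_{x_{0}}(t))=t$ for all $t\ge 0$; moreover, as observed in the proof of Lemma~\ref{lem:geometric splitting}, $b\le\rho_{\bm}$ on all of $M$. In particular $b(p)=l=\rho_{\bm}(p)$. I would take $\epsilon:=l/2$ (any $\epsilon\in(0,l)$ works). For $q\in B_{\epsilon}(p)$, the $1$-Lipschitz property of $b$ yields $b(q)\ge b(p)-\epsilon=l-\epsilon>0$, so $\rho_{\bm}(q)\ge b(q)>0$ and $q\in\inte M$; thus asymptotes for $\gamma_{x_{0}}$ from $q$ are defined.

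The core step is to show that for any asymptote $\gamma_{q}$ for $\gamma_{x_{0}}$ from $q$ one has $b(\gamma_{q}(t))=b(q)+t$ for every $t\ge 0$; in fact only the inequality $b(\gamma_{q}(t))\ge b(q)$ will be used. By construction $\gamma_{q}$ is the pointwise limit of minimal geodesics $\gamma_{k}:[0,l_{k}]\to M$ from $q$ to $\gamma_{x_{0}}(t_{k})$, with $l_{k}=d_{M}(q,\gamma_{x_{0}}(t_{k}))$ and $t_{k}\to\infty$. For $k$ large enough that $l_{k}>t$ one has $d_{M}(\gamma_{k}(t),\gamma_{x_{0}}(t_{k}))=l_{k}-t$, so the triangle inequality gives
\[
t_{k}-d_{M}(\gamma_{q}(t),\gamma_{x_{0}}(t_{k}))\ \ge\ (t_{k}-l_{k})+t-d_{M}(\gamma_{q}(t),\gamma_{k}(t)).
\]
Letting $k\to\infty$, using $t_{k}-l_{k}\to b(q)$, $d_{M}(\gamma_{q}(t),\gamma_{k}(t))\to 0$, and the fact that the Busemann limit $\lim_{s\to\infty}\bigl(s-d_{M}(\gamma_{q}(t),\gamma_{x_{0}}(s))\bigr)$ is a monotone bounded limit and hence may be evaluated along the subsequence $\{t_{k}\}$, we obtain $b(\gamma_{q}(t))\ge b(q)+t$; the reverse inequality is immediate from the $1$-Lipschitz bound applied along $\gamma_{q}$.

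With this in hand the conclusion is immediate: for $q\in B_{\epsilon}(p)$ and any asymptote $\gamma_{q}$ for $\gamma_{x_{0}}$ from $q$,
\[
\rho_{\bm}(\gamma_{q}(t))\ \ge\ b(\gamma_{q}(t))\ =\ b(q)+t\ \ge\ l-\epsilon\ >\ 0
\]
for all $t\ge 0$, hence $\gamma_{q}(t)\in\inte M$ for every $t$, i.e.\ $\gamma_{q}$ lies in $\inte M$. The only step that needs genuine care is the identity $b(\gamma_{q}(t))=b(q)+t$ --- the standard statement that a Busemann function grows at unit speed along its asymptotes --- and the single subtle point there is the interchange of the Busemann limit with the asymptote-defining sequence $\{t_{k}\}$; the rest is bookkeeping with the triangle inequality and the comparison $b\le\rho_{\bm}$.
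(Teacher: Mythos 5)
Your proof is correct, and it reaches the conclusion by a genuinely more direct route than the paper's. The shared core is the unit-speed growth of the Busemann function along asymptotes: your triangle-inequality-plus-monotone-limit derivation of $b_{\gamma_{x_{0}}}(\gamma_{q}(t))\geq b_{\gamma_{x_{0}}}(q)+t$ is precisely the computation the paper carries out to obtain $b_{\gamma_{x_{0}}}(q_{i})=-t+b_{\gamma_{x_{0}}}(\gamma_{i}(t))$, and the comparison $b_{\gamma_{x_{0}}}\leq\rho_{\bm}$ (valid because $\tau(x_{0})=\infty$) is the same fact used in the proof of Lemma \ref{lem:geometric splitting}. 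The difference is architectural: the paper argues by contradiction, taking $q_{i}\to p$ with asymptotes $\gamma_{i}$ not contained in $\inte M$, using properness and a diagonal argument to extract a limit ray $\gamma_{p}$, identifying $\gamma_{p}$ with $\gamma_{x_{0}}|_{[l,\infty)}$ through the equality case of $b_{\gamma_{x_{0}}}\leq\rho_{\bm}$ at $p$, and finally deriving a contradiction from the behavior of the first boundary-hitting points $x_{i}=\gamma_{i}(t_{i})$. You instead exploit that $b_{\gamma_{x_{0}}}$ is $1$-Lipschitz with $b_{\gamma_{x_{0}}}(p)=l$, so that $b_{\gamma_{x_{0}}}(q)\geq l-\epsilon>0$ on $B_{\epsilon}(p)$ for any $\epsilon<l$, and then the chain $\rho_{\bm}(\gamma_{q}(t))\geq b_{\gamma_{x_{0}}}(\gamma_{q}(t))\geq b_{\gamma_{x_{0}}}(q)+t>0$ keeps every asymptote in $\inte M$. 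This dispenses with the compactness extraction and the identification of the limit ray, produces an explicit $\epsilon$ (any $\epsilon<l$ works), and in fact yields the stronger quantitative statement that every asymptote issuing from any point where $b_{\gamma_{x_{0}}}>0$ stays in the interior, at distance at least $b_{\gamma_{x_{0}}}(q)+t$ from $\bm$; the only thing the paper's longer argument additionally provides is the description of the limiting direction of the putative bad asymptotes, which the statement does not require. Your handling of the one delicate point --- evaluating the Busemann limit along the subsequence $\{t_{k}\}$, justified by monotonicity and boundedness of $s\mapsto s-d_{M}(\cdot,\gamma_{x_{0}}(s))$ --- is sound.
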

\begin{proof}
The proof is by contradiction.
Suppose that
there exists a sequence $\{q_{i}\}$ in $\inte M$ with $q_{i}\to p$ such that
for each $i$,
there exists an asymptote $\gamma_{i}$ for $\gamma_{x_{0}}$ from $q_{i}$ such that
$\gamma_{i}$ does not lie in $\inte M$.
Now,
$M$ is proper.
Therefore,
by taking a subsequence of $\{\gamma_{i}\}$,
we may assume that
there exists a ray $\gamma_{p}:[0,\infty)\to M$ such that
for every $t\in [0,\infty)$,
we have $\gamma_{i}(t)\to \gamma_{p}(t)$ as $i\to \infty$.

Fix $i$.
Since $\gamma_{i}$ is an asymptote for $\gamma_{x_{0}}$ from $q_{i}$,
there exists a sequence $\{t_{i_{k}}\}$ with $t_{i_{k}}\to \infty$ as $k\to \infty$,
and for every $k$
there exists a normal minimal geodesic $\gamma_{i_{k}}$ in $M$ from $q_{i}$ to $\gamma_{x_{0}}(t_{i_{k}})$ such that
for every $t\in (0,\infty)$
we have $\gamma_{i_{k}}(t)\to \gamma_{i}(t)$ as $k\to \infty$.
For a fixed $t\in (0,\infty)$,
and for every $k$,
we have
\begin{equation*}
t_{i_{k}}-d_{M}(q_{i},\gamma_{x_{0}}(t_{i_{k}}))=-t+\left(t_{i_{k}}-d_{M}(\gamma_{i_{k}}(t),\gamma_{x_{0}}(t_{i_{k}}))\right).
\end{equation*}
Letting $k\to \infty$,
we have $b_{\gamma_{x_{0}}}(q_{i})=-t+b_{\gamma_{x_{0}}}(\gamma_{i}(t))$.
By letting $i \to \infty$,
we obtain $b_{\gamma_{x_{0}}}(p)=-t+b_{\gamma_{x_{0}}}(\gamma_{p}(t))$.

Since $\rho_{\bm}$ is $1$-Lipschitz,
and since $\tau(x_{0})=\infty$,
we have $b_{\gamma_{x_{0}}}\leq \rho_{\bm}$ on $M$,
and the equality holds at $p$.
Furthermore,
for every $t \in (0,\infty)$
we have $b_{\gamma_{x_{0}}}(p)=-t+b_{\gamma_{x_{0}}}(\gamma_{p}(t))$.
Therefore,
for every $t\in(0,\infty)$,
\begin{align*}
d_{M}(\gamma_{p}(t),x_{0})&\geq \rho_{\bm}(\gamma_{p}(t)) \geq b_{\gamma_{x_{0}}}(\gamma_{p}(t))
                                                                                                    = t+\rho_{\bm}(p)\\
                                                                                                  &=d_{M}(\gamma_{p}(t),p)+d_{M}(p,x_{0}).
\end{align*}
From the triangle inequality,
it follows that
$d_{M}(\gamma_{p}(t),x_{0})$ is equal to $d_{M}(\gamma_{p}(t),p)+d_{M}(p,x_{0})$.
In particular,
$\gamma_{p}|_{[0,\infty)}$ coincides with $\gamma_{x_{0}}|_{[l,\infty)}$.
Since $q_{i}\in \inte M$ for each $i$,
we have $u_{i}:=\gamma'_{i}(0)\in U_{q_{i}}M$.
We have $q_{i}\to p$ in $M$.
Therefore,
by taking a subsequence of $\{u_{i}\}$,
for some $u\in U_{p}M$
we have $u_{i}\to u$ in the unit tangent bundle on $\inte M$.
Since $\gamma_{p}|_{[0,\infty)}$ coincides with $\gamma_{x_{0}}|_{[l,\infty)}$,
we have $u=\gamma'_{x_{0}}(l)$.
Put
\begin{equation*}
t_{i}:=\sup\{t>0\mid \gamma_{i}([0,t))\subset \inte M\}
\end{equation*}
and $x_{i}:=\gamma_{i}(t_{i})\in \bm$.
Since all $\gamma_{i}$ are asymptotes for $\gamma_{x_{0}}$,
and since $\rho_{\bm}(x_{i})=0$ for all $i$,
we have
\begin{equation*}
b_{\gamma_{x_{0}}}(q_{i})=-t_{i}+b_{\gamma_{x_{0}}}(x_{i})\leq -t_{i}.
\end{equation*}
We see $b_{\gamma_{x_{0}}}(q_{i})\to l$ as $i\to \infty$.
Therefore,
the sequence $\{t_{i}\}$ does not diverge.
We may assume that for some $x\in \bm$,
the sequence $\{x_{i}\}$ converges to $x$ in $\bm$.
Since $u=\gamma'_{x_{0}}(l)$,
the ray $\gamma_{x_{0}}$ passes through $x$.
This contradicts that $\gamma_{x_{0}}|_{(0,\infty)}$ lies in $\inte M$.
\end{proof}
Let $M$ be a connected complete Riemannian manifold with boundary.
Take a point $p\in \inte M$,
and a continuous function $f:M\to \mathbb{R}$.
We say that 
a function $\bar{f}:M\to \mathbb{R}$ is a \textit{support function of} $f$ \textit{at} $p$
if we have $\bar{f}(p)=f(p)$,
and for all $q\in M$,
we have $\bar{f}(q)\leq f(q)$.

Take a domain $U$ in $\inte M$.
We say that $f$ is \textit{subharmonic in a barrier sense on} $U$
if for each $\epsilon>0$,
and for each $p\in U$,
there exists a support function $f_{p,\epsilon}:M\to \mathbb{R}$ of $f$ at $p$
such that $f_{p,\epsilon}$ is smooth on an open neighborhood of $p$,
and $\Delta f_{p,\epsilon}(p)\leq \epsilon$.
The Calabi maximal principle in \cite{C} tells us that
if a function that is subharmonic in a barrier sense on $U$ takes the maximal value at a point in $U$,
then the function must be constant.

We prove Theorem \ref{thm:splitting} by using the Calabi maximal principle in \cite{C}.
\begin{proof}[Proof of Theorem \ref{thm:splitting}]
For $\kappa \leq 0$,
let $M$ be an $n$-dimensional,
connected complete Riemannian manifold with boundary
such that $\ric_{M}\geq (n-1)\kappa$ and $H_{\bm} \geq \sqrt{\vert \kappa \vert}$.
Assume that for $x\in \bm$,
we have $\tau(x)=\infty$.
Let $\bm_{0}$ be the connected component of $\bm$ containing $x$.
Put
\begin{equation*}
\Omega:=\{y\in \bm_{0} \mid \tau(y)=\infty \}.
\end{equation*}
The assumption implies $\Omega \neq \emptyset$.
By the continuity of the function $\tau$,
we see that $\Omega$ is closed in $\bm_{0}$.

We show the openness of $\Omega$ in $\bm_{0}$.
Let $x_{0}\in \Omega$.
Take $l>0$,
and put $p_{0}:=\gamma_{x_{0}}(l)$.
By Lemma \ref{lem:asymptote},
there exists a sufficiently small open neighborhood $U$ of $p_{0}$ in $\inte M$ with $U\subset D_{\bm}$ such that
for each $q\in U$,
the unique foot point on $\bm$ of $q$ belongs to $\bm_{0}$,
and all asymptotes for $\gamma_{x_{0}}$ from $q$ lie in $\inte M$.

We prove that
the function $b_{\gamma_{x_{0}}}-\rho_{\bm}$ is subharmonic in a barrier sense on $U$.
By Proposition \ref{prop:distance function},
$\rho_{\bm}$ is smooth on $U$.
Fix a point $q_{0}\in U$,
and take an asymptote $\gamma_{q_{0}}:[0,\infty)\to M$ for $\gamma_{x_{0}}$ from $q_{0}$.
For $t>0$,
define a function $b_{\gamma_{x_{0}},t}:M\to \mathbb{R}$ by
\begin{equation*}
b_{\gamma_{x_{0}},t}(p):=b_{\gamma_{x_{0}}}(q_{0})+t-d_{M}(p,\gamma_{q_{0}}(t)).
\end{equation*}
We see that
$b_{\gamma_{x_{0}},t}-\rho_{\bm}$ is a support function of $b_{\gamma_{x_{0}}}-\rho_{\bm}$ at $q_{0}$.
Since $\gamma_{q_{0}}$ is a ray contained in $\inte M$,
for every $t\in(0,\infty)$,
the function $b_{\gamma_{x_{0}},t}$ is smooth on a neighborhood of $q_{0}$ in $\inte M$.
By Lemma \ref{lem:point Laplacian},
we have $\Delta b_{\gamma_{x_{0}},t}(q_{0})\leq (n-1)(s'_{\kappa}(t)/s_{\kappa}(t))$.
Note that $s'_{\kappa}(t)/s_{\kappa}(t)\to \sqrt{\vert \kappa \vert}$ as $t\to \infty$.
On the other hand,
by Theorem \ref{thm:boundary Laplacian},
for all $q\in U$,
we have $\Delta \rho_{\bm}(q)\geq (n-1)\sqrt{\vert \kappa \vert}$.
Hence,
$b_{\gamma_{x_{0}}}-\rho_{\bm}$ is subharmonic in a barrier sense on $U$.
The function $b_{\gamma_{x_{0}}}-\rho_{\bm}$ takes the maximal value $0$ at $p_{0}$.
The Calabi maximal principle in \cite{C} implies that
$b_{\gamma_{x_{0}}}$ coincides with $\rho_{\bm}$ on $U$.
From Lemma \ref{lem:geometric splitting},
it follows that $\Omega$ is open in $\bm_{0}$.

For all $x\in \bm_{0}$,
we have $\tau(x)=\infty$.
We put
\begin{equation*}
TD_{\bm_{0}}:=\bigcup_{x\in \bm_{0}}\{t\,u_{x} \mid t\in (0,\infty)\}.
\end{equation*}
By Lemma \ref{lem:regular and injective},
$\expp|_{TD_{\bm_{0}}}:TD_{\bm_{0}}\to \expp(TD_{\bm_{0}})$ is a diffeomorphism.
The set $TD_{\bm_{0}}$ is open and closed in $TD_{\bm}\setminus 0(\tbp)$.
Therefore,
$\expp(TD_{\bm_{0}})$ is also open and closed in $\inte M$.
Since $\inte M$ is connected,
$\expp(TD_{\bm_{0}})$ coincides with $\inte M$;
in particular,
$\bm$ is connected
and $\cut \bm=\emptyset$.
Note that
$\rho_{\bm}$ is smooth on $\inte M$.

Take $p\in \inte M$
and the unique foot point $x_{p}$ on $\bm$ of $p$.
Since $\tau(x_{p})=\infty$,
the maximal principle argument implies that
$b_{\gamma_{x_{p}}}$ coincides with $\rho_{\bm}$ on a neighborhood $V$ of $p$ in $\inte M$;
in particular,
$b_{\gamma_{x_{p}}}$ is smooth on $V$,
and $\Delta \rho_{\bm}(p)=(n-1)\sqrt{\vert \kappa \vert}$.
It follows that
the equality in Theorem \ref{thm:boundary Laplacian} holds on $\inte M$.
For each $x\in \bm$,
choose an orthonormal basis $\{e_{x,i}\}_{i=1}^{n-1}$ of $T_{x}\bm$.
For each $i=1,\dots,n-1$,
let $Y_{x,i}$ be the $\bm$-Jacobi field along $\gamma_{x}$
with initial conditions $Y_{x,i}(0)=e_{x,i}$ and $Y'_{x,i}(0)=-A_{u_{x}}e_{x,i}$.
Then we have $Y_{x,i}(t)=s_{\kappa,\sqrt{\vert \kappa \vert}}(t)E_{x,i}(t)$,
where $E_{x,i}$ is the parallel vector fields along $\gamma_{x}$ with initial condition $E_{x,i}(0)=e_{x,i}$ (see Remark \ref{rem:lap eq}).
Define a map $\Phi:[0,\infty)\times \bm\to M$ by $\Phi(t,x):=\gamma_{x}(t)$.
For every $p\in (0,\infty)\times \bm$,
the map $D(\Phi|_{(0,\infty)\times \bm})_{p}$ sends an orthonormal basis of $T_{p}((0,\infty)\times \bm)$ to that of $T_{\Phi(p)}M$,
and for every $x\in \{0\}\times \bm$,
the map $D(\Phi|_{\{0\}\times \bm})_{x}$ sends an orthonormal basis of $T_{x}(\{0\}\times \bm)$ to that of $T_{\Phi(x)}\bm$.
Therefore,
$\Phi$ is a Riemannian isometry with boundary from $[0,\infty)\times_{\kappa,\sqrt{\vert \kappa \vert}} \bm$ to $M$.
We complete the proof of Theorem \ref{thm:splitting}.
\end{proof}
The Cheeger-Gromoll splitting theorem (\cite{CG}) states that
if $M$ is an $n$-dimensional, 
connected complete Riemmanian manifold of non-negative Ricci curvature,
and if $M$ contains a line,
then there exists an $(n-1)$-dimensional Riemannian manifold $N$ of non-negative Ricci curvature such that
$M$ is isometric to the standard product $\mathbb{R}\times N$.
\begin{cor}
For $\kappa\leq 0$,
let $M$ be an $n$-dimensional,
connected complete Riemannian manifold with boundary
such that $\ric_{M}\geq (n-1)\kappa$ and $H_{\bm} \geq \sqrt{\vert \kappa \vert}$.
Suppose that for some $x\in \bm$,
we have $\tau(x)=\infty$.
Then there exist $k\in \{0,\dots,n-1\}$, 
and an $(n-1-k)$-dimensional,
connected complete Riemannian manifold $N$ of non-negative Ricci curvature containing no line such that
$(\bm,d_{\bm})$ is isometric to the standard product metric space $(\mathbb{R}^{k}\times N,d_{\mathbb{R}^{k}\times N})$.
In particular,
$(M,d_{M})$ is isometric to $([0,\infty)\times_{\kappa,\sqrt{\vert \kappa \vert}} (\mathbb{R}^{k}\times N),d_{\kappa,\sqrt{\vert \kappa \vert}})$.
\end{cor}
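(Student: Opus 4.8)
The plan is to combine Theorem \ref{thm:splitting}, Lemma \ref{lem:rigid boundary Ricci}, and an iterated application of the Cheeger-Gromoll splitting theorem. Since $\tau(x)=\infty$ for some $x\in\bm$, Theorem \ref{thm:splitting} gives an isometry of metric spaces from $(M,d_{M})$ onto $([0,\infty)\times_{\kappa,\sqrt{\vert \kappa \vert}}\bm,d_{\kappa,\sqrt{\vert \kappa \vert}})$; moreover, as shown in the course of the proof of Theorem \ref{thm:splitting}, $\bm$ is connected. Put $\lambda:=\sqrt{\vert \kappa \vert}$. Applying Lemma \ref{lem:rigid boundary Ricci}, we obtain $\ric_{\bm}\geq(n-2)(\kappa+\lambda^{2})$, and since $\kappa\leq 0$ we have $\kappa+\lambda^{2}=\kappa+\vert \kappa \vert=0$, so $\ric_{\bm}\geq 0$. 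Thus $(\bm,h)$ is an $(n-1)$-dimensional, connected complete Riemannian manifold (without boundary) of non-negative Ricci curvature, the completeness of $(\bm,h)$ being used exactly as in the proof of Corollary \ref{cor:complement}.

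Next I would apply the Cheeger-Gromoll splitting theorem repeatedly to $(\bm,h)$. If $(\bm,h)$ contains no line, set $k:=0$ and $N:=\bm$. Otherwise the Cheeger-Gromoll splitting theorem produces an $(n-2)$-dimensional, connected complete Riemannian manifold $N_{1}$ of non-negative Ricci curvature with $\bm$ Riemannian isometric to $\mathbb{R}\times N_{1}$; if $N_{1}$ again contains a line, repeat the argument with $N_{1}$ in place of $\bm$, and so on. Because the dimension strictly decreases at each step while remaining non-negative, after $k\leq n-1$ steps one reaches an $(n-1-k)$-dimensional, connected complete Riemannian manifold $N$ of non-negative Ricci curvature containing no line, together with a Riemannian isometry from $\bm$ onto $\mathbb{R}^{k}\times N$. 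Such a Riemannian isometry preserves the induced length metrics, so $(\bm,d_{\bm})$ is isometric to $(\mathbb{R}^{k}\times N,d_{\mathbb{R}^{k}\times N})$.

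For the last assertion, I would observe that a Riemannian isometry $\Psi:\bm\to\mathbb{R}^{k}\times N$ extends fiberwise to a Riemannian isometry with boundary from $[0,\infty)\times_{\kappa,\lambda}\bm$ onto $[0,\infty)\times_{\kappa,\lambda}(\mathbb{R}^{k}\times N)$, since the warped product metric $dt^{2}+s_{\kappa,\lambda}^{2}(t)\,h$ depends on the fiber only through $h$. Composing this with the isometry supplied by Theorem \ref{thm:splitting} and using Lemma \ref{lem:isometry with with boundary}, we conclude that $(M,d_{M})$ is isometric to $([0,\infty)\times_{\kappa,\sqrt{\vert \kappa \vert}}(\mathbb{R}^{k}\times N),d_{\kappa,\sqrt{\vert \kappa \vert}})$.

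Once Theorem \ref{thm:splitting} and Lemma \ref{lem:rigid boundary Ricci} are available the argument is essentially bookkeeping; the one step deserving a moment's attention is the identity $\kappa+\vert \kappa \vert=0$, which holds precisely because $\kappa\leq 0$ and is what upgrades the Ricci bound on $\bm$ to non-negativity, thereby making the Cheeger-Gromoll splitting theorem applicable. A secondary point is that the iteration terminates and that connectedness and completeness of the successive fibers are inherited at each stage; both are immediate from the standard form of the splitting theorem.
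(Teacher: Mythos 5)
Your proposal is correct and follows essentially the same route as the paper: Theorem \ref{thm:splitting} gives the warped-product splitting, Lemma \ref{lem:rigid boundary Ricci} yields $\ric_{\bm}\geq (n-2)(\kappa+\vert\kappa\vert)=0$, and the Cheeger--Gromoll splitting theorem applied inductively to $(\bm,h)$ produces the factorization $\mathbb{R}^{k}\times N$. The extra details you supply (termination of the iteration, extension of the fiber isometry to the warped product via Lemma \ref{lem:isometry with with boundary}) are correct elaborations of steps the paper leaves implicit.
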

\begin{proof}
From Theorem \ref{thm:splitting},
it follows that the metric space $(M,d_{M})$ is isometric to $([0,\infty)\times_{\kappa,\sqrt{\vert \kappa \vert}}\bm,d_{\kappa,\sqrt{\vert \kappa \vert}})$.
Lemma \ref{lem:rigid boundary Ricci} implies $\ric_{\bm}\geq 0$.
Applying the Cheeger-Gromoll splitting theorem to $\bm$ inductively,
we see that $(\bm,d_{\bm})$ is isometric to $(\mathbb{R}^{k}\times N,d_{\mathbb{R}^{k}\times N})$ for some $k$.
\end{proof}

\section{The first eigenvalues}\label{sec:segment inequality}
\subsection{Lower bounds}
Let $M$ be a connected complete Riemannian manifold with boundary with Riemannian metric $g$.
For a relatively compact domain $\Omega$ in $M$
such that $\partial \Omega$ is a smooth hypersurface in $M$,
we denote by $\vol_{\partial \Omega}$ the Riemannian volume measure on $\partial \Omega$ induced from the induced Riemannian metric on $\partial \Omega$.
For $\alpha\in (0,\infty)$,
the \textit{Dirichlet $\alpha$-isoperimetric constant} $ID_{\alpha}(M)$ of $M$ is defined as
\begin{equation*}
ID_{\alpha}(M):=\inf_{\Omega}\, \frac{\vol_{\partial \Omega}\, \partial \Omega}{\left(\vol_{g} \Omega \right)^{1/\alpha}},
\end{equation*}
where the infimum is taken over all relatively compact domains $\Omega$ in $M$
such that $\partial \Omega$ is a smooth hypersurface in $M$ and $\partial \Omega \cap \partial M=\emptyset$.
The \textit{Dirichlet $\alpha$-Sobolev constant} $SD_{\alpha}(M)$ of $M$ is defined as
\begin{equation*}
SD_{\alpha}(M):=\inf_{f \in W^{1,1}_{0}(M)}\, \frac{\int_{M}\,  \Vert \nabla f \Vert \,d\,\vol_{g}}{\left(\int_{M}\,  \vert f \vert^{\alpha} \,d\,\vol_{g} \right)^{1/\alpha}}.
\end{equation*}
For all $\alpha \in (0,\infty)$,
we have $ID_{\alpha}(M)=SD_{\alpha}(M)$.
This relationship between 
the isoperimetric constant and the Sobolev constant has been 
formally established by Federer and Fleming in \cite{FF} (see e.g., Theorem 4 in Chapter 4 in \cite{Ch}, Theorem 9.5 in \cite{Li}),
and later used by Cheeger in \cite{Che} for the estimate of the first Dirichlet eigenvalue of the Laplacian.

The following volume estimate has been proved by Kasue in Proposition 4.1 in \cite{K5}.
\begin{prop}[\cite{K5}]\label{prop:Kasue volume estimate}
Let $M$ be an $n$-dimensional,
connected complete Riemannian manifold with boundary
with Riemannian metric $g$
such that $\ric_{M}\geq (n-1)\kappa$ and $H_{\bm} \geq \lambda$.
Let $\Omega$ be a relatively compact domain in $M$
such that $\partial \Omega$ is a smooth hypersurface in $M$.
Then
\begin{equation*}
\vol_{g} \Omega \leq \vol_{\partial \Omega}\, \partial \Omega \,\sup_{t\in (\delta_{1}(\Omega),\delta_{2}(\Omega))}\, \frac{\int^{\delta_{2}(\Omega)}_{t}\,  s^{n-1}_{\kappa,\lambda}(s)\, ds}{s^{n-1}_{\kappa,\lambda}(t)},
\end{equation*}
where $\delta_{1}(\Omega):=\inf_{p\in \Omega}\, \rho_{\bm}(p)$ and $\delta_{2}(\Omega):=\sup_{p\in \Omega} \,\rho_{\bm}(p)$.
\end{prop}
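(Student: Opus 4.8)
The plan is to produce on $\Omega$ a vector field $X$ with $\Div X\ge 1$ off a null set and $\Vert X\Vert\le C$ everywhere, where $C$ is the supremum on the right-hand side, and then to invoke the divergence theorem. Write $\delta_i:=\delta_i(\Omega)$. Since $\delta_2\le \dm\le \bconst$ (Lemma \ref{lem:shinitai2}, trivially when $\kappa,\lambda$ do not satisfy the ball-condition) and $s_{\kappa,\lambda}>0$ on $(0,\bconst)$, the function
\begin{equation*}
\varphi(t):=\frac{\int^{\delta_2}_{t} s^{n-1}_{\kappa,\lambda}(s)\,ds}{s^{n-1}_{\kappa,\lambda}(t)}
\end{equation*}
is smooth and positive on $[0,\delta_2)$, extends continuously by $\varphi(\delta_2)=0$, and a direct differentiation gives
\begin{equation*}
\varphi'(t)+(n-1)\frac{s'_{\kappa,\lambda}(t)}{s_{\kappa,\lambda}(t)}\,\varphi(t)=-1 .
\end{equation*}
Here $C:=\sup_{t\in(\delta_1,\delta_2)}\varphi(t)=\sup_{[\delta_1,\delta_2]}\varphi$ is precisely the constant in the statement (the degenerate case $\delta_1=\delta_2$ forces $\vol_g\Omega=0$).

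Next I would set $X:=-\varphi(\rho_{\bm})\,\nabla\rho_{\bm}$ on $\Omega\setminus \cut\bm$. By Proposition \ref{prop:distance function} this is smooth there, with $\Vert\nabla\rho_{\bm}\Vert=1$, so $\Vert X\Vert=\varphi(\rho_{\bm})\le C$ because $\rho_{\bm}$ takes values in $[\delta_1,\delta_2]$ on $\Omega$. Since $\Delta$ is minus the trace of the Hessian, one computes $\Div X=-\varphi'(\rho_{\bm})+\varphi(\rho_{\bm})\,\Delta\rho_{\bm}$ on $\Omega\setminus\cut\bm$; feeding in the Laplacian comparison $\Delta\rho_{\bm}\ge -(n-1)s'_{\kappa,\lambda}(\rho_{\bm})/s_{\kappa,\lambda}(\rho_{\bm})$ of Theorem \ref{thm:boundary Laplacian} (valid off $\cut\bm$), together with $\varphi\ge0$ and the ODE above, yields $\Div X\ge 1$. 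The divergence theorem would then give
\begin{equation*}
\vol_g\Omega\le\int_{\Omega}\Div X\,d\vol_g=\int_{\partial\Omega}\langle X,\nu\rangle\,d\vol_{\partial\Omega}\le C\,\vol_{\partial\Omega}\,\partial\Omega ,
\end{equation*}
where $\nu$ is the outer unit normal and $\langle X,\nu\rangle=-\varphi(\rho_{\bm})\langle\nabla\rho_{\bm},\nu\rangle\le\varphi(\rho_{\bm})\le C$ by Cauchy--Schwarz; if a piece of $\partial\Omega$ lies in $\bm$ then $\rho_{\bm}=0$ and $\delta_1=0$ there, and the flux through that piece is at most $\varphi(0)\le C$ times its area, so the same bound persists.

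The one point requiring genuine care — and the main obstacle — is that $X$ is only defined and smooth on the complement of $\cut\bm$. The left-hand inequality is untouched since $\vol_g\cut\bm=0$ (Proposition \ref{prop:vol of cut}), but the flux identity must be justified across the cut locus. I would handle this by recording that the comparison for $\rho_{\bm}$ holds on $\inte M$ in the distributional (equivalently, barrier) sense: the singular part of $-\Delta\rho_{\bm}$ supported on $\cut\bm$ is a nonpositive measure, which only strengthens $\Div X\ge 1$ when tested against nonnegative functions, so the integrated inequality $\vol_g\Omega\le\int_{\partial\Omega}\langle X,\nu\rangle\,d\vol_{\partial\Omega}$ survives; alternatively, one exhausts $\Omega\setminus\cut\bm$ by subdomains with piecewise smooth boundary and checks that the extra boundary term, carried by $\cut\bm$ where the gradient field points into $D_{\bm}$ from either side, contributes with the favorable sign. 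Everything else is the routine computation sketched above.
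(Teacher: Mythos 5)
The paper does not prove this proposition at all (it is quoted from Kasue \cite{K5}, Proposition 4.1), so your argument has to stand on its own. Its core is sound: your ODE for $\varphi(t)=\int_{t}^{\delta_{2}}s^{n-1}_{\kappa,\lambda}(s)\,ds\,/\,s^{n-1}_{\kappa,\lambda}(t)$ is correct, $\delta_{2}\leq \dm\leq\bconst$ makes $\varphi$ well defined, and on $\Omega\setminus\cut\bm$ the field $X=-\varphi(\rho_{\bm})\nabla\rho_{\bm}$ does satisfy $\Vert X\Vert\leq C$ and $\Div X\geq 1$ by Theorem \ref{thm:boundary Laplacian} together with Proposition \ref{prop:distance function}. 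But you have correctly identified that the passage across $\cut\bm$ is the real content, and there you only assert the needed facts: the paper supplies the Laplacian comparison only off the cut locus, so the claim that the singular part of the distributional $\Div\nabla\rho_{\bm}$ on $\cut\bm$ is nonpositive (equivalently, that your exhausting hypersurfaces can be chosen with outward normal making a nonnegative angle with $\nabla\rho_{\bm}$) must itself be proved, e.g.\ from local semiconcavity of $\rho_{\bm}$ or by an explicit excision of tubes around $\cut\bm$. Your sign analysis is right, but as written this step — the only nontrivial one — is a sketch, not a proof.

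The second point is an outright error: your parenthetical treatment of domains meeting $\bm$. The dangerous case is not a piece of $\partial\Omega$ lying in $\bm$, but $\Omega$ itself containing a piece of $\bm$: then the divergence theorem on $\bar\Omega$ produces, besides the flux through $\partial\Omega$, the flux through $\bm\cap\bar\Omega$, which equals $+\varphi(0)$ times its $(n-1)$-volume and is not dominated by $C\,\vol_{\partial\Omega}\partial\Omega$; ``the same bound persists'' is false. Indeed, under that literal reading even the stated inequality fails: on the flat cylinder $M=[0,1]\times S^{1}$ (so $\kappa=\lambda=0$), let $\Omega$ be the complement of a small closed disc of radius $\epsilon$ centred on the middle circle; then $\partial\Omega$ is a circle of length $2\pi\epsilon$, $\delta_{1}=0$, $\delta_{2}=1/2$, so the right-hand side is $\pi\epsilon$, while $\vol_{g}\Omega$ is essentially the area of the cylinder. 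So the proposition must be read — as it is in fact used in the paper, where the relevant $\Omega$ are superlevel sets of functions supported in $\inte M$, and as in Kasue's setting — for domains with $\bar\Omega\subset\inte M$ (or with the portion of $\bm$ in $\bar\Omega$ counted in the boundary term). Restrict your argument to that case and delete the parenthetical fix; with the cut-locus step actually carried out, the rest of your computation then gives the inequality.
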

The equality case in Proposition \ref{prop:Kasue volume estimate} has been also studied in \cite{K5}.

We prove Theorem \ref{thm:p-Laplacian1}.
\begin{proof}[Proof of Theorem \ref{thm:p-Laplacian1}]
Let $M$ be an $n$-dimensional,
connected complete Riemannian manifold with boundary
with Riemannian metric $g$
such that $\ric_{M}\geq (n-1)\kappa,\,H_{\bm} \geq \lambda$ and $\dm \leq D$.
Suppose $\partial M$ is compact.
Recall that the positive constant $C(n,\kappa,\lambda,D)$ is defined as
\begin{equation*}
C(n,\kappa,\lambda,D):=\sup_{t\in [0,D)}\, \frac{\int^{D}_{t}\,  s^{n-1}_{\kappa,\lambda}(s)\, ds}{s^{n-1}_{\kappa,\lambda}(t)}.
\end{equation*}
Let $\Omega$ be a relatively compact domain in $M$ such that $\partial \Omega$ is a smooth hypersurface in $M$ and $\partial \Omega \cap \partial M=\emptyset$.
By Proposition \ref{prop:Kasue volume estimate},
\begin{equation*}\label{eq:Kasue}
\vol_{g} \Omega \leq \vol_{\partial \Omega} \partial \Omega\,\sup_{t\in (0,D)}\, \frac{\int^{D}_{t}\,  s^{n-1}_{\kappa,\lambda}(s)\, ds}{s^{n-1}_{\kappa,\lambda}(t)}=C(n,\kappa,\lambda,D) \,\vol_{\partial \Omega} \partial \Omega.
\end{equation*}
From the relationship $ID_{1}(M)=SD_{1}(M)$,
it follows that $SD_{1}(M)\geq C(n,\kappa,\lambda,D)^{-1}$.
Therefore,
for all $\phi \in W^{1,1}_{0}(M)$,
we have the following Poincar\'e inequality:
\begin{equation*}\label{eq:Poincare1}
\int_{M}\, \vert \phi \vert \,d\vol_{g}\leq C(n,\kappa,\lambda,D) \int_{M}\,\Vert \nabla \phi \Vert \,d\vol_{g}.
\end{equation*}

For a fixed $p\in (1,\infty)$,
let $\psi$ be a non-zero function in $W^{1,p}_{0}(M)$.
Put $q:=p\,(1-p)^{-1}$.
In the Poincar\'e inequality,
by replacing $\phi$ with $\vert \psi \vert^{p}$, 
and by the H\"older inequality,
we see
\begin{multline*}
\int_{M}\, \vert \psi \vert^{p} \,d\vol_{g} \leq p\,C(n,\kappa,\lambda,D)\, \int_{M}\, \vert \psi \vert^{p-1} \,  \Vert \nabla \psi\Vert \,d\vol_{g}\\
                                \leq p\, C(n,\kappa,\lambda,D)\, \left(\int_{M}\, \vert \psi \vert^{p}\,d \vol_{g}\right)^{1/q} \left(\int_{M}\, \Vert \nabla \psi\Vert^{p}\,d \vol_{g}\right)^{1/p}.\\
\end{multline*}
Considering the Rayleigh quotient $R_{p}(\psi)$,
we obtain the inequality $\mu_{1,p}(M)\geq (p\,C(n,\kappa,\lambda,D))^{-p}$.
This proves Theorem \ref{thm:p-Laplacian1}.
\end{proof}
We next prove Theorem \ref{thm:eigenvalue rigidity}.
\begin{proof}[Proof of Theorem \ref{thm:eigenvalue rigidity}]
Let $\kappa<0$ and $\lambda:=\sqrt{\vert \kappa \vert}$.
Let $M$ be an $n$-dimensional,
connected complete Riemannian manifold with boundary
such that $\ric_{M}\geq (n-1)\kappa$ and $H_{\bm} \geq \lambda$.
Suppose $\partial M$ is compact.
We put $D:=D(M,\bm)\in (0,\infty]$.
We have 
\begin{equation*}
C(n,\kappa,\lambda,D)=\left((n-1)\lambda \right)^{-1}\,\left(1-e^{-(n-1)\lambda\, D} \right).
\end{equation*}
The right hand side is monotone increasing as $D\to \infty$.
By Theorem \ref{thm:p-Laplacian1},
for all $p\in (1,\infty)$
we have $\mu_{1,p}(M)\geq ((n-1)\lambda/p)^{p}$.

We assume 
$\mu_{1,p}(M)=((n-1)\lambda/p)^{p}$.
By Theorem \ref{thm:p-Laplacian1},
we have $D=\infty$.
Therefore,
the compactness of $\bm$ and Lemma \ref{lem:compact} imply that
$M$ is noncompact.
It has been proved in Theorem C in \cite{K3} as a splitting theorem (see Subsection \ref{sec:Splitting theorems}) that
if $M$ is noncompact and $\bm$ is compact,
then $(M,d_{M})$ is isometric to $([0,\infty)\times_{\kappa,\lambda}\bm, d_{\kappa,\lambda})$.
Therefore,
$(M,d_{M})$ is isometric to $([0,\infty)\times_{\kappa,\lambda}\bm, d_{\kappa,\lambda})$.

Let $p=2$,
and let $(M,d_{M})$ be isometric to $([0,\infty)\times_{\kappa,\lambda}\bm, d_{\kappa,\lambda})$.
Let $\phi_{n,\kappa,\lambda}:[0,\infty)\to [0,\infty)$ be a smooth function defined by 
\begin{equation*}
\phi_{n,\kappa,\lambda}(t):=t\,e^{     \frac{(n-1)\lambda t}{2}       }.
\end{equation*}
Then the smooth function $\phi_{n,\kappa,\lambda}\circ \rho_{\bm}$ on $M$ satisfies
\begin{equation*}
\Delta_{2} (\phi_{n,\kappa,\lambda} \circ \rho_{\bm})=\left(\frac{(n-1)\lambda}{2}\right)^{2}\, (\phi_{n,\kappa,\lambda} \circ \rho_{\bm})
\end{equation*}
on $M$;
in particular,
\begin{equation*}
\mu_{1,2}(M)\leq R_{2}(\phi_{n,\kappa,\lambda} \circ \rho_{\bm})= \left(\frac{(n-1)\lambda}{2}\right)^{2}.
\end{equation*}
Therefore,
$\mu_{1,2}(M)=((n-1)\lambda/2)^{2}$.
This proves Theorem \ref{thm:eigenvalue rigidity}.
\end{proof}

\subsection{Segment inequality}
For $n\geq 2$,
$\kappa,\lambda\in \mathbb{R}$,
and $D\in (0,\bar{C}_{\kappa,\lambda}]$,
let $C_{1}(n,\kappa,\lambda,D)$ be the positive constant defined as
\begin{equation*}
C_{1}(n,\kappa,\lambda,D):=\sup_{l\in (0,D)}\, \sup_{t\in (0,l)}\, \frac{s^{n-1}_{\kappa,\lambda}(l)}{s^{n-1}_{\kappa,\lambda}(t)}.
\end{equation*}

We prove the following segment inequality:
\begin{prop}\label{prop:segment}
For $D\in (0,\bar{C}_{\kappa,\lambda}]\setminus \{\infty\}$,
let $M$ be an $n$-dimensional,
connected complete Riemannian manifold with boundary with Riemannian metric $g$
such that $\ric_{M}\geq (n-1)\kappa,\,H_{\bm} \geq \lambda$ and $\dm \leq D$.
Let $f:M\to \mathbb{R}$ be a non-negative integrable function on $M$,
and define a function $E_{f}:M\to \mathbb{R}$ by
\begin{equation*}
E_{f}(p):=\inf_{x\in \bm} \int^{\rho_{\bm}(p)}_{0}\, f(\gamma_{x}(t))\, dt,
\end{equation*}
where the infimum is taken over all foot points $x$ on $\bm$ of $p$.
Then
\begin{equation*}
\int_{M}\, E_{f}\,d\vol_{g}\leq C_{1}(n,\kappa,\lambda,D)D \int_{M}\,f\,d\vol_{g}.
\end{equation*}
\end{prop}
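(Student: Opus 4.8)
The plan is to transplant the Cheeger--Colding segment inequality argument into the normal exponential coordinates for $\bm$. First I would note that $E_{f}$ only needs to be understood on $D_{\bm}\setminus\bm$: by Proposition~\ref{prop:vol of cut} the cut locus $\cut\bm$ is $\vol_{g}$-null, so the left-hand side equals $\int_{D_{\bm}\setminus\bm}E_{f}\,d\vol_{g}$. On $D_{\bm}\setminus\bm$ each point has a \emph{unique} foot point (Lemma~\ref{lem:cut and regular} together with Proposition~\ref{prop:distance function}), so if $p=\gamma_{x}(t)$ with $t\in(0,\tau(x))$ then $\rho_{\bm}(p)=t$, the infimum defining $E_{f}(p)$ is attained at this single $x$, and $E_{f}(p)=\int_{0}^{t}f(\gamma_{x}(s))\,ds$. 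Since $\expp$ restricted to $TD_{\bm}\setminus 0(\tbp)$ is a diffeomorphism onto $D_{\bm}\setminus\bm$ with Jacobian $\theta$ (Lemma~\ref{lem:regular and injective}), the coarea/change-of-variables formula (cf.\ Lemma~\ref{lem:vol2}; since the integrand is non-negative, Tonelli makes this valid even when $\bm$ is not compact) gives
\[
\int_{M}E_{f}\,d\vol_{g}=\int_{\bm}\int_{0}^{\tau(x)}\Bigl(\int_{0}^{t}f(\gamma_{x}(s))\,ds\Bigr)\theta(t,x)\,dt\,d\vol_{h},
\]
and the same formula rewrites $\int_{M}f\,d\vol_{g}=\int_{\bm}\int_{0}^{\tau(x)}f(\gamma_{x}(s))\,\theta(s,x)\,ds\,d\vol_{h}$.

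Next I would interchange the two inner integrals by Tonelli, turning the iterated integral $\int_{0}^{\tau(x)}\!\!\int_{0}^{t}(\cdots)\,ds\,dt$ into $\int_{0}^{\tau(x)}f(\gamma_{x}(s))\bigl(\int_{s}^{\tau(x)}\theta(t,x)\,dt\bigr)\,ds$; the crux is then to dominate the inner kernel $\int_{s}^{\tau(x)}\theta(t,x)\,dt$ by a constant multiple of $\theta(s,x)$. Because $\dm\le D\le\bconst$ forces $\tau(x)\le D\le\bconst$ and $\tau(x)\le\tau_{1}(x)$, the hypotheses of Lemma~\ref{lem:comp2} hold along $\gamma_{x}$ on $(0,\tau(x))$, so for $0\le s\le t<\tau(x)$ we get $\theta(t,x)\le \theta(s,x)\,s_{\kappa,\lambda}^{n-1}(t)/s_{\kappa,\lambda}^{n-1}(s)$. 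For $0<s<t<D$ the ratio $s_{\kappa,\lambda}^{n-1}(t)/s_{\kappa,\lambda}^{n-1}(s)$ is by definition at most $C_{1}(n,\kappa,\lambda,D)$, and $\tau(x)-s\le D$; integrating in $t$ therefore yields
\[
\int_{s}^{\tau(x)}\theta(t,x)\,dt\le C_{1}(n,\kappa,\lambda,D)\,D\,\theta(s,x).
\]

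Substituting this bound back into the iterated integral, the factor $C_{1}(n,\kappa,\lambda,D)\,D$ pulls out and the remaining integral is exactly $\int_{\bm}\int_{0}^{\tau(x)}f(\gamma_{x}(s))\,\theta(s,x)\,ds\,d\vol_{h}=\int_{M}f\,d\vol_{g}$, which is the assertion. I expect the only genuinely delicate part to be the measure-theoretic bookkeeping in the first step --- verifying that $E_{f}$ is measurable, that it is finite for $\vol_{g}$-a.e.\ point (a consequence of $f\in L^{1}$ via the same change of variables), and that the coarea formula applies without compactness of $\bm$; once these are in place, the Tonelli interchange combined with the relative Jacobian comparison of Lemma~\ref{lem:comp2} supplies the entire geometric content, exactly as in the classical segment inequality.
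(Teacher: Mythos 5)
Your proposal is correct and follows essentially the same route as the paper's proof: the key inequality $\theta(t,x)\leq C_{1}(n,\kappa,\lambda,D)\,\theta(s,x)$ for $s\leq t<\tau(x)$ from Lemma \ref{lem:comp2}, the bound $\tau(x)\leq D$, and the change of variables via $\expp|_{TD_{\bm}\setminus 0(\tbp)}$ together with $\vol_{g}\cut\bm=0$. The only difference is cosmetic: you swap the order of integration by Tonelli and bound the kernel $\int_{s}^{\tau(x)}\theta(t,x)\,dt\leq C_{1}D\,\theta(s,x)$, whereas the paper integrates the pointwise inequality $E_{f}(\gamma_{x}(l))\,\theta(l,x)\leq C_{1}\int_{0}^{l}f(\gamma_{x}(t))\,\theta(t,x)\,dt$ in $l$ before integrating over $\bm$.
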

\begin{proof}
Put $C_{1}:=C_{1}(n,\kappa,\lambda,D)$.
Fix $x\in \bm$ and $l\in (0,\tau(x))$.
Observe that $x$ is the unique foot point on $\bm$ of $\gamma_{x}(l)$,
and $\gamma_{x}|_{(0,l]}$ lies in $\inte M$.
By Lemma \ref{lem:comp2},
for all $t\in [0,l]$ we have
\begin{equation*}
E_{f}(\gamma_{x}(l)) \thetae(l,x)\leq C_{1} \int^{l}_{0}\, f(\gamma_{x}(t))\thetae(t,x)\,dt.
\end{equation*}
Integrating the both sides,
we see
\begin{equation*}
\int^{\tau(x)}_{0} E_{f}(\gamma_{x}(l)) \thetae(l,x)\,dl \leq C_{1} D \int^{\tau(x)}_{0}\, f(\gamma_{x}(t))\thetae(t,x)\,dt.
\end{equation*}
Lemma \ref{lem:cut and regular} implies $M=\expp(\bigcup_{x\in \bm} \{tu_{x}\mid t\in [0,\tau(x)] \})$.
From Lemma \ref{lem:regular and injective},
it follows that $\expp|_{TD_{\bm}\setminus 0(T^{\perp}\bm)}$ is a diffeomorphism onto $D_{\bm}\setminus\bm$.
By Proposition \ref{prop:vol of cut},
we have $\vol_{g}\cut \bm=0$.
Integrating the both sides of the above inequality over $\bm$ with respect to $x$,
we obtain the desired segment inequality.
\end{proof}
From Proposition \ref{prop:segment},
we derive the following Poincar\'e inequality:
\begin{lem}\label{lem:Poincare}
For $D\in (0,\bar{C}_{\kappa,\lambda}]\setminus \{\infty\}$,
let $M$ be an $n$-dimensional,
connected complete Riemannian manifold with boundary with Riemannian metric $g$
such that $\ric_{M}\geq (n-1)\kappa, \, H_{\bm} \geq \lambda$ and $\dm \leq D$.
Let $\psi:M\to \mathbb{R}$ be a smooth integrable function on $M$ with $\psi|_{\bm}=0$.
Assume $\int_{M} \Vert \nabla \psi \Vert\, d\vol_{g}<\infty$.
Then
\begin{equation*}
\int_{M}\, \vert \psi \vert \,d\vol_{g}\leq C_{1}(n,\kappa,\lambda,D)D \int_{M}\,\Vert \nabla \psi\Vert \,d\vol_{g}.
\end{equation*}
\end{lem}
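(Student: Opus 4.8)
The plan is to deduce the inequality directly from the segment inequality of Proposition \ref{prop:segment}, applied to the function $f:=\Vert \nabla \psi \Vert$, after establishing the pointwise bound $\vert \psi \vert \le E_{f}$ on $M$.

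First I would fix $p\in M$ and an arbitrary foot point $x\in \bm$ of $p$. By Lemma \ref{lem:foot vector}, the curve $\gamma_{x}|_{[0,l]}$ with $l:=\rho_{\bm}(p)$ is a unit-speed normal minimal geodesic from $x$ to $p$, so that $t\mapsto \psi(\gamma_{x}(t))$ is smooth on $[0,l]$ (here we use that $\psi$ is smooth on all of $M$, including $\bm$). Since $\psi|_{\bm}=0$, we have $\psi(\gamma_{x}(0))=\psi(x)=0$, and hence by the fundamental theorem of calculus together with $\Vert \gamma_{x}'(t)\Vert =1$,
\[
\vert \psi(p)\vert=\left\vert \int_{0}^{l}g\bigl(\nabla \psi(\gamma_{x}(t)),\gamma_{x}'(t)\bigr)\,dt\right\vert\le \int_{0}^{l}\Vert \nabla \psi(\gamma_{x}(t))\Vert \,dt .
\]
As this holds for every foot point $x$ of $p$, taking the infimum over all such $x$ yields $\vert \psi(p)\vert\le E_{f}(p)$ with $f=\Vert \nabla \psi \Vert$, where $E_{f}$ is the function in the statement of Proposition \ref{prop:segment}.

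Then I would apply Proposition \ref{prop:segment} to $f=\Vert \nabla \psi \Vert$: this function is non-negative and, by hypothesis, integrable on $M$, so the assumptions of Proposition \ref{prop:segment} hold with the same $n,\kappa,\lambda,D$. Integrating the pointwise bound and invoking the segment inequality gives
\[
\int_{M}\vert \psi \vert \,d\vol_{g}\le \int_{M}E_{f}\,d\vol_{g}\le C_{1}(n,\kappa,\lambda,D)\,D\int_{M}f\,d\vol_{g}=C_{1}(n,\kappa,\lambda,D)\,D\int_{M}\Vert \nabla \psi\Vert \,d\vol_{g},
\]
which is the desired estimate; the integrability of $\vert \psi \vert$ that makes the left-hand side finite is part of the hypotheses.

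There is no serious obstacle here. The only points needing a word of care are that $\gamma_{x}|_{[0,\rho_{\bm}(p)]}$ is genuinely a minimal geodesic realizing $d_{M}(p,\bm)$, which is exactly Lemma \ref{lem:foot vector}, and that $t\mapsto \psi(\gamma_{x}(t))$ is differentiable up to the endpoint $t=0$ on $\bm$, which holds because $\psi$ is smooth on all of $M$; everything else is the fundamental theorem of calculus and a direct appeal to Proposition \ref{prop:segment}.
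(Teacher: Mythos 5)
Your proposal is correct and follows essentially the same route as the paper: establish the pointwise bound $\vert \psi(p)\vert \le E_{f}(p)$ with $f=\Vert \nabla \psi \Vert$ along minimal geodesics from foot points (the paper phrases the estimate via Cauchy--Schwarz, which is the same computation as your fundamental-theorem-of-calculus step), and then integrate and apply Proposition \ref{prop:segment}. The only cosmetic difference is that the paper integrates over $D_{\bm}$ and invokes $\vol_{g}\cut\bm=0$, whereas you note the pointwise bound at every point of $M$ and integrate directly; both are fine.
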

\begin{proof}
Put $f:=\Vert \nabla \psi \Vert$,
and let $E_{f}$ be the function defined in Proposition \ref{prop:segment}.
For each $p\in D_{\bm}$,
let $x$ be the foot point on $\bm$ of $p$.
By the Cauchy-Schwarz inequality,
we have
\begin{equation*}
\vert \psi(p)-\psi(x) \vert \leq \int^{\rho_{\bm}(p)}_{0}\,\left\vert g(\nabla \psi,\gamma'_{x}(t))\right\vert\,dt \leq E_{f}(p).
\end{equation*}
Since $\psi|_{\bm}=0$,
we have $\vert \psi(p)\vert\leq E_{f}(p)$.
Integrate the both sides of the inequality over $D_{\bm}$ with respect to $p$.
By Proposition \ref{prop:segment} and $\vol_{g} \cut \bm=0$,
we arrived at the desired inequality.
\end{proof}
As one of the applications of our segment inequality in Proposition \ref{prop:segment},
we show the following:
\begin{prop}\label{prop:p-Laplacian2}
For $D\in (0,\bar{C}_{\kappa,\lambda}]$,
let $M$ be an $n$-dimensional,
connected complete Riemannian manifold with boundary
such that $\ric_{M}\geq (n-1)\kappa, H_{\bm} \geq \lambda$ and $\dm \leq D$.
Let $M$ be compact.
Then for all $p\in (1,\infty)$,
we have
\begin{equation*}
\mu_{1,p}(M)\geq (\,p\,C_{1}(n,\kappa,\lambda,D)\,D\,)^{-p}.
\end{equation*}
\end{prop}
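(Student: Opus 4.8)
The plan is to run the same Poincar\'e--H\"older argument used in the proof of Theorem \ref{thm:p-Laplacian1}, but starting from the Poincar\'e inequality furnished by the segment inequality, that is, from Lemma \ref{lem:Poincare}, whose constant is $C_{1}(n,\kappa,\lambda,D)\,D$ in place of $C(n,\kappa,\lambda,D)$.

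First I would dispose of the possibility $D=\infty$. Since $M$ is compact, $\bm$ is compact, so $\dm<\infty$ by Lemma \ref{lem:compact}; moreover $\dm>0$ (the interior of $M$ is non-empty), and $\dm\leq \bar{C}_{\kappa,\lambda}$ by Lemma \ref{lem:shinitai2}. Because enlarging $D$ only enlarges the range of the supremum defining $C_{1}(n,\kappa,\lambda,D)$, the quantity $C_{1}(n,\kappa,\lambda,D)\,D$ is non-decreasing in $D$; hence proving the estimate with $D$ replaced by $\dm$ implies the stated one, and so I may assume $D\in(0,\bar{C}_{\kappa,\lambda}]\setminus\{\infty\}$, which is exactly the range in which Lemma \ref{lem:Poincare} applies. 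Write $C:=C_{1}(n,\kappa,\lambda,D)\,D$.

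Now fix $p\in(1,\infty)$. Since $\mu_{1,p}(M)=\inf_{\psi}R_{p}(\psi)$, the infimum being over non-zero $\psi\in W^{1,p}_{0}(M)$, and such functions are approximated in $W^{1,p}(M)$ by non-zero smooth functions with compact support contained in $\inte M$, it suffices to bound $R_{p}(\psi)$ from below for the latter. For such a $\psi$, the function $|\psi|^{p}$ is non-negative, $C^{1}$, compactly supported in $\inte M$, and satisfies $\|\nabla(|\psi|^{p})\|=p\,|\psi|^{p-1}\|\nabla\psi\|$ almost everywhere. Applying Lemma \ref{lem:Poincare} to $|\psi|^{p}$ --- or, to stay strictly within its stated smoothness hypotheses, to the smooth functions $(\psi^{2}+\epsilon)^{p/2}-\epsilon^{p/2}$, which are compactly supported in $\inte M$ and converge to $|\psi|^{p}$ in $W^{1,1}(M)$ as $\epsilon\to0$ --- yields
\begin{equation*}
\int_{M}|\psi|^{p}\,d\vol_{g}\leq p\,C\int_{M}|\psi|^{p-1}\|\nabla\psi\|\,d\vol_{g}.
\end{equation*}
Then I would apply H\"older's inequality to the right-hand side with the conjugate exponents $p/(p-1)$ and $p$, divide by $\bigl(\int_{M}|\psi|^{p}\,d\vol_{g}\bigr)^{(p-1)/p}>0$, and raise to the $p$-th power, obtaining $\int_{M}\|\nabla\psi\|^{p}\,d\vol_{g}\geq (p\,C)^{-p}\int_{M}|\psi|^{p}\,d\vol_{g}$, i.e.\ $R_{p}(\psi)\geq (p\,C_{1}(n,\kappa,\lambda,D)\,D)^{-p}$. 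Taking the infimum over $\psi$ gives the claim.

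The argument is essentially routine once Lemma \ref{lem:Poincare} is available; the two points that need a little attention are the reduction to finite $D$ via the monotonicity of $D\mapsto C_{1}(n,\kappa,\lambda,D)\,D$, and the passage from a smooth $\psi$ to the merely $C^{1}$ function $|\psi|^{p}$ in Lemma \ref{lem:Poincare}, handled by the $\epsilon$-regularization above. I do not anticipate a serious obstacle.
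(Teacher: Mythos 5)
Your proposal is correct and follows essentially the same route as the paper: apply the Poincar\'e inequality of Lemma \ref{lem:Poincare} to $\vert\psi\vert^{p}$ and then H\"older's inequality to bound the Rayleigh quotient. The extra care you take (reducing to finite $D$ via monotonicity of $C_{1}(n,\kappa,\lambda,D)\,D$, and the $\epsilon$-regularization to justify using the merely $C^{1}$ function $\vert\psi\vert^{p}$) only fills in details the paper passes over silently.
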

\begin{proof}
For a fixed $p\in (1,\infty)$,
let $\psi$ be a non-zero function in $W^{1,p}_{0}(M)$.
We may assume that $\psi$ is smooth on $M$.
In Lemma \ref{lem:Poincare},
by replacing $\psi$ with $\vert\psi\vert^{p}$,
we have
\begin{equation*}
\int_{M}\, \vert \psi \vert^{p} \,d\vol_{g} \leq p\,C_{1}(n,\kappa,\lambda,D)\, D\, \int_{M}\, \vert \psi \vert^{p-1} \,  \Vert \nabla \psi\Vert \,d\vol_{g}.
\end{equation*}
From the H\"older inequality,
we derive $R_{p}(\psi)\geq (\,p\,C_{1}(n,\kappa,\lambda,D)\,D\,)^{-p}$.
This proves Proposition \ref{prop:p-Laplacian2}.
\end{proof}
\begin{rem}
Proposition \ref{prop:p-Laplacian2} is weaker than Theorem \ref{thm:p-Laplacian1}.
We can prove that
the lower bound $(p\,C_{1}(n,\kappa,\lambda,D)\,D)^{-p}$ for $\mu_{1,p}$ in Proposition \ref{prop:p-Laplacian2}
is at most the lower bound $(p\,C(n,\kappa,\lambda,D))^{-p}$ in Theorem \ref{thm:p-Laplacian1}.
\end{rem}

\section{Measure contraction property}\label{sec:Measure contraction property}
Let $M$ be a connected complete Riemannian manifold with boundary with Riemannian metric $g$.
\subsection{Measure contraction inequalities}
Let $t \in (0,1)$.
For a point $p\in M$,
we say that
$q\in M$ is a $t$\textit{-extension point from} $\bm$ \textit{of} $p$
if $q$ satisfies the following:
(1) $\rho_{\bm}(p)/\rho_{\bm}(q)=t$;
(2) there exists a foot point $x$ on $\bm$ of $p$ with $q=\gamma_{x}(\rho_{\bm}(q))$.
We denote by $W_{t}$ the set of all points $p \in M$ for which
there exists a $t$-extension point from $\bm$ of $p$.

We first show the following:
\begin{lem}\label{lem:contraction point}
For every $t\in (0,1)$,
and for every $p\in W_{t}$,
there exists a unique foot point on $\bm$ of $p$.
In particular,
every $p\in W_{t}$ has a unique $t$-extension point from $\bm$.
\end{lem}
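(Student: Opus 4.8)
The plan is to show that every $p\in W_{t}$ lies in $D_{\bm}\setminus\cut\bm$; once this is known, uniqueness of the foot point is immediate, because $\expp$ is injective on $TD_{\bm}\setminus 0(T^{\perp}\bm)$ by Lemma \ref{lem:regular and injective}, while by Lemma \ref{lem:foot vector} any foot point $y$ of $p$ is the base point of the minimizing geodesic $\gamma_{y}|_{[0,\rho_{\bm}(p)]}$ from $\bm$ to $p$, i.e. it is recorded in the vector $(y,\rho_{\bm}(p)\,u_{y})\in T^{\perp}\bm$ mapped to $p$ by $\expp$. If $\rho_{\bm}(p)=0$ then $p\in\bm$ and $p$ is its own unique foot point, so I would assume $\rho_{\bm}(p)>0$, hence $p\in\inte M$.

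The steps go as follows. Let $x$ be a foot point of $p$ and $q$ a $t$-extension point from $\bm$ of $p$ with $q=\gamma_{x}(\rho_{\bm}(q))$, as in the definition of $W_{t}$, and put $s:=\rho_{\bm}(q)$, so $\rho_{\bm}(p)=ts>0$. From $\rho_{\bm}(\gamma_{x}(s))=\rho_{\bm}(q)=s$ and the definition of $\tau$ we get $s\le\tau(x)$, and Lemma \ref{lem:foot vector} identifies the minimizing geodesic from $x$ to $p$ with $\gamma_{x}|_{[0,ts]}$, so $p=\gamma_{x}(ts)$; since $t\in(0,1)$ this gives $0<ts<s\le\tau(x)$, hence $(x,ts\,u_{x})\in TD_{\bm}\setminus 0(T^{\perp}\bm)$ and $p\in D_{\bm}\setminus\bm$. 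Now let $y$ be any foot point of $p$. Lemma \ref{lem:foot vector} again gives $p=\gamma_{y}(ts)$ with $\rho_{\bm}(\gamma_{y}(ts))=\rho_{\bm}(p)=ts$, so $ts\le\tau(y)$; the equality $ts=\tau(y)$ would put $p$ in $\cut\bm$, which is impossible since $p\in D_{\bm}$ and $D_{\bm}\cap\cut\bm=\emptyset$ by Lemma \ref{lem:cut and regular}, so $ts<\tau(y)$, that is $(y,ts\,u_{y})\in TD_{\bm}\setminus 0(T^{\perp}\bm)$. Because $\expp(x,ts\,u_{x})=p=\expp(y,ts\,u_{y})$, the injectivity in Lemma \ref{lem:regular and injective} forces $(x,ts\,u_{x})=(y,ts\,u_{y})$, hence $y=x$, proving uniqueness of the foot point.

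For the final assertion, if $q'$ is a $t$-extension point of $p$, then condition (1) forces $\rho_{\bm}(q')=\rho_{\bm}(p)/t$, and condition (2) gives $q'=\gamma_{x'}(\rho_{\bm}(q'))$ for some foot point $x'$ of $p$; by the uniqueness just established $x'=x$, so $q'=\gamma_{x}(\rho_{\bm}(p)/t)$ is determined. I do not expect a real obstacle here, since the content is already prepared in Section \ref{sec:Cut locus for the boundary}; the only point that needs a little care is verifying that the parameter $ts$ lies \emph{strictly} below the cut value $\tau$ along every minimizing geodesic from $\bm$ to $p$, which is precisely where Lemma \ref{lem:cut and regular} enters.
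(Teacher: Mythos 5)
Your argument is correct and takes essentially the same route as the paper: both proofs hinge on extracting $\rho_{\bm}(p)=t\,\rho_{\bm}(q)<\rho_{\bm}(q)\leq\tau(x)$ from the definitions of the $t$-extension point and of $\tau$, identifying $p=\gamma_{x}(\rho_{\bm}(p))$ via Lemma \ref{lem:foot vector}, and then passing from uniqueness of the foot point to uniqueness of the $t$-extension point exactly as you do. The only (harmless) difference is that for the foot-point uniqueness the paper cites Lemma \ref{lem:cut} directly, whereas you route through its consequences, namely $D_{\bm}\cap\cut\bm=\emptyset$ from Lemma \ref{lem:cut and regular} and the injectivity of $\expp$ on $TD_{\bm}\setminus 0(T^{\perp}\bm)$ from Lemma \ref{lem:regular and injective}.
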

\begin{proof}
Take $p\in W_{t}$.
Let $q$ be a $t$-extension point from $\bm$ of $p$.
There exists a foot point $x$ on $\bm$ of $p$ such that $q=\gamma_{x}(\rho_{\bm}(q))$.
The definition of $\tau$ implies $\rho_{\bm}(q)\leq \tau(x)$.
Since $\rho_{\bm}(p)=t\rho_{\bm}(q)$,
it follows that $\rho_{\bm}(p)<\tau(x)$.
From Lemma \ref{lem:foot vector},
we derive $p=\gamma_{x}(\rho_{\bm}(p))$.
Lemma \ref{lem:cut} tells us that
$x$ is a unique foot point on $\bm$ of $p$.

Suppose that
there exist distinct $t$-extension points $q_{1},q_{2}\in M$ from $\bm$ of $p$.
By the definition,
it holds that $\rho_{\bm}(q_{1})=\rho_{\bm}(q_{2})$.
Furthermore,
for each $i=1,2$,
there exists a foot point $x_{i}$ on $\bm$ of $p$ with $q_{i}=\gamma_{x_{i}}(\rho_{\bm}(q_{i}))$.
Since $q_{1}\neq q_{2}$,
we have $x_{1}\neq x_{2}$.
This contradicts the property that
$p$ has a unique foot point on $\bm$.
\end{proof}
By Lemma \ref{lem:contraction point},
for every $t\in (0,1)$,
we can define a map $\Phi_{t}:W_{t} \to M$ by $\Phi_{t}(p):=q$,
where $q$ is a unique $t$-extension point from $\bm$ of $p$.
We call $\Phi_{t}$ the $t$\textit{-extension map from} $\bm$.
Notice that
for every $t\in (0,1)$,
the $t$-extension map $\Phi_{t}$ from $\bm$ is surjective and continuous.

Let $\Omega$ be a subset of $M$.
We say that
$x\in \bm$ is a \textit{foot point on $\bm$ of $\Omega$}
if there exists a point $p\in \Omega$ such that
$x$ is a foot point on $\bm$ of $p$.
We denote by $\Pi(\Omega)$ the set of all foot points on $\bm$ of $\Omega$.

We have the following property of the $t$-extension map $\Phi_{t}$ from $\bm$:
\begin{lem}\label{lem:contraction and foot point}
For $t\in (0,1)$,
let $\Phi_{t}$ be the $t$-extension map from $\bm$.
Let $\Omega$ be a subset of $M$.
Then $\Pi(\Phi^{-1}_{t}(\Omega))=\Pi(\Omega)$.
\end{lem}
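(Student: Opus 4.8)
The plan is to establish the equality by proving the two inclusions $\Pi(\Phi_{t}^{-1}(\Omega))\subseteq \Pi(\Omega)$ and $\Pi(\Omega)\subseteq \Pi(\Phi_{t}^{-1}(\Omega))$ separately. In both directions the mechanism is the same: whenever $p\in W_{t}$, Lemma \ref{lem:contraction point} supplies the unique foot point $x$ on $\bm$ of $p$, and the defining conditions of a $t$-extension point then force both $p$ and $\Phi_{t}(p)$ to lie on the single normal geodesic $\gamma_{x}$, with $p=\gamma_{x}(\rho_{\bm}(p))$ and $\Phi_{t}(p)=\gamma_{x}(\rho_{\bm}(\Phi_{t}(p)))$.

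For the inclusion $\Pi(\Phi_{t}^{-1}(\Omega))\subseteq \Pi(\Omega)$, take $x\in \Pi(\Phi_{t}^{-1}(\Omega))$ and fix $p\in \Phi_{t}^{-1}(\Omega)$ having $x$ as a foot point on $\bm$. By Lemma \ref{lem:contraction point}, $x$ is the unique foot point on $\bm$ of $p$, and $q:=\Phi_{t}(p)\in \Omega$ is the unique $t$-extension point from $\bm$ of $p$; by the definition of a $t$-extension point together with this uniqueness, $q=\gamma_{x}(\rho_{\bm}(q))$. Since $\gamma_{x}|_{[0,\rho_{\bm}(q)]}$ has length $\rho_{\bm}(q)$, we get $d_{M}(q,x)\leq \rho_{\bm}(q)=d_{M}(q,\bm)\leq d_{M}(q,x)$, so $d_{M}(q,x)=d_{M}(q,\bm)$ and $x$ is a foot point on $\bm$ of $q\in \Omega$, that is, $x\in \Pi(\Omega)$.

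For the reverse inclusion $\Pi(\Omega)\subseteq \Pi(\Phi_{t}^{-1}(\Omega))$, take $x\in \Pi(\Omega)$ and fix $q\in \Omega$ with $x$ a foot point on $\bm$ of $q$; set $l:=\rho_{\bm}(q)$. We may assume $l>0$, the case $q\in \bm$ (where $x=q$ and $\Phi_{t}$ is the identity on $\bm$) being immediate. By Lemma \ref{lem:foot vector}, $q=\gamma_{x}(l)$, so $\rho_{\bm}(\gamma_{x}(l))=l$ and hence $l\leq \tau(x)$ by the definition of $\tau$. Put $p:=\gamma_{x}(tl)$; since $tl<l\leq \tau(x)$, the definition of $\tau$ (as used in the proof of Lemma \ref{lem:shinitai}) yields that $x$ is a foot point on $\bm$ of $p$ and $\rho_{\bm}(p)=tl$. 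Then $\rho_{\bm}(p)/\rho_{\bm}(q)=t$ and $q=\gamma_{x}(\rho_{\bm}(q))$ with $x$ a foot point of $p$, so $q$ is a $t$-extension point from $\bm$ of $p$; thus $p\in W_{t}$ and, by the uniqueness in Lemma \ref{lem:contraction point}, $\Phi_{t}(p)=q\in \Omega$, i.e., $p\in \Phi_{t}^{-1}(\Omega)$. Since $x$ is a foot point on $\bm$ of $p$, we conclude $x\in \Pi(\Phi_{t}^{-1}(\Omega))$. I expect the only delicate step to be this reverse inclusion: one has to verify that the constructed point $p=\gamma_{x}(tl)$ genuinely belongs to $W_{t}$ and satisfies $\Phi_{t}(p)=q$, which relies on the inequality $tl<l\leq \tau(x)$ (where $t<1$ enters) together with the elementary consequence of the definition of $\tau$ that $\rho_{\bm}(\gamma_{x}(s))=s$, with $x$ a foot point of $\gamma_{x}(s)$, for every $s$ in the relevant range.
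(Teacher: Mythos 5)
Your proof is correct and follows essentially the same route as the paper: both inclusions are obtained from Lemma \ref{lem:foot vector}, the definition of $\tau$ (which gives $\rho_{\bm}(\gamma_{x}(s))=s$ for $s\leq\tau(x)$, so that $\gamma_{x}(t\rho_{\bm}(q))$ has $x$ as foot point and $q$ as its $t$-extension point), and the uniqueness statement of Lemma \ref{lem:contraction point}, exactly as in the paper's argument. The only cosmetic differences are that you verify "$x$ is a foot point of $\Phi_{t}(p)$" by a direct length comparison rather than via $\rho_{\bm}(\Phi_{t}(p))\leq\tau(x)$, and that you explicitly flag the trivial boundary case, which the paper leaves implicit.
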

\begin{proof}
First,
we show $\Pi(\Omega) \subset \Pi(\Phi^{-1}_{t}(\Omega))$.
Take $x\in \Pi(\Omega)$.
There exists $p\in \Omega$ such that
$x$ is a foot point on $\bm$ of $p$.
Put $p_{t}:=\gamma_{x}(t \rho_{\bm}(p))$.
It suffices to show that
$x$ is a foot point on $\bm$ of $p_{t}$,
and $p_{t}$ belongs to $\Phi^{-1}_{t}(\Omega)$.
Lemma \ref{lem:foot vector} implies $p=\gamma_{x}(\rho_{\bm}(p))$.
By the definition of $\tau$,
we see $\rho_{\bm}(p)\leq \tau(x)$;
in particular,
$t\rho_{\bm}(p)$ is smaller than $\tau(x)$.
From Lemma \ref{lem:cut},
it follows that
$x$ is a unique foot point on $\bm$ of $p_{t}$.
Furthermore,
we have $\rho_{\bm}(p_{t})=t \rho_{\bm}(p)$.
Hence,
$p$ is a $t$-extension point from $\bm$ of $p_{t}$.
By Lemma \ref{lem:contraction point},
$p$ is a unique $t$-extension point from $\bm$.
Since $p=\Phi_{t}(p_{t})$ and $p\in \Omega$,
we see $p_{t} \in \Phi^{-1}_{t}(\Omega)$.
This implies $x\in \Pi(\Phi^{-1}_{t}(\Omega))$.

Next,
we show the opposite.
Take $x\in \Pi(\Phi^{-1}_{t}(\Omega))$.
There exists $p\in \Phi^{-1}_{t}(\Omega)$ such that
$x$ is a foot point on $\bm$ of $p$.
By Lemma \ref{lem:contraction point},
$x$ is a unique foot point on $\bm$ of $p$.
By the definition of the $t$-extension point from $\bm$,
we see $\Phi_{t}(p)=\gamma_{x}(\rho_{\bm}(\Phi_{t}(p)))$.
Thus,
we have $\rho_{\bm}(\Phi_{t}(p))\leq \tau(x)$.
Hence,
$x$ is a foot point on $\bm$ of $\Phi_{t}(p)$.
Since $\Phi_{t}(p)\in \Omega$,
we have $x \in \Pi(\Omega)$.
This proves the lemma.
\end{proof}
For $t\in (0,1)$,
let $\Phi_{t}$ be the $t$-extension map from $\bm$.
Let $\Omega$ be a subset of $M$.
For $x \in \Pi(\Omega)$,
we put
\begin{equation*}
I_{\Omega,t,x}:=\{\, s \in (0,t\tau(x))\mid \gamma_{x}(s)\in \Phi^{-1}_{t}(\Omega) \,\}.
\end{equation*}

We prove the following:
\begin{lem}\label{lem:contraction and domain}
For $t\in (0,1)$,
let $\Phi_{t}$ be the $t$-extension map from $\bm$.
Suppose that
a subset $\Omega$ of $M$ is measurable,
and satisfies $\vol_{g}\Phi^{-1}_{t}(\Omega)<\infty$.
Then we have
\begin{equation*}
\vol_{g}\Phi^{-1}_{t}(\Omega)=\int_{\Pi(\Omega)}\int_{I_{\Omega,t,x}}\,\theta(s,x)\,ds\,d\vol_{h}.
\end{equation*}
\end{lem}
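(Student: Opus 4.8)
The plan is to identify $\Phi_{t}^{-1}(\Omega)$, up to a set of $\vol_{g}$-measure zero, with the $\expp$-image of a measurable subset of $\tbp$ whose fibre over each $x\in\bm$ is precisely $I_{\Omega,t,x}$, and then to apply the change of variables already used in the proof of Lemma \ref{lem:vol}. As a first step I would show that $A:=\Phi_{t}^{-1}(\Omega)$ satisfies $A\setminus\bm\subset D_{\bm}\setminus\bm$. Indeed $A\subset W_{t}$, so by Lemma \ref{lem:contraction point} every $p\in A\setminus\bm$ has a unique foot point $x$ on $\bm$, and by Lemma \ref{lem:foot vector} we have $p=\gamma_{x}(\rho_{\bm}(p))$. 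Writing $q:=\Phi_{t}(p)$, the definition of the $t$-extension point gives $q=\gamma_{x}(\rho_{\bm}(q))$ with $\rho_{\bm}(p)=t\,\rho_{\bm}(q)$; since $\gamma_{x}|_{[0,\rho_{\bm}(q)]}$ is then a minimal geodesic from $\bm$ to $q$, the parameter value $\rho_{\bm}(q)$ equals $\rho_{\bm}(\gamma_{x}(\rho_{\bm}(q)))$, so $\rho_{\bm}(q)\le\tau(x)$, and hence $\rho_{\bm}(p)\le t\,\tau(x)<\tau(x)$. In particular $p\in D_{\bm}$.

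Next, for each $x\in\bm$ put $J_{x}:=\{\,s\in(0,\tau(x))\mid\gamma_{x}(s)\in A\,\}$. If $s\in J_{x}$ then $\rho_{\bm}(\gamma_{x}(s))=s$, so $x$ is a foot point on $\bm$ of $\gamma_{x}(s)$ and therefore $x\in\Pi(A)$; by Lemma \ref{lem:contraction and foot point} we have $\Pi(A)=\Pi(\Phi_{t}^{-1}(\Omega))=\Pi(\Omega)$, so $J_{x}=\emptyset$ whenever $x\notin\Pi(\Omega)$. Moreover the previous paragraph shows that $s\in J_{x}$ forces $s\le t\,\tau(x)$, so $J_{x}\subset(0,t\,\tau(x)]$ and $J_{x}$ differs from $I_{\Omega,t,x}$ by at most the single point $t\,\tau(x)$. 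By Lemma \ref{lem:regular and injective}, $\expp$ restricts to a diffeomorphism of $TD_{\bm}\setminus 0(\tbp)$ onto $D_{\bm}\setminus\bm$; under this diffeomorphism the preimage of $A\setminus\bm$ is exactly $\bigcup_{x\in\bm}\{\,s\,u_{x}\mid s\in J_{x}\,\}$, which is a measurable subset of $\tbp$ because $A$ is measurable.

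Finally I would compute the volume. Since $\vol_{g}\cut\bm=0$ by Proposition \ref{prop:vol of cut}, we have $\vol_{g}A=\vol_{g}(A\setminus\bm)$, and applying the change of variables for the diffeomorphism $\expp|_{TD_{\bm}\setminus 0(\tbp)}$ — whose Jacobian at $s\,u_{x}$ has absolute value $\theta(s,x)$, exactly as in the proof of Lemma \ref{lem:vol} — together with Tonelli's theorem (valid since $\theta\ge 0$), yields
\[
\vol_{g}A=\int_{\bm}\int_{J_{x}}\theta(s,x)\,ds\,d\vol_{h}=\int_{\Pi(\Omega)}\int_{I_{\Omega,t,x}}\theta(s,x)\,ds\,d\vol_{h},
\]
where the last equality uses that $J_{x}=\emptyset$ for $x\notin\Pi(\Omega)$ and that $J_{x}$ and $I_{\Omega,t,x}$ differ by a Lebesgue-null set; one may also note directly that $\{\,t\,\tau(x)\,u_{x}\mid x\in\Pi(\Omega),\ \tau(x)<\infty\,\}$ is $\vol_{g}$-null, by the continuity of $\tau$ (Lemma \ref{lem:conti}) and Fubini, exactly as in the proof of Proposition \ref{prop:vol of cut}.

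The step I expect to be the main obstacle is the measure-theoretic bookkeeping underlying the middle paragraph: confirming that the preimage set in $\tbp$ is genuinely measurable, that the graph-type set $\{\,(x,t\tau(x))\,\}$ and the cut locus $\cut\bm$ contribute nothing to the integral, and that the change-of-variables identity may be invoked on the open full-measure domain $TD_{\bm}\setminus 0(\tbp)$ rather than on a smaller neighborhood on which $\expp$ is globally a diffeomorphism. All of these points are handled by the techniques already developed in Section \ref{sec:Cut locus for the boundary} and in the proof of Theorem \ref{thm:volume comparison}, so the argument is essentially an assembly of those ingredients.
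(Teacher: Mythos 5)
Your argument is correct and follows essentially the same route as the paper: you identify $\Phi_{t}^{-1}(\Omega)$, up to the negligible set of endpoints $\gamma_{x}(t\tau(x))$, with the $\expp$-image of the fibered set over $\Pi(\Omega)$ with fibers $I_{\Omega,t,x}$, and then apply the change of variables with Jacobian $\theta(s,x)$ on $TD_{\bm}\setminus 0(\tbp)$, which is exactly the paper's decomposition of $\Phi_{t}^{-1}(\Omega)\setminus\bm$ into the null endpoint set and the set parametrized by the $I_{\Omega,t,x}$. One cosmetic slip: the equality $\vol_{g}\Phi_{t}^{-1}(\Omega)=\vol_{g}(\Phi_{t}^{-1}(\Omega)\setminus\bm)$ does not follow from $\vol_{g}\cut\bm=0$, but it holds trivially because $\Phi_{t}^{-1}(\Omega)\subset W_{t}$ is disjoint from $\bm$ (a boundary point has no $t$-extension point), so this does not affect the proof.
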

\begin{proof}
We put
\begin{align*}
A&:=\{\, \gamma_{x}(t \tau(x))\in \Phi^{-1}_{t}(\Omega) \mid x \in \Pi(\Omega), \tau(x)<\infty\},\\
B&:=\{\, \gamma_{x}(s) \mid x \in \Pi(\Omega), s\in I_{\Omega,t,x}\}.
\end{align*}
Note that
$A$ and $B$ are disjoint.

We show $\Phi^{-1}_{t}(\Omega)\setminus \bm=A\sqcup B$.
The definition of $I_{\Omega,t,x}$ implies $A\sqcup B \subset \Phi^{-1}_{t}(\Omega)\setminus \bm$.
To show the opposite,
take $p\in \Phi^{-1}_{t}(\Omega)\setminus \bm$,
and take a foot point $x$ on $\bm$ of $p$.
By Lemma \ref{lem:foot vector},
we see $p=\gamma_{x}(\rho_{\bm}(p))$.
From Lemma \ref{lem:contraction and foot point},
we derive $x\in \Pi(\Omega)$.
Now,
$p$ belongs to $W_{t}$.
Hence,
by Lemma \ref{lem:contraction point},
$x$ is a unique foot point on $\bm$ of $p$,
and there exists a unique $t$-extension point $q\in M$ from $\bm$ of $p$.
The $t$-extension point $q$ from $\bm$ of $p$ satisfies $t \rho_{\bm}(q)=\rho_{\bm}(p)$ and $q=\gamma_{x}(\rho_{\bm}(q))$.
The definition of $\tau$ implies $\rho_{\bm}(q)\leq \tau(x)$.
It holds that $\rho_{\bm}(p)\leq t \tau(x)$.
Since $x\in \Pi(\Omega)$ and $\rho_{\bm}(p)\in (0,t \tau(x)]$,
it follows that $\Phi^{-1}_{t}(\Omega)\setminus \bm \subset A\sqcup B$.

We next show that
$A$ is a null set of $M$.
We put
\begin{equation*}
\bar{A}:=\bigcup_{x\in \Pi(\Omega)} \{t \tau (x)u_{x}\mid \tau(x)<\infty \}.
\end{equation*}
Note that $A=\expp(\bar{A})$.
By Lemma \ref{lem:conti},
and by the Fubini theorem,
the graph $\{(x,t \tau(x)) \mid x\in \bm,\,\tau(x)<\infty \}$ of $t \tau$ is a null set of $\bm\times [0,\infty)$.
Since a map $\Psi:\bm \times [0,\infty)\to T^{\perp}\bm$ defined by $\Psi(x,s):=s u_{x}$ is smooth,
the set $\bar{A}$ is also a null set of $T^{\perp}\bm$.
By the definition of $\tau$,
the set $A$ is contained in $\inte M$.
From the smoothness of $\expp$,
it follows that
$A$ is a null set of $M$.

Since $\Phi^{-1}_{t}(\Omega)\setminus \bm=A\sqcup B$,
and since $A$ is a null set of $M$,
it suffices to show that
\begin{equation*}
\vol_{g}B=\int_{\Pi(\Omega)}\int_{I_{\Omega,t,x}}\,\theta(s,x)\,ds\,d\vol_{h}.
\end{equation*}
We put
\begin{equation*}
\bar{B}:=\bigcup_{x\in \Pi(\Omega)} \{s u_{x}\mid s\in I_{\Omega,t,x} \}.
\end{equation*} 
Note that $B=\expp(\bar{B})$.
The set $\bar{B}$ is contained in $TD_{\bm}\setminus 0(\tbp)$.
By Lemma \ref{lem:regular and injective},
the map $\expp|_{TD_{\bm}\setminus 0(T^{\perp}\bm)}$ is a diffeomorphism.
Hence,
by the coarea formula and the Fubini theorem,
\begin{equation*}
\vol_{g} \expp(\bar{B})=\int_{\Pi(\Omega)}\int_{I_{\Omega,t,x}}\,\theta(s,x)\,ds\,d\vol_{h}.
\end{equation*}
Since $B=\expp(\bar{B})$,
we arrive at the desired equation.
\end{proof}
Now,
we prove the following measure contraction inequality:
\begin{prop}\label{prop:mcp1}
Let $M$ be an $n$-dimensional,
connected complete Riemannian manifold with boundary with Riemannian metric $g$ such that
$\ric_{M}\geq (n-1)\kappa$ and $H_{\bm}\geq \lambda$.
For $t\in (0,1)$,
let $\Phi_{t}$ be the $t$-extension map from $\bm$.
Suppose that
a subset $\Omega$ of $M$ is measurable.
Then we have
\begin{equation*}\label{eq:mcp1}
\vol_{g} \Phi^{-1}_{t}(\Omega) \geq t\,\int_{\Omega}\,\frac{s^{n-1}_{\kappa,\lambda} \circ t \rho_{\bm}}{s^{n-1}_{\kappa,\lambda} \circ \rho_{\bm}}   d\vol_{g}.
\end{equation*}
\end{prop}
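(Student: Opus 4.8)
The plan is to express both sides in the $\bm$-Jacobi coordinates $(x,r)\mapsto \gamma_{x}(r)$, and then to reduce the inequality to a fiberwise estimate which is precisely the relative comparison of Lemma \ref{lem:comp2}.

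First, if $\vol_{g}\Phi^{-1}_{t}(\Omega)=\infty$ there is nothing to prove, so assume $\vol_{g}\Phi^{-1}_{t}(\Omega)<\infty$. For $x\in \bm$ set $J_{\Omega,x}:=\{\,r\in(0,\tau(x)) \mid \gamma_{x}(r)\in \Omega\,\}$. A point $\gamma_{x}(r)$ with $r<\tau(x)$ lies in $D_{\bm}\setminus \bm$, hence (by Lemmas \ref{lem:foot vector}, \ref{lem:cut and regular} and \ref{lem:regular and injective}) has $x$ as its unique foot point and satisfies $\rho_{\bm}(\gamma_{x}(r))=r$; in particular $J_{\Omega,x}=\emptyset$ whenever $x\notin \Pi(\Omega)$. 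Since $\vol_{g}\bm=0$ and $\vol_{g}\cut\bm=0$ (Proposition \ref{prop:vol of cut}), and since $\expp|_{TD_{\bm}\setminus 0(\tbp)}$ is a diffeomorphism onto $D_{\bm}\setminus \bm$ (Lemma \ref{lem:regular and injective}), the same coarea-plus-Fubini computation used in the proofs of Lemmas \ref{lem:vol2} and \ref{lem:contraction and domain} gives, for every non-negative measurable function $F$ on $M$,
\[
\int_{\Omega} F\, d\vol_{g}=\int_{\Pi(\Omega)}\int_{J_{\Omega,x}} F(\gamma_{x}(r))\,\theta(r,x)\,dr\,d\vol_{h}.
\]
Taking $F=(s^{n-1}_{\kappa,\lambda}\circ t\rho_{\bm})/(s^{n-1}_{\kappa,\lambda}\circ \rho_{\bm})$, which along $\gamma_{x}$ equals $s^{n-1}_{\kappa,\lambda}(tr)/s^{n-1}_{\kappa,\lambda}(r)$ for $r<\tau(x)$, expresses the right-hand side of the asserted inequality as
\[
t\int_{\Pi(\Omega)}\int_{J_{\Omega,x}}\frac{s^{n-1}_{\kappa,\lambda}(tr)}{s^{n-1}_{\kappa,\lambda}(r)}\,\theta(r,x)\,dr\,d\vol_{h}.
\]

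Next I would compute the left-hand side in the same coordinates. For $s\in(0,t\tau(x))$ the point $\gamma_{x}(s)$ has $x$ as its unique foot point, and $\gamma_{x}(s/t)$ as its unique $t$-extension point from $\bm$ (indeed $\rho_{\bm}(\gamma_{x}(s))/\rho_{\bm}(\gamma_{x}(s/t))=t$ and $s/t<\tau(x)$, and uniqueness follows from Lemma \ref{lem:contraction point}); hence $\gamma_{x}(s)\in \Phi^{-1}_{t}(\Omega)$ if and only if $\gamma_{x}(s/t)\in \Omega$, i.e. $I_{\Omega,t,x}=t\,J_{\Omega,x}$. By Lemma \ref{lem:contraction and domain} and the substitution $s=tr$,
\[
\vol_{g}\Phi^{-1}_{t}(\Omega)=\int_{\Pi(\Omega)}\int_{I_{\Omega,t,x}}\theta(s,x)\,ds\,d\vol_{h}=t\int_{\Pi(\Omega)}\int_{J_{\Omega,x}}\theta(tr,x)\,dr\,d\vol_{h}.
\]
Comparing the two displays, it suffices to prove, for each $x\in \Pi(\Omega)$ and each $r\in J_{\Omega,x}$, the pointwise estimate $\theta(tr,x)\geq \bigl(s^{n-1}_{\kappa,\lambda}(tr)/s^{n-1}_{\kappa,\lambda}(r)\bigr)\theta(r,x)$. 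Since $r<\tau(x)\leq \tau_{1}(x)$ and, by Lemma \ref{lem:shinitai2}, $\tau(x)\leq \bconst$ (trivial when $\bconst=\infty$), both $tr$ and $r$ lie in $[0,\min\{\tau_{1}(x),\bconst\})$ with $tr\leq r$ and $s^{n-1}_{\kappa,\lambda}(r)>0$; because $\ric_{M}\geq (n-1)\kappa$ and $H_{\bm}\geq \lambda$, Lemma \ref{lem:comp2} applies to the pair $(tr,r)$ and yields exactly this inequality, which completes the argument.

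The manipulations with the change-of-variables formula and the identity $I_{\Omega,t,x}=t\,J_{\Omega,x}$ are routine bookkeeping, following the pattern already established in Section \ref{sec:Measure contraction property}; the only genuinely geometric input is the fiberwise Jacobian comparison, which has already been isolated as Lemma \ref{lem:comp2}. I therefore do not expect a substantive obstacle. The one point that needs care is to ensure that every radius $r$ occurring in a fiber integral stays strictly below $\tau(x)$ (so that $\rho_{\bm}(\gamma_{x}(r))=r$, the foot point is unique, and Lemma \ref{lem:comp2} is applicable), and to dispose at the outset of the trivial case $\vol_{g}\Phi^{-1}_{t}(\Omega)=\infty$ so that Lemma \ref{lem:contraction and domain} may be invoked.
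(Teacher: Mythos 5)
Your proposal is correct and takes essentially the same route as the paper's proof: both express $\vol_{g}\Phi^{-1}_{t}(\Omega)$ through Lemma \ref{lem:contraction and domain}, rescale the fiber variable via the identity $I_{\Omega,t,x}=t\,I_{\Omega,x}$ (which you spell out using Lemma \ref{lem:contraction point}), identify $\Omega$ with the fiberwise set over $\Pi(\Omega)$ up to the null sets $\bm$ and $\cut\bm$ by the coarea formula, and conclude with the Jacobian comparison of Lemma \ref{lem:comp2}. The only difference is the order of steps (you change variables before applying the comparison, the paper applies the comparison first), which is immaterial.
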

\begin{proof}
We may assume $\vol_{g} \Phi^{-1}_{t}(\Omega)<\infty$.
By Lemma \ref{lem:contraction and domain},
\begin{equation*}\label{eq:contraction integration formula2}
\vol_{g}\Phi^{-1}_{t}(\Omega)=\int_{\Pi(\Omega)}\int_{I_{\Omega,t,x}}\,\theta(s,x)\,ds\,d\vol_{h}.
\end{equation*}
From Lemma \ref{lem:comp2},
for all $x\in \Pi(\Omega)$ and $s\in I_{\Omega,t,x}$,
we derive
\begin{equation*}\label{eq:measure contraction comparison}
\frac{\theta(t^{-1}s,x)}{\theta(s,x)} \leq \frac{s^{n-1}_{\kappa,\lambda}(t^{-1}s)}{s^{n-1}_{\kappa,\lambda}(s)}.
\end{equation*}
It follows that
\begin{equation*}\label{eq:integration measure contraction comparison}
\vol_{g}\Phi^{-1}_{t}(\Omega)\geq \int_{\Pi(\Omega)}\int_{I_{\Omega,t,x}}\,\frac{s^{n-1}_{\kappa,\lambda}(s)}{s^{n-1}_{\kappa,\lambda}(t^{-1}s)}\theta(t^{-1}s,x) \,ds\,d\vol_{h}.
\end{equation*}
For $x \in \Pi(\Omega)$,
we put
\begin{equation*}
I_{\Omega,x}:=\{\, s \in (0,\tau(x))\mid \gamma_{x}(s)\in \Omega \,\}.
\end{equation*}
Note that
for each $x\in \Pi(\Omega)$,
the set $\{l \in (0,\tau(x)) \mid t l\in I_{\Omega,t,x}\}$ coincides with $I_{\Omega,x}$.
By putting $l:=t^{-1} s$ in the above inequality,
we have
\begin{equation*}
\vol_{g}\Phi^{-1}_{t}(\Omega)\geq t\int_{\Pi(\Omega)}\int_{I_{\Omega,x}}\,\frac{s^{n-1}_{\kappa,\lambda}(tl)}{s^{n-1}_{\kappa,\lambda}(l)}\theta(l,x) \,dl\,d\vol_{h}.
\end{equation*}

Now,
we put 
\begin{equation*}
\bar{U}:=\bigcup_{x\in \Pi(\Omega)} \{su_{x}\mid s\in I_{\Omega,x} \}.
\end{equation*}
We show $\expp(\bar{U})=\Omega \setminus \left(\cut \bm \cup \bm \right)$.
By the definition of $I_{\Omega,x}$,
we have $\expp(\bar{U})\subset \Omega \setminus \left(\cut \bm \cup \bm \right)$.
To show the opposite,
take $p\in \Omega\setminus \left(\cut \bm \cup \bm \right)$,
and take a foot point $x$ on $\bm$ of $p$.
From Lemma \ref{lem:foot vector},
it follows that $p=\expp(\rho_{\bm}(p)u_{x})$.
We see $x\in \Pi(\Omega)$.
Since $p$ does not belongs to $\cut \bm \cup \bm$,
we have $\rho_{\bm}(p) \in (0,\tau(x))$.
This implies $\rho_{\bm}(p)\in I_{\Omega,x}$.
Hence,
the set $\Omega \setminus \left(\cut \bm \cup \bm \right)$ is contained in $\expp(\bar{U})$.

The set $\bar{U}$ is contained in $TD_{\bm}\setminus 0(\tbp)$.
Lemma \ref{lem:regular and injective} implies that
the map $\expp|_{TD_{\bm}\setminus 0(T^{\perp}\bm)}$ is a diffeomorphism.
By the coarea formula and the Fubini theorem,
and by Lemma \ref{prop:vol of cut},
we have
\begin{align*}
t\int_{\Pi(\Omega)}\int_{I_{\Omega,x}}\,\frac{s^{n-1}_{\kappa,\lambda}(tl)}{s^{n-1}_{\kappa,\lambda}(l)}\theta(l,x) \,dl\,d\vol_{h}
&=t\,\int_{\expp(\bar{U})}\,\frac{s^{n-1}_{\kappa,\lambda} \circ t \rho_{\bm}}{s^{n-1}_{\kappa,\lambda} \circ \rho_{\bm}}   d\vol_{g}\\
&=t\,\int_{\Omega}\,\frac{s^{n-1}_{\kappa,\lambda} \circ t \rho_{\bm}}{s^{n-1}_{\kappa,\lambda} \circ \rho_{\bm}}   d\vol_{g}.
\end{align*}
Thus,
we arrive at the desired inequality.
\end{proof}
\subsection{Another proof of Theorem \ref{thm:volume comparison}}
For $r,R \in (0,\infty)$ with $r<R$,
we put $A_{r,R}(\bm):=B_{R}(\bm)\setminus B_{r}(\bm)$.

By using Proposition \ref{prop:mcp1},
we have the following:
\begin{lem}\label{lem:mcp2}
Let $M$ be an $n$-dimensional,
connected complete Riemannian manifold with boundary with Riemannian metric $g$ such that
$\ric_{M}\geq (n-1)\kappa$ and $H_{\bm}\geq \lambda$.
Let $t \in (0,1)$.
Suppose $\bm$ is compact.
Then for all $R\in (0,\bar{C}_{\kappa,\lambda}] \setminus \{\infty\}$ and $r\in (0,R)$,
we have
\begin{equation*}\label{eq:mcp2}
\frac{\vol_{g} A_{r,R}(\bm)}{\vol_{g} A_{t r,t R}(\bm)} \leq \left(t\, \inf_{s \in (r,R)}\frac{s^{n-1}_{\kappa,\lambda}(t s)}{s^{n-1}_{\kappa,\lambda}(s)} \right)^{-1}.
\end{equation*}
\end{lem}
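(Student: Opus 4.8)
The plan is to apply the measure contraction inequality of Proposition \ref{prop:mcp1} to the annulus $\Omega:=A_{r,R}(\bm)$, using the $t$-extension map $\Phi_{t}$ from $\bm$, and to bound $\vol_{g}\Phi_{t}^{-1}(\Omega)$ from above by $\vol_{g}A_{tr,tR}(\bm)$ via an elementary inclusion of sets.

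First I would record two preliminary observations that make the quotients in the statement legitimate. Since $\bm$ is compact and $R<\infty$, the set $B_{R}(\bm)$ is a closed bounded subset of the proper metric space $(M,d_{M})$, hence compact, so $\vol_{g}A_{r,R}(\bm)$ and $\vol_{g}A_{tr,tR}(\bm)$ are finite. Moreover, since $s_{\kappa,\lambda}$ is positive on $(0,\bconst)$ and $R\leq\bconst$, the number $\inf_{s\in(r,R)}s^{n-1}_{\kappa,\lambda}(ts)/s^{n-1}_{\kappa,\lambda}(s)$ is positive: the function $s\mapsto s^{n-1}_{\kappa,\lambda}(ts)/s^{n-1}_{\kappa,\lambda}(s)$ is continuous and positive on $(r,R)$, has a finite positive limit as $s\to r^{+}$, and stays bounded below as $s\to R^{-}$ (it tends to $+\infty$ when $s_{\kappa,\lambda}(R)=0$). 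In any case, if this infimum vanished the asserted inequality would be vacuous, so positivity is only a convenience.

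The geometric core is the inclusion $\Phi_{t}^{-1}(\Omega)\subseteq A_{tr,tR}(\bm)$. Indeed, if $p\in\Phi_{t}^{-1}(\Omega)$ then $p\in W_{t}$, and by Lemma \ref{lem:contraction point} the point $q:=\Phi_{t}(p)$ is the unique $t$-extension point from $\bm$ of $p$, so $\rho_{\bm}(p)=t\,\rho_{\bm}(q)$; since $q\in\Omega=A_{r,R}(\bm)$ forces $r<\rho_{\bm}(q)\leq R$, we get $tr<\rho_{\bm}(p)\leq tR$, i.e.\ $p\in A_{tr,tR}(\bm)$. Hence $\vol_{g}\Phi_{t}^{-1}(\Omega)\leq\vol_{g}A_{tr,tR}(\bm)$. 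On the other hand, Proposition \ref{prop:mcp1} gives
\[
\vol_{g}\Phi_{t}^{-1}(\Omega)\geq t\int_{\Omega}\frac{s^{n-1}_{\kappa,\lambda}\circ t\rho_{\bm}}{s^{n-1}_{\kappa,\lambda}\circ\rho_{\bm}}\,d\vol_{g},
\]
and on $\Omega$ one has $\rho_{\bm}\in(r,R]$; off the $\vol_{g}$-null set $\rho_{\bm}^{-1}(R)$ (null by Proposition \ref{prop:vol of cut} together with the smoothness of $\rho_{\bm}$ on $\inte M\setminus\cut\bm$ from Proposition \ref{prop:distance function}) the integrand is well defined and is at least $\inf_{s\in(r,R)}s^{n-1}_{\kappa,\lambda}(ts)/s^{n-1}_{\kappa,\lambda}(s)$. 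Therefore
\[
\vol_{g}\Phi_{t}^{-1}(\Omega)\geq t\Bigl(\inf_{s\in(r,R)}\frac{s^{n-1}_{\kappa,\lambda}(ts)}{s^{n-1}_{\kappa,\lambda}(s)}\Bigr)\vol_{g}A_{r,R}(\bm).
\]
Chaining the two displayed estimates and dividing by the positive finite quantities involved yields the claimed inequality.

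I do not expect a genuine obstacle, since all the analytic substance is already contained in Proposition \ref{prop:mcp1}; the remainder is bookkeeping with the $t$-extension map. The two points requiring a little care are the measure-theoretic remark that $\rho_{\bm}^{-1}(R)$ is $\vol_{g}$-null — which makes the possible vanishing of the denominator there (when $R=\bconst$) harmless — and the verification that $s^{n-1}_{\kappa,\lambda}\circ\rho_{\bm}$ does not vanish on the rest of $\Omega$, which is precisely where the hypothesis $R\leq\bconst$ enters.
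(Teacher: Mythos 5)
Your proposal is correct and follows essentially the same route as the paper: apply Proposition \ref{prop:mcp1} to $\Omega=A_{r,R}(\bm)$, observe that $\rho_{\bm}(p)=t\,\rho_{\bm}(\Phi_{t}(p))$ forces $\Phi_{t}^{-1}(\Omega)\subseteq A_{tr,tR}(\bm)$, and bound the integrand below by the infimum. Your extra measure-theoretic remarks (finiteness of the volumes and the null level set $\rho_{\bm}^{-1}(R)$ when $s_{\kappa,\lambda}(R)=0$) are careful refinements of the same argument, not a different approach.
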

\begin{proof}
Take $R\in (0,\bar{C}_{\kappa,\lambda}] \setminus \{\infty\}$ and $r\in (0,R)$.
Put $\Omega:=A_{r,R}(\bm)$.
Let $\Phi_{t}$ be the $t$-extension map from $\bm$.
For all $p \in \Phi^{-1}_{t}(\Omega)$,
we have
\begin{equation*}
\rho_{\bm}(p)=t\,\rho_{\bm}(\Phi_{t}(p))\in (t r,t R].
\end{equation*}
Hence,
$\Phi^{-1}_{t}(\Omega)$ is contained in $A_{t r,t R}(\bm)$.
Applying Proposition \ref{prop:mcp1} to $\Omega$,
we obtain
\begin{equation*}
\vol_{g} A_{t r,t R}(\bm) \geq \vol_{g} \Phi^{-1}_{t}(\Omega) \geq t\, \inf_{s \in (r,R)}\frac{s^{n-1}_{\kappa,\lambda}(t s)}{s^{n-1}_{\kappa,\lambda}(s)}\, \vol_{g} \Omega.
\end{equation*}
This proves the lemma.
\end{proof}
From Lemma \ref{lem:mcp2},
we derive the following:
\begin{lem}\label{lem:mcp3}
Let $M$ be an $n$-dimensional,
connected complete Riemannian manifold with boundary with Riemannian metric $g$ such that
$\ric_{M}\geq (n-1)\kappa$ and $H_{\bm}\geq \lambda$.
Suppose $\bm$ is compact.
Let $r_{2}\in (0,\bar{C}_{\kappa,\lambda}] \setminus \{\infty\}$,
and let $r_{1}\in (0,r_{2})$.
Put $t:=r_{1}/r_{2}$.
For $k\in\mathbb{N}$,
put $r:=t^{k}r_{2}$.
Then we have
\begin{equation*}\label{eq:mcp3}
\frac{\vol_{g} A_{r_{1},r_{2}}(\bm)}{\vol_{g} B_{r}(\bm)}\leq \left(\sum^{\infty}_{i=k} t^{i} \inf_{s \in (r_{1},r_{2})} \frac{s^{n-1}_{\kappa,\lambda}(t^{i}s)}{s^{n-1}_{\kappa,\lambda}(s)}\right)^{-1}.
\end{equation*}
\end{lem}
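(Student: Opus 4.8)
The plan is to tile the neighborhood $B_r(\bm)$ by dilated copies of the annulus $A_{r_1,r_2}(\bm)$ and to estimate each piece with Lemma \ref{lem:mcp2}. Since $t=r_1/r_2$, one has $t^i r_1=t^{i+1}r_2$ for every integer $i$, so
\begin{equation*}
A_{t^i r_1,t^i r_2}(\bm)=\{\,p\in M\mid t^{i+1}r_2<\rho_{\bm}(p)\le t^i r_2\,\}.
\end{equation*}
These sets, for $i\ge k$, are pairwise disjoint, and their union is $\{\,p\in M\mid 0<\rho_{\bm}(p)\le t^k r_2\,\}=B_r(\bm)\setminus\bm$, because $r=t^k r_2$ and $t^i r_2\to 0$. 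As $\bm$ is a submanifold of positive codimension, $\vol_g\bm=0$, hence
\begin{equation*}
\vol_g B_r(\bm)=\sum_{i=k}^{\infty}\vol_g A_{t^i r_1,t^i r_2}(\bm).
\end{equation*}

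For each $i\ge k$ I would apply Lemma \ref{lem:mcp2} with contraction parameter $t^i\in(0,1)$, outer radius $R=r_2\in(0,\bconst]\setminus\{\infty\}$, and inner radius $r_1\in(0,r_2)$; note that the outer radius of the larger annulus is $r_2$ for every $i$, so the hypotheses of Lemma \ref{lem:mcp2} are met uniformly in $i$. This gives
\begin{equation*}
\vol_g A_{t^i r_1,t^i r_2}(\bm)\ \ge\ \Bigl(t^i\,\inf_{s\in(r_1,r_2)}\frac{s_{\kappa,\lambda}^{n-1}(t^i s)}{s_{\kappa,\lambda}^{n-1}(s)}\Bigr)\,\vol_g A_{r_1,r_2}(\bm).
\end{equation*}
Summing over $i\ge k$ and substituting the identity for $\vol_g B_r(\bm)$ obtained above yields
\begin{equation*}
\vol_g B_r(\bm)\ \ge\ \Bigl(\sum_{i=k}^{\infty}t^i\,\inf_{s\in(r_1,r_2)}\frac{s_{\kappa,\lambda}^{n-1}(t^i s)}{s_{\kappa,\lambda}^{n-1}(s)}\Bigr)\,\vol_g A_{r_1,r_2}(\bm).
\end{equation*}
Dividing by $\vol_g B_r(\bm)$, which is positive and finite because $B_r(\bm)$ is closed and bounded, hence compact, as $\bm$ is compact and $(M,d_M)$ is proper, gives exactly the asserted estimate.

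The whole argument is essentially bookkeeping: the one step that genuinely needs care is verifying that the annuli $A_{t^i r_1,t^i r_2}(\bm)$, $i\ge k$, partition $B_r(\bm)$ up to the null set $\bm$, which rests precisely on the relation $t^i r_1=t^{i+1}r_2$ forced by $t=r_1/r_2$. Everything else is a direct invocation of Lemma \ref{lem:mcp2} (legitimate for each $i$ since $t^i r_2\le r_2\le\bconst$) together with $\vol_g\bm=0$, so no new geometric or analytic input is required.
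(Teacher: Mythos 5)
Your proof is correct and takes essentially the same approach as the paper: decompose $B_{r}(\bm)\setminus \bm$ into the pairwise disjoint annuli $A_{t^{i}r_{1},t^{i}r_{2}}(\bm)$, $i\geq k$ (using $t^{i}r_{1}=t^{i+1}r_{2}$), apply Lemma \ref{lem:mcp2} with contraction parameter $t^{i}$ and radii $r_{1}<r_{2}\leq \bconst$ to each piece, and sum. The points you spell out explicitly (the telescoping of the annuli, $\vol_{g}\bm=0$, and the positivity and finiteness of $\vol_{g}B_{r}(\bm)$) are exactly what the paper's shorter proof leaves implicit.
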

\begin{proof}
We see $B_{r}(\bm)\setminus \bm=\bigcup^{\infty}_{i=k}A_{t^{i}r_{1},t^{i}r_{2}}(\bm).$
Lemma \ref{lem:mcp2} implies
\begin{align*}
        \vol_{g} B_{r}(\bm)
&=   \sum^{\infty}_{i=k}\vol_{g} A_{t^{i}r_{1},t^{i}r_{2}}(\bm)\\
&\geq \vol_{g}A_{r_{1},r_{2}}(\bm) \left(\sum^{\infty}_{i=k} t^{i} \inf_{s \in (r_{1},r_{2})} \frac{s^{n-1}_{\kappa,\lambda}(t^{i}s)}{s^{n-1}_{\kappa,\lambda}(s)}\right).
\end{align*}
This completes the proof.
\end{proof}
By Lemma \ref{lem:mcp3},
we have the following volume estimate:
\begin{lem}\label{lem:mcp4}
Let $M$ be an $n$-dimensional,
connected complete Riemannian manifold with boundary with Riemannian metric $g$ such that
$\ric_{M}\geq (n-1)\kappa$ and $H_{\bm}\geq \lambda$.
Suppose $\bm$ is compact.
Let $t \in (0,1)$.
Take $l,m\in \mathbb{N}$ with $l<m$.
Then for all $r \in (0,\infty)$ with $t^{l-1} r\in (0,\bar{C}_{\kappa,\lambda}] \setminus \{\infty\}$,
we have
\begin{equation*}
       \frac{\vol_{g}B_{t^{l-1}r}(\bm)}{\vol_{g}B_{t^{m-1}r}(\bm)}
\leq \frac{\sum^{\infty}_{j=l}\sup_{s \in (t^{j}r,t^{j-1}r)}  s^{n-1}_{\kappa,\lambda}(s) (t^{j-1}r-t^{j}r)}{\sum^{\infty}_{i=m}\inf_{s \in (t^{i}r,t^{i-1}r)} s^{n-1}_{\kappa,\lambda}(s)(t^{i-1}r-t^{i}r)}.
\end{equation*}
\end{lem}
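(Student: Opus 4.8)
The plan is to split both distance balls into disjoint metric annuli and to estimate, one at a time, the finitely many annuli lying between radius $t^{m-1}r$ and radius $t^{l-1}r$ by comparing each of them with the \emph{whole} ball $B_{t^{m-1}r}(\bm)$ via Lemma \ref{lem:mcp3}. Concretely, for integers $j\geq l$ put $a_{j}:=\vol_{g}A_{t^{j}r,t^{j-1}r}(\bm)$ and set $V:=\vol_{g}B_{t^{m-1}r}(\bm)$. Since $t\in(0,1)$ one has $B_{t^{l-1}r}(\bm)\setminus\bm=\bigsqcup_{j\geq l}A_{t^{j}r,t^{j-1}r}(\bm)$ and $B_{t^{m-1}r}(\bm)\setminus\bm=\bigsqcup_{j\geq m}A_{t^{j}r,t^{j-1}r}(\bm)$, and $\vol_{g}\bm=0$; as $\bm$ is compact and $M$ is complete, $B_{t^{l-1}r}(\bm)$ is compact, so $\vol_{g}B_{t^{l-1}r}(\bm)=\sum_{j\geq l}a_{j}<\infty$ and $0<V=\sum_{j\geq m}a_{j}<\infty$.

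Next I would estimate $a_{j}$ for each $j$ with $l\leq j\leq m-1$. For such $j$, apply Lemma \ref{lem:mcp3} with $r_{2}:=t^{j-1}r$ and $r_{1}:=t^{j}r$: this is admissible because $t^{j-1}r\leq t^{l-1}r\leq\bconst$ with $t^{j-1}r\neq\infty$, the associated ratio is $r_{1}/r_{2}=t$, and with the integer $k:=m-j\geq1$ the output radius is $t^{m-j}\cdot t^{j-1}r=t^{m-1}r$. Hence $a_{j}\leq V/P_{j}$, where $P_{j}:=\sum_{i=m-j}^{\infty}t^{i}\inf_{s\in(t^{j}r,t^{j-1}r)}\frac{s^{n-1}_{\kappa,\lambda}(t^{i}s)}{s^{n-1}_{\kappa,\lambda}(s)}$. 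Writing $\sigma_{j}:=\sup_{s\in(t^{j}r,t^{j-1}r)}s^{n-1}_{\kappa,\lambda}(s)$ and $\iota_{p}:=\inf_{s\in(t^{p}r,t^{p-1}r)}s^{n-1}_{\kappa,\lambda}(s)$, and noting that every interval $(t^{j}r,t^{j-1}r)$ with $j\geq l$ lies in $(0,\bconst)$ — with closure in $(0,\bconst)$ once the index is $\geq m$ — on which $s_{\kappa,\lambda}$ is positive, all the $\sigma_{j}$ are finite and all the $\iota_{p}$ with $p\geq m$ are positive. Using $\inf(a/b)\geq(\inf a)/(\sup b)$ together with the substitution $u=t^{i}s$ then gives $P_{j}\geq\sigma_{j}^{-1}\sum_{i\geq m-j}t^{i}\iota_{i+j}=\bigl(t^{-j}/\sigma_{j}\bigr)\sum_{p\geq m}t^{p}\iota_{p}>0$, so $1/P_{j}\leq\sigma_{j}t^{j}/\sum_{p\geq m}t^{p}\iota_{p}$; here $\sum_{p\geq m}t^{p}\iota_{p}$ and $\sum_{j\geq l}t^{j}\sigma_{j}$ are positive and finite.

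Finally I would assemble these. From $\vol_{g}B_{t^{l-1}r}(\bm)=\sum_{j=l}^{m-1}a_{j}+V$ and the above,
\[
\frac{\vol_{g}B_{t^{l-1}r}(\bm)}{V}=1+\frac{1}{V}\sum_{j=l}^{m-1}a_{j}\leq 1+\sum_{j=l}^{m-1}\frac{1}{P_{j}}\leq\frac{\sum_{p\geq m}t^{p}\iota_{p}+\sum_{j=l}^{m-1}t^{j}\sigma_{j}}{\sum_{p\geq m}t^{p}\iota_{p}}\leq\frac{\sum_{j\geq l}t^{j}\sigma_{j}}{\sum_{p\geq m}t^{p}\iota_{p}},
\]
the last step using $\iota_{p}\leq\sigma_{p}$. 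Multiplying numerator and denominator by $r(1-t)t^{-1}$ and using $t^{j-1}r-t^{j}r=t^{j-1}r(1-t)$ rewrites the right-hand side as $\bigl(\sum_{j\geq l}\sigma_{j}(t^{j-1}r-t^{j}r)\bigr)/\bigl(\sum_{i\geq m}\iota_{i}(t^{i-1}r-t^{i}r)\bigr)$, which is exactly the asserted estimate. The only geometric input is Lemma \ref{lem:mcp3}; the step I expect to be most delicate is the bookkeeping — keeping the two index ranges $l\leq j\leq m-1$ and $j\geq m$ separate, and re-indexing the geometric series correctly when passing from $P_{j}$ to $\sum_{p\geq m}t^{p}\iota_{p}$, where an off-by-one would be fatal.
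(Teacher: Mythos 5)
Your argument is correct and follows essentially the same route as the paper: apply Lemma \ref{lem:mcp3} to each annulus $A_{t^{j}r,t^{j-1}r}(\bm)$, $l\leq j\leq m-1$, with $k=m-j$, bound the resulting series from below via $\inf(a/b)\geq(\inf a)/(\sup b)$ and the index shift $p=i+j$, and then sum, extending the numerator to all $j\geq l$ using $\inf\leq\sup$; the final rescaling by $r(1-t)t^{-1}$ is the same bookkeeping the paper leaves implicit in ``this implies the lemma.'' Your added checks (positivity of the terms with index $\geq m$, finiteness of the sums, and the admissibility $t^{j-1}r\leq\bconst$ in invoking Lemma \ref{lem:mcp3}) are all consistent with the paper's proof.
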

\begin{proof}
Fix $j \in \{l,\dots,m-1\}$.
By Lemma \ref{lem:mcp3},
we have
\begin{align*}
       \frac{\vol_{g}A_{t^{j}r,t^{j-1}r}(\bm)}{\vol_{g}B_{t^{m-1}r}(\bm)}
&\leq \left( \sum^{\infty}_{i=m-j}t^{i} \inf_{s\in (t^{j}r,t^{j-1}r)} \frac{s^{n-1}_{\kappa,\lambda}(t^{i}s)}{s^{n-1}_{\kappa,\lambda}(s)} \right)^{-1}\\
&\leq \left( \sum^{\infty}_{i=m-j}t^{i} \frac{\inf_{s \in (t^{j}r,t^{j-1}r)} s^{n-1}_{\kappa,\lambda}(t^{i}s)}{\sup_{s \in (t^{j}r,t^{j-1}r)}  s^{n-1}_{\kappa,\lambda}(s)} \right)^{-1}.
\end{align*}
Note that
we have
\begin{equation*}
       \left( \sum^{\infty}_{i=m-j}t^{i} \frac{\inf_{s \in (t^{j}r,t^{j-1}r)} s^{n-1}_{\kappa,\lambda}(t^{i}s)}{\sup_{s \in (t^{j}r,t^{j-1}r)}  s^{n-1}_{\kappa,\lambda}(s)} \right)^{-1}
  =   \frac{t^{j}\sup_{s \in (t^{j}r,t^{j-1}r)}  s^{n-1}_{\kappa,\lambda}(s)}{\sum^{\infty}_{i=m}t^{i}\inf_{s \in (t^{i}r,t^{i-1}r)} s^{n-1}_{\kappa,\lambda}(s)}.
\end{equation*}
It follows that
\begin{align*}
          \frac{\vol_{g}B_{t^{l-1}r}(\bm)}{\vol_{g}B_{t^{m-1}r}(\bm)}
&=     1+\sum^{m-1}_{j=l}\frac{\vol_{g}A_{t^{j}r,t^{j-1}r}(\bm)}{\vol_{g}B_{t^{m-1}r}(\bm)}\\
&\leq 1+\sum^{m-1}_{j=l} \frac{t^{j}\sup_{s \in (t^{j}r,t^{j-1}r)}  s^{n-1}_{\kappa,\lambda}(s)}{\sum^{\infty}_{i=m}t^{i}\inf_{s \in (t^{i}r,t^{i-1}r)} s^{n-1}_{\kappa,\lambda}(s)}\\
&\leq \frac{\sum^{\infty}_{j=l}t^{j}\sup_{s \in (t^{j}r,t^{j-1}r)}  s^{n-1}_{\kappa,\lambda}(s)}{\sum^{\infty}_{i=m}t^{i}\inf_{s \in (t^{i}r,t^{i-1}r)} s^{n-1}_{\kappa,\lambda}(s)}.
\end{align*}
This implies the lemma.
\end{proof}
Now,
we give another proof of Theorem \ref{thm:volume comparison}.
\begin{proof}[Proof of Theorem \ref{thm:volume comparison}]
Let $M$ be an $n$-dimensional,
connected complete Riemannian manifold with boundary with Riemannian metric $g$
such that $\ric_{M}\geq (n-1)\kappa$ and $H_{\bm} \geq \lambda$.
Suppose $\bm$ is compact.
Take $r,R\in (0,\infty)$ with $r\leq R$.
By Lemma \ref{lem:shinitai2},
we may assume $R\in (0,\bar{C}_{\kappa,\lambda}] \setminus \{\infty\}$ and $r<R$.
Put $r_{0}:=R r$.
Take a sufficiently large $N \in \mathbb{N}$ such that $N^{-1}\log r \in (0,1)$.
We put $t:=1-(\log r/N)$,
and
\begin{equation*}
l:=N+1,\quad m:=\min \left\{i\in \mathbb{N} \mid i \geq N (\log R/\log r)+1\right\}.
\end{equation*}
We have $l<m$ and $t^{m-1}r_{0}\leq r$.
Note that
if $N\to \infty$,
then $t^{l-1}r_{0}\to R$ and $t^{m-1}r_{0}\to r$.
From Lemma \ref{lem:mcp4},
it follows that
\begin{align*}
       \frac{\vol_{g}B_{t^{l-1}r_{0}}(\bm)}{\vol_{g}B_{r}(\bm)}
&\leq \frac{\vol_{g}B_{t^{l-1}r_{0}}(\bm)}{\vol_{g}B_{t^{m-1}r_{0}}(\bm)}\\
&\leq \frac{\sum^{\infty}_{j=l}\sup_{s \in (t^{j}r_{0},t^{j-1}r_{0})}  s^{n-1}_{\kappa,\lambda}(s) (t^{j-1}r_{0}-t^{j}r_{0})}{\sum^{\infty}_{i=m}\inf_{s \in (t^{i}r_{0},t^{i-1}r_{0})} s^{n-1}_{\kappa,\lambda}(s)(t^{i-1}r_{0}-t^{i}r_{0})}.
\end{align*}
Letting $N\to \infty$,
we have
\begin{equation*}
       \frac{\vol_{g}B_{R}(\bm)}{\vol_{g}B_{r}(\bm)}
\leq \frac{\int^{R}_{0}\,s^{n-1}_{\kappa,\lambda}(s)\,ds}{\int^{r}_{0}\,s^{n-1}_{\kappa,\lambda}(s)\,ds}.
\end{equation*}
Thus,
we obtain Theorem \ref{thm:volume comparison}.
\end{proof}

\end{document}